\numberwithin{equation}{section}
\let\savedbigtimes\bigtimes
\let\bigtimes\relax
\let\bigtimes\savedbigtimes
\crefname{appsec}{Appendix}{Appendices}
\newtheorem*{assumption*}{Assumption}
\newtheorem*{condition*}{Condition}
\newtheorem{theorem}{Theorem}[section]
\newtheorem{proposition}[theorem]{Proposition}
\newtheorem{lemma}[theorem]{Lemma}
\newtheorem{corollary}[theorem]{Corollary}
\theoremstyle{definition}
\newtheorem{definition}[theorem]{Definition}
\newtheorem{assumption}[theorem]{Assumption}
\newtheorem{remark}[theorem]{Remark}
\crefname{lemma}{Lemma}{Lemmas}
\crefname{theorem}{Theorem}{Theorems}
\crefname{definition}{Definition}{Definitions}
\crefname{fact}{Fact}{Facts}
\crefname{claim}{Claim}{Claims}
\crefname{proposition}{Proposition}{Propositions}
\newcommand{\dif}{\,\mathrm{d}}
\newcommand{\E}{\mathbb{E}}
\newcommand{\Var}{\mathrm{Var}}
\newcommand{\Cov}{\mathrm{Cov}}
\DeclareMathOperator*{\argmax}{arg\,max}
\newcommand{\norm}[1]{\left\lVert #1 \right\rVert}
\newcommand{\ceil}[1]{\left\lceil #1 \right\rceil}
\newcommand{\floor}[1]{\left\lfloor #1 \right\rfloor}
\newcommand{\ip}[2]{\left\langle #1 , #2 \right\rangle}
\newcommand{\trans}{\intercal}
\newcommand{\poly}{\mathrm{poly}}
\newcommand{\eps}{\varepsilon}
\renewcommand{\epsilon}{\varepsilon}
\newcommand{\N}{\mathbb{N}}
\newcommand{\R}{\mathbb{R}}
\newcommand{\beq}{\begin{equation}}
\newcommand{\eeq}{\end{equation}}
\crefname{appsec}{Appendix}{Appendices}
\begin{document}

\title[On the Low-Temperature MCMC Threshold]{On the Low-Temperature MCMC threshold:\\the cases of sparse tensor PCA, sparse regression,\\and a geometric rule}

\author[Z. Chen, C. Sheehan, I. Zadik]{Zongchen Chen$^\dagger$, Conor Sheehan$^\star$, and Ilias Zadik$^\star$}
\thanks{
$^\dagger$School of Computer Science, Georgia Tech, Email: \texttt{chenzongchen@gatech.edu}.\\
$^\star$Department of Statistics and Data Science, Yale University,
Email: \texttt{\{conor.sheehan,ilias.zadik\}@yale.edu}}

\date{ 
\today}

\begin{abstract}
    Over the last years, there has been a significant amount of work studying the power of specific classes of computationally efficient estimators for multiple statistical parametric estimation tasks, including the estimator classes of low-degree polynomials, spectral methods, and others. Despite that, our understanding of the important class of MCMC methods remains quite poorly understood. For instance, for many simple statistical models of interest, the performance of even zero-temperature (randomized greedy-like) MCMC methods that simply maximize the posterior remains elusive.
    
    In this work, we provide a structural geometric condition under which the low-temperature Metropolis chain maximizes the posterior in polynomial-time with high probability. The result is generally applicable, and in this work, we use it to derive positive MCMC results for two classical sparse estimation tasks: the sparse tensor PCA model and sparse regression. Interestingly, in both cases, we also carefully apply the Overlap Gap Property framework for inference (Gamarnik, Zadik AoS '22) to prove that our results are tight: no low-temperature local MCMC method can achieve better performance. In particular, our work identifies the ``low-temperature (local) MCMC threshold'' for both sparse models, significantly improving upon a series of related prior works on the topic.

    Notably, in the sparse tensor PCA model our results indicate that low-temperature local MCMC methods greatly underperform compared to other studied time-efficient methods, such as the class of low-degree polynomials. Moreover, the ``amount'' by which they underperform appears connected to the size of the computational-statistical gap of the model. Specifically, an intriguing mathematical pattern is observed in sparse tensor PCA, which we call \emph{``the geometric rule''}: the information-theory threshold, the threshold for low-degree polynomials and the MCMC threshold form a geometric progression for all values of the sparsity level and tensor power. 
\end{abstract}

\maketitle

\newpage

\tableofcontents

\setcounter{page}{1}

\section{Introduction}

Over the recent years, it has been revealed that a plethora of statistical estimation models exhibit computational-statistical gaps, i.e., parameter regimes for which some estimator can achieve small error, but no such polynomial-time estimator is known to achieve similar error guarantees. It is an intriguing and rapidly growing research direction to understand whether these gaps are ``inherent'' or an inability of our current polynomial-time statistical estimation techniques. Given that the (simpler\footnote{Often the statistical models of interest include random noise assumptions, hence cannot be treated as ``classic'' computational questions about tasks with a worst-case input, such as e.g., the traveling salesman problem over a worst-case graph. It is this difference that makes statements about the classes such as $\mathcal{P}$ and $\mathcal{NP}$ not directly applicable to these statistical estimation settings.}) worst-case question of $\mathcal{P} \neq \mathcal{NP}$ remains open in computational complexity theory, unfortunately proving that all polynomial-time estimators fail to close any given computational-statistical gap appears well beyond our current mathematical abilities.

Despite that, over the recent years, researchers have constructed several tools to provide ``rigorous evidence'' for a computational-statistical gap of interest.  Some of the profound approaches include average-case reductions to ``hard regimes'' of other statistical problems (see e.g., \cite{berthet2013optimal, macomputa2015, brennan2020reducibility}), statistical physics inspired ``hardness'' heuristics \cite{gamarnik2022disordered}, and also proofs that natural \emph{restricted classes} of polynomial-time estimators fail to close these gaps. Some of the most popular studied such restricted classes are MCMC methods \cite{Jer92, chen2023almost}, low-degree polynomials \cite{schramm2022computational}, semidefinite programs \cite{barak2019nearly}, message passing procedures \cite{feng2022unifying}, and the class of statistical query methods \cite{feldman2017statistical}.

Among these restricted classes of estimators perhaps the most recent attention has been received by the class of estimators that take the form of a low-degree (e.g., $O(1)$ or $O(\log n)$ degree) polynomial function of the data (first used in the context of hypothesis testing in \cite{hopkins2018statistical} and for estimation settings in \cite{schramm2022computational}). A series of papers have built interesting tools to understand the exact threshold (in terms of some critical natural quantity of the model, often the sample size or signal to noise ratio) for a given statistical problem above which some low-degree polynomial works, and below which all low-degree polynomials fail (see e.g., \cite{kunisky2019notes} for a survey). On top of that, a conjecture has been put forward in the context of hypothesis testing \cite{hopkins2018statistical} that -- under mild noisy and symmetry assumptions \cite{holmgren2020counterexamples, zadik2022lattice} -- the threshold below which all $O(\log n)$-degree polynomials fail should in fact coincide with the threshold for which all polynomial-time methods fail as well. In other words, subject to this ``low-degree conjecture'', in many classical statistical models the performance of  $O(\log n)$-degree polynomials (which one can often directly analyze) allow us to characterize the performance of all polynomial-time estimators. Given the recent rapid growth of the low-degree literature for estimation problems following \cite{schramm2022computational} (e.g., \cite{montanari2022equivalence, mao2023detection, luo2023computational}), it is expected that a similar low-degree conjecture will be put forward in the near future in terms of statistical estimation problems as well.

Despite this recent popularity of analyzing the performance of low-degree polynomials for statistical estimation settings, it is natural to wonder what happens with other natural classes of polynomial-time estimators that are significantly different from low-degree polynomials (e.g., estimators that are not even well-approximated by them). One such \emph{highly canonical class} of statistical estimators (both in theory and applications of statistics) is the class of Markov Chain Monte Carlo (MCMC) methods that try to sample from the posterior (or slight variants of it), such as the Metropolis chain in discrete time and Langevin dynamics in continuous time. Researchers have studied this important class for a large family of models, but often arrive at significantly less satisfying theoretical results than the ones for the literature of low-degree methods. Among these MCMC results, most of them concern lower bounds, that is provable failures of classes of MCMC method to achieve a certain guarantee. Some influential such results include MCMC lower bounds for the planted clique problem \cite{Jer92, gamarnik2019landscape, chen2023almost}, sparse principal component analysis (PCA) \cite{gamarnik2021overlap, arous2023free} and tensor PCA \cite{benarousPCA}. Importantly, many of the recent such results leverage a suggested proof framework known as Overlap Gap Property for inference to obtain bottlenecks for the studied Markov chains, first introduced in \cite{10.1214/21-AOS2130}. On the other hand, there are significantly fewer positive MCMC results known for such estimation problems, including though interesting positive results about Bayesian variable selection \cite{jordanMCMC}, tensor PCA \cite{benarousPCA}, and recently also the planted clique model \cite{gheissari2023finding} and the stochastic block model \cite{liu2024locally}. Yet, even concerning these relatively few positive results, most of them are an outcome of a technical detailed analysis tailored to the specific model of interest. In particular, as opposed to lower bounds and the proof framework of \cite{10.1214/21-AOS2130}, there is a lack of a general ``easy-to-use'' strategy of how can one obtain a positive MCMC result for a new model of interest, besides directly trying to bound the mixing time (or analyze its locally stationary measures as in the recent work \cite{liu2024locally}). A significant motivation for this work is to answer the following question:

\begin{quote}
    \centering
    Can we provide a general user-friendly framework \\ to prove positive MCMC results for statistical estimation problems?
\end{quote}

Besides only obtaining positive MCMC results, it is of course  highly desirable to obtain the ``optimal'' MCMC results, in the sense of locating exactly what is the minimal sample size (or signal to noise ratio) so that an element of some natural class of MCMC methods succeeds. Unfortunately, it is significantly understudied how one could locate exactly the ``MCMC threshold'' for a generic class of estimation settings, in sharp contrast with the generally applicable tools designed for the class of low-degree polynomials \cite{schramm2022computational,xifan24}.  On top of that, the pursuit of the MCMC threshold in estimation settings is even more motivated because of recent surprising discoveries in the context of tensor PCA \cite{benarousPCA} and planted clique \cite{chen2023almost}. The reason is that in both of these models the cited works have proven that a severe underperformance of certain natural MCMC methods takes place compared to other polynomial-time methods (something also noticed in the statistical physics literature \cite{mannelli2019passed}). To make it more precise, let us focus on the tensor PCA case where for some integer $t \geq 2$ one observes the $t$-tensor $Y=\lambda v^{\otimes t}+W$ for some $v \in S^{d-1}$ chosen uniformly at random from the sphere and $W$ has i.i.d. $N(0,1)$ entries. The task is to recover $v$ by observing $Y$. While the minimum $\lambda=\lambda_d>0$ for which this is possible by some (time-unconstrained) method scales like $\lambda_{\mathrm{STATS}} \approx \sqrt{d}$, polynomial-time methods are known to begin to work only when $\lambda_{\mathrm{ALG}} \approx d^{t/4}$ and below $\lambda_{\mathrm{ALG}}$ all $O(\log n)$-degree polynomials are known to fail, suggesting that $\lambda_{\mathrm{ALG}}$ is the computational threshold for this setting. Yet, if one runs Langevin dynamics that tries to sample from the (slightly temperature misparametrized) posterior, \cite{benarousPCA} proves that the dynamics can recover $v$ in polynomial-time at the threshold $\lambda_{\mathrm{MCMC}} \approx d^{t/2-1/2}$. Notice that for $t \geq 3$, $d^{t/4}=o(d^{t/2-1/2})$. It is perhaps striking the underperformance by a polynomial-in-$d$ factor of the Langevin dynamics for this seemingly simple task. For such reasons, this phenomenon that tensor PCA undergoes is known in the community as a \emph{``local-to-computational gap''}. A partial motivation of this work is to understand how generic the existence of local-to-computational gaps is in statistical estimation settings. 

Lastly, from a mathematical standpoint, one can observe a somewhat strange but elegant formula for the MCMC threshold for tensor PCA, which suggests that the MCMC thresholds may be inherently connected to the other thresholds via some intriguing mathematical structure. \emph{For each $t \geq 2$,} observe the computational threshold is given simply by the geometric average of the statistical threshold and the MCMC threshold, i.e., for each $t \geq 2$.
\begin{align}\label{eq:geom}
    \sqrt{\lambda_{\mathrm{MCMC}} \times \lambda_{\mathrm{STATS}}}  \approx \lambda_{\mathrm{ALG}} \approx d^{t/4}.
\end{align}We call this identity as \emph{``the geometric rule''}. This seemingly coincidental formula suggests a perhaps overly optimistic question:
\begin{quote}\centering
    Could it be that this geometric rule is fundamental \\ (e.g., does it hold for other priors on $v$, or beyond tensor PCA)?
\end{quote}
Besides the mathematical elegance, a positive answer to such a question would mean that for a class of models, as long as there is a computational statistical trade-off, i.e., $\lambda_{\mathrm{ALG}}/ \lambda_{\mathrm{STATS}}=\omega(1)$, there is a local-to-computational gap as well, i.e., $\lambda_{\mathrm{MCMC}}/ \lambda_{\mathrm{ALG}}=\omega(1)$. In this work, we interestingly reveal that the geometric rule does indeed hold beyond just the tensor PCA model, and in particular for \emph{the sparse matrix and tensor PCA models} as well.

\subsection{Contributions}\label{sec:contrib}

In this work, we provide a framework for the systematic study of the low-temperature MCMC threshold for high-dimensional parametric estimation problems. We first explain exactly the family of estimation tasks of interest, as well as the family of MCMC methods we are focusing on. 

We study the \emph{discrete parameter estimation} set-up, where for some parameter $\theta^* \in \Theta \subseteq \mathbb{R}^d$ sampled from some prior $\mathcal{P}$, we observe a dataset $\mathcal{D} \sim \mathbb{P}_{\theta^*}$. We assume that $\Theta$ is a finite set of parameters and the prior is fully supported on $\Theta$. The goal of the statistician is to exactly recover the value of $\theta^*$ from observing $\mathcal{D}$, with high probability over both the randomness of the prior and the channel as $d$ grows to infinity. Here and everywhere in the paper, by saying that an event holds with high probability as $d$ grows to infinity, we mean that it's probability converges to one as $d$ grows to infinity. 

It is folklore that the optimal estimator to exactly recover $\theta^*$ with the maximal possible probability of success (among all time-unconstrained recovery procedures) is the Maximum A Posteriori (MAP) estimator which maximizes among all $\theta \in \Theta$ the posterior $\mathbb{P}(\theta|\mathcal{D})$. Of course the issue leading to computational-statistical gaps is that often this maximization problem defining MAP can be intractable to compute directly, e.g., because $\Theta$ is exponentially large. A natural strategy in both theory and practice to perform this maximization in a time-efficient manner is to run a ``local'' \footnote{``Locality'' refers to the property that each state is connected in the underlying neighborhood graph of the chain with only a few neighbors. This allows for the step transitions of the chain to be computationally efficient even if $\Theta$ is large.} reversible Markov chain on $\Theta$ which for some sufficiently large $\beta>0$ (or ``low'' enough temperature) it has stationary measure $\pi_{\beta}$ \footnote{In practice, one may consider a simulated annealing schedule in terms of the inverse temperature $\beta$. Yet, in this work, as common in theoretical MCMC works, we simplify this setting and assume one fixed $\beta$ throughout the iterations of the Markov chain.} given by
\begin{align*}
    \pi_{\beta}(\theta) \propto \exp\left(\beta \log\mathbb{P}(\mathcal{D}|\theta)\right), \theta \in \Theta.
\end{align*}For convenience in what follows, we refer to such a family of local Markov chains as ``low-temperature MCMC methods''. We are interested in under which assumptions low-temperature MCMC methods can recover $\theta^*$ with high probability.

\subsubsection{A general positive result}As we mentioned in the Introduction, the positive results for such low-temperature MCMC methods are few, often quite technical, and usually an outcome of an analysis crucially tailored to each case of interest. 

Our first contribution is a \emph{general positive ``black-box'' result} (\cref{thm:canonical}) which offers a structural geometric condition under which some low-temperature MCMC method succeeds in polynomial-time to optimize the posterior, and in particular recover $\theta^*$ with high probability. Our result in fact applies for any optimization problem over a discrete feasible set and we expect to be of independent interest to the discrete optimization community as well. 

The sufficient condition simply requests the existence of a ``neighborhood'' graph defined on the parameter space such that from any parameter $\theta$ there exists an ascending path, in terms of the posterior value (or the arbitrary objective of interest), towards the optimizer that has polynomial length (see Figure \ref{fig:spanning_tree} below). Note that the condition is simple but non-trivial exactly because the whole parameter space could be exponentially large.

As long as solely this condition is satisfied, we prove that simply running the (low-temperature) Metropolis chain on this neighborhood graph for polynomial time is guaranteed to find the optimizer of the posterior $\theta^*$ with high probability. As we mentioned above, to the best of our knowledge, no similar result is known at that level of generality even in the discrete/combinatorial optimization community. The proof is simple and is based on a general canonical paths construction \cite{jerrum2003counting} and could be potentially of independent interest across communities.

To show the generality of \cref{thm:canonical} we derive two positive MCMC results for two of the most classical, but significantly different, \emph{sparse estimation problems} with well-studied computational-statistical gaps. First, we focus on the \emph{sparse tensor PCA model} (also called, constant high order clustering) \cite{luo2022tensor, choo2021complexity} which is a special case of a Gaussian additive model, i.e., one observes a sparse signal drawn from some prior where Gaussian noise is added to it. Second, we also apply it to \emph{Gaussian sparse regression} \cite{10.1214/21-AOS2130} which is a special case of a Linear Model, i.e., one observes i.i.d. samples of a (random) linear transformation of a sparse signal where Gaussian noise is added to each sample. Interestingly, using the Overlap Gap Property framework from \cite{10.1214/21-AOS2130} we can also show that both the obtained low-temperature MCMC results are tight, in the sense that a large class of low-temperature MCMC results fail to work in polynomial-time, whenever the conditions of our positive result do not hold. In particular, these results allow us to characterize the exact power of a large class of low-temperature MCMC methods for both of these settings. We finally highlight that both applications of the Overlap Gap Property framework are rather non-trivial and require highly technical applications of the second moment method to prove existence of ``bad" bottlenecks in the landscape of the models.

\subsubsection{Sparse Tensor PCA}We start by describing the contribution for the case of sparse tensor PCA (also called constant high order clustering setting). The model is that for some $p \rightarrow +\infty$ and $\omega(1)=k=k_p=o(p)$, we choose a $k$-sparse $v \in \{0,1\}^p$ uniformly at random, and assume the statistician observes for some $t \geq 2$ the $t$-tensor \begin{align}\label{eq:stpca} Y=\lambda v^{\otimes t}/k^{t/2}+W\end{align} where $W$ has i.i.d. $N(0,1)$ entries. The goal is to recover $v$ from $Y$ with high probability as $k \rightarrow +\infty$.

The sparse tensor PCA model we focus on is a canonical Gaussian principal component analysis setting with a large literature in theoretical statistics (see e.g., \cite{10.3150/12-BEJ470,butucea2015sharp,cai2017computational,xia2019sup,niles2020all,luo2022tensor} and references therein). Moreover, the setting is greatly motivated by a wide range of applications in many domains, such as genetics, social sciences, and engineering (see e.g., \cite{henriques2018triclustering} for a survey). More specific interesting examples include topics such as microbiome studies \cite{faust2012microbial}
and multi-tissue multi-individual
gene expression data \cite{wang2019three}.

Now, from the theoretical standpoint it is known \cite{luo2022tensor, niles2020all} that the optimal estimator (MAP) can recover $v$ if and only if $\lambda \geq \lambda_{\mathrm{STATS}}(t,k)$ for some\begin{align*}
    \lambda_{\mathrm{STATS}}(t,k)=\tilde{\Theta}\left(\sqrt{k}\right)
\end{align*}where the $\tilde{\Theta}$ notation here and throughout the paper hides $\poly \log p$-dependent terms. 

Moreover, based on average-case reductions to hypergraphs variants of the planted clique problem \cite{brennan2020reducibility, luo2022tensor}, polynomial-time estimators are expected to work if and only if $\lambda \geq \lambda_{\mathrm{ALG}}(t,k)$ for some\begin{align*}
    \lambda_{\mathrm{ALG}}(t,k)=\tilde{\Theta} \left(\min\{k^{t/2},p^{(t-1)/2}/k^{t/2-1}\}\right).
\end{align*}Note that for all $k=o(p), t \geq 2 $ it holds $\lambda_{\mathrm{ALG}}(t,k)=\tilde{\omega}(\lambda_{\mathrm{STATS}}(t,k))$ and therefore the model is conjectured to exhibit a computational-statistical trade-off.

\begin{remark}  

It should be noted that the mentioned algorithmic lower bound when $\lambda=o(\lambda_{\mathrm{ALG}}(t,k))$ from \cite{brennan2020reducibility,luo2022tensor} 
 applies in principle for a slight variant of the model of sparse tensor PCA compared to one we introduced above, which the authors of \cite{luo2022tensor} denote by $\mathrm{CHC}_R$. In $\mathrm{CHC}_R$ one observes $Y=\lambda v_1 \otimes v_2 \otimes \ldots \otimes v_t+W$ for $v_i, i=1,\ldots,t$ which are \emph{independent} $k$-sparse binary vectors chosen uniformly at random. Note that in our case we focus instead on the case $v_1=\ldots=v_t=v$ where $v$ is a $k$-sparse binary vectors chosen uniformly at random. Often in the literature, the model we study is called the symmetric version of a tensor PCA model, while the one studied in \cite{luo2022tensor} is called the asymmetric version (see e.g., the discussion in \cite{dudeja2021statistical}).

Yet, a quick check in the proof of their lower bound \cite[Theorem 16]{luo2022tensor} shows that in fact the authors first prove the conditional failure of polynomial-time algorithms when $\lambda=o(\lambda_{\mathrm{ALG}}(t,k))$ for our case of interest, that is when $v_1=\ldots=v_t$, and then  
add an extra step to also prove it for the case that $v_i,i=1,\ldots,t$ are independent. In particular, their result applies almost identically to the case of symmetric sparse tensor PCA we focus on this work. For reasons of completeness, in \cref{sec:reduction}, we describe exactly the slightly modified argument from \cite{luo2022tensor} and how it applies in our symmetric setting. We finally also note that the reduction argument for sparse tensor PCA in \cite{luo2022tensor} is based on some instrumental techniques introduced in \cite{brennan2020reducibility} for the hardness of tensor PCA in the special case of $v\in\{-1,1\}^n$ (that is the case $v $ is Rademacher-valued and not sparse).
\end{remark}

A careful application of our ``black box'' positive MCMC result interestingly implies that for all $t \geq 2$ and various values of the sparsity level $\omega(1)=k=o(p),$ if $\beta>0$ is large enough, the Metropolis chain on the Johnson graph with stationary measure $\pi_{\beta}$ hits the planted vector $v$ in polynomial time as long as  $\lambda \geq \lambda_{\mathrm{MCMC}}(t,k)$ for a significantly different threshold compared to $\lambda_{\mathrm{ALG}}(t,k)$ given by  
\begin{align*}
   \lambda_{\mathrm{MCMC}}(t,k)=\tilde{\Theta}\left(\min \left\{ k^{t-\frac{1}{2}},\, \frac{p^{t-1}}{k^{t-\frac{3}{2}}} \right\}\right). 
\end{align*}See \cref{thm:alg-sparse-dense} for the exact statement and specifically the Johnson adjacency graph the Metropolis chain is naturally defined on. Note also that for all $k=o(p), t \geq 2 $ it holds $\lambda_{\mathrm{MCMC}}(t,k)=\tilde{\omega}(\lambda_{\mathrm{ALG}}(t,k))$.

 Satisfyingly, we also prove that when $\lambda=\tilde{o}(\lambda_{\mathrm{MCMC}}(t,k))$ a negative result for all low-temperature reversible local MCMC methods (including the Metropolis chain) holds. Specifically, we prove in \cref{thm:lb} that for large enough $\beta>0$ all local reversible Markov chains with stationary measure $\pi_{\beta}$ require super-polynomial time to hit $v$. Notably, our result significantly improves upon the prior work \cite{arous2023free} on sparse PCA where a similar negative result for local Markov chains was proven but only when $t=2$ and importantly under the more restrictive condition $\lambda=o(\lambda_{\mathrm{ALG}}(2,k))$. The proof of \cref{thm:lb} is the most technical contribution of the present work. We prove -- via a delicate conditional second moment method -- a bottleneck for the MCMC dynamics based on the Overlap Gap Property framework from \cite{10.1214/21-AOS2130}. The proof carefully leverages the ``flatness'' idea from random graph theory \cite{balister2019dense, gamarnik2019landscape}, but adjusted to high dimensional Gaussian tensors, and could be of independent interest. Moreover, the fact that the Overlap Gap Property from \cite{10.1214/21-AOS2130} appears in the landscape of the problem well into it's ``easy" phase where some polynomial-time method works (specifically when $\tilde{\omega}(\lambda_{\mathrm{ALG}}(t,k))=\lambda=\tilde{o}(\lambda_{\mathrm{MCMC}}(t,k))$) is a surprising fact, first conjectured in \cite{gamarnik2019landscape} for the planted clique model that remains unproved. This is a result which in fact aligns with similar results in the general realm of average-case complexity (see e.g., \cite{li2024easyoptimizationproblemsoverlapgap} for a similar recent result for average-case optimization) and naturally leads to multiple future questions about the relation between ``geometric" complexity and algorithmic hardness. 

The above results combined, conclude that $\lambda_{\mathrm{MCMC}}(t,k)$ is the low-temperature MCMC threshold for this model. The specific formula of the threshold may appear puzzling to the reader, as in most regimes it is significantly bigger than $\lambda_{\mathrm{ALG}}(t,k)$. This is reminiscent of the literature of tensor PCA, and an \emph{arguably striking conclusion} of our work is that \emph{the geometric rule} we observed for tensor PCA still applies also for sparse tensor PCA. Specifically, for sparse tensor PCA, a quick verification gives that it holds for all $t \geq 2$ and $k=o(p)$ that our \cref{thm:lb} implies that

\begin{align}\label{eq:geom_2}
    \sqrt{\lambda_{\mathrm{MCMC}}(t,k) \times \lambda_{\mathrm{STATS}}(t,k)} = \tilde{\Theta}(\lambda_{\mathrm{ALG}}(t,k)).
\end{align}We remark that this is a significant departure compared to the established tensor PCA result \cite{benarousPCA} as described in \eqref{eq:geom}, as it holds firstly under the sparsity assumption, and also it importantly applies even to the matrix case $t=2$ where tensor PCA does not exhibit either a computational-statistical gap or local-to-computational gap. Albeit interesting in its own right, the fact that the result applies when $t=2$ allows us also to see the geometric rule in simulations for relatively small values of $p,k$, which we find particularly satisfying (see Section \ref{sec:sim}). We suspect that a generalization of the geometric rule to a large family of Gaussian additive models is possible. The exact reason such an elegant formula appears in that generality remains an intriguing question for future work.

 \subsubsection{Sparse Regression}

 In this setting, we study the sparse linear model where for some $p \rightarrow +\infty$ and $\omega(1)=k=k_n=o(p)$, we choose a $k$-sparse $v \in \{0,1\}^p$ uniformly at random, and then observe $n$ i.i.d. samples $(x_i,y_i)$ of the form $y_i=\ip{x_i}{v}+w_i$ for independent $x_i \sim N(0,I_p)$ and $w_i \sim N(0,\sigma^2).$ 
 
The studied model is a standard Gaussian version of sparse regression with a very large literature in theoretical statistics, compressed sensing, and information theory (see \cite{wainwright_lasso,  Hassibi98,BarronMonograph,Casto11,JASAgenomics, reeves2021all, 10.1214/21-AOS2130} and references therein). Of course also the setting is greatly motivated by the study of sparse linear models in a plethora of applied contexts such as radiology  and biomedical imaging (see e.g., \cite{DonohoMRI} and references therein) and genomics \cite{JASAgenomics, BickelGenome}. 

Now for our setting it is well-known (e.g., an easy adaptation of the vanilla union bound argument in \cite[Theorem 4]{reeves2020all}) that as long as $\sigma^2=o(k),$ MAP works if $n \geq n_{\mathrm{STATS}}$ for 
 \begin{align*}
     n_{\mathrm{STATS}}=\Theta(k\log (p/k)/\log (k/\sigma^2)).
 \end{align*} Yet, based on the performance of standard algorithms such as LASSO \cite{wainwright_lasso, 10.1214/21-AOS2130} and the low-degree framework \cite{bandeira2022franz}, polynomial-time algorithms are expected to succeed if and only if $n \geq n_{\mathrm{ALG}}$ for \begin{align*}
     n_{\mathrm{ALG}}=\Theta(k\log (p/k)).
 \end{align*} 

By applying our black-box result we can prove that in the case of sparse regression low-temperature MCMC methods are in fact achieving the algorithmic threshold, in sharp contrast with the case of sparse tensor PCA we discussed above. Specifically in \cref{thm:SRpos} we prove as long as $n \geq n_{\mathrm{MCMC}}$ for some 
\begin{align*}
    n_{\mathrm{MCMC}}=\Theta(n_{\mathrm{ALG}})
\end{align*}then for any large enough $\beta>0$ the Metropolis chain on the Johnson graph with stationary measure $\pi_{\beta}$ hits $v$ in polynomial-time, extending a result from \cite{10.1214/21-AOS2130} which only holds for greedy dynamics and significantly lower noise levels than our result.

Importantly, low-temperature MCMC methods that run for polynomially many iterations \emph{cannot} be in principle approximated by low-degree polynomials. For this reason, even in light of the low-degree lower bound of \cite{bandeira2022franz}, we focus also on whether local Markov chains can succeed when $n=o(n_{\mathrm{ALG}})$. In \cref{thm:negresult} we show that as long as $k=o(p^{1/3})$ whenever $n=o(n_{\mathrm{ALG}})$ for large enough $\beta>0$ a large family of low-temperature local reversible MCMC methods on the Johnson graph with stationary measure $\pi_{\beta}$ (including the Metropolis chain) take super-polynomial time to hit $v$, bringing new evidence for the polynomial-time optimality of low-degree polynomials even compared to seemingly very different classes of estimators. Similar to the case of sparse tensor PCA, this result follows by leveraging the Overlap Gap Property framework from \cite{10.1214/21-AOS2130}, alongside a lengthy and delicate modification of a second moment method inspired by a similar one in \cite{10.1214/21-AOS2130}. Our result yet crucially generalizes the calculation and a similar lower bound proven by \cite{10.1214/21-AOS2130} which was leveraging the more restrictive assumption $k=p^{o(1)}$.

In conclusion, our work identifies for sparse regression the threshold for the success of a large class of low-temperature MCMC methods and proves that it coincides with the algorithmic threshold, i.e., it holds up to constants \begin{align*}n_{\mathrm{MCMC}}=\Theta(n_{\mathrm{ALG}})=\Theta(k\log (p/k)).\end{align*}In particular, as the model exhibits a computational-statistical gap, i.e., $n_{\mathrm{STATS}}=o(n_{\mathrm{ALG}}),$ the geometric rule for sparse regression does not hold.

\subsection{Asymptotic Notation}

 We will use the standard asymptotic notations $O, o, \Omega, \omega, \Theta$. On top of that, for any $(a_p)_{p \in \mathbb{N}}$, $(b_p)_{p \in \mathbb{N}}$ sequences of real numbers, we write $a_p=\tilde{o}(b_p)$ when $a_p = o(b_p (\log p)^x)$ for some $x \in \mathbb{Z}$, $a_p=\tilde{\omega}(b_p)$ if $b_p=\tilde{o}(a_p)$ and $b_p=\tilde{\Theta}(a_p)$  if $\Omega(b_p (\log p)^y)=a_p = O(b_p (\log p)^x)$ for some $y,x \in \mathbb{Z}$.
\section{Main Positive result}
We now start formally presenting our results. We start with presenting our main results with our general positive result for local Markov chains on optimizing certain Hamiltonians of interest. Specifically, our positive results apply for the so-called Metropolis chain (also called Metropolis--Hastings process) \cite{Metro53,Hastings70,Tierney94,Jer92}.

We present the result in a general optimization context of a function over a finite state space. We later apply it in two statistical estimation settings as mentioned above in \cref{sec:contrib}.

Suppose $\mathcal{X}$ is a finite state space. Let $H: \mathcal{X} \rightarrow \mathbb{R}$ be a Hamiltonian, and suppose that $v^* =\argmax_{x \in \mathcal{X}} H(x)$ is the unique global maximum of the Hamiltonian $H$. 
Furthermore, we denote the range of $H$ as
\begin{align*}
    \mathcal{R}_H := \max_{x \in \mathcal{X}} H(x) - \min_{x \in \mathcal{X}} H(x) = H(v^*) - \min_{x \in \mathcal{X}} H(x).
\end{align*}
For $\beta > 0$, our goal is to sample from 
\[\pi_\beta (x) \propto e^{\beta H(x)}, \quad \forall x \in \mathcal{X}.\]
\begin{remark} (The corresponding Bayesian estimation quantities)
    For correspondence to \cref{sec:contrib}, $\mathcal{X}$ would be the parameter space $\Theta$, $H$ would correspond to the logarithm of the posterior (assuming it is non-zero on the whole parameter space) and $v^*$ would be the MAP estimator.
\end{remark}

We focus on running a ``local'' Markov chain, called the Metropolis chain \cite{Metro53,Hastings70,Tierney94,Jer92}, on a (simple) graph $G$ with vertex set $\mathcal{X}$. 
For each $x \in \mathcal{X}$ denote the neighborhood of $x$ by $\Gamma_G(x) = \{y \in \mathcal{X}: xy \in E\}$.
Let $\Delta$ denote the maximum degree of $G$, i.e., $\Delta = \max_{x \in \mathcal{X}} |\Gamma_G(x)|$.
The Metropolis chain associated with graph $G$ is a random walk on $G$ with transition probabilities specified by the Hamiltonian; more precisely it updates as follows in each step. 

\medskip
\emph{One step of Metropolis chain}
\begin{enumerate}
\item With probability $1 - |\Gamma_G(X_{t-1})|/\Delta$: let $X_t \gets X_{t-1}$;
\item With remaining probability $|\Gamma_G(X_{t-1})|/\Delta$: Pick $x \in \Gamma_G(X_{t-1})$ uniformly at random,
and update 
\begin{align*}
X_t \gets 
\begin{cases}
x, & \text{with probability}  \min\{1,\pi(x)/\pi(X_{t-1})\}; \\
X_{t-1}, & \text{with remaining probability.}
\end{cases}
\end{align*}
\end{enumerate}

It is standard that under mild conditions (namely $G$ is finite and connected), the Metropolis chain is ergodic and converges to the target distribution $\pi_\beta$ under any initialization, see \cite{LP-book17}. In particular, if $G$ is $\Delta$-regular then the transition of the chain can be simplified as step~1 can be removed. We note that the uniform random proposal of $x$ in the neighborhood of $X_{t-1}$ is common in the literature (e.g. in \cite{jordanMCMC}), and contrasts with an ``informed" proposal such as that in \cite{zhouDimensionFree}.

We operate under the following assumption, which should be interpreted as that from every state there is an ``increasing'' path of length at most $D$ that leads to the maximizer $v^*$ of $H$.

\begin{assumption}
\label{assum:paths} 
There exists a spanning tree $T \subseteq G$ rooted at $v^*$ and constants $D,\delta > 0$ such that: 
\begin{itemize}
    \item The diameter of $T$ is at most $D$;
    \item For all $x \in \mathcal{X} \setminus \{v^*\}$, 
    \begin{align}\label{eq:inc}
        H(\mathrm{parent}(x))-H(x) \ge \delta
    \end{align}
    where $\mathrm{parent}(x)$ denotes the parent of $x$ in $T$.
\end{itemize}
\end{assumption}

\cref{assum:paths} is visualized in \cref{fig:spanning_tree}. Our first result is to show that solely under \cref{assum:paths}, we can directly upper bound the time the Metropolis chain takes to hit $v^*$ for sufficiently large values of $\beta>0$. 
Intuitively, the Metropolis chain can find its way towards the root $v^*$ of the spanning tree $T$.
\begin{figure}
    \centering
    \includegraphics[width=1\linewidth]{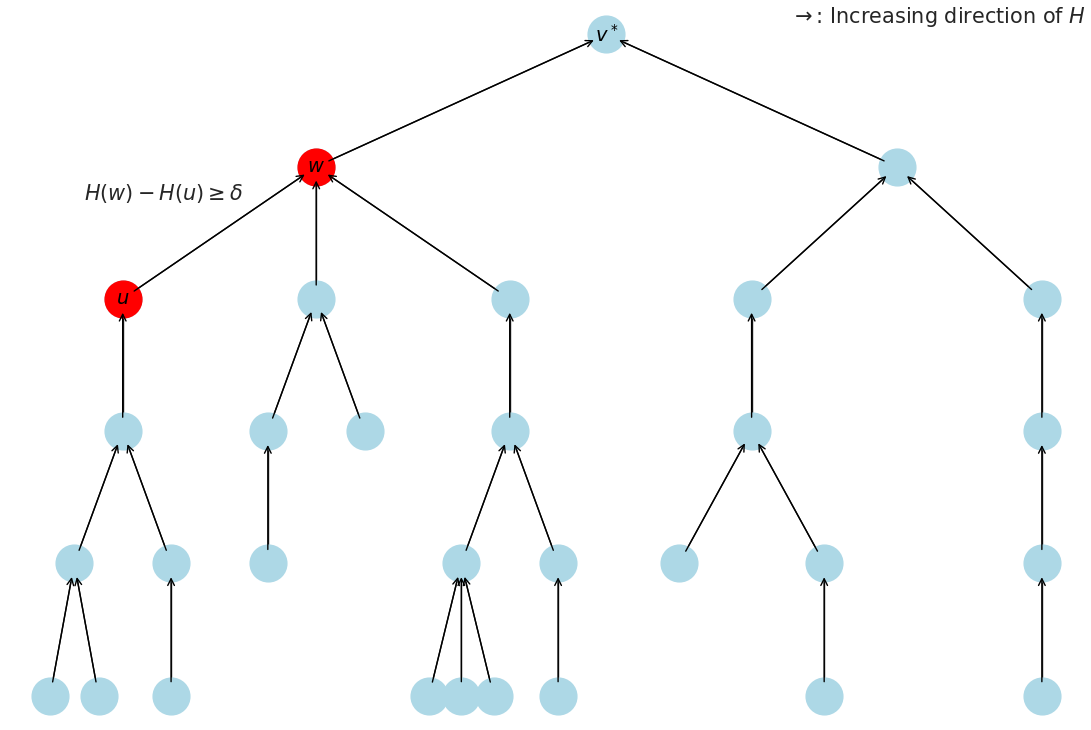}
    \caption{$H$ increases by at least $\delta$ at every step towards $v^*$, and the diameter of the tree is at most $D$.}
    \label{fig:spanning_tree}
\end{figure}
 
\begin{theorem}[General positive result]\label{thm:canonical}
Consider the Metropolis chain on $G$ with maximum degree 
$\Delta = \max_{x \in \mathcal{X}} |\Gamma_G(x)|$.
Suppose that \cref{assum:paths} holds with constants $D,\delta > 0$. 
Then, for any $\beta \ge \frac{1}{\delta}(\log \Delta + 2)$ we have:
\begin{enumerate}[(1)]
    \item $\pi_\beta(v^*) \ge \frac{3}{4}$;
    \item For any $T=\omega(D\Delta \left(\log |\mathcal{X}| + \beta \mathcal{R}_H)\right)$ it holds that for large enough $n$ the Metropolis chain after $T$ iterations is at state $v^*$ with probability at least $2/3$;

\end{enumerate}

\begin{enumerate}[(1)]
    \item[(3)]
    For any $T=\omega\left(\frac{\Delta \mathcal{R}_H}{\delta}\right)$ and any $\beta \ge \frac{2}{\delta}\left(\log |\mathcal{X}| + \log\left(\frac{40\mathcal{R}_H}{\delta}\right)\right)$, it holds that for large enough $n$ the Metropolis chain after $T$ iterations is at state $v^*$ with probability at least $2/3$. Moreover, for the Randomized Greedy algorithm (i.e., $\beta = \infty$), once the chain hits $v^*$, it remains at $v^*$ for all future iterations after $T$ with probability 1.
\end{enumerate}
\end{theorem}
The proof of this theorem is given in \cref{sec:GenPosResult} and relies on a canonical paths argument. 

\begin{remark}(``One good path suffices")
    We highlight that an interesting aspect of \cref{thm:canonical} is that under \cref{assum:paths} we only need the existence of one ``short'' ascending path towards the root for the Metropolis chain to hit the root fast. 
    Note that this is analogous to optimizing convex functions using gradient descent, where the gradient flows can be viewed as (continuous) descending paths from any point in the space to the minimizer. Our result shows that, in discrete settings, the mere existence of these monotonic paths suffice to find the optimizer in polynomial time.
\end{remark}

\begin{remark}(``How high is the lower bound on $\beta$?") While our positive result holds only for ``large enough'' $\beta,$ we highlight that our lower bound on the applicable values of $\beta$ so that \cref{thm:canonical} applies is not that restrictive. For instance, as we are about to see in two applications (in sparse PCA and sparse regression), the bound is satisfied in both examples for any $\beta$ bigger than the Bayesian optimal posterior temperature of the statistical task, and it is satisfied for certain smaller values of $\beta$ than the posterior temperature as well. 
\end{remark}
\section{Application: Sparse Gaussian additive model}

We now exploit the generality of \cref{thm:canonical}, and apply to identify the threshold for the low-temperature Metropolis chain for two important statistical estimation models: the sparse Gaussian additive model, which is a generalization of sparse tensor PCA, and sparse linear regression. We start with sparse Gaussian additive model in this section.

\subsection{The model}
Here we set out the assumptions of the sparse Gaussian Additive Model.
For integers $0 \le k \le p$, let 
\begin{align}\label{eq:sparse_set_def}
    \mathcal{X}_{p,k} = \left\{ x \in \{ 0,1 \}^p: \norm{x}_0 = k \right\}
\end{align}
be the set of $p$-dimensional binary vectors with exactly $k$ entries being $1$.
It is helpful to identify each vector $x \in \mathcal{X}_{p,k}$ with a $k$-subset of $[p]$ corresponding to nonzero entries of $x$; namely, each element in $\mathcal{X}_{p,k}$ is an indicator vector of a $k$-subset of $[p]$.  
The structure of $\mathcal{X}_{p,k}$ naturally defines a graph $G_{p,k}$ on it (known as Johnson graph) where two vertices $x,y \in \mathcal{X}_{p,k}$ are adjacent iff $x,y$ differ at exactly two entries (i.e., $|x\cap y| = k-1$). In what follows we discuss the performance of MCMC methods with neighborhood graph $G_{p,k}$, often referring to them as ``local'' MCMC methods.

Let $F: \mathbb{S}^{p-1} \to \mathbb{R}^d$ be a mapping such that for some ($p$-independent) smooth $f: [-1,1] \rightarrow \R$ it holds 
for all $a,b \in \mathbb{S}^{p-1}$ that
\[
\langle F(a), F(b) \rangle =f(\langle a,b \rangle).
\]
Assume $v^*$ is sampled from the uniform distribution over $\mathcal{X}_{p,k}$, and let 
\[
Y=\lambda F\left( \frac{v^*}{\sqrt{k}} \right) + W
\]
where $W$ is iid standard Gaussian noise and $\lambda=\lambda_{p,k} > 0$ is the signal-to-noise ratio (SNR). The goal is then to recover $v^*$ from the noisy observation $Y$.

\begin{remark}
    We remark for the reader's convenience that in \cref{sec:contrib} we specialized for simplicity the results of this section to the case of sparse tensor PCA \cite{luo2022tensor} where for some integer $t \geq 2$, $F(v)=v^{\otimes t}$ and in particular $f(x)=x^t$. Yet, our positive result applies in the more general case of the sparse Gaussian additive model described above. 
\end{remark}
Notice that the posterior distribution of $v^*$ given $Y$ is given by 
\begin{align}\label{eq:PCApost}
\pi_{\mathrm{post}}(x) 
\propto \exp(\lambda H(x))
, \quad \forall x \in \mathcal{X}_{p,k}
\end{align} where
\begin{align}\label{eq:H-GAM}
H(x) = \ip{F\left( \frac{x}{\sqrt{k}} \right)}{Y}
= \lambda f\left( \frac{\ip{x}{v^*}}{k} \right) + \ip{F\left( \frac{x}{\sqrt{k}} \right)}{W}.
\end{align}

\subsection{The positive result}
As we discussed the optimal estimator is to maximize the posterior. Hence, we study here the performance of the Metropolis chain on $G_{p,k}$ to sample from the low-temperature variant of the posterior; that is, from 
\[
\pi(x) 
\propto \exp(\beta H(x))
, \quad \forall x \in \mathcal{X}_{p,k}
\]for sufficiently large values of $\beta=\beta_{p,k}>0$.

We also require some smoothness assumptions on $f$.
\begin{assumption}[Smoothness of $f$]
\label{ass:f}
Assume $f \in C^2[0,1]$ is monotone increasing and convex,  
and there exist absolute constants $\eps \in (0,1)$ and $t \ge 1$ such that
\begin{enumerate}[(a)]
    \item $f(0) = 0$ and $f(1) = O(1)$; \label{ass:f-f'(0)-f'(1)-f(1)}
    \item $f'(x) = \Omega\left( x^{t-1} \right)$ for all $x \in [0,\eps]$, and $f'(1) = O(1)$; \label{ass:f-f'(0)-f'(1)}
    \item $f''(x) = O(1)$ for all $x \in [1-\eps,1]$. \label{ass:f-f''(1)} 
\end{enumerate}
\end{assumption}For example, it is immediate to check that the monomials $f(x)=x^m, m \ge 1$ directly satisfy the \cref{ass:f} for $t=m$ and any $\epsilon \in (0,1)$.

We now state our main positive result for the sparse Gaussian additive model.
Given $p,k$, let $\ell_{p,k}$ denote the typical overlap between $v^*$ and a random element from $\mathcal{X}_{p,k}$, defined formally as
\begin{align*}
    \ell_{p,k} =
    \begin{cases}
        1, & \text{if $k = O(\sqrt{p \log p})$}; \\
        \frac{k^2}{p}, & \text{if $k = \omega(\sqrt{p \log p})$}.
    \end{cases}
\end{align*}

\begin{theorem}[Sparse Gaussian Additive Model: Positive Result]
\label{thm:alg-sparse-dense}
    Consider the sparse Gaussian additive model where $f$ satisfies \cref{ass:f} with constants $\eps \in (0,1)$ and $t \ge 1$. Suppose the signal-to-noise ratio satisfies
    \begin{align*}
        \lambda 
        = \Omega\left( \frac{k^{t-\frac{1}{2}}}{\ell_{p,k}^{t-1}} \sqrt{\log p} \right)
        =
        \begin{cases}
            \Omega\left( k^{t-\frac{1}{2}} \sqrt{\log p} \right), & \text{if $k = O(\sqrt{p \log p})$}; \\
            \Omega\left( \frac{p^{t-1} }{k^{t-\frac{3}{2}}} \sqrt{\log p} \right), & \text{if $k = \omega(\sqrt{p \log p})$}.
        \end{cases}
    \end{align*}
    Then the following statements hold:
    \begin{enumerate}
        \item (Moderate $\beta$): For any $\beta$ satisfying
            \begin{align*}
                \beta \lambda = \Omega\left( \frac{k^t}{\ell_{p,k}^{t-1}} \log p \right),
            \end{align*}
            and any $T=\omega(\beta\lambda k^2p)$, the Metropolis chain with random initialization after $T$ iterations  is at $v^*$ with probability at least $2/3-o(1)$.
        \item (Large $\beta$): For any $\beta$ satisfying
            \begin{align*}
                \beta \lambda = \Omega\left( \frac{k^{t+1}}{\ell_{p,k}^{t-1}} \log\left( \frac{p}{k} \right) \right),
            \end{align*}
            and any $T=\omega\left( \frac{k^{t+1}}{\ell_{p,k}^{t-1}} p \right)$, the Metropolis chain with random initialization after $T$ iterations is at $v^*$ with probability at least $2/3-o(1)$. Moreover, for the Randomized Greedy algorithm (i.e., $\beta = \infty$), once the chain hits $v^*$, with high probability it remains at $v^*$ for all future iterations.
    \end{enumerate}
\end{theorem}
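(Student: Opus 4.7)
The plan is to derive \cref{thm:alg-sparse-dense} as a direct application of the black-box positive result \cref{thm:canonical} to the Hamiltonian in \eqref{eq:H-GAM}, run on the Johnson graph $G_{p,k}$. Since $G_{p,k}$ has maximum degree $\Delta = k(p-k) = O(pk)$ and $|\mathcal{X}_{p,k}| \le e^{k \log(ep/k)}$, the only real task is to construct a spanning tree $T \subseteq G_{p,k}$ rooted at $v^*$ satisfying \cref{assum:paths} with high probability over the Gaussian noise $W$, with diameter $D = O(k)$ and a per-edge increment $\delta$ satisfying $\beta\delta \ge \log\Delta + 2 = O(\log p)$.

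The tree I propose is the following: for each $x \in \mathcal{X}_{p,k}$ with $j := \langle x, v^* \rangle < k$, define $\mathrm{parent}(x)$ as the Johnson neighbor obtained by swapping some $a \in x \setminus v^*$ with some $b \in v^* \setminus x$, chosen to maximize $H$ over the $(k-j)^2$ such overlap-increasing swaps. This yields a monotone-overlap tree of diameter at most $k$. Along each parent edge the increment of $H$ splits as a deterministic signal
\[
\lambda\bigl[f((j+1)/k) - f(j/k)\bigr]
\]
plus the maximum over $(k-j)^2$ correlated centered Gaussians, each with variance $2[f(1) - f(1-1/k)] = O(1/k)$ by part (c) of \cref{ass:f}. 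Part (b) of \cref{ass:f} gives that the signal is $\Omega(\lambda (j/k)^{t-1}/k)$ for $j \le \eps k$ and $\Omega(\lambda/k)$ for $j > \eps k$; the worst-case signal along the tree is attained around the typical initialization overlap $j \approx \ell_{p,k}$.

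The next step is to show, via Gaussian-maximum tail bounds combined with a union bound over the $\binom{p}{k}$ vertices and the $(k-j)^2$ candidate edges at each, that with high probability the signal dominates the noise fluctuation uniformly along $T$. This yields $\delta = \Omega\bigl(\lambda \ell_{p,k}^{t-1} / k^t\bigr)$ once the SNR hypothesis $\lambda = \Omega(k^{t-1/2}/\ell_{p,k}^{t-1}\sqrt{\log p})$ is used to absorb the noise penalty. The condition $\beta\delta \ge \log\Delta + 2$ then becomes exactly $\beta\lambda = \Omega(k^t/\ell_{p,k}^{t-1}\log p)$, matching the theorem's hypothesis. Applying part (2) of \cref{thm:canonical} with $\mathcal{R}_H = O(\lambda)$ (so that $\beta\mathcal{R}_H$ dominates $\log|\mathcal{X}| = O(k\log(p/k))$ in the regime of interest) gives the hitting-time bound $O(D \Delta \beta \mathcal{R}_H) = O(\beta\lambda k^2 p)$. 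For the Randomized Greedy statement, the strengthened hypothesis on $\beta\lambda$ triggers part (3) of \cref{thm:canonical}, and $\Delta \mathcal{R}_H / \delta = O\bigl(kp \cdot \lambda \cdot k^t/(\lambda \ell_{p,k}^{t-1})\bigr) = O(k^{t+1} p / \ell_{p,k}^{t-1})$ is exactly the second claimed bound.

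The main obstacle will be the uniform noise control in the regime $k = \omega(\sqrt{p\log p})$, where $\ell_{p,k} = k^2/p \gg 1$: the atypical vertices with overlap $j < \ell_{p,k}$ still must lie in the spanning tree, yet the deterministic signal there is too weak to absorb the $-O(\sqrt{\log(p/k)/k})$ noise penalty produced by the global union bound. The fix is to exploit that the maximum over $(k-j)^2 \asymp k^2$ near-independent Gaussians yields a \emph{positive} noise contribution of order $\sqrt{\log k / k}$; the Gaussians share $W$ and are correlated, but the pairwise covariances are $O(1/k)$, so Slepian-type comparison reduces to the independent case. The delicate bookkeeping is to track the $\ell_{p,k}$-dependence in the union-bound constants precisely enough that the final SNR threshold matches $k^{t-1/2}/\ell_{p,k}^{t-1}\sqrt{\log p}$ with the right exponent.
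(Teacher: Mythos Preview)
Your overall framework is right and matches the paper: build an overlap-increasing spanning tree, verify \cref{assum:paths}, and invoke \cref{thm:canonical}. The sparse case $k = O(\sqrt{p\log p})$ goes through essentially as you describe, though the paper controls the noise not via individual Gaussian tail bounds but via the Hanson--Wright inequality applied to the vector $Z^x = (Z^x_y)_{y \in \Gamma_{G,v^*}(x)}$: one shows $\|Z^x\|_2^2 = O((k-\ell)^2 \log p / k)$ uniformly in $x$, and then a pigeonhole gives that at least half the overlap-increasing neighbors $y$ satisfy $|Z^x_y| = O(\sqrt{\log p/k})$. The point is that the union-bound cost over vertices at overlap $\ell$ is $\exp(O((k-\ell)\log p))$, and Hanson--Wright on a $(k-\ell)^2$-dimensional vector gives concentration at exactly that rate; a per-coordinate tail bound would lose a factor $(k-\ell)$ and give the wrong threshold.

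The genuine gap is your treatment of the dense regime $k = \omega(\sqrt{p\log p})$. You correctly identify that for $x$ with overlap $j \ll \ell_{p,k}$ the deterministic signal $\lambda[f((j{+}1)/k)-f(j/k)] = \Theta(\lambda j^{t-1}/k^t)$ is too weak, but your proposed fix---harvesting a positive $\sqrt{\log k/k}$ contribution from $\max_y Z^x_y$ via Slepian---does not survive the union bound. First, Slepian compares to the independent case in the wrong direction for lower-bounding a maximum. Second, even granting independence, the number of vertices with overlap $0$ is still $\exp\bigl((1-o(1))k\log(p/k)\bigr)$ in this regime, so by Borell--TIS the lower-tail deviation you can afford is only $O(\sqrt{\log(p/k)/k})$, while $\E\max_y Z^x_y = O(\sqrt{\log k/k})$; the latter cannot absorb the former and still leave a uniform positive $\delta$. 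The paper does \emph{not} build a global tree here. Instead it exploits the random initialization directly: with high probability $|X_0 \cap v^*| \ge 9k^2/(10p)$, and a drift argument (the overlap process stochastically dominates a biased random walk on $[k^2/(10p),\,k^2/(8p)]$) shows the chain never drops below overlap $k^2/(10p)$ for $e^{\Omega(k^2/p)}$ steps. One then applies \cref{thm:canonical} to the \emph{restricted} chain on $\{x:|x\cap v^*|\ge k^2/(10p)\}$, where \cref{assum:paths} holds with $\ell_{\min}=k^2/(10p)$ and $\delta = \Omega(\lambda k^{t-2}/p^{t-1})$. Your statement that ``the worst-case signal along the tree is attained around the typical initialization overlap $j\approx\ell_{p,k}$'' conflates the minimum overlap in the tree with the typical one; making that identification rigorous is precisely what the escape argument accomplishes.
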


The proof of \cref{thm:alg-sparse-dense} is given in \cref{sec:PosGAMproof}.
\begin{remark}[Hitting time dependence on $\beta$]
    We note that for the case (1), that is for moderately large values of $\beta,$ our hitting time guarantee of $v^*$ increases with $\beta$, but in case (2), where $\beta$ exceeds some larger value than case (1), the hitting time ceases to depend on $\beta$ and it reduces to $O\left( \frac{k^{t+1}}{\ell_{p,k}^{t-1}} p \right)$ for all $\beta$ in case (2).
\end{remark}

\begin{remark}[Positive result for the posterior temperature]
    We note that the posterior temperature, $\beta_{\mathrm{post}}=\lambda$, by assumption of \cref{thm:alg-sparse-dense} satisfies
    \[\beta_{\mathrm{post}}\lambda=\lambda^2=\Omega\left(\frac{k^{2t-1}}{\ell_{p,k}^{2t-2}}\log p\right).\]
    An easy calculation shows that since $k=o(p)$ and $t\geq 2$, this satisfies the moderate $\beta$ threshold for all $k$. In particular, for this regime of $\lambda,$ the Metropolis chain on the Johnson graph sampling from $\pi_{\mathrm{post}}$ hits $v^*$ in polynomial-time with constant probability.
\end{remark}

\begin{remark}\label{rem:random}(Random Initialization)
    The first part of \cref{thm:alg-sparse-dense} requires a randomized initialization for the Metropolis chain, i.e., the initial point $X_0$ is chosen uniformly at random from $\mathcal{X}_{p,k}$. If we further know $\lambda = \Omega( k^{t-\frac{1}{2}} \sqrt{\log p} )$ (e.g., when $k = O(\sqrt{p})$), then this assumption can be removed and the result holds under any initialization.
\end{remark}

\begin{remark}(On applying \cref{thm:canonical})
   We highlight at this point that the proof of \cref{thm:alg-sparse-dense} proceeds by constructing the paths to verify the assumptions of our general positive result \cref{thm:canonical}. The proof relies on a careful first moment method argument alongside the Hanson-Wright inequality to show that for ``most" binary $k$-sparse vectors $v'$ there is an ascending polynomially-long path in the Johnson graph to the planted vector $v^*$, in the sense that the posterior of the sparse Gaussian additive model strictly increases along the path. In particular, for zero-temperature, our result guarantees the success of the randomized greedy algorithm on the Johnson graph for maximizing the posterior. It is important that such a path construction is only possible for ``most" vectors $v'$ leading to guarantees only when the Metropolis process is initialized randomly (see \cref{rem:random}). It is perhaps natural to wonder how one could use our main positive result \cref{thm:canonical}, which assumes the existence of ascending polynomially-long paths to $v^*$ from all states. This is a robustness feature of our main result \cref{thm:canonical}, as we first prove that the Metropolis process will remain in this ``typical'' part of the state space (where polynomially-long ascending paths to $v^*$ exist) for an exponential amount of time with high probability and then use our main positive result on this ``truncated" state space. 
\end{remark}

\subsection{The negative result}
It is natural to wonder whether our obtained positive results is tight. In this section, we prove that in the special case of sparse tensor PCA \cite{luo2022tensor}, the threshold for the Metropolis chain to hit the planted vector $v^*$ fast is indeed captured by the above result.

To focus on the case of sparse tensor PCA we choose for some constant $t \geq 2$, $d=p^{t}$ and $F(v)=\mathrm{vec}(v^{\otimes t})$. This yields of course that $f$ satisfies $f(x)=x^t, x \in \mathbb{R}$. Under this choice of parameters, our model is equivalent with the sparse $t$-tensor PCA model where one observes a $t$-tensor \begin{align} Y=\frac{\lambda}{k^{t/2}} (v^*)^{\otimes t} +W\end{align} for signal $v^* \in \{0,1\}^p, \|v^*\|_0=k$ and noise $W \in \R^{p^t}$ with i.i.d. $N(0,1)$ entries. 

We prove the following lower bound for all local Markov chains on the graph $G_{p,k}$. 
\begin{theorem}[Sparse Tensor PCA Model: Negative Result]
\label{thm:lb}
Consider the sparse $t$-tensor PCA model where $t \ge 2$ is constant.
Assume either that
\begin{enumerate}
    \item $t=2$ and $\omega(1) = k \le p^{\alpha_0}$ where $\alpha_0 \in (0,1)$ is some sufficiently small absolute constant;

    \item $t \geq 3$ and $k = o\left( p^{\frac{t-2}{t+2}} \right)$.
\end{enumerate}
Suppose the signal-to-noise ratio satisfies 
\begin{align*}
    \lambda = o \left( \frac{1}{\sqrt{\log(p/k)}} \min \left\{ k^{t-\frac{1}{2}},\, \frac{p^{t-1}}{k^{t-\frac{3}{2}}} \right\} \right).
\end{align*}
    Then, with high probability for any $\beta$ satisfying
    \begin{align*}
        \beta = \omega\left( \lambda^{\frac{1}{t-1}} k^{\frac{t-2}{2(t-1)}} (\log(p/k))^{\frac{3t-2}{2(t-1)}} \right),
    \end{align*}
    for any reversible Markov chain on the graph $G_{p,k}$ with stationary distribution $\pi_{\beta}$, the first time the chain hits $v^*$ is $p^{\omega(1)}$.
\end{theorem}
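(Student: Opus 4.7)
The plan is to establish a super-polynomial mixing time lower bound via the classical conductance bottleneck argument. Concretely, I aim to exhibit a subset $A \subset \mathcal{X}_{p,k}$ indexed by the overlap with the planted signal — namely $A = \{x \in \mathcal{X}_{p,k} : \langle x, v^*\rangle < \ell^*\}$ for a carefully chosen intermediate overlap $\ell^*$ — such that $\pi_\beta(A) \in [1/4, 3/4]$ and the Johnson-graph vertex boundary satisfies $\pi_\beta(\partial_V A) \leq p^{-\omega(1)} \pi_\beta(A)$ with high probability over the Gaussian noise $W$. Since a single step in $G_{p,k}$ changes $\ell(x) := \langle x, v^*\rangle$ by at most one, the boundary $\partial_V A$ lies inside the overlap shell $S_{\ell^*}$, so the task reduces to controlling the posterior mass of a single shell. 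Any reversible chain on $G_{p,k}$ with stationary measure $\pi_\beta$ then has conductance $\Phi \leq p^{-\omega(1)}$, and the Cheeger lower bound yields $t_{\mathrm{mix}} = p^{\omega(1)}$.

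For the upper bound on $\pi_\beta(S_{\ell^*})$, I would analyze the first moment of the shell partition function $Z_\ell := \sum_{x \in S_\ell} \exp(\beta H(x))$. Because $H(x) = \lambda (\ell(x)/k)^t + G_x$ with $G_x := \langle (x/\sqrt{k})^{\otimes t}, W\rangle \sim N(0,1)$ and covariance $\mathbb{E}[G_x G_{x'}] = (\langle x, x'\rangle/k)^t$, one obtains $\mathbb{E}[Z_\ell] = |S_\ell| \exp(\beta \lambda (\ell/k)^t + \beta^2/2)$. Viewing $\log \mathbb{E}[Z_\ell]$ as a function of $\ell$, the entropic cost $\log |S_\ell|$ (decreasing past the baseline overlap $\ell_{p,k}$) competes with the convex-increasing signal gain $\beta \lambda (\ell/k)^t$. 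Under the hypothesized sub-threshold condition on $\lambda$ and for all admissible $\beta$, these terms are out of balance in such a way that $\log \mathbb{E}[Z_\ell]$ develops a deep minimum at some intermediate $\ell^* \in (\ell_{p,k}, k)$, with a gap of $\omega(\log p)$ relative to the baseline value $\log \mathbb{E}[Z_{\ell_0}]$ at $\ell_0 \approx \ell_{p,k}$. Markov's inequality together with a union bound over a narrow window of $\ell$ around $\ell^*$ then yields $Z_{\ell^*} \leq p^{O(1)} \mathbb{E}[Z_{\ell^*}]$ with high probability.

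The matching lower bound $Z_{\ell_0} \geq \mathbb{E}[Z_{\ell_0}]/p^{O(1)}$, needed to guarantee $\pi_\beta(A)$ is bounded below, is the main technical obstacle and requires a delicate conditional second moment. A vanilla second moment diverges: pairs $(x, x') \in S_{\ell_0}^2$ with anomalously large $\langle x, x'\rangle$ produce highly correlated Gaussian noise and inflate $\mathbb{E}[Z_{\ell_0}^2]$ far above $\mathbb{E}[Z_{\ell_0}]^2$. To tame this, I would deploy the flatness strategy of Balister--Bollob\'as--Riordan--Skokan and Gamarnik--Sudan, here transported from random graphs to high-dimensional Gaussian tensors: restrict the sum defining $Z_{\ell_0}$ to states $x$ that possess no companion $x' \in S_{\ell_0}$ with both large overlap to $x$ and high Hamiltonian $H(x')$. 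This restriction reflects a first-moment overlap gap property for the maximum of $H$ (precisely the OGP framework of Gamarnik--Zadik) and holds with high probability in the sub-threshold regime. On the restricted sum the second moment is dominated by its low-overlap diagonal and satisfies $\mathbb{E}[(Z^{\mathrm{rest}}_{\ell_0})^2] \lesssim \mathbb{E}[Z^{\mathrm{rest}}_{\ell_0}]^2$, and Paley--Zygmund concludes.

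The hardest step, and the source of the refined parameter conditions in the two cases ($\alpha_0$ small for $t=2$ and $k = o(p^{(t-2)/(t+2)})$ for $t \geq 3$), is the Johnson-shell pair counting at the heart of the conditional second moment: the covariance $(\langle x, x'\rangle/k)^t$ grows rapidly in the pair overlap, so the number of pairs $(x,x') \in S_{\ell_0}^2$ at each joint-overlap level must be estimated sharply enough to beat this growth, while still leaving room for the flatness restriction to absorb the leftover pairs. Uniformity in $\beta$ across the stated range follows by checking that both the depth of the first-moment minimum at $\ell^*$ and the OGP event used in the conditioning persist throughout, and combining everything with Cheeger's inequality gives the claimed $p^{\omega(1)}$ mixing time lower bound valid for any reversible Markov chain on $G_{p,k}$ with stationary distribution $\pi_\beta$.
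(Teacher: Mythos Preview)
Your bottleneck strategy and the identification of the overlap shell $S_{\ell^*}$ as the boundary are both correct and match the paper. The gap is in how you plan to certify the lower bound $\pi_\beta(A) \geq p^{-O(1)}\pi_\beta(S_{\ell^*})^{-1}$: running a second moment on the shell \emph{partition function} $Z_{\ell_0} = \sum_{x\in S_{\ell_0}} e^{\beta H(x)}$ cannot give a statement uniform in $\beta$. Indeed, already the diagonal of $\mathbb{E}[Z_{\ell_0}^2]$ contributes
\[
\frac{\sum_{x} \mathbb{E}\bigl[e^{2\beta H(x)}\bigr]}{\bigl(\mathbb{E}[Z_{\ell_0}]\bigr)^2}
\;=\; \frac{|S_{\ell_0}|\, e^{2\beta\lambda(\ell_0/k)^t + 2\beta^2}}{|S_{\ell_0}|^2\, e^{2\beta\lambda(\ell_0/k)^t + \beta^2}}
\;=\; \frac{e^{\beta^2}}{|S_{\ell_0}|},
\]
which explodes once $\beta^2 \gg k\log(p/k)$; no flatness or ``no-bad-companion'' restriction can suppress this term, because it comes from $x=x'$. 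Since the theorem must hold for \emph{all} $\beta$ above the stated threshold (including $\beta \to \infty$), Paley--Zygmund on $Z_{\ell_0}$ simply yields nothing in that regime. Relatedly, your first-moment Markov bound $Z_{\ell^*}\le p^{O(1)}\mathbb{E}[Z_{\ell^*}]$ is true but vacuous for large $\beta$, since $\mathbb{E}[Z_{\ell^*}]$ carries the same $e^{\beta^2/2}$ inflation.

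The paper sidesteps this entirely by working with the \emph{maximum} $\Gamma_\ell := \max_{x\in S_\ell} H(x)$ rather than the sum. The bottleneck ratio is bounded crudely by
\[
\frac{\pi_\beta(S_{\ell^*})}{\pi_\beta\bigl(\{\langle x,v^*\rangle \le \ell^*\}\bigr)}
\;\le\; \binom{p}{k}\exp\!\bigl(\beta(\Gamma_{\ell^*} - \Gamma_{[0,\ell^*]})\bigr),
\]
so the whole problem reduces to showing $\Gamma_{\ell^*} - \Gamma_{[0,\ell^*]} \le -c\,\ell^*/\sqrt{k\log(p/k)}$ with high probability, a $\beta$-\emph{free} statement. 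The upper bound on $\Gamma_{\ell^*}$ is a union bound; the lower bound on $\Gamma_{[0,\ell^*]}$ is a second moment on the \emph{counting} variable $Z=\#\{x: k^{-t/2}\langle W,x^{\otimes t}\rangle \ge q\}$, which has no $\beta$ in it. Flatness enters only in the $t=2$ case, and there it is a property of a single vector (all sub-supports $s\subset x$ have $s^\top W s$ close to $(|s|/k)^2\, x^\top W x$) used to decouple the two Gaussians in the second moment of the count, not a restriction on companions' Hamiltonians. Once $\Gamma_{\ell^*}-\Gamma_{[0,\ell^*]}<0$ is established, the assumed largeness of $\beta$ works \emph{for} you in the displayed ratio rather than against you.
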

\cref{thm:lb} is proved in \cref{sec:NegGAMproof}.

\begin{remark}(Negative result for the posterior temperature)
    By an easy calculation, we note that the conditions of \cref{thm:lb} are satisfied with $\beta_{\mathrm{post}}=\lambda$ whenever
    \[\lambda=\omega\left(k^{1/2}\left(\log\left(\frac{p}{k}\right)\right)^{\frac{3t-2}{2(t-2)}}\right)=\Tilde\omega(\lambda_{\mathrm{STATS}}(t,k)),\]where the $\tilde{\omega}$ notation hides poly-logarithmic terms in $p,k.$
    Therefore, \cref{thm:lb} implies that any reversible Markov chain on $G_{p,k}$ whose stationary distribution is $\pi_{\mathrm{post}}$ has at least $p^{\omega(1)}$ mixing time whenever 
    \[\Tilde\omega(\lambda_{\mathrm{STATS}}(t,k))= \lambda = \Tilde o(\lambda_{\mathrm{MCMC}}(t,k)).\]
\end{remark}

\section{Application: Sparse Linear Regression}

\subsection{The model}
We now focus on sparse regression. Similar to \cref{eq:sparse_set_def}, we have $\mathcal{X}_{p,k}$ for integers $0 \le k \le p$ with the associated Johnson graph $G_{p,k}$. Let $n$ be a positive integer and let $\sigma^2 > 0$. Following the Gaussian sparse regression setting  \cite{10.1214/21-AOS2130}, let us assume that the covariate matrix $X\in\mathbb{R}^{n\times p}$ is a random matrix with i.i.d. standard Gaussian entries. Let the noise $W\in\mathbb{R}^n$ be distributed according to $N(0,\sigma^2I)$. Assume $v^*$ is sampled from the uniform distribution over $\mathcal{X}_{p,k}$, and let the observations be
\[
Y = Xv^* + W
\] Our task is to exactly recover $v^*$ given $(Y,X)$, with high probability as $k$ grows to infinity.

The posterior is now given by 
\[
\pi_{\mathrm{post}}(v) 
\propto \exp(1/2 H(v))
, \quad \forall v \in \mathcal{X}_{p,k}. \label{eq:post}
\] 
where
\[
H(v) = -\norm{Y-Xv}_2^2
\]

\subsection{The positive result}
Similar to our previous case of interest, we study the $G_{p,k}$-Metropolis chain which samples from the (potentially) temperature misparameterized posterior distribution,
\[
\pi_\beta(v) 
\propto \exp(\beta H(v))
, \quad \forall v \in \mathcal{X}_{p,k}.
\] 
where here $\beta>0$ is the inverse temperature parameter.

We work under the high signal to noise ratio assumption for sparse regression, as described in the following (mild) assumption.
\begin{assumption}
\label{ass:sr}
Assume that
\begin{align*}
    \sigma^2\leq \frac{k}{\log p}.
\end{align*}
\end{assumption}

We now present our main positive result for sparse regression. This result holds with high probability as $k\rightarrow\infty$ (and therefore $n,p\rightarrow\infty$ as well).

\begin{theorem}[Sparse Regression: Positive Result]\label{thm:SRpos}
    Consider the Sparse Linear Regression model where \cref{ass:sr} holds. Assume that for a sufficiently large constant $C>0$,
    \[n\geq Ck\log (p/k).\]
    Then the following statements hold:
    \begin{enumerate}
        \item (Moderate $\beta$): For any $\beta = \Omega(\log(kp)/n)$, and any $T=\omega\left(k^3 p \left(\log\left(\frac{pe}{k}\right)+\beta n\right)\right)$, the Metropolis chain with any initialization after $T$ iterations is at $v^*$ with probability at least $2/3-o(1)$.
        \item (Large $\beta$): For any $\beta > 8/C$, and any $T=\omega(k^2 p)$, the Metropolis chain with any initialization after $T$ iterations is at $v^*$ with probability at least $2/3-o(1)$. Moreover, for the Randomized Greedy algorithm (i.e., $\beta = \infty$), once the chain hits $v^*$, with high probability it remains at $v^*$ for all future iterations.
    \end{enumerate}
\end{theorem}
The proof of \cref{thm:SRpos} is given in \cref{sec:PosSRproof}.
\begin{remark}(Hitting time dependence on $\beta$)
    We note that the hitting time of the chain for moderate $\beta$ has a positive dependence on $\beta n$, but once $\beta$ enters the large $\beta$ regime, this dependence disappears, and the hitting time reduces to $O(k^2 p)$.
\end{remark}
\begin{remark}(Positive result for the posterior temperature)
    We note that for a uniform prior on $v^*$, the posterior given $X$ and $Y$ as described in \eqref{eq:post}
corresponds to the case of $\pi_{\beta}$ where $\beta=\beta_{\mathrm{post}}=\frac{1}{2}$. \cref{thm:SRpos} therefore implies that the Metropolis chain can sample from the posterior of sparse regression in polynomial time whenever $C > 16$ or
\[n\geq 16k\log (p/k).\]

We remark that achieving such sampling results from the posterior of high dimensional estimation tasks is a non-trivial task which has seen recent progress in non-sparse settings via diffusion models techniques \cite{montanari2023posterior}.  Our work implies that sampling from the posterior of sparse estimation tasks is also possible in polynomial-time via simple MCMC methods almost all the way down to the LASSO threshold \cite{wainwright_lasso} $n_{\mathrm{ALG}}=\Theta(k \log (p/k))$.
\end{remark}

\subsection{The negative result}
Here we prove the tightness of our positive results for sparse regression. We show that any reversible chain on $G_{p,k}$ (in particular the Metropolis chain) sampling from $\pi_{\beta}$ fails to recover $v^*$ in polynomial time when $n=o( k\log (p/k))$.

\begin{theorem}\label{thm:negresult}[Sparse Regression: Negative Result]
   Assume that for some sufficiently small $c>0$, we have
    \begin{align*}
        n\leq ck\log (p/k)
    \end{align*}
    Assume also that $\sigma^2\leq k$, $k=o(p^{1/3-\eta})$ for some $\eta>0$, and that $\beta$ satisfies
    \begin{align*}
        \beta > \Omega\left(\frac{\log(p/k)}{n}\exp\left(\frac{2k\log(p/k)}{n}\right)\right)
    \end{align*}
   Then w.h.p. as $k\rightarrow\infty$, for any reversible Markov chain on the graph $G_{p,k}$ with stationary distribution $\pi_{\beta}$, the first time the chain hits $v^*$ is $\exp\left(\Omega(k\log (p/k))\right)$.
\end{theorem}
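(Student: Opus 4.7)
My approach is to invoke the Overlap Gap Property (OGP) framework of \cite{10.1214/21-AOS2130} together with a Cheeger-type conductance lower bound. For carefully chosen intermediate overlaps $0 < \ell_1 < \ell_2 < k$, I will define the bottleneck set
\[
B = \left\{ v \in \mathcal{X}_{p,k} : \ell_1 \le |v \cap v^*| \le \ell_2 \right\}.
\]
Partitioning $\mathcal{X}_{p,k}$ into $S = \{v : |v \cap v^*| > \ell_2\}$ and its complement, every edge of $G_{p,k}$ crossing this partition has one endpoint in $B$, since a single swap changes the overlap by at most one. For any reversible chain with stationary distribution $\pi_\beta$, reversibility then yields the bottleneck-ratio bound $\Phi(S^c) \le \pi_\beta(B)/\pi_\beta(S^c)$, and the standard Cheeger inequality gives $t_{\mathrm{mix}} \ge \pi_\beta(S^c)/(4\pi_\beta(B))$. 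The goal is therefore to show that $\pi_\beta(B) \le e^{-\Omega(k\log(p/k))} \cdot \pi_\beta(S^c)$.

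The core of the argument is a conditional first-moment calculation on $B$. Writing $\Delta = v^* - v$, for $v \in \mathcal{X}_{p,k}$ with $|v \cap v^*| = \ell$ we have $\|\Delta\|_2^2 = 2(k-\ell)$, and conditional on $v^*$, $X\Delta + W \sim N(0, (2(k-\ell) + \sigma^2) I_n)$, so $-H(v) = \|X\Delta + W\|_2^2$ is a scaled $\chi^2_n$ variable. Choosing a threshold $\tau$ slightly above $-H(v^*) \approx n\sigma^2$, a $\chi^2$ lower-tail bound gives $\Pr[H(v) \ge -\tau] \le \exp\!\bigl(-\Omega\bigl(n(k-\ell)/(k-\ell+\sigma^2)\bigr)\bigr)$. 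Multiplying by the entropy count $\binom{k}{\ell}\binom{p-k}{k-\ell}$, summing over $\ell \in [\ell_1, \ell_2]$, and invoking $n \le ck\log(p/k)$ with $c$ small, I will show
\[
\E\!\left[ \#\{ v \in B : H(v) \ge -\tau \} \right] \le e^{-\Omega(k \log(p/k))}.
\]
Markov's inequality then yields w.h.p.\ that no such configuration exists in $B$. A complementary first-moment computation at low overlap $|v \cap v^*| < \ell_1$ exhibits many configurations far from $v^*$ whose Hamiltonian remains competitive with $H(v^*)$ --- the manifestation of the algorithmic-statistical gap that powers OGP --- guaranteeing that $\pi_\beta(S^c)$ is non-negligible.

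The hypothesis $\beta = \Omega\bigl(\tfrac{\log(p/k)}{n}\exp(\tfrac{2k\log(p/k)}{n})\bigr)$ is calibrated precisely so that any $v$ with $H(v) < -\tau$ carries $\pi_\beta$-mass at least $e^{\Omega(k\log(p/k))}$ smaller than a near-optimal configuration; combined with the union bound over $|B| \le \binom{p}{k}^{O(1)}$ vertices, this transfers the vanishing first moment into $\pi_\beta(B) \le e^{-\Omega(k \log(p/k))}$, and Cheeger closes the argument. The main technical obstacle is the regime extension itself: in \cite{10.1214/21-AOS2130}, which treated only $k = p^{o(1)}$, the typical overlap of two random $k$-subsets is $o(1)$ and the log-binomial entropic factor collapses uniformly to $(k-\ell)\log(p/k)$, decoupling cleanly from the $\chi^2$ tail. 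For $k = o(p^{1/3-\eta})$, the overlap grows with $p$ over much of the relevant range, the entropic and Gaussian anticoncentration contributions interact non-trivially, and the $\chi^2$ lower-tail exponent must be balanced against $\log\binom{k}{\ell}\binom{p-k}{k-\ell}$ uniformly over $\ell \in [\ell_1,\ell_2]$. I expect the cutoff $p^{1/3-\eta}$ to mark exactly the regime in which the resulting exponent can be maintained uniformly negative throughout the gap while the low-overlap first moment simultaneously supplies the required lower bound on $\pi_\beta(S^c)$.
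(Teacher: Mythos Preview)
Your proposal has a fatal arithmetic error in the first-moment upper bound on $B$, and a genuine methodological gap in the lower bound on $\pi_\beta(S^c)$.

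\textbf{The threshold $\tau\approx n\sigma^2$ does not work.} For $v\in B$ with overlap $\ell$, you correctly note $\Pr[H(v)\ge -\tau]\le\exp\bigl(-\Omega(n(k-\ell)/(k-\ell+\sigma^2))\bigr)$. But for medium overlap (say $\ell_1,\ell_2$ proportional to $k$, which is what the conductance argument needs), $k-\ell=\Theta(k)$ and $\sigma^2\le k$, so this tail is only $\exp(-\Omega(n))$. The entropy factor at that overlap is $\binom{k}{\ell}\binom{p-k}{k-\ell}=\exp\bigl(\Theta((k-\ell)\log(p/k))\bigr)=\exp\bigl(\Theta(k\log(p/k))\bigr)$. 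Since $n\le ck\log(p/k)$ with $c$ small, the product is $\exp\bigl(\Theta(k\log(p/k))-\Omega(n)\bigr)$, which is exponentially \emph{large}, not $e^{-\Omega(k\log(p/k))}$. So the claimed first-moment bound on $B$ is false. In fact, precisely because $n$ is below the algorithmic threshold, there \emph{are} many configurations at every overlap level whose error is comparable to $-H(v^*)$; comparing to $H(v^*)$ cannot produce a gap.

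The paper does not compare to $H(v^*)$ at all. It studies the minimum squared error \emph{at each overlap level}, captured by the function $\Gamma(\zeta)=(2\zeta k+\sigma^2)^{1/2}\exp(-\zeta k\log(p/k)/n)$ where $\zeta=(k-\ell)/k$. The OGP is the statement that $\Gamma(\zeta^*)$ at intermediate $\zeta^*$ exceeds (by a constant factor inside the exponential) the error achieved by \emph{many} configurations at $\zeta=1$ (zero overlap). The bottleneck threshold sits at $\Theta(n\Gamma^2(\zeta^*))$, not $n\sigma^2$, and the upper bound on $\pi_\beta(A_1)$ comes from a union bound over the \emph{minimum} error at each $\ell$ (the paper's Theorem 10.1(a)), not from a threshold near the global optimum.

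\textbf{A first moment cannot supply the lower bound on $\pi_\beta(S^c)$.} You write that ``a complementary first-moment computation at low overlap exhibits many configurations''. A first moment only shows the \emph{expected} count is large; to conclude w.h.p.\ that at least $R^{n/5}$ zero-overlap configurations achieve error $\le nR^2\Gamma^2(\mu)$ you need concentration, i.e.\ a second-moment argument. This is exactly the paper's Theorem 10.1(b), whose proof (a truncated conditional second-moment method, the paper's Proposition 12.4) is the main technical work of the section and is where the restriction $k=o(p^{1/3-\eta})$ actually enters: it is needed to control the correlation terms when the typical overlap between two random $k$-subsets is no longer $o(1)$. Your proposal identifies the right obstacle (the entropy/Gaussian balance under polynomial $k$) but places it in the wrong step; the delicate balancing occurs in the second-moment computation for the low-overlap count, not in a first-moment upper bound on $B$.
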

To prove \cref{thm:negresult} (see \cref{sec:NegSRproof}), we establish a bottleneck for the MCMC methods, by significantly generalizing an Overlap Gap Property result from \cite{10.1214/21-AOS2130}, where the authors assume $n =\Omega(k\log k)$ and $k=p^{o(1)}$. In particular, we generalize to arbitrarily small $n$ and any $k=o(p^{1/3})$. 
\begin{remark}(Negative result for the posterior temperature)
    If $\sigma^2$ is non-vanishing, then we note that the conditions of \cref{thm:negresult} are satisfied at the posterior temperature $\beta_{\mathrm{post}}=\frac{1}{2}$ when $n\geq Ck\log(p/k)/(\log(k/\sigma^2+1)=\Theta(n_{\mathrm{STATS}})$ for a large enough constant $C$. Therefore \cref{thm:negresult} implies that any reversible Markov chain on the graph $G_{p,k}$ whose stationary distribution is $\pi_{\mathrm{post}}$ has at least $\exp\left(\Omega(\log(p/k)\right)$ hitting time whenever
    \[\omega(n_{\mathrm{STATS}})=n=o(n_{\mathrm{ALG}}).\]
\end{remark}

\section{Further Comparison with Previous Work}

In this section, we discuss more how our results on sparse Gaussian additive model and sparse regression compare with existing results. 

\subsection{Comparison with \texorpdfstring{\cite{arous2023free}}{[AWZ23]}}For sparse tensor PCA, our results directly compare with \cite{arous2023free}.  \cite{arous2023free} studies the same setting exactly but only under the assumption $t=2$, i.e., in the matrix case, while we study the more general case $t \geq 2$. In this matrix case, they study the same class of low-temperature reversible MCMC methods on the Johnson graph as we consider in this work. They prove that whenever $k=o(p)$ the hitting time of the planted vector is super-polynomial for a part of the conjectured ``computationally hard'' regime that is when
\[\lambda=o(\min\{\lambda_{\mathrm{ALG}}(2,k),(kp)^{1/4}\})=o(\min\{k,(kp)^{1/4}\}).\]
The authors provide this as evidence for the conjectured computational hardness of sparse (matrix) PCA. Importantly they leave as an open question whether one of these MCMC methods succeed in polynomial-time when other polynomial-time methods are known to work i.e. when $\lambda=\omega(\lambda_{\mathrm{ALG}}(2,k))$.

Our negative results prove that when  $k\leq p^{\alpha_0}$ for $\alpha_0>0$ a small enough, a much wider failure takes place even when $t=2$. Specifically, all considered low-temperature MCMC methods in \cite{arous2023free} have super-polynomial hitting time of the planted vector whenever $\lambda=o(k^{3/2}/\sqrt{\log p})$. Moreover, we also provide a positive result saying that the hitting time becomes polynomial for an element of this class (i.e., the Metropolis chain) when $\lambda=\omega(k^{3/2}\sqrt{\log p})$. Hence we pin down the exact threshold, up to logarithmic dependencies, for these MCMC methods to succeed in polynomial-time in sparse PCA, and we highlight that potentially surprisingly it turns out to be much bigger than $\lambda_{\mathrm{ALG}}(2,k)$.

From a technical standpoint, we obtain the mentioned stronger negative MCMC results compared to \cite{arous2023free} because of a significantly improved application of the second moment method (see the proof of \cref{prop:2mm}), which is perhaps notable as already   \cite{arous2023free} used an elaborate second moment method to carry through the same exact argument (when $t=2$). The key new technical ingredient we leverage is some novel ``flatness'' ideas from random graph theory \cite{balister2019dense,gamarnik2019landscape} appropriately adapted to Gaussian tensors. We expect this idea to be broadly useful for similar sparse problems.

\subsection{Comparison with \texorpdfstring{\cite{10.1214/21-AOS2130}}{[GZ22a]}} For sparse regression, the most relevant work is \cite{10.1214/21-AOS2130} that studies the exact same setting as we do. The authors prove that for all $k=o(p)$, if $n=\omega(n_{\mathrm{ALG}})$ a greedy local-search algorithm maximizing the posterior succeeds in polynomial-time. Moreover, they show that if $n=o(n_{\mathrm{ALG}})$ a disconnectivity property which they called Overlap Gap Property for inference takes place, which in particular leads to the failure of the mentioned greedy local search procedure. Importantly, their negative result applies only in the more restricted sparsity regime $k=p^{o(1)}$.

In our work, we prove more general both positive and negative results. For the positive side, we prove that for all $k=o(p)$, a larger family of low-temperature MCMC methods succeeds whenever $n=\omega(n_{\mathrm{ALG}})$. In particular, our positive results include the randomized greedy algorithm maximizing the posterior, as opposed to the vanilla greedy approach of \cite{10.1214/21-AOS2130}. We highlight that this is a non-trivial extension, which is not always known to be true (see e.g., the relevant discussion in \cite{gheissari2023finding} for the planted clique model). On top of that, our positive results hold down to $n=\Theta(n_{\mathrm{ALG}})$ for any $\sigma^2=O( k/\log p)$ which is a significant improvement from the required assumption $\sigma^2=O( \min\{\frac{\log p}{\log \log p}, k\})$ in \cite{10.1214/21-AOS2130}, under a very mild assumption that $k=\omega( \frac{(\log p)^2}{\log \log p})$. Finally, we prove that our negative result that for all $k=o(p^{1/3})$ a large family of low-temperature MCMC methods fails when $n=o(n_{\mathrm{ALG}})$, significantly again generalizing the result of \cite{10.1214/21-AOS2130}.

\subsection{Comparison with \texorpdfstring{\cite{jordanMCMC}}{[YWJ16]} and \texorpdfstring{\cite{zhouDimensionFree}}{[ZYV+22]}}
In terms of positive results for Bayesian models of sparse regression, there are two other very relevant works on the performance of Markov chains, \cite{jordanMCMC} and \cite{zhouDimensionFree}.

In \cite{jordanMCMC}, the authors consider a $k$-sparse regression setting where the goal is to sample from the posterior, given the Zellner hierarchical $g$-prior \cite{zellnerGprior}, a deterministic measurement matrix $X$, and i.i.d. Gaussian noise $W$ of variance $\sigma^2$. Briefly, instead of considering Markov chains operating only on the exactly $k$-sparse vectors as in our work (which is the parameter space of the problem), they consider a ``lifted" state space of all support sets of size below some threshold, and add a regularizer prior to the model that encourages sparsity. They then derive the posterior and study the Metropolis chain\footnote{The graph in this case has two states connected if their Hamming distance is at most two, rather than exactly two as in our case.} to sample from it. They go on to derive polynomial mixing time bounds and hitting time guarantees of the planted vector of the Metropolis chain sampling from the posterior. Their result hold under certain structural deterministic assumptions on the feature matrix $X$.

Comparing with our results where $X$ has i.i.d. standard Gaussian entries, we show that these structural conditions on $X$ are not satisfied with high probability when $n=o(k\sigma)$, where $\sigma^2$ is the noise variance (recall $n_{\mathrm{ALG}}=\Theta(k\log (p/k))$. We present a proof of this claim in \cref{sec:structCondsJordan}. Note that when $\sigma^2=\Theta(k/\log p)$ (the maximum noise level allowed for our positive result to hold), this implies that the required sample size for the MCMC algorithm in \cite{jordanMCMC} to succeed is $n=\Omega\left(k^{1.5}/\sqrt{\log p}\right)$, which significantly larger than the conjectures algorithmic threshold $n_{\mathrm{ALG}}=\Theta(k\log(p/k))$ which is achieved by our analysis of the Metropolis process using our main positive result \cref{thm:canonical}. 

The same setting of sparse regression with deterministic $X$ and Gaussian noise $W$ as in \cite{jordanMCMC} is considered in \cite{zhouDimensionFree}. Here the authors study a different Metropolis chain but on the same state space as \cite{jordanMCMC} (that is of supports of arbitrary size below some threshold), and prove that, under assumptions, it mixes with the posterior rapidly. Of note, the final mixing time bound obtained is independent of $p$. A key feature of their Metropolis chain is the use of an \textit{informed proposal} method, where subsets of variables that have a higher posterior probability are more likely to be proposed as transitions. Such informed proposal schemes have been studied in several previous works \cite{zanellaScalable,zanellaInformed, griffinMixing}. This contrasts with the usual version of Metropolis, where proposed transitions are chosen in an uninformed manner, such as uniformly (the approach taken both in our work and in \cite{jordanMCMC}). However, informed proposals carry a significantly higher computational cost than uninformed ones, as they generally require calculating the full posterior probability of each neighbor of the current state of the chain. This computational cost can be up to $O(pk^2)$ at each iteration, rather than $O(nk)$ in our case (the cost of evaluating $H$ at a single point). The total computational complexity of the algorithm in \cite{zhouDimensionFree} therefore has a linear dependence on $p$, identical to our result.

We also note that the assumptions on $X$ made in \cite{zhouDimensionFree}, while deterministic assumptions, are slightly weaker than those in \cite{jordanMCMC}. Rather than making the same structural assumptions on the design matrix $X$, the authors show that if there exists a path (in Hamming distance) from any set of coordinates (of arbitrary sparsity) to the true set, such that the posterior probability is suitably increasing at each step, then the chain mixes quickly. Then they prove that the structural assumptions in \cite{jordanMCMC} indeed imply their required condition. We highlight that this assumption is in fact very similar to the general assumption we propose for arbitrary state space \cref{assum:paths}.

We highlight, however, some key further differences between our results and that in \cite{zhouDimensionFree}. First, as mentioned above we analyze the uninformed Metropolis process on the $k$-sparse vectors, while they do anayze the informed Metropolis process on sparse vectors of arbitrary size below some threshold. Second, the authors do not prove that their ``ascending paths" assumptions on this extended state space is satisfied even in some relatively simple specific examples such as the case of i.i.d. Gaussian $X$, instead they show that their assumptions are implied by the structural assumptions in \cite{jordanMCMC}. As argued above, these structural assumptions of \cite{jordanMCMC} can fail to hold for i.i.d. Gaussian data when $n=\Theta(n_\mathrm{ALG})$ and $\sigma^2=\Theta(k/\log p)$. On the other hand, we do establish that our \cref{assum:paths} does indeed hold down to $n=\Theta(n_\mathrm{ALG})$ when we focus on the $k$-sparse state space.

\section{Simulations} \label{sec:sim}
Here we present the results of simulations that demonstrate the theoretical results given in the previous sections. For both Sparse PCA and Sparse Linear Regression, we implement the above described Metropolis chain. We then track the proportion of the hidden vector $v^*$ discovered by the chain over time for varying levels of the signal-to-noise ratio. A phase transition is observed, in line with the theoretical results.
\subsection{Sparse Matrix PCA}

We simulate the Metropolis chain for Sparse \textit{Matrix} PCA, i.e. the case where $t=2$. We note that simulating for higher $t$ poses computational tractability issues, as even for as small as $p=1000$, the resulting $3$-tensor has a billion entries. For similar computational concerns, we focus on the relatively sparse case, where $k \leq \sqrt{p}$.

\subsubsection{Specification}
Using $t=2$ gives a model
\[
Y = \lambda v^*v^{*\top} + Z
\]
where $v^*\in\{0,1\}^p$, $\norm{v}_0=k$, and $Z\in\mathbb{R}^{p\times p}$ with i.i.d. standard Gaussian entries. The Hamiltonian for this model is thus given by
\[
H(v) = v^\top Y v.
\]
Our theoretical results give three thresholds for $\lambda$: the information-theoretic threshold is given by
\[
\lambda_{\mathrm{STATS}} = \sqrt{k},
\]
the general algorithmic threshold is predicted to be given by
\[
\lambda_{\mathrm{ALG}} = k,
\]
while the low-temperature MCMC threshold for this model shown to be
\[
\lambda_{\mathrm{MCMC}} = k^{1.5}
\]
These thresholds are affirmed by our simulation results, where the Metropolis chain fails to recover $v^*$ when $\lambda \leq \lambda_{\mathrm{MCMC}}$, but quickly succeeds when $\lambda$ surpasses this threshold.

\subsubsection{Results}

Our implementation is for $p=3500$, $k=15$, and $\beta = 1000$, while we set $\lambda = k^x$ for values of $x$ between $1$ and $2$. Both $v^*$ and the initialisation $v_0$ are chosen uniformly at random independently. The Metropolis chain is then run with these parameters. At each step, we calculate the inner product of the current state $v_t$ with $v^*$, and halt the process when $v_t=v^*$, or after $10^6$ iterations, whichever occurs first.

In \cref{fig:sparsePCA_xVsTime}, we simulate five times for each value of $x$, and plot the median time for the chain to fully recover $v^*$ (or $10^6$ if the chain does not fully recover $v^*$ before then).

We interestingly observe \emph{a clear phase transition}, where for lower values of $\lambda$, the median chain does not succeed in finding $v^*$. In fact, in $72\%$ of simulations where $\lambda < k^{1.5}$, the chain fails to recover a single relevant coordinate of $v^*$ in the first one million iterations. On the other hand, above the critical MCMC threshold of $\lambda \geq k^{1.5}$, the chain quickly succeeds. In particular, for values of $\lambda$ in this regime, $83\%$ of simulations fully recover $v^*$ in fewer than 200 thousand iterations, while $90\%$ achieve full recovery in the first million iterations.

In \cref{fig:sparsePCA_pathPlots} we present in more detail the dynamics of the Metropolis chain in terms of recovering the hidden vector. Here we plot the proportion of the hidden vector $v^*$ recovered by the chain at each time step for the first $\approx 10^5$ iterations. We observe again a clear phase transition. When $\lambda \geq k^{1.5}$, the chain quickly finds the entirety of $v^*$. In contrast, for $\lambda < k^{1.4}$, the median chain never finds even a small fraction of $v^*$.

\begin{figure}[ht]
    \centering
    \includegraphics[width=1\linewidth]{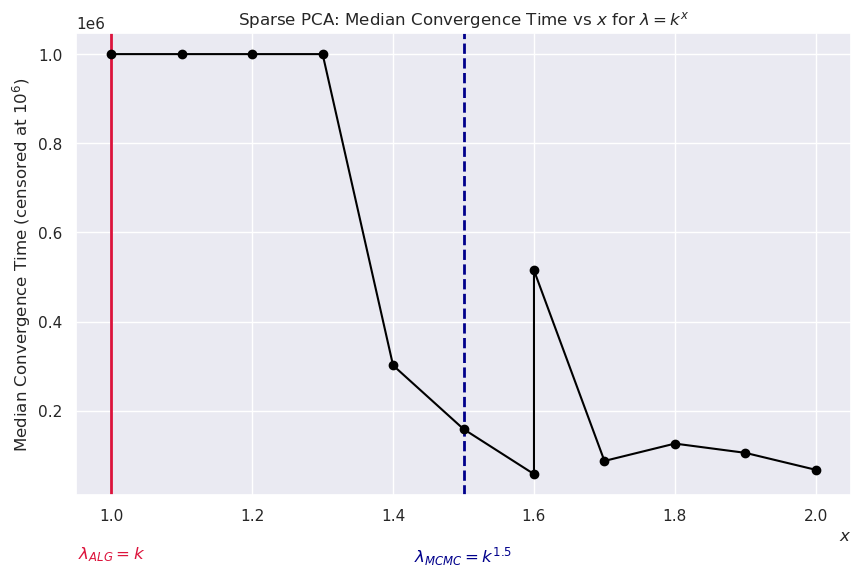}
    \caption{Sparse PCA: Median convergence time (censored at $10^6)$ vs $x$, where $\lambda=k^x$. Here we take $p=3500$, $k=15$, and $\beta = 1000$.}
    \label{fig:sparsePCA_xVsTime}
\end{figure}

\subsection{Sparse Linear Regression}
We also simulate the Metropolis chain for the sparse regression model. In line with our predictions, we also observe a transition point around the predicted algorithmic threshold, where above this point the chain quickly recovers $v^*$, and fails below it.
\begin{figure}[ht]
    \centering
    \includegraphics[width=1\linewidth]{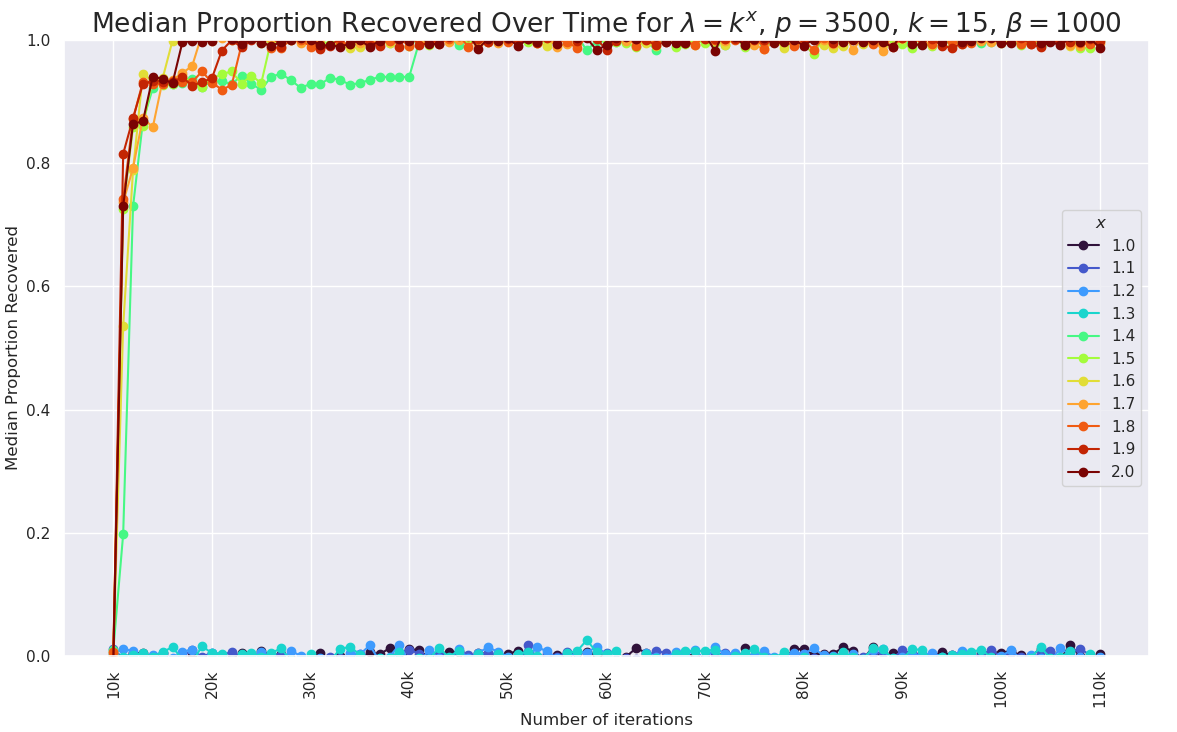}
    \caption{Sparse PCA: Median proportion of $v^*$ recovered over time for different values of $\lambda$. Here $\lambda = k^x$, $p=3500$, $k=15$, and $\beta=1000$. The predicted MCMC threshold for success is $x=1.5$.}
    \label{fig:sparsePCA_pathPlots}
\end{figure}
\subsubsection{Specification}
Our implementation here uses $p=20000$, $k=18$, $\sigma^2 = 0.05$, and $\beta = 100$. We then simulate the Metropolis chain for varying values of the sample size $n$. The information-theoretic threshold here is given by \cite{10.1214/21-AOS2130}
\begin{align*}
    n_{\mathrm{STATS}} = \frac{2k\log (p/k)}{\log\left(\frac{k}{\sigma^2}+1\right)} \approx 43
\end{align*}
Our positive result in \cref{thm:SRpos} provides the correct order for polynomial time convergence, but not the exact constant. We therefore compare to the algorithmic threshold for LASSO, where the exact threshold for success is given by \cite{wainwright_lasso}
\begin{align*}
    n_{\mathrm{LASSO}} = (2k+\sigma^2)\log (p/k) \approx 253
\end{align*}
\subsubsection{Results}
The results of these simulations are displayed in \cref{fig:sparse_regression_nVsTime}. We again observe \emph{a clear phase transition} when the sample size $n$ crosses the algorithmic threshold. When $n < n_{\mathrm{LASSO}}$, most samples time out after a million iterations having failed to find $v^*$. Above $n_{\mathrm{LASSO}}$, the chain quickly finds $v^*$. We can see these dynamics in more detail in \cref{fig:sparse_regression_pathPlots}. Here we group simulations by their value of $n$. Then at each time, we plot the median proportion of $v^*$ recovered at that time within that group. We observe that for values of $n$ above 300, we achieve very rapid convergence. Conversely, below 175 the chain does not appear to be converging at all, despite being significantly above the information-theoretic threshold $n_{\mathrm{STATS}}\approx 43$.

We recognize that the correct constant for the MCMC algorithmic threshold may not be the same as for LASSO, and when $n$ is relatively small as it is here, that can affect the results. Nevertheless, we do see a rapid speed up in convergence when the LASSO threshold is surpassed.

\begin{figure}
    \centering
    \includegraphics[width=1\linewidth]{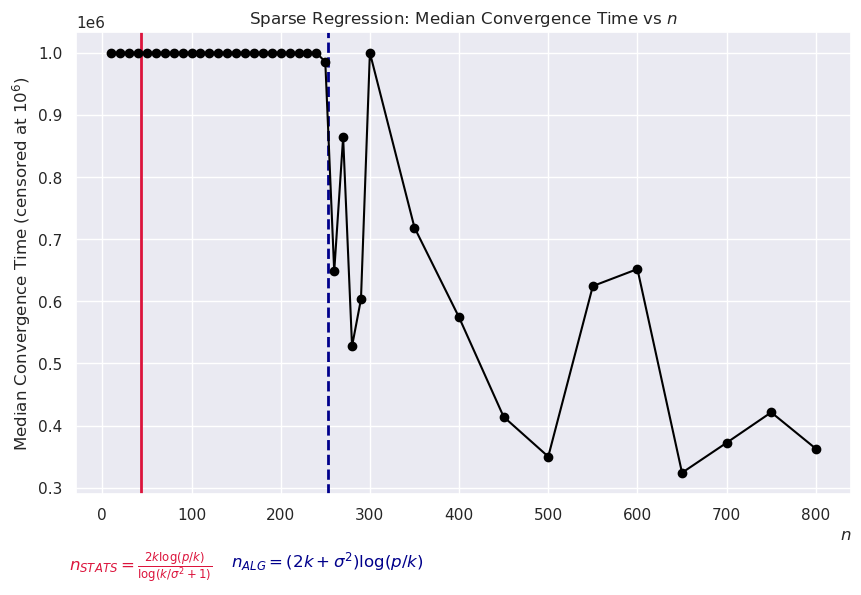}
    \caption{Sparse Regression: Median convergence time (censored at $10^6$) vs $n$, for $p=20000$, $k=18$, $\sigma^2 = 0.05$, and $\beta = 100$.}
    \label{fig:sparse_regression_nVsTime}
\end{figure}
\begin{figure}
    \centering
    \includegraphics[width=1\linewidth]{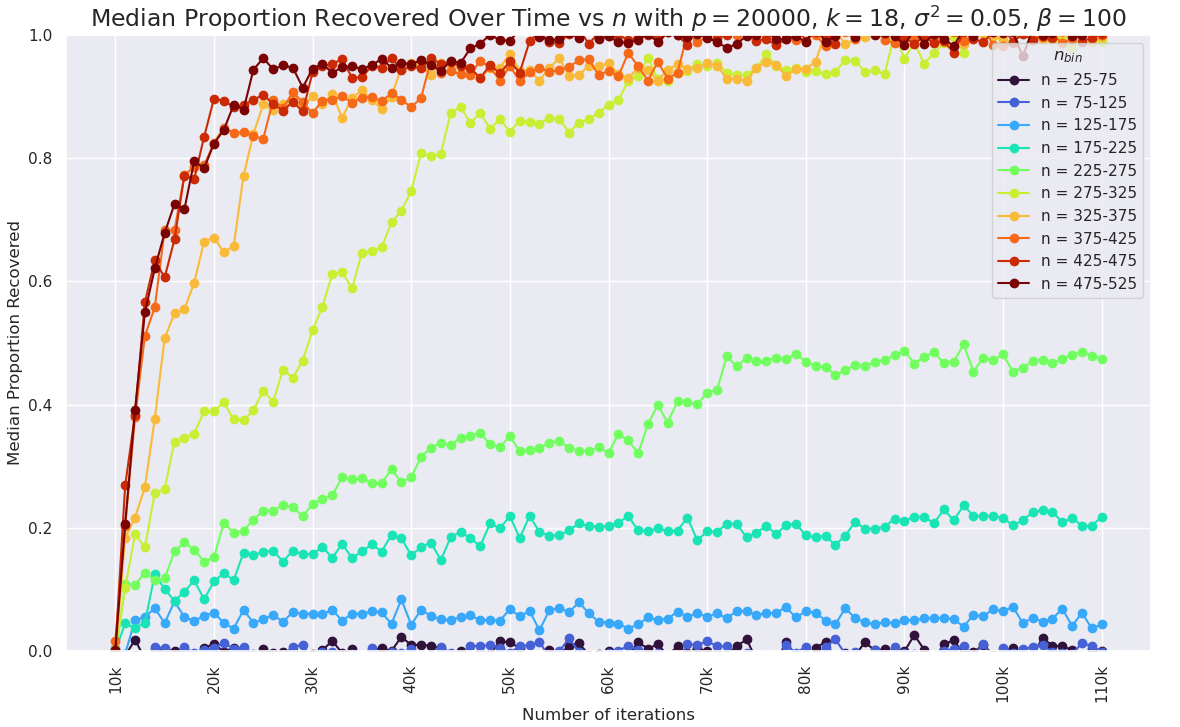}
    \caption{Sparse Regression: Median proportion of $v^*$ recovered over time for different values of $n$. Here $p=20000$, $k=18$, $\sigma^2 = 0.05$ and $\beta = 100$. The information theoretic threshold is $\approx 43$, and the predicted algorithmic threshold is $\approx 253$.}
    \label{fig:sparse_regression_pathPlots}
\end{figure}

\section{Proof of General Positive Result}\label{sec:GenPosResult}
Here we prove \cref{thm:canonical}, the general algorithmic positive result for the Metropolis chain. We highlight that the proof is rather short.
\begin{proof}[Proof of \cref{thm:canonical}] 
For $x,y \in \mathcal{X}$, we write $x \prec y$ if $x$ is an ancestor of $y$ (equivalently, $y$ is a descendant of $x$). 
For part (1), we observe that
\begin{align*}
\frac{1}{\pi_\beta(v^*)}
= \sum_{x \in \mathcal{X}} e^{\beta(H(x)-H(v^*))} 
\leq \sum_{i=0}^\infty \Delta^i e^{-\beta \delta i} 
\le \sum_{i=0}^\infty e^{-2i} \le \frac{4}{3}.
\end{align*}
We thus focus on parts (2) and (3).

    For part (2), we use the standard canonical paths argument for mixing (for example, see chapter 5 of \cite{jerrum2003counting}).
	For any $x,y \in \mathcal{X}$ we define $\gamma_{x,y}$ to be the unique path connecting them in $T$. Note that $|\gamma_{x,y}| \le D$.
    Now for each edge $(a,b)$ in $T$, we upper bound the conductance $t(a,b)$ of it, defined as
    \begin{align*}
        t(a,b) = \frac{1}{\pi(a)P(a,b)} \sum_{(x,y):\, (a,b) \in \gamma_{x,y}}  \pi(x)\pi(y)|\gamma_{x,y}|.
    \end{align*}
    Without loss of generality, we assume $b \prec a$ (this is because $\pi(a)P(a,b) = \pi(b)P(b,a)$ by reversibility and the proof is symmetric for paths containing $(a,b)$ or $(b,a)$). 
    Then we have 
    \begin{align*}
    P(a,b) = \frac{1}{|\Gamma_G(a)|} \ge \frac{1}{\Delta}
    \end{align*}
    since $H(b) > H(a)$. It follows that
    \begin{align*} 
    t(a,b) &= \frac{1}{\pi(a)P(a,b)} \sum_{(x,y):\, (a,b) \in \gamma_{x,y}}  \pi(x)\pi(y)|\gamma_{x,y}|\\
    & \leq D\Delta \sum_{(x,y):\, (a,b) \in \gamma_{x,y}} \frac{\pi(x)\pi(y)}{\pi(a)}\\
    & \leq D\Delta \sum_{x:\, x \succeq a} \frac{\pi(x)}{\pi(a)}
    \end{align*}
    where we observe that for any path $\gamma_{x,y}$ containing $(a,b)$, the starting point $x$ must be a descendant of $a$ or $x = a$.
Hence, it follows that
\begin{align*} 
    t(a,b)  
    \leq D\Delta \sum_{x:\, x \succeq a} e^{\beta(H(x)-H(a))}
    \leq D\Delta \sum_{i=0}^\infty \Delta^i e^{-\beta \delta i} 
    \le D\Delta \sum_{i=0}^\infty e^{-2i}
    =O\left( D\Delta \right).
\end{align*}
Thus, the conductance of the Metropolis chain on $G$ is at most $O(D\Delta)$
and therefore the mixing time is at most $O(D\Delta \left(\log |\mathcal{X}| + \beta \mathcal{R}_H\right))$ using standard results, see \cite{DS91,Sinclair92} or \cite[Theorem 5.2]{jerrum2003counting}. Now, given the mixing time upper bound, part (2) follows directly from part (1). 

Lastly, we consider the case where $\beta \ge \frac{2}{\delta}(\log |\mathcal{X}| + \log(\frac{40\mathcal{R}_H}{\delta}))$ and prove part (3).
We argue that in such a low-temperature regime, the behavior of the Metropolis chain is very similar as the Randomized Greedy algorithm with high probability.

We call a step of the Metropolis chain to be ``good'' if, at current state $X_{t-1} = x \neq v^*$, it selects $y \in \Gamma_G(x)$ which is the parent of $x$ in $T$; note that since $H(y) > H(x)$ by \cref{assum:paths}, the chain will then move to $y$, i.e., $X_t = y$. For completeness, if the current state is $X_{t-1} = v^*$ then we always call this step good no matter which next state it picks or whether it moves.
Thus, each step has a probability at least $1/\Delta$ to be good.
A simple application of the Chernoff bounds yields that, if we run the Metropolis chain for $T=\omega(\frac{\Delta \mathcal{R}_H}{\delta})$ steps then -- since for large enough $n$ it must hold $T \geq \frac{20 \Delta \mathcal{R}_H}{\delta}$ -- we have that at least $\frac{4 \mathcal{R}_H}{\delta}$ of the first $T$ steps of the Metropolis chain are good with probability at least $0.9$. 

Let $m'$ be the number of good steps.
Suppose the first $m = \min\left\{\frac{4 \mathcal{R}_H}{\delta}, m'\right\}$ good steps happen at time $1\le t_1 < t_2 < \cdots < t_m \le T$; we also set $t_0 = 0$.
Since the transition matrix, conditional on no move between time $t_{i-1}$ and $t_i$ is good, is still time-reversible, it holds that
\begin{align*}
    \Pr\left( H(X_{t_i - 1}) \le H(X_{t_{i-1}}) - \frac{\delta}{2} \;\bigg\vert\; X_{t_{i-1}} = x  \right)
    &= \sum_{y:\, H(y) \le H(x) - \frac{\delta}{2}}
    \Pr\left( X_{t_i - 1} = y \mid X_{t_{i-1}} = x \right) \\
    &\le \sum_{y:\, H(y) \le H(x) - \frac{\delta}{2}} \exp\left( \beta H(y) - \beta H(x) \right) \\
    &\le |\mathcal{X}| e^{-\frac{\beta \delta}{2}} \le \frac{\delta}{40 \mathcal{R}_H}.
\end{align*}
Therefore, with probability at least $1-\frac{4 \mathcal{R}_H}{\delta} \cdot \frac{\delta}{40 \mathcal{R}_H} = 0.9$, it holds that $H(X_{t_i - 1}) > H(X_{t_{i-1}}) - \frac{\delta}{2}$ for all $1 \le i \le m$.

Now, by the union bound, with probability at least $0.8$, we have that (1) there are at least $m' \ge \frac{4 \mathcal{R}_H}{\delta}$ good steps; (2) for all $1 \le i \le m=\frac{4 \mathcal{R}_H}{\delta}$ it holds $H(X_{t_i - 1}) > H(X_{t_{i-1}}) - \frac{\delta}{2}$.
We claim that under (1) and (2), it must hold that $X_{t_i - 1} = v^*$ for some $1 \le i \le m$, and the theorem then follows.
Suppose for sake of contradiction that $X_{t_i - 1} \neq v^*$ for all $i$.
Then, by \cref{assum:paths} we have $H(X_{t_i}) \ge H(X_{t_i - 1}) + \delta > H(X_{t_{i-1}}) + \frac{\delta}{2}$.
It follows that $H(X_{t_m}) > H(X_{t_0}) + \frac{\delta m}{2} = H(X_0) + 2\mathcal{R}_H$, contradicting $H(X_{t_m}) - H(X_0) \le \mathcal{R}_H$ by definition. 
\end{proof}

\section{Proofs of the Positive Results in Applications}

\subsection{Proof of Positive Results for the Gaussian Additive Model}\label{sec:PosGAMproof}
Here we apply \cref{thm:canonical} to prove \cref{thm:alg-sparse-dense} under \cref{ass:f}. The first step is to show that \cref{ass:f} implies \cref{assum:paths}.

\subsubsection{Verifying \texorpdfstring{\cref{assum:paths}}{assum:paths}.}
Write $G = G_{p,k}$ for brevity.
For each $x \in \mathcal{X}_{p,k}$ let $\Gamma_{G,v^*}(x) = \{y \in \Gamma_G(x): |y \cap v^*| = |x \cap v^*| + 1 \}$.
That is, $\Gamma_{G,v^*}(x)$ contains all $k$-subsets of $[p]$ obtained by removing an element from $x \setminus v^*$ and adding another element from $v^* \setminus x$.  

In the following lemma, we verify \cref{assum:paths} under \cref{ass:f}.

\begin{lemma}\label{lem:key}
Assume that \cref{ass:f} holds with constants $\eps \in (0,1)$ and $t \ge 1$.
For all positive integers $p,k,\ell_{\min} \in \N^+$ such that $2/\eps \le k \le p$ and $\ell_{\min} \le \eps k$, if
\[
\lambda \ge 
\Omega\left( \frac{k^{t-\frac{1}{2}} }{(\ell_{\min}+1)^{t-1}} \sqrt{\log p} \right),
\]
then, with high probability, for every $x \in \mathcal{X}_{p,k}$ whose overlap $\ell = |x \cap v^*|$ satisfies $\ell_{\min} \le \ell \le k-1$, there exist at least $\frac{1}{2}(k-\ell)^2$ vectors $y \in \Gamma_{G,v^*}(x)$ such that
\[
H(y) - H(x) 
= \Omega\left( \frac{\lambda (\ell_{\min}+1)^{t-1}}{k^t} \right).
\]
\end{lemma}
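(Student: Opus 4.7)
My plan is to decompose $H(y) - H(x)$ into a deterministic signal part and a Gaussian noise part, bound each separately, and then use a Markov-type inequality together with Gaussian concentration to control how many neighbors have a significantly negative noise contribution.

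First, for $y \in \Gamma_{G,v^*}(x)$ of the form $y = x - e_i + e_j$ with $i \in x\setminus v^*$, $j \in v^*\setminus x$, I would write
\begin{align*}
H(y) - H(x) = \lambda\left[f\!\left(\tfrac{\ell+1}{k}\right) - f\!\left(\tfrac{\ell}{k}\right)\right] + Z_y, \qquad Z_y := \left\langle F\!\left(\tfrac{y}{\sqrt k}\right) - F\!\left(\tfrac{x}{\sqrt k}\right),\, W\right\rangle.
\end{align*}
To lower bound the signal, I would use \cref{ass:f}: convexity makes $f'$ monotone, so combining $f'(s) = \Omega(s^{t-1})$ on $[0,\varepsilon]$ with $f'(s) \ge f'(\varepsilon) = \Omega(1)$ on $[\varepsilon, 1]$ gives $f'(\ell/k) = \Omega((\ell/k)^{t-1})$ for all $\ell \in [\ell_{\min}, k-1]$. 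The mean value theorem then yields $\lambda[f((\ell+1)/k) - f(\ell/k)] \ge \frac{\lambda}{k}\,f'(\ell/k) = \Omega\!\left(\frac{\lambda(\ell_{\min}+1)^{t-1}}{k^t}\right) =: \Delta_0$.

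Next I would compute the second-order Gaussian structure of $(Z_y)_{y \in \Gamma_{G,v^*}(x)}$. Using $\langle F(a), F(b)\rangle = f(\langle a,b\rangle)$ together with a Taylor expansion near $1$ (justified by $f'(1) = O(1)$ and $f'' = O(1)$ near $1$ from \cref{ass:f}), I get $\mathrm{Var}(Z_y) = 2f(1) - 2f((k-1)/k) = O(1/k)$. For $y \ne y' \in \Gamma_{G,v^*}(x)$ the covariance $\mathrm{Cov}(Z_y, Z_{y'})$ splits on $|y \cap y'|$: it equals $f(1) - f((k-1)/k) = O(1/k)$ when $|y\cap y'| = k-1$, and a second-order Taylor cancellation gives $O(1/k^2)$ when $|y \cap y'| = k-2$. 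Assembling these into the matrix $A_x$ whose rows are the vectors $F(y/\sqrt k) - F(x/\sqrt k)$, this yields $\|A_x\|_F^2 = O((k-\ell)^2/k)$ and, crucially, the much smaller $\|A_x\|_{\mathrm{op}}^2 = O((k-\ell)/k)$: the dominant block of $A_x A_x^\top$ (rescaled by $f'(1)/k$) has the Kronecker-sum form $I_{k-\ell}\otimes J_{k-\ell} + J_{k-\ell}\otimes I_{k-\ell}$, whose top eigenvalue is $2(k-\ell)$.

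The central step is the Markov-type inequality
\begin{align*}
\left|\left\{y \in \Gamma_{G,v^*}(x) : Z_y < -\Delta_0/2\right\}\right| \;\le\; \frac{4}{\Delta_0^2}\sum_{y \in \Gamma_{G,v^*}(x)} Z_y^2 \;=\; \frac{4\,\|A_x W\|_2^2}{\Delta_0^2},
\end{align*}
so it suffices to show $\|A_x W\|_2^2 \le (k-\ell)^2 \Delta_0^2/8$ for every relevant $x$ with high probability. I would apply Gaussian Lipschitz concentration to $W \mapsto \|A_x W\|_2$ (Lipschitz constant $\|A_x\|_{\mathrm{op}}$) around its mean (bounded by $\|A_x\|_F$). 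The hypothesis $\lambda = \Omega(k^{t-1/2}\sqrt{\log p}/(\ell_{\min}+1)^{t-1})$ gives $\Delta_0 = \Omega(\sqrt{\log p/k})$, precisely the signal-to-noise ratio that makes the Gaussian tail exponent absorb the union-bound cost.

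The hard part will be the union bound over all $x \in \mathcal{X}_{p,k}$, which is exponentially large ($\binom{p}{k} = e^{\Theta(k\log(p/k))}$). A naive union bound over individual pairs $(x,y)$ would cost an extra factor of $\sqrt k$ in $\lambda$ and would miss the claimed threshold; the gain is exactly the factor of $(k-\ell)$ by which $\|A_x\|_{\mathrm{op}}^2$ beats $\|A_x\|_F^2$, which is what lets Gaussian concentration of $\|A_x W\|_2$ outperform a coordinate-wise bound. I expect a small case split on $k-\ell$ will be needed: when $k-\ell$ is large, Gaussian concentration together with the operator-norm bound does the job; when $k-\ell$ is very small (say $O(1)$), the concentration loses strength but the total number of relevant pairs, $\binom{k}{\ell}\binom{p-k}{k-\ell}(k-\ell)^2$, is itself small enough that a direct union bound already matches the hypothesis. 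Finally, I would combine the signal lower bound and the noise upper bound to conclude that at least $(k-\ell)^2/2$ neighbors $y$ satisfy $H(y)-H(x) \ge \Delta_0/2 = \Omega(\lambda(\ell_{\min}+1)^{t-1}/k^t)$.
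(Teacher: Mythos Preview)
Your proposal is correct and follows essentially the same approach as the paper: decompose into signal plus noise, lower-bound the signal via \cref{ass:f} and convexity, compute the covariance structure of the $(Z_y)$, obtain an $O((k-\ell)/k)$ operator-norm bound, apply Gaussian concentration to $\|A_xW\|_2^2$, and finish with a Markov-type count. The only cosmetic differences are that the paper uses the Hanson--Wright inequality rather than Gaussian Lipschitz concentration (which are equivalent here, since both yield a tail of $\exp(-\Omega((k-\ell)\log p))$), and bounds $\|K^x\|_2$ by the max row sum $\|K^x\|_\infty$ rather than via your Kronecker-sum eigenvalue computation.

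One remark: the case split you anticipate for small $k-\ell$ is unnecessary. The crucial observation (which the paper makes explicit) is that the number of $x$ with overlap $\ell$ is at most $\binom{k}{\ell}\binom{p-k}{k-\ell} \le \exp(2(k-\ell)\log p)$, so the union-bound cost scales with $k-\ell$ in exactly the same way as the concentration exponent. Hence a single uniform union bound over all $\ell \in [\ell_{\min}, k-1]$ suffices without splitting.
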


\begin{proof}
Fix $x \in \mathcal{X}_{p,k}$, and suppose $|x \cap v^*| = \ell$ for some integer $\ell \le k-1$. 
Hence, we have $|y \cap v^*| = \ell+1$ for all $y \in \Gamma_{G, v^*}(x)$.
Observe from \eqref{eq:H-GAM} (the definition of $H$), that 
\begin{align*}
H(y) - H(x)
&= \lambda \left( f\left( \frac{\ip{y}{v^*}}{k} \right) - f\left( \frac{\ip{x}{v^*}}{k} \right) \right) + \ip{F\left( \frac{y}{\sqrt{k}} \right) - F\left( \frac{x}{\sqrt{k}} \right)}{W} \\
&= \lambda \left( f\left(\frac{\ell+1}{k}\right) - f\left(\frac{\ell}{k}\right) \right) + \ip{F\left( \frac{y}{\sqrt{k}} \right) - F\left( \frac{x}{\sqrt{k}} \right)}{W}.
\end{align*}
We deduce from \cref{ass:f} that
\begin{align*}
    f\left(\frac{\ell+1}{k}\right) - f\left(\frac{\ell}{k}\right)
    &\ge f\left(\frac{\ell_{\min} + 1}{k}\right) - f\left(\frac{\ell_{\min}}{k}\right) \\
    &= \int_{\frac{\ell_{\min}}{k}}^{\frac{\ell_{\min}+1}{k}} f'(s) \dif s \\
    &= \Omega\left( \int_{\frac{\ell_{\min}}{k}}^{\frac{\ell_{\min}+1}{k}} s^{t-1} \dif s \right) \\
    &= \Omega\left( \frac{(\ell_{\min}+1)^t - \ell_{\min}^t}{t k^t} \right) \\
    &= \Omega\left( \frac{(\ell_{\min}+1)^{t-1}}{k^t} \right),
\end{align*}
and hence
\begin{align*}
    \lambda \left( f\left(\frac{\ell+1}{k}\right) - f\left(\frac{\ell}{k}\right) \right)
    = \Omega\left( \frac{\lambda (\ell_{\min}+1)^{t-1}}{k^t} \right).
\end{align*}

It suffices to show that there are at least $\frac{1}{2}(k-\ell)^2$ vectors $y \in \Gamma_{G,v^*}(x)$ satisfying
\begin{align}\label{eq:second-term}
    \left| \ip{F\left( \frac{y}{\sqrt{k}} \right) - F\left( \frac{x}{\sqrt{k}} \right)}{W} \right| = O\left( \sqrt{ \frac{\log p}{k} } \right),
\end{align}
and the lemma then follows from the assumption $\lambda \ge C \frac{k^{t-\frac{1}{2}} }{(\ell_{\min}+1)^{t-1}} \sqrt{\log p} $ by taking a sufficiently large constant $C$.

Given $x \in \mathcal{X}_{p,k}$, 
for each $y \in \Gamma_{G,v^*}(x)$ let us define
\[
Z^x_y := \ip{F\left( \frac{y}{\sqrt{k}} \right) - F\left( \frac{x}{\sqrt{k}} \right)}{W}
\qquad\text{and}\qquad
Z^x := (Z^x_y)_{y \in \Gamma_{G,v^*}(x)}.
\]
We want to show that there exists at least half of $y \in \Gamma_{G, v^*}(x)$ that satisfies $|Z^x_y| = O\left( \sqrt{ (\log p)/k } \right)$.

Observe that each $Z^x_y$ is a Gaussian with mean zero and variance given by
\begin{align*}
    \Var\left( Z^x_y \right) = \E\left[ (Z^x_y)^2 \right] &= \norm{F\left( \frac{y}{\sqrt{k}} \right) - F\left( \frac{x}{\sqrt{k}} \right)}_2^2 \\
    &= f\left(\frac{\ip{y}{y}}{k}\right) - 2 f\left(\frac{\ip{x}{y}}{k}\right) + f\left(\frac{\ip{x}{x}}{k}\right) \\
    &= 2\left( f(1) - f\left( 1-\frac{1}{k} \right) \right) = O\left( \frac{1}{k} \right),
\end{align*}
where we use the convexity of $f$ and item (b) of \cref{ass:f}.
Thus, we have
\begin{align}\label{eq:z^x-norm-exp}
\E\left[ \norm{Z^x}_2^2 \right]
= \sum_{y \in \Gamma_{G,v^*}(x)} \E\left[ (Z^x_y)^2 \right] = O\left( \frac{(k-\ell)^2}{k} \right).
\end{align}
Furthermore, for any $y,y' \in \Gamma_{G,v^*}(x)$, the covariance of $Z^x_y$ and $Z^x_{y'}$ is given by
\begin{align*}
\Cov\left( Z^x_y, Z^x_{y'} \right)
&= \ip{F\left( \frac{y}{\sqrt{k}} \right) - F\left( \frac{x}{\sqrt{k}} \right)}{F\left( \frac{y'}{\sqrt{k}} \right) - F\left( \frac{x}{\sqrt{k}} \right)} \\
&= f\left(\frac{\ip{y}{y'}}{k}\right) - f\left(\frac{\ip{x}{y}}{k}\right) - f\left(\frac{\ip{x}{y'}}{k}\right) + f\left(\frac{\ip{x}{x}}{k}\right) \\
&= f\left(\frac{\ip{y}{y'}}{k}\right) - 2 f\left( 1-\frac{1}{k} \right) + f(1).
\end{align*}

Now fix some $y \in \Gamma_{G,v^*}(x)$. 
The number of $y' \in \Gamma_{G,v^*}(x)$ such that $\ip{y}{y'} = k-2$ is $(k-\ell-1)^2$, and for such $y'$ it holds
\begin{align*}
\left| \Cov\left( Z^x_y, Z^x_{y'} \right) \right|
&= f\left( 1-\frac{2}{k} \right) - 2 f\left( 1-\frac{1}{k} \right) + f(1) \\
&\le -\frac{1}{k} \cdot f'\left( 1-\frac{2}{k} \right) + \frac{1}{k} \cdot f'(1) \\
&= O\left( \frac{1}{k^2} \right),
\end{align*}
where we apply the convexity of $f$ and item (c) of \cref{ass:f}. 
Meanwhile, the number of $y' \in \Gamma_{G,v^*}(x)$ such that $\ip{y}{y'} = k-1$ is $2(k-\ell-1)$, and for such $w$ it holds
\begin{align*}
\left| \Cov\left( Z^x_y, Z^x_{y'} \right) \right|
= f(1) - f\left( 1-\frac{1}{k} \right)
\le \frac{1}{k} \cdot f'(1)
= O\left( \frac{1}{k} \right),
\end{align*}
by the convexity of $f$ and item (b) of \cref{ass:f}.
Finally, if $y'=y$ then $\ip{y}{y'} = k$ and
\begin{align*}
\left| \Cov\left( Z^x_y, Z^x_{y'} \right) \right|
= 2 f(1) - 2 f\left( 1-\frac{1}{k} \right) 
= O\left( \frac{1}{k} \right).
\end{align*}
This allows us to bound the operator and Frobenius norms of the covariance matrix $K^x := \Cov(Z^x)$ as follows: 
for the operator norm, 
\begin{align*}
\norm{K^x}_2
\le \norm{K^x}_\infty
&= \max_{y \in \Gamma_{G,v^*}(x)} \sum_{y' \in \Gamma_{G,v^*}(x)} \left| \Cov\left( Z^x_y, Z^x_{y'} \right) \right|\\
&= (k-\ell-1)^2 \cdot O\left( \frac{1}{k^2} \right) 
+ 2(k-\ell-1) \cdot O\left( \frac{1}{k} \right) 
+ O\left( \frac{1}{k} \right) \\
&= O \left( \frac{k-\ell}{k} \right);
\end{align*}
and for the Frobenius norm,
\begin{align*}
    \norm{K^x}_{\mathrm{F}}^2
    &= \sum_{y \in \Gamma_{G,v^*}(x)} \sum_{y' \in \Gamma_{G,v^*}(x)} \left| \Cov\left( Z^x_y, Z^x_{y'} \right) \right|^2 \\
    &= (k-\ell)^2 \left( (k-\ell-1)^2 \cdot O\left( \frac{1}{k^4} \right) + 2(k-\ell-1) \cdot O\left( \frac{1}{k^2} \right) + O\left( \frac{1}{k^2} \right) \right) \\
    &= O\left( \frac{(k-\ell)^3}{k^2} \right).
\end{align*}

It follows from the Hanson--Wright inequality \cref{lem:HW} below that, for any constant $a > 0$, we have
\begin{align*}
& \Pr\left( \norm{Z^x}_2^2 \ge \E\left[ \norm{Z^x}_2^2 \right] + \frac{a(k-\ell)^2 \log p}{k} \right) \\
\le{}&
\exp\left( - \Omega\left( \min\left\{ \frac{a^2(k-\ell)^4 \log^2 p}{k^2 \norm{K^x}_{\mathrm{F}}^2}, \frac{a(k-\ell)^2 \log p}{k \norm{K^x}_2} \right\} \right) \right) \\
={}& \exp\left( - \Omega\left( a (k-\ell) \log p \right) \right).
\end{align*} 
Notice that the total number of vectors in $\mathcal{X}_{p,k}$ with overlap size $\ell$ with $v^*$ is given by
\[
\binom{k}{\ell} \binom{p-k}{k-\ell}
\le k^{k-\ell} p^{k-\ell}
\le \exp(2(k-\ell) \log p).
\]
Therefore, by choosing a constant $a>0$ large enough and recalling \cref{eq:z^x-norm-exp}, it holds with high probability that for all $x \in \mathcal{X}_{p,k}$, 
\[
\norm{Z^x}_2^2 = \sum_{y \in \Gamma_{G,v^*}(x)} (Z^x_y)^2 = O\left( \frac{(k-\ell)^2 \log p}{k} \right).
\]

Assuming this event holds. 
Then for any $x \in \mathcal{X}_{p,k}$, 
there are at most half of $y \in \Gamma_{G,v^*}(x)$ (i.e., $\le (k-\ell)^2/2$ of them) satisfying
\[
(Z^x_y)^2 > \frac{2\norm{Z^x}_2^2}{(k-\ell)^2};
\]
in other words,
at least half of $y \in \Gamma_{G,v^*}(x)$ satisfies
\[
(Z^x_y)^2 \le \frac{2\norm{Z^x}_2^2}{(k-\ell)^2} = O\left( \frac{\log p}{k} \right).
\]
This establishes \cref{eq:second-term}, and the lemma then follows.
\end{proof}

\begin{lemma}[Hanson--Wright Inequality {\cite[Theorem 1.1]{RV13}}]
\label{lem:HW}
Let $Z \sim \mathcal{N}(0,I_n)$ be a standard normal random vector. Let $A \in \R^{m\times n}$ and $X = AZ \in \R^m$. 
Let $K = A A^\trans$ be the covariance matrix of $X$. 
Then for any $a > 0$ it holds
\begin{align*}
\Pr\left( \norm{X}_2^2 \ge \E\left[ \norm{X}_2^2 \right] + a \right) 
\le
\exp\left( - \Omega \left( \min\left\{ \frac{a^2}{\norm{K}_{\mathrm{F}}^2}, \frac{a}{\norm{K}_2} \right\} \right) \right).
\end{align*}
\end{lemma}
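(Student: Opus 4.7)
The statement is essentially a direct corollary of the classical Hanson--Wright inequality for quadratic forms of subgaussian vectors, so the plan is short and involves no substantive new work beyond a cosmetic identification of norms.

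First, I would rewrite the squared norm as a quadratic form. Setting $M := A^\trans A$, we have $\norm{X}_2^2 = \norm{AZ}_2^2 = Z^\trans M Z$, which is a quadratic form in the standard Gaussian vector $Z \sim \mathcal{N}(0, I_n)$, with $\E[Z^\trans M Z] = \tr(M)$. This reduces the claim to a tail bound on a Gaussian quadratic form.

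Next, I would apply the standard two-sided Hanson--Wright inequality (the full strength of Theorem 1.1 of \cite{RV13}, which is stated for general subgaussian vectors and in particular covers the Gaussian case) to $Z^\trans M Z$. This yields, for some absolute constant $c > 0$,
\begin{align*}
    \Pr\left( \left| Z^\trans M Z - \E[Z^\trans M Z] \right| \ge a \right)
    \le 2 \exp\left( -c \min\left\{ \frac{a^2}{\norm{M}_{\mathrm{F}}^2},\, \frac{a}{\norm{M}_2} \right\} \right).
\end{align*}
Discarding the lower tail gives a one-sided bound of exactly the shape stated in the lemma, but with the norms of $M = A^\trans A$ rather than the norms of $K = A A^\trans$.

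Finally, I would identify the two sets of norms using the singular value decomposition of $A$. Writing $A = U \Sigma V^\trans$, the matrices $A^\trans A = V \Sigma^\trans \Sigma V^\trans$ and $A A^\trans = U \Sigma \Sigma^\trans U^\trans$ share the same multiset of nonzero eigenvalues (the squared singular values of $A$), so $\norm{A^\trans A}_{\mathrm{F}} = \norm{A A^\trans}_{\mathrm{F}} = \norm{K}_{\mathrm{F}}$ and $\norm{A^\trans A}_2 = \norm{A A^\trans}_2 = \norm{K}_2$. Substituting this into the previous display yields the claimed inequality. There is no real obstacle in this proof; the only content beyond the citation is observing that the quadratic form $\|AZ\|_2^2$ is governed by the norms of either $A^\trans A$ or $A A^\trans$ interchangeably.
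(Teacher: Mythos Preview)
Your derivation is correct and is the standard way to extract the stated form from \cite[Theorem~1.1]{RV13}: rewrite $\norm{X}_2^2$ as the quadratic form $Z^\trans (A^\trans A) Z$, apply Hanson--Wright, and use that $A^\trans A$ and $K = AA^\trans$ share nonzero singular values so their Frobenius and operator norms coincide. The paper does not supply any argument here at all; it simply records the inequality as a citation, so there is nothing further to compare.
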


\subsubsection{Proof of \texorpdfstring{\cref{thm:alg-sparse-dense}}{thm:alg-sparse-dense}} 
We are now ready to prove \cref{thm:alg-sparse-dense}. 
We first consider the sparse case where $k=O(\sqrt{p \log p})$, where we directly apply \cref{thm:canonical} and \cref{lem:key}.
We need to show that \cref{assum:paths} is satisfied with high probability, and then the results follow from \cref{thm:canonical}.
By \cref{lem:key} with $\ell_{\min} = 0$, for every $x \in \mathcal{X}_{p,k}$ we pick an arbitrary $y \in \Gamma_{G,v^*}(x)$ such that
\begin{align*}
H(y) - H(x) 
= \Omega\left( \frac{\lambda}{k^t} \right),
\end{align*}
and set the parent of $x$ to be $y$; this gives a spanning tree with high probability. 
Note that this automatically establishes the second part of \cref{assum:paths} with $\delta = \Omega( \lambda/k^t )$. 
Observe that the diameter of the tree is at most $2k$, and the maximum degree of the graph $G_{p,k}$ is at most $kp$. 
Hence, \cref{assum:paths} follows.

To upper bound the running time, notice that
\begin{align*}
    \log |\mathcal{X}_{p,k}| = \log \binom{p}{k} = O(k \log(p/k)).
\end{align*}
Also recalling the definition of $H$, the range of $H$ is at most
\begin{align*}
    \mathcal{R}_H \le \lambda f\left( 1 \right) + 2 \max_{x \in \mathcal{X}_{p,k}} \left| \ip{F\left( \frac{x}{\sqrt{k}} \right)}{W} \right|.
\end{align*}
Note that for each $x \in \mathcal{X}_{p,k}$, the random variable $Z_x := \ip{F\left( \frac{x}{\sqrt{k}} \right)}{W}$ is a centered Gaussian with variance $f(1) = O(1)$. 
Thus, $\max_{x \in \mathcal{X}_{p,k}} |Z_x| = O(\sqrt{\log |\mathcal{X}_{p,k}|}) = O(\sqrt{k \log(p/k)})$ with high probability.
It follows that
\begin{align*}
    \mathcal{R}_H = O\left( \lambda + \sqrt{k \log(p/k)} \right) = O(\lambda).
\end{align*}
Therefore, we deduce from \cref{thm:canonical} that when $\beta \lambda = \Omega(k^t \log p)$, the number of iterations required is at most
\begin{align*}
    O\left( 2k \cdot kp \cdot (k \log(p/k) + \beta \lambda) \right) = O\left( \beta \lambda k^2 p \right)
\end{align*}
with high probability, and when $\beta \lambda = \Omega(k^{t+1} \log(p/k))$, it is at most
\begin{align*}
    O\left( kp \cdot \lambda \cdot \frac{k^t}{\lambda} \right) = O\left( k^{t+1} p \right)
\end{align*}
with high probability.

For the final statement of \cref{thm:alg-sparse-dense}, we note that by \cref{lem:key}, for any neighbor $y$ of $v^*$, it holds with high probability that $H(v^*)-H(y)=\Omega\left(\frac{\lambda}{k^t}\right)$. Therefore, once the Randomized Greedy algorithm (i.e. when $\beta=\infty$) hits $v^*$, it remains there for all future iterations with high probability. 

Next we consider the case where $k = \omega(\sqrt{p \log p})$.
The following lemma allows us to couple the evolution of the Metropolis chain together with the restricted Metropolis chain only on $x$'s whose overlap $\ip{x}{v^*} \ge \frac{k^2}{10p}$. 
Hence, any polynomial mixing time in the latter implies the same polynomial mixing time in the former.

\begin{lemma}\label{lem:escape}
    Under the assumptions of \cref{thm:alg-sparse-dense}, and suppose $k = \omega(\sqrt{p \log p})$, 
    if we initialize the Metropolis chain with a uniformly random $X_0 \in \mathcal{X}_{p,k}$, then with high probability it holds $|X_t \cap v^*| \geq \frac{k^2}{10p}$ for all $t \leq e^{O(k^2/p)}$.
\end{lemma}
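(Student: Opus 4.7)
The plan is a Lyapunov / supermartingale argument on the overlap $L_t := |X_t \cap v^*|$: once $L_t$ is above its typical (hypergeometric) value, the strong signal in the Hamiltonian together with the combinatorics of $G_{p,k}$ keep the chain comfortably above $k^2/(10p)$ for an exponentially long time. I would fix two thresholds $\ell_0 := \lfloor k^2/(10p) \rfloor$ and $\ell_1 := \lfloor k^2/(3p) \rfloor$. First, $L_0$ is hypergeometric with mean $k^2/p$, so a standard multiplicative Chernoff bound for sampling without replacement gives $P(L_0 \leq k^2/(2p)) \leq e^{-\Omega(k^2/p)} = o(1)$, since $k = \omega(\sqrt{p \log p})$ forces $k^2/p = \omega(\log p)$. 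Thus $L_0 > \ell_1$ with high probability.

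The main ingredient is a drift estimate from \cref{lem:key}. Applying it with $\ell_{\min} = \ell_0$, its hypothesis reduces to $\lambda = \Omega(p^{t-1}/k^{t-3/2}\sqrt{\log p})$, which matches the SNR assumption of \cref{thm:alg-sparse-dense} in the dense regime. On the resulting high-probability event over $(v^*, W)$, every $x$ with $\ell := L(x) \in [\ell_0, k-1]$ has at least $(k-\ell)^2/2$ upward neighbors $y \in \Gamma_{G,v^*}(x)$ with $H(y) > H(x)$, each of which the Metropolis chain accepts with probability one. Since $G_{p,k}$ is $k(p-k)$-regular, and upper-bounding downward acceptance trivially by one,
\begin{align*}
P_+ := P(L_{t+1} = \ell+1 \mid X_t = x) \geq \frac{(k-\ell)^2}{2 k(p-k)}, \qquad
P_- := P(L_{t+1} = \ell-1 \mid X_t = x) \leq \frac{\ell(p - 2k + \ell)}{k(p-k)}.
\end{align*}
A short calculation using $\ell \leq \ell_1 = k^2/(3p)$ and $k = o(p)$ yields the ratio bound $P_+ / P_- \geq 3/2$ uniformly over $\ell \in [\ell_0, \ell_1]$.

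I would then define the Lyapunov function $\phi(x) := e^{-\alpha L(x)}$ with $\alpha = \log(3/2)$; the drift bound implies $E[\phi(X_{t+1}) \mid X_t] \leq \phi(X_t)$ whenever $L(X_t) \in [\ell_0, \ell_1]$. Optional stopping applied to the exit time from $[\ell_0, \ell_1]$, started at $L = \ell_1$, then yields $P(\text{exit at } \ell_0 - 1) \leq e^{-\alpha(\ell_1 - \ell_0 + 1)} = e^{-\Omega(k^2/p)}$. Since $L_t$ changes by at most one per step, a chain starting at $L_0 > \ell_1$ can only reach $\ell_0 - 1$ by first hitting $\ell_1$ and beginning an excursion into $[\ell_0, \ell_1]$, and there are at most $T$ such excursions in $T$ steps. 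A union bound therefore gives $P(\exists\, t \leq T:\, L_t < \ell_0) \leq T \cdot e^{-\Omega(k^2/p)} + o(1)$, which is $o(1)$ for $T = e^{c k^2/p}$ with $c > 0$ sufficiently small.

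The main obstacle is the drift bound: the Metropolis acceptance probabilities are noise-dependent, so a priori many favorable proposals could be rejected and many unfavorable ones accepted. What saves the argument is the one-sided nature of the required inequalities: for the lower bound on $P_+$ one only needs the fraction of \emph{deterministically} accepted upward proposals, which is precisely what \cref{lem:key} provides, and for the upper bound on $P_-$ one simply uses acceptance $\leq 1$. Once this drift is in hand, the remaining steps — hypergeometric concentration, optional stopping, and the union bound over excursions — are routine.
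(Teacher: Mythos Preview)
Your proposal is correct and follows essentially the same approach as the paper: establish the drift bounds $P_+ \gtrsim k/p$ and $P_- \lesssim \ell/k$ on a short overlap interval above $k^2/(10p)$ using \cref{lem:key}, then conclude via a biased nearest-neighbor random walk argument that the chain stays above the lower threshold for $e^{\Omega(k^2/p)}$ steps. The paper simply invokes stochastic domination by a lazy $\pm 1$ walk with upward drift $\Omega(k/p)$ on the interval $[k^2/(10p),\,k^2/(8p)]$ and states the exponential exit bound, whereas you make this last step explicit via the exponential Lyapunov function $\phi(x)=e^{-\alpha L(x)}$ and optional stopping; the two are equivalent formulations of the same gambler's-ruin estimate.
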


\begin{proof}
Let $\ell_t = |X_t \cap v^*|$ for all $t\ge 0$.
Note that $\ell_0 = |X_0 \cap v^*| \ge \frac{9k^2}{10p}$ with high probability since $X_0$ is chosen uniformly at random.
Also observe that $\ell_{t+1} - \ell_t \in \{-1,0,1\}$ 
for any $t \ge 0$. We have from the update rules of the Metropolis chain that
\[\mathbb{P}(\ell_{t+1}=\ell_t-1) \leq \frac{\ell_t}{k},\]
and using \cref{lem:key},
\[\mathbb{P}\left( \ell_{t+1}=\ell_t+1 \;\Big\vert\; \ell_t \ge \frac{k^2}{10p} \right) \geq \frac{(k-\ell_t)^2}{2kp}.\] 
This means that on the interval $\ell_t \in [\frac{k^2}{10p}, \frac{k^2}{8p}]$, 
it holds
\begin{align*}
    \mathbb{P}\left( \ell_{t+1}=\ell_t-1 \;\Big\vert\; \frac{k^2}{10p} \le \ell_t \le \frac{k^2}{8p} \right) &\leq \frac{k}{8p}, \\
    \mathbb{P}\left( \ell_{t+1}=\ell_t+1 \;\Big\vert\; \frac{k^2}{10p} \le \ell_t \le \frac{k^2}{8p} \right) &\geq \frac{k}{4p}.
\end{align*}
Namely, $\ell_t$ stochastically dominates a lazy random walk with $\pm 1$ steps and upward drift $\Omega(k/p)$. For such a lazy random walk, the probability of it becoming $< \frac{k^2}{10p}$ when starting at $\frac{k^2}{8p}$ is exponentially small $e^{-\Omega(k^2/p)}$ when run for $e^{O(k^2/p)}$ steps. The lemma then follows.
\end{proof}

Suppose $k = \omega(\sqrt{p \log p})$.
Let $\mathcal{X}' \subseteq \mathcal{X}_{p,k}$ be those vectors whose overlap with $v^*$ is at least $\frac{k^2}{10p}$. 
We define the restricted Metropolis chain to be the Metropolis chain that does not leave $\mathcal{X}'$, i.e., when a vector from $\mathcal{X}_{p,k} \setminus \mathcal{X}'$ is proposed, the chain always rejects it (one can think of it as setting the Hamiltonian to be $-\infty$ outside $\mathcal{X}'$). 
If we could establish polynomial mixing for the restricted Metropolis chain, then the theorem would follow from the equivalence between the restricted Metropolis chain and the standard one as stated in \cref{lem:escape}.
To prove for the restricted chain, we apply \cref{lem:key} with $\ell_{\min} = \frac{k^2}{10p}$ and construct a spanning tree similarly as in the previous case $k = O(\sqrt{p \log p})$; in particular, we achieve
\begin{align*}
    \delta = \Omega\left( \frac{\lambda k^{t-2}}{p^{t-1}} \right)
\end{align*}
for the second part of \cref{assum:paths}.
Thus, by \cref{thm:canonical}, for $\beta \lambda = \Omega(\frac{p^{t-1}}{k^{t-2}} \log p)$ the number of iterations required is at most $O(\beta \lambda k^2 p)$

with high probability, and for $\beta \lambda = \Omega(\frac{p^{t-1}}{k^{t-3}} \log(p/k))$ it is at most
\begin{align*}
    O\left( kp \cdot \lambda \cdot \frac{p^{t-1}}{\lambda k^{t-2}} \right) = O\left( \frac{p^t}{k^{t-3}} \right)
\end{align*}
with high probability. Again, for the final statement of \cref{thm:alg-sparse-dense}, we note that for any neighbor $y$ of $v^*$, it holds with high probability that $H(v^*)-H(y)=\Omega\left(\frac{\lambda k^{t-2}}{p^{t-1}}\right)$. Therefore, once the Randomized Greedy algorithm (i.e. when $\beta=\infty$) hits $v^*$, it remains there for all future iterations with high probability. This completes the proof of \cref{thm:alg-sparse-dense}.

\subsection{Proof of Positive Results for Sparse Linear Regression}\label{sec:PosSRproof}
Here we prove \cref{thm:SRpos} under \cref{ass:sr}. First we show that \cref{ass:sr} implies \cref{assum:paths}.

\subsubsection{Verifying \texorpdfstring{\cref{assum:paths}}{assum:paths}}
In order to verify \cref{assum:paths} we present a Binary - Local Search Algorithm that is a binary version of the Local Search Algorithm in \cite{10.1214/21-AOS2130}. The proof of its convergence is essentially the same as for the Local Search Algorithm, except we only need to consider support recovery. We use $e_1,e_2,\dots$ to refer to the standard basis vectors in $\mathbb{R}^p$. We write $X_i$ to refer to the $i^{th}$ column of $X$.

\begin{algorithm}[H]
\caption{Binary - Local Search Algorithm (B-LSA)}
\KwIn{An exactly $k$-sparse binary vector $v$ with support $S$.}
\BlankLine

\Repeat{termination}{
    \For{$i \in S$ \textbf{and} $j \in [p]$}{
        $\mathrm{err}_{i}(j) \gets \|Y - Xv + X_i - X_j\|^2_2$\;
    }

    $(i_1, j_1) \gets \mathrm{argmin}_{i \in S, j \in [p]} \mathrm{err}_{i}(j)$\;

    \If{$\|Y - Xv + X_{i_1} - X_{j_1}\|_2^2 < \|Y - Xv\|_2^2$}{
        $v \gets v - e_{i_1} + e_{j_1}$\;
        $S\gets \mathrm{supp}(v)$\;
    }
    \Else{
        Terminate and output $v$\;
    }
}

\end{algorithm}

B-LSA looks at all the possible coordinate ``flips" we could make in one step, and chooses the one that minimises the squared error. It repeats until no local improvement is possible.
In the following lemma, we verify the second part of \cref{assum:paths} under \cref{ass:sr}, with $\delta = \frac{n}{4}$ by showing that B-LSA always reduces the squared error by a factor of at least order $n$ at each step.
\begin{lemma}\label{lem:SR_ver_assump2}
    Assume that \cref{ass:sr} holds. If $v \neq v^*$, then for any $v'$ obtained from $v$ in one iteration of B-LSA, with high probability it holds that 

    \[\lVert Y-Xv'\rVert^2_2 \leq \lVert Y-Xv\rVert^2_2-\frac{1}{4}n\]
\end{lemma}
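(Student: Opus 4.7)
Because B-LSA selects the swap $(i_1, j_1)$ that minimises $\|Y - X(v - e_{i_1} + e_{j_1})\|_2^2$, it suffices to exhibit \emph{one} pair $(i,j)$ with $i \in S \setminus S^*$ and $j \in S^* \setminus S$ (such a pair exists since $v \neq v^*$ and $\|v\|_0 = \|v^*\|_0 = k$) whose swap $v' := v - e_i + e_j$ achieves the claimed reduction; the algorithm's output then does at least as well. Here $S^* := \mathrm{supp}(v^*)$.

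First I would carry out the algebraic expansion. Using $Y = Xv^* + W$ and $h := v^* - v$, the identity $\|Y - Xv'\|_2^2 = \|Y - Xv + X_i - X_j\|_2^2$ together with $Xh = (X_j - X_i) + X\tilde h$ yields
\[
\|Y - Xv\|_2^2 - \|Y - Xv'\|_2^2 = \|X_j - X_i\|_2^2 + 2\langle X\tilde h + W,\, X_j - X_i\rangle,
\]
where $\tilde h := h + e_i - e_j$ is supported on $(S \triangle S^*) \setminus \{i,j\}$ with $\|\tilde h\|_2^2 = 2d - 2$ for $d := |S \setminus S^*|$. The first term is the ``signal'' ($\approx 2n$) and the second is the ``noise''; conditionally on $X_i, X_j$, the noise is a centred Gaussian with variance $(\|\tilde h\|_2^2 + \sigma^2) \|X_j - X_i\|_2^2 = O((d + \sigma^2) n)$.

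Next I would bound each term. The signal $\|X_j - X_i\|_2^2$ is a rescaled $\chi^2_n$ variable concentrating around $2n$, and a union bound over the $O(p^2)$ column pairs gives $\|X_j - X_i\|_2^2 \geq 7n/4$ for every pair with probability $1 - o(1)$. For the noise, a single fixed pair exceeds $n/8$ in magnitude with probability at most $\exp(-\Omega(n/(d + \sigma^2)))$; under \cref{ass:sr} this is $\exp(-\Omega(n/k))$. A direct union bound over the $\binom{p}{k}$ possible values of $v$ would then require $n = \Omega(k^2 \log(p/k))$, stronger than the hypothesis $n \geq C k \log(p/k)$. To close this gap, I would exploit the algorithm's freedom to choose among the $d^2$ candidate pairs in $(S \setminus S^*) \times (S^* \setminus S)$: the minimum of these $d^2$ correlated Gaussian noise terms is typically a factor of roughly $d$ smaller than any single one, recovering the desired $n = \Omega(k \log(p/k))$ scaling.

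The main technical obstacle is making this minimum-over-correlated-Gaussians argument rigorous and uniform over $v \neq v^*$. I expect this to follow from a conditional second-moment computation: condition on the columns $\{X_\ell : \ell \in S \triangle S^*\}$ (so that the noise becomes a quadratic form in the fixed columns $X_i, X_j$ with explicit variance) and use a Gaussian-comparison or second-moment bound to control the minimum noise over the $d^2$ pairs with probability $1 - o(\binom{p}{k}^{-1})$. Combining the signal lower bound with this minimum-noise upper bound and taking the union bound over $v$ then yields $\|Y - Xv\|_2^2 - \|Y - Xv'\|_2^2 \geq 7n/4 - n/4 \geq n/4$ with high probability, uniformly over $v \neq v^*$, completing the proof.
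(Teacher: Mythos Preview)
Your algebraic setup is correct, and you correctly identify the key obstacle: a naive union bound over all $v$ requires $n = \Omega(k^2 \log(p/k))$, which is too strong. You are also right that exploiting the $d^2$ available swaps is the way forward. However, the mechanism you propose has a genuine gap.

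The claim that ``the minimum of these $d^2$ correlated Gaussian noise terms is typically a factor of roughly $d$ smaller'' is not the right heuristic, and the vague appeal to a ``conditional second-moment computation'' or ``Gaussian comparison'' does not pin down an argument. What actually wins the factor of $d$ is not the \emph{minimum} but the \emph{average}: summing the improvements over all $d^2$ swaps yields
\[
\frac{1}{d^2}\sum_{i,j}\bigl(\|Y-Xv\|_2^2-\|Y-Xv'_{ij}\|_2^2\bigr)
=\frac{2}{d}\langle Y-Xv,\,Xh\rangle-\frac{1}{d^2}\sum_{i,j}\|X_j-X_i\|_2^2,
\]
whose dominant term $\tfrac{2}{d}\|Xh\|_2^2\approx 4n$ is controlled deterministically once $X$ satisfies $3k$-RIP with small constant; the residual noise $\tfrac{2}{d}\langle W,Xh\rangle$ then has standard deviation $O(\sigma\sqrt{n/d})$ rather than $O(\sqrt{dn})$, which is exactly the gain you need. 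This averaging is precisely what the paper packages, in a cleaner form, as the deterministic ``no $\tfrac14$-D.L.M.'' statement under RIP (Proposition~F.7 of \cite{GZ22_supp}).

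But averaging alone is still not enough: when $d$ is very small relative to $\sigma^2$ (concretely $2d\le\sigma^2/4$), the averaged noise $\tfrac{2}{d}\langle W,Xh\rangle$ can swamp the signal, and the number of such $v$ is too large for the residual concentration to beat a union bound. The paper closes this remaining case with a separate estimate (Proposition~F.8 of \cite{GZ22_supp}) that uses the full strength of Assumption~4.1 --- in particular the bound $\sigma^2\le c\,\log p/\log\log p$, which forces $d$ in this regime to be so small that the combinatorics become tractable. Your sketch neither isolates this small-$d$ regime nor uses Assumption~4.1 beyond $\sigma^2=O(k)$, so as written it cannot close.
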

The proof of \cref{lem:SR_ver_assump2} is deferred to \cref{sec:proofSR_ver_assump2}. In the next lemma, we verify the first part of \cref{assum:paths} with $D = O(k)$. 
\begin{lemma}\label{lem:SR_ver_assump1}
    Assume that \cref{ass:sr} holds. With high probability as $k\rightarrow\infty$, B-LSA with uniform random initialisation $v_0$ terminates in at most $O(k)$ iterations and outputs $v^*$.
    \begin{proof}
        By \cref{lem:SR_ver_assump2}, the only possible termination point is $v^*$, and since we decrease the error by at least $\frac{n}{4}$ at each step, we can take at most $\frac{4}{n}\lVert Y-Xv_0 \rVert_2^2$ total steps.
        
        It remains to show that with high probability $\frac{4}{n}\lVert Y-Xv_0 \rVert_2^2=O(k)$. Note that if $v_0$ satisfies $\ip{v}{v^*}=\ell$, then since
        \begin{align*}
            \lVert Y-Xv_0\rVert_2^2 &= \lVert Xv^*+W-Xv_0\rVert_2^2
        \end{align*} and for each $i$, $1\leq i\leq n$, we have 
        \begin{align*}
            \left(Xv^*+W-Xv_0\right)_i \sim N(0, 2(k-\ell)+\sigma^2)
        \end{align*}
        and so 
        \begin{align*}
            \lVert Y-Xv_0\rVert_2^2 \sim (2(k-\ell)+\sigma^2)\chi^2(n)
        \end{align*}
        We will now use the standard $\chi^2$ bound (for instance, page 1325 of \cite{LaurentMassart}) that\newline$\mathbb{P}(\chi^2(n)\geq n + 2\sqrt{nx}+2x)\leq e^{-x}$ for all $x>0$ for $x=n$ to have that the probability that 
        \begin{align*}
            \mathbb{P}\left(\frac{4}{n}\lVert Y-Xv_0\rVert_2^2 \geq 20\left(2(k-\ell)+\sigma^2\right)\right) \leq \exp(-n)=o(1).
        \end{align*}

        Note that since $\sigma^2 = o(k)$ by \cref{ass:sr}, we have that $20\left(2(k-\ell)+\sigma^2\right)\leq 50k$ for large enough $k$, and therefore
        \begin{align*}
            \mathbb{P}\left(\frac{4}{n}\lVert Y-Xv_0\rVert_2^2 \geq 50k\right)=o(1)
        \end{align*}
        as $n\rightarrow \infty$. Thus $\frac{4}{n}\lVert Y-Xv_0\rVert_2^2=O(k)$ with high probability as $k\rightarrow\infty$.
    \end{proof}
\end{lemma}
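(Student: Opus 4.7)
My plan is to combine Lemma~\ref{lem:SR_ver_assump2} with a concentration estimate on the initial squared residual $\|Y - Xv_0\|_2^2$. First, Lemma~\ref{lem:SR_ver_assump2} guarantees that with high probability every iteration of B-LSA started from some $v \neq v^*$ strictly decreases the squared error by at least $n/4$. Two consequences follow immediately: (i) the algorithm can only terminate at $v^*$, since at any other state a strictly improving single-flip neighbor exists and would be chosen by the greedy rule; and (ii) the total number of iterations $T$ before termination satisfies $T \cdot (n/4) \leq \|Y - Xv_0\|_2^2$, i.e.\ $T \leq \tfrac{4}{n}\|Y - Xv_0\|_2^2$.

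Second, I would bound this initial quantity in distribution. Writing $Y - Xv_0 = X(v^* - v_0) + W$ and conditioning on the overlap $\ell = \langle v_0, v^*\rangle$, the vector $v^* - v_0$ is a fixed vector of squared norm $2(k-\ell)$ with entries in $\{-1,0,1\}$. Since $X$ has i.i.d.\ standard Gaussian entries and $W \sim N(0, \sigma^2 I_n)$ is independent, the coordinates of $Y - Xv_0$ are i.i.d.\ centered Gaussians each with variance $2(k-\ell) + \sigma^2$. Hence $\|Y - Xv_0\|_2^2$ has the same distribution as $(2(k-\ell) + \sigma^2)\cdot \chi^2(n)$.

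The final step is to apply a standard chi-squared tail bound (e.g.\ Laurent--Massart), yielding $\chi^2(n) = O(n)$ with probability $1 - e^{-\Omega(n)}$. Combined with $\sigma^2 \leq ck$ from Assumption~\ref{ass:sr} and the trivial bound $\ell \leq k$, this gives $\|Y - Xv_0\|_2^2 = O(nk)$ with high probability. Dividing by $n/4$ produces the claimed bound $T = O(k)$, and by consequence (i) above the output must be $v^*$. I do not anticipate a serious obstacle: Lemma~\ref{lem:SR_ver_assump2} already carries the substantive difficulty, and the remaining chi-squared concentration is routine. The only minor point requiring care is verifying that the high-probability event of Lemma~\ref{lem:SR_ver_assump2} is over the randomness of $(X, W)$ and holds uniformly over all candidate states $v$, so no union bound across the $O(k)$ iterations is needed; this is natural given the quantifier ``for any $v'$ obtained from $v$'' in its statement.
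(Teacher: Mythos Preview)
Your proposal is correct and follows essentially the same approach as the paper: invoke Lemma~\ref{lem:SR_ver_assump2} to conclude that termination can only occur at $v^*$ and that the number of iterations is at most $\tfrac{4}{n}\|Y-Xv_0\|_2^2$, then identify $\|Y-Xv_0\|_2^2$ as $(2(k-\ell)+\sigma^2)\chi^2(n)$ and apply the Laurent--Massart tail bound together with $\sigma^2 \le ck$ to get the $O(k)$ bound. Your additional remark about the high-probability event of Lemma~\ref{lem:SR_ver_assump2} holding uniformly over all states is a useful clarification that the paper leaves implicit.
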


\begin{lemma}\label{lem:SR_Pi_min}
    We assume \cref{ass:sr} and $n \geq C k\log(p/k)$ for some $C>0$. Then with high probability as $k\rightarrow\infty$, it holds that 
    \[
    \mathcal{R}_H = O(kn)
    \]
    \begin{proof}
        As $H(v) \leq 0$ for all $v$, we can only focus on bounding $\min_v H(v)$, the lowest value of the Hamiltonian (i.e. the worst squared error). Each $H(v)$ can be directly checked to follow a $(2(k-\ip{v}{v^*} + \sigma^2)\chi^2_n$ distribution. We use a union bound to see that for $x = 2k\log\left(\frac{pe}{k}\right)$ it holds that for some $U \sim \chi^2(n)$,
        \begin{align*}
    \mathbb{P}\left(\max_v \lVert Y-Xv\rVert_2^2 \geq \left(n+2\sqrt{nx}+2x\right)(2k+\sigma^2)\right)
    &\leq \binom{p}{k}\mathbb{P}\left(U \geq n+2\sqrt{nx}+2x \right)\\
    &\leq \left(\frac{pe}{k}\right)^k e^{-x}=o(1).
\end{align*}
and thus we get that with high probability (and recalling that $\sigma^2 = o(k)$),
\begin{align*}
    \max_v \lVert Y-Xv\rVert_2^2 \leq \left(n+2\sqrt{2nk\log\left(\frac{pe}{k}\right)}+4k\log\left(\frac{pe}{k}\right)\right)(2k+\sigma^2) 
\end{align*}
Note that since $n=\Omega(k\log(p/k)$ and $\sigma^2 = o(k)$, all terms are $O(kn)$, and the result follows.
    \end{proof}
\end{lemma}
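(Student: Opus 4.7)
The plan is to bound $\mathcal{R}_H = \max_v H(v) - \min_v H(v)$ directly from the definition $H(v) = -\|Y-Xv\|_2^2$. Since $H(v) \leq 0$ for every $v \in \mathcal{X}_{p,k}$, we trivially have $\max_v H(v) \leq 0$, and $\min_v H(v) = -\max_{v \in \mathcal{X}_{p,k}} \|Y-Xv\|_2^2$. Hence the entire task reduces to producing a high-probability upper bound of order $kn$ on $\max_{v \in \mathcal{X}_{p,k}} \|Y-Xv\|_2^2$.

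Next, I would compute the marginal distribution of $\|Y-Xv\|_2^2$ for a fixed $v$. Writing $Y - Xv = X(v^*-v) + W$, and using the independence of $X$ and $W$ with the fact that $v^*-v$ has $\|v^*-v\|_2^2 = 2(k-\ell)$ where $\ell = \langle v, v^*\rangle$, each of the $n$ coordinates of $Y-Xv$ is an independent $N(0, 2(k-\ell)+\sigma^2)$ variable. Therefore $\|Y-Xv\|_2^2$ is distributed as $(2(k-\ell)+\sigma^2)\chi^2(n)$, and in particular is stochastically dominated by $(2k+\sigma^2)\chi^2(n)$.

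The next step is a union bound. Using the standard concentration inequality $\mathbb{P}(\chi^2(n) \geq n + 2\sqrt{nx} + 2x) \leq e^{-x}$ with $x = 2k\log(pe/k)$, and noting $|\mathcal{X}_{p,k}| = \binom{p}{k} \leq (pe/k)^k$, the union bound yields
\begin{equation*}
\mathbb{P}\!\left( \max_{v \in \mathcal{X}_{p,k}} \|Y-Xv\|_2^2 \geq (2k+\sigma^2)\bigl(n + 2\sqrt{2nk\log(pe/k)} + 4k\log(pe/k)\bigr) \right) \leq (pe/k)^k e^{-2k\log(pe/k)} = o(1).
\end{equation*}
On this good event, the first factor is $O(k)$ by Assumption 4.1 ($\sigma^2 \leq ck$). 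For the second factor, the hypothesis $n \geq Ck\log(p/k)$ gives $k\log(pe/k) = O(n)$ and hence $\sqrt{nk\log(pe/k)} = O(n)$ as well, so the second factor is $O(n)$. Multiplying, $\max_v \|Y-Xv\|_2^2 = O(kn)$, and therefore $\mathcal{R}_H = O(kn)$.

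This is essentially a routine tail-bound argument, so no step should pose a genuine obstacle. The only delicate point is the calibration of $x$ in the chi-squared tail bound: it must be chosen of order $k\log(p/k)$ so that $e^{-x}$ defeats the $(pe/k)^k$ union-bound factor, while the resulting deviation $(2k+\sigma^2)\bigl(n + 2\sqrt{nx}+2x\bigr)$ still simplifies to $O(kn)$. The sample-size assumption $n \geq Ck\log(p/k)$ is precisely what makes these two requirements simultaneously achievable.
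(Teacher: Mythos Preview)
Your proposal is correct and follows essentially the same approach as the paper: both reduce to bounding $\max_v \lVert Y-Xv\rVert_2^2$, identify the $(2(k-\ell)+\sigma^2)\chi^2(n)$ distribution, apply the Laurent--Massart tail bound with $x=2k\log(pe/k)$, and union bound over $\binom{p}{k}$ vectors to conclude $O(kn)$.
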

Therefore \cref{assum:paths} is satisfied with $D = O(k)$ and $\delta = \frac{n}{4}$, and $\mathcal{R}_H=O(kn)$.
We now have everything we need to prove \cref{thm:SRpos}.
\begin{proof}[Proof of \cref{thm:SRpos}]
    We note that in this case that $\Delta$, the maximum degree of the graph, is given by $\Delta = \max_{v \in \mathcal{X}_{p,k}} |\Gamma_G(v)| = k(p-k)$, as each $v$ has $k$ 1's and $(p-k)$ 0's. By \cref{lem:SR_ver_assump2}, \cref{lem:SR_ver_assump1}, and \cref{lem:SR_Pi_min}, we have that \cref{assum:paths} holds with $D = O(k)$, $\delta = \frac{n}{4}$, and $\mathcal{R}_H=O(kn)$. We also see that the cardinality of the state space $\mathcal{X}$ is given by $\binom{p}{k}$. We therefore apply the first two parts of \cref{thm:canonical} to see that if
    \[
    \beta \geq \frac{1}{\delta}(\log\Delta+2) = \frac{4}{n}\left(\log(k(p-k))+2\right) = \Omega\left(\frac{\log(kp)}{n}\right)
    \]
    then the chain mixes in at most
    \begin{align*}
        &O\left(D\Delta\left(\log \left|\mathcal{X}\right|+\beta\mathcal{R}_H\right)\right)\\
         = &O\left(k(k)(p-k)\left(\log\binom{p}{k}+\beta kn\right)\right)\\
         =&O\left(k^3 p\left(\log \left(\frac{pe}{k}\right)+\beta n\right)\right)
    \end{align*}
    iterations, where we use $\binom{p}{k}\leq \left(\frac{pe}{k}\right)^k$. This proves the moderate $\beta$ statement.
    For the large $\beta$ statement, we apply the final part of \cref{thm:canonical}. This part requires
    \begin{align*}
    \beta &\geq \frac{2}{\delta}\left(\log\left|\mathcal{X}\right|+\log\left(\frac{40\mathcal{R}_H}{\delta}\right)\right)\\
    &=\frac{8}{n}\left(k\log\left(\frac{pe}{k}\right)+\log\left(\frac{160\mathcal{{R}_H}}{n}\right)\right)
    \end{align*}
    Note that since $n\geq Ck\log(\frac{p}{k})$, $\frac{8}{n}k\log\left(\frac{pe}{k}\right)$ is at most $8/C$ plus an $o(1)$ term. For the second term: $\mathcal{R}_H=O(kn)$, so $\frac{8}{n}\log\left(\frac{160\mathcal{R}_H}{n}\right)=O\left(\frac{\log k}{n}\right)$, which is dominated by the first term since $n=\omega(k)$. Therefore the condition on $\beta$ is satisfied whenever $\beta$ is larger than $8/C$. It then follows by \cref{thm:canonical} that the Metropolis chain after 
    \begin{align*}
        O\left(\frac{\Delta \mathcal{R}_H}{\delta}\right) &= O\left(\frac{k(p-k)(kn)}{n}\right)=O\left(k^2 p\right)
    \end{align*}
    iterations is at $v^*$ with probability at least $2/3-o(1)$. Finally, we note that by \cref{lem:SR_ver_assump2} that for any neighbor $v$ of $v^*$, it holds with high probability that $H(v^*)-H(v)\geq n/4$. Therefore, with high probability, the Randomized Greedy algorithm (i.e. $\beta=\infty$) never leaves $v^*$ once it hits it.
\end{proof}

\section{Proofs of Negative Results for the Gaussian Additive Model}\label{sec:NegGAMproof}
Here we show the tightness of our positive results for the Gaussian Additive Model by proving \cref{thm:lb}, a superpolynomial lower bound on the mixing time for Sparse PCA. The general approach we follow for both this lower bound, and the lower bound in sparse regression that follows, is a careful implementation of the ``Overlap Gap Property for inference'' framework from \cite{10.1214/21-AOS2130}, which is inspired by statistical physics. This Overlap Gap Property framework has been shown in previous works (e.g., in \cite{gamarnik2021overlap, gamarnik2019landscape,arous2023free}) to directly imply mixing time lower bounds.

In this work though, to keep the presentation simple and focused on Markov chains, we avoid defining exactly what the Overlap Gap Property is, but follow only the proof steps required to derive directly the desired mixing lower bound. We direct the reader to \cite{arous2023free} for the more details on the underlying connections.

\subsection{Auxiliary Lemmas}

For the proof we first need some auxiliary lemmas.

We start with two folklore inequalities, see e.g., \cite{savage1962mills}.

\begin{lemma}\label{lem:mill}
Let $Z,Z_{\rho} \sim N(0,1)$ with covariance $\rho$. For large $x>0$ we have
\begin{align}\label{eq:mill_1}
    \mathbb{P}(Z \geq x) = \left( 1 + o_x(1) \right) \frac{1}{\sqrt{2 \pi} x} e^{-x^2/2},
\end{align}and also 
and
\begin{align}\label{eq:mill_2}
    \mathbb{P}(Z \ge x \land Z_{\rho}\geq x) \leq \frac{(1+\rho)^2}{2\pi \sqrt{1-\rho^2} x^2} e^{-x^2/(1+\rho)}.
\end{align}
\end{lemma}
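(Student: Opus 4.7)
The lemma consists of two classical Mills-ratio-type estimates, and my plan is to handle them separately using direct integral manipulations.

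For the first estimate \eqref{eq:mill_1}, the plan is to invoke the standard sandwich bound
\[
\frac{x}{\sqrt{2\pi}(x^2+1)}\, e^{-x^2/2} \;\le\; \PP(Z\ge x) \;\le\; \frac{1}{\sqrt{2\pi}\,x}\, e^{-x^2/2},
\]
which follows from two integrations by parts on $\int_x^\infty e^{-t^2/2}\,dt$ (upper bound: use $t/t \ge 1$ inside the integrand and integrate; lower bound: iterate the integration-by-parts identity once more). Taking $x\to\infty$ gives the sharp asymptotic.

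For the joint estimate \eqref{eq:mill_2}, my strategy is to work directly with the bivariate normal density
\[
f(z_1,z_2) \;=\; \frac{1}{2\pi\sqrt{1-\rho^2}}\,\exp\!\left(-\frac{z_1^2 - 2\rho z_1 z_2 + z_2^2}{2(1-\rho^2)}\right),
\]
shift the region of integration via $u = z_1 - x$, $v = z_2 - x$ (so $u,v\ge 0$), and expand the exponent. A routine but careful algebraic manipulation yields
\[
z_1^2 - 2\rho z_1 z_2 + z_2^2 \;=\; (u^2 + v^2 - 2\rho u v) \;+\; 2x(1-\rho)(u+v) \;+\; 2x^2(1-\rho),
\]
so that after dividing by $2(1-\rho^2)=2(1-\rho)(1+\rho)$ the exponent splits as
\[
-\frac{u^2 + v^2 - 2\rho u v}{2(1-\rho^2)} \;-\; \frac{x(u+v)}{1+\rho} \;-\; \frac{x^2}{1+\rho}.
\]

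The key observation is that the first quadratic $u^2+v^2-2\rho uv$ is nonnegative on $\{u,v\ge 0\}$ (for $\rho\le 1$), so bounding its exponential by $1$ gives an upper bound in which the remaining integral factorizes:
\[
\PP(Z\ge x,\ Z_\rho\ge x) \;\le\; \frac{1}{2\pi\sqrt{1-\rho^2}}\, e^{-x^2/(1+\rho)} \left(\int_0^\infty e^{-xu/(1+\rho)}\,du\right)^{\!2},
\]
and evaluating the product of exponential integrals as $\bigl((1+\rho)/x\bigr)^2$ yields exactly the claimed bound. I anticipate no real obstacle: the only step that requires care is the algebraic expansion of the exponent after the shift, which must be done precisely to see the clean split into a quadratic in $(u,v)$, a linear part driving the Gaussian-integral factorization, and the constant $-x^2/(1+\rho)$ that produces the advertised decay rate.
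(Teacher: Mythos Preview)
Your argument is correct. For \eqref{eq:mill_1} the sandwich bound and limit are standard and valid. For \eqref{eq:mill_2} your algebra checks out: after the shift $z_i=x+u_i$ the quadratic form splits exactly as you wrote, the first piece $u^2+v^2-2\rho uv=(u-\rho v)^2+(1-\rho^2)v^2$ is nonnegative for all $|\rho|\le 1$, and dropping it leaves a product of two exponential integrals equal to $((1+\rho)/x)^2$, giving the bound verbatim.

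The paper does not actually prove this lemma; it simply records both inequalities as folklore with a citation to Savage (1962). So your direct derivation is a self-contained elementary proof that the paper omits. The tradeoff is minor: the paper saves space by outsourcing the argument, while your approach makes the constants in \eqref{eq:mill_2} transparent --- in particular one sees that the $(1+\rho)^2$ prefactor comes purely from the two exponential integrals and that the exponent $x^2/(1+\rho)$ arises from the constant term after the shift, which is exactly what is needed when the bound is applied in the second-moment calculation later in the paper.
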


We also need the following lemma.

\begin{lemma}\label{lem:binom}
    For all $ 1 \leq k = o(p)$ and $k^2/p \ll \ell \ll k$, it holds for all large enough $p$ that
    \begin{align*}
    \log\left( \binom{k}{\ell}\binom{p-k}{k-\ell} \right) - \log \binom{p}{k}
    \leq - \frac{\ell}{2}.
    \end{align*}
\end{lemma}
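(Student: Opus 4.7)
The plan is to recognize the ratio $\binom{k}{\ell}\binom{p-k}{k-\ell}/\binom{p}{k}$ as the probability mass function of a hypergeometric random variable $\mathrm{Hypergeom}(p,k,k)$ evaluated at $\ell$. Its mean is $k^2/p$, and the hypothesis $\ell \gg k^2/p$ places $\ell$ well into the upper tail of this distribution, where we expect exponential decay in $\ell$.

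To make this quantitative, I would first use monotonicity to bound $\binom{p-k}{k-\ell} \leq \binom{p}{k-\ell}$, and then write the resulting ratio as a telescoping product
\[
\frac{\binom{p}{k-\ell}}{\binom{p}{k}} = \prod_{i=0}^{\ell-1}\frac{k-i}{p-k+\ell-i} \leq \left(\frac{k}{p-k+1}\right)^\ell,
\]
where each factor is bounded by noting that the numerator is at most $k$ and the denominator at least $p-k+1$. Combining this with the standard estimate $\binom{k}{\ell} \leq (ek/\ell)^\ell$ yields
\[
\frac{\binom{k}{\ell}\binom{p-k}{k-\ell}}{\binom{p}{k}} \leq \left(\frac{ek^2}{\ell(p-k+1)}\right)^\ell.
\]

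Finally, since $k=o(p)$ we have $p-k+1 \ge p/2$ for large $p$, so taking logarithms gives
\[
\log\left(\frac{\binom{k}{\ell}\binom{p-k}{k-\ell}}{\binom{p}{k}}\right) \leq -\ell\, \log\left(\frac{\ell p}{2ek^2}\right).
\]
Because $\ell p/k^2 \to \infty$ by the hypothesis $\ell \gg k^2/p$, the argument of the logarithm eventually exceeds $e^{1/2}$, so the right-hand side is at most $-\ell/2$ for all sufficiently large $p$, completing the proof. I do not anticipate any genuine obstacle: the argument is a short Stirling-free calculation, and the only care needed is to verify that both asymptotic hypotheses ($k=o(p)$ and $\ell \gg k^2/p$) enter at the correct steps, the former for replacing $p-k+1$ by $p/2$ and the latter for making $\log(\ell p/(2ek^2))$ at least $1/2$.
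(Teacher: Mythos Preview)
Your proof is correct and in fact a bit cleaner than the paper's. The paper does not use your monotonicity step $\binom{p-k}{k-\ell}\le\binom{p}{k-\ell}$; instead it compares directly to the hypergeometric mode by setting $q=\lfloor k^2/p\rfloor$, observing $\binom{p}{k}\ge\binom{k}{q}\binom{p-k}{k-q}$, and then telescoping
\[
\log\frac{\binom{k}{\ell}\binom{p-k}{k-\ell}}{\binom{k}{q}\binom{p-k}{k-q}}
=\sum_{m=q}^{\ell-1}\log\frac{(k-m)^2}{(m+1)(p-2k+m+1)}
\le\sum_{m=q}^{\ell-1}\Bigl(\frac{2(q+1)}{m+1}-1\Bigr),
\]
after which a harmonic-sum estimate and $q\ll\ell$ give $-\ell/2$. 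Your route avoids introducing $q$ entirely: the monotonicity step throws away just enough to make the telescoping one-line, at the cost of the harmless replacement $p-k\mapsto p$. Both arguments ultimately reduce to the same observation that the base of the exponential, roughly $k^2/(\ell p)$, tends to zero under the hypothesis $\ell\gg k^2/p$; yours gets there with fewer moving parts, while the paper's version keeps the hypergeometric structure more visible and would generalize more readily if one wanted sharper control near the mode. Note that neither proof actually uses the upper hypothesis $\ell\ll k$.
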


\begin{proof}
Let $q = \floor{k^2/p}$ denote the ``typical'' overlap of a random $k$-sparse binary vector $x$ with the hidden signal $v^*$.
Note that $q = 0$ if $k < \sqrt{p}$, and $q = o(k)$ since $k = o(p)$.
We will show that
\begin{align*}
    \log\left( \frac{ \binom{k}{\ell}\binom{p-k}{k-\ell}}{\binom{k}{q} \binom{p-k}{k-q}} \right) \leq - \frac{\ell}{2}.
\end{align*}
Since $\binom{p}{k} \ge \binom{k}{q} \binom{p-k}{k-q}$, the lemma then follows.
Observe that
\begin{align*}
    \log\left( \frac{ \binom{k}{\ell}\binom{p-k}{k-\ell}}{ \binom{k}{q} \binom{p-k}{k-q}} \right) 
    = \sum_{m=q}^{\ell-1} \log \left( \frac{ \binom{k}{m+1} \binom{p-k}{k-m-1} }{ \binom{k}{m} \binom{p-k}{k-m} } \right).
\end{align*}
For large enough $p$, for all $q \leq m \leq \ell-1$,

    \begin{align*}
        \frac{ \binom{k}{m+1} \binom{p-k}{k-m-1} }{ \binom{k}{m} \binom{p-k}{k-m} } =\frac{(k-m)^2}{(m+1)(p-2k+m+1)} 
        \le \frac{2k^2}{(m+1)p} \le \frac{2(q+1)}{m+1},
    \end{align*}where in the last inequality we used that $k \leq p/4$ for large enough $p$.
Hence,
\begin{align*}
    \log \left( \frac{ \binom{k}{m+1} \binom{p-k}{k-m-1} }{ \binom{k}{m} \binom{p-k}{k-m} } \right)
    \le \log\left( \frac{2(q+1)}{m+1} \right) \le \frac{2(q+1)}{m+1} - 1.
\end{align*}
It follows that
\begin{align*}
    \log\left( \frac{ \binom{k}{\ell}\binom{p-k}{k-\ell}}{ \binom{k}{q} \binom{p-k}{k-q}} \right) 
    &\le \sum_{m=q}^{\ell-1} \left( \frac{2(q+1)}{m+1} - 1 \right) \\
    &= - (\ell-q) + 2(q+1) \sum_{m=q}^{\ell-1} \frac{1}{m+1} \\
    &\le - \ell + q + 4(q+1) \log(\ell/(q+1)) \le - \frac{\ell}{2},
\end{align*}
where the last inequality follows from $q \ll \ell$ and thus $\log(\ell/(q+1)) \ll \ell/(q+1)$.
\end{proof}

\subsection{Proof Strategy}
We now explain the proof strategy we follow. For simplicity, we will use the notation of \eqref{eq:stpca} in this proof section, i.e. with
\begin{align}
    Y=\lambda (v^*)^{\otimes t}/k^{t/2}+W
\end{align}
where $W$ has i.i.d. $N(0,1)$ entries.

Let us consider the following key random functions (analogues of the key quantities defined in \cite{10.1214/21-AOS2130}) which control the maximum log-likelihood value that can be achieved by any binary $k$-sparse $x$ of overlap $l=0,1,\ldots,k$ with $v^*$:
\begin{align} 
\Gamma_\ell &:= \frac{1}{k^{t/2}} \max_{\langle x,v^*\rangle=\ell} \langle Y, x^{\otimes t} \rangle 
= \lambda \frac{\ell^t}{k^t} + \frac{1}{k^{t/2}} \max_{\langle x,v^*\rangle=\ell}\langle W, x^{\otimes t} \rangle; \\
\Gamma_{[0,\ell]} &:= \max_{0\le m \le \ell} \Gamma_{m}
\ge \frac{1}{k^{t/2}} \max_{0 \le \langle x,v^*\rangle \le \ell}\langle W, x^{\otimes t} \rangle.
\end{align}

In \cite{10.1214/21-AOS2130} the authors use the non-monotonicity of $\Gamma_\ell$ to obtain the MCMC lower bound. To achieve a similar result we need to understand the concentration properties of $\Gamma_\ell$.

The following upper bound holds for $\Gamma_{\ell}$ with high probability.
\begin{proposition}\label{prop:1mm} 
Suppose $k=o(p)$.
Then for any $\ell=o(k)$ and any sequence $A_p =\omega(1)$ it holds w.h.p.~that
    \begin{align}\label{eq:1mm} 
    \Gamma_\ell \leq 
    \lambda \frac{\ell^t}{k^t} + \sqrt{2 \log \left( \binom{k}{\ell} \binom{p-k}{k-\ell} \right) - \log \left(k\log(p/k)\right) +A_p}.
    \end{align}
\end{proposition}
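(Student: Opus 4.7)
The plan is to reduce the statement to a straightforward union bound over a family of standard Gaussians, and then check that the chosen threshold makes the union bound $o(1)$. To start, I would write
\[
\Gamma_\ell \;=\; \lambda\,\frac{\ell^t}{k^t} \;+\; \frac{1}{k^{t/2}}\,M_\ell, \qquad M_\ell := \max_{\ip{x}{v^*}=\ell}\ip{W}{x^{\otimes t}},
\]
so the target inequality is equivalent to $k^{-t/2}M_\ell \le \sqrt{2\log N - \log(k\log(p/k)) + A_p}$ w.h.p., where $N := \binom{k}{\ell}\binom{p-k}{k-\ell}$ is the number of $x\in\mathcal{X}_{p,k}$ with $\ip{x}{v^*}=\ell$. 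Conditional on $v^*$, each $Z_x := k^{-t/2}\ip{W}{x^{\otimes t}}$ is a standard normal, since $x$ is a $k$-sparse $0/1$-vector and hence $\|x^{\otimes t}\|_2^2 = k^t$.

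Setting $\tau^2 := 2\log N - \log(k\log(p/k)) + A_p$, I would combine a union bound over the $N$ candidates with the Mill's ratio upper tail bound in \cref{lem:mill}:
\[
\mathbb{P}\!\left(\max_x Z_x \ge \tau\right) \;\le\; N\cdot\frac{1+o(1)}{\sqrt{2\pi}\,\tau}\,e^{-\tau^2/2}.
\]
A direct calculation using the definition of $\tau$ gives the clean identity $Ne^{-\tau^2/2} = \sqrt{k\log(p/k)}\cdot e^{-A_p/2}$, so the right-hand side becomes $(1+o(1))\,\frac{\sqrt{k\log(p/k)}}{\sqrt{2\pi}\,\tau}\,e^{-A_p/2}$.

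What remains is the verification that $\sqrt{k\log(p/k)}/\tau = O(1)$, which would then bound the above by $O(e^{-A_p/2})=o(1)$ since $A_p=\omega(1)$. Using the elementary inequality $\binom{n}{r}\ge (n/r)^r$ with the assumptions $\ell=o(k)$ and $k=o(p)$,
\[
\log\binom{p-k}{k-\ell}\;\ge\;(k-\ell)\log\frac{p-k}{k-\ell}\;=\;(1-o(1))\,k\log(p/k),
\]
so $\log N\ge(1-o(1))\,k\log(p/k)$. The correction term $-\log(k\log(p/k))+A_p$ is then of strictly lower order, yielding $\tau^2=\Omega(k\log(p/k))$ and hence $\sqrt{k\log(p/k)}/\tau=O(1)$, as needed.

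The proof is mostly bookkeeping and I do not anticipate a serious obstacle, but the step that needs the most attention is the combinatorial lower bound $\log N \gtrsim k\log(p/k)$ used to absorb the $1/\tau$ Mill's correction. This is exactly where the joint hypothesis $\ell=o(k)$ and $k=o(p)$ gets used: without the first, the $(k-\ell)\log\frac{p-k}{k-\ell}$ factor is not $(1-o(1))\,k\log(p/k)$, and without the second, $(p-k)/(k-\ell)$ might not tend to infinity and $\binom{p-k}{k-\ell}$ would not be genuinely exponential in $k\log(p/k)$. Getting this uniformly over all admissible $\ell$ is the only non-routine point in the argument.
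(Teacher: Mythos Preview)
Your proposal is correct and is essentially identical to the paper's own proof: both reduce to a union bound over the $N=\binom{k}{\ell}\binom{p-k}{k-\ell}$ standard Gaussians $k^{-t/2}\ip{W}{x^{\otimes t}}$, apply the Mill's ratio bound of \cref{lem:mill}, and then absorb the $1/\tau$ correction using the combinatorial lower bound $\log\binom{p-k}{k-\ell}\ge (1-o(1))\,k\log(p/k)$ obtained from $\ell=o(k)$ and $k=o(p)$. The paper writes the same computation in slightly different form but the argument and the key step are the same.
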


\begin{proof}
    For any $x$, observe that $k^{-t/2} \langle W, x^{\otimes t} \rangle$ is a standard Gaussian.
    By a union bound and \eqref{eq:mill_1}, the probability that \eqref{eq:1mm} does not hold is at most 
    \begin{align*}
    & \frac{1+o(1)}{\sqrt{2\pi}} \frac{\exp\left( \frac{1}{2} \log \left(k\log(p/k)\right) - \frac{1}{2} A_p \right)}{\sqrt{2 \log \left( \binom{k}{\ell} \binom{p-k}{k-\ell} \right) - \log \left(k\log(p/k)\right) + A_p}} \\
={}& \frac{1+o(1)}{\sqrt{2\pi}} \sqrt{ \frac{k\log(p/k)}{2 \log \left( \binom{k}{\ell} \binom{p-k}{k-\ell} \right) - \log \left(k\log(p/k)\right) + A_p} } \exp\left( -\frac{1}{2} A_p \right)\\
={}& O\left( \exp\left( -\frac{1}{2} A_p \right) \right)
= o(1).
    \end{align*} For the second to last equality we used that since $\ell=o(k)$, it holds
    \[\log \left( \binom{k}{\ell}\binom{p-k}{k-\ell} \right) \geq \log \binom{p-k}{k-\ell} \geq (1-o(1)) k\log (p/k)\]
    This shows \cref{eq:1mm}.
\end{proof}

The following lower bound also holds with high probability.

\begin{proposition} \label{prop:2mm} 
Assume either that
\begin{enumerate}[(1)]
    \item $t=2$ and $k = o(p^{\alpha})$ where $\alpha \in (0,1)$ is some sufficiently small absolute constant;

    \item $t \ge 3$ and $k = o\left( p^{\frac{t-2}{t+2}} \right)$.
\end{enumerate}  
Then for any $\omega(k^2/p) = \ell = o(k)$ and any sequence $A_p =\omega(1)$ it holds w.h.p.~that
\begin{align*}
    \Gamma_{[0,\ell]} \geq \sqrt{2 \log \binom{p}{k} - \log \left(k\log(p/k)\right) - A_p}.
\end{align*}
\end{proposition}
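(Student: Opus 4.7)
The plan is to apply the second moment method, in the form of the Paley--Zygmund inequality, to the random count
\begin{align*}
N := \#\Bigl\{ x \in \mathcal{X}_{p,k} : 0 \le \langle x, v^*\rangle \le \ell,\ k^{-t/2}\langle W, x^{\otimes t}\rangle \ge \tau \Bigr\},
\end{align*}
where $\tau := \sqrt{2\log\binom{p}{k} - \log(k\log(p/k)) - A_p}$. Since $\Gamma_{[0,\ell]}$ dominates $k^{-t/2}\max\{\langle W, x^{\otimes t}\rangle : \langle x,v^*\rangle \le \ell\}$, proving $\mathbb{P}(N \ge 1) = 1-o(1)$ gives the proposition immediately.

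For the first moment, Mills' ratio \eqref{eq:mill_1} gives each indicator probability $(1+o(1))e^{-\tau^2/2}/(\sqrt{2\pi}\tau)$. A standard hypergeometric concentration argument, together with the assumption $\ell = \omega(k^2/p)$, shows that a $(1-o(1))$-fraction of $\mathcal{X}_{p,k}$ satisfies the overlap constraint, so substituting the explicit value of $\tau$ yields
\begin{align*}
\mathbb{E}[N] = (1+o(1))\frac{\sqrt{k\log(p/k)}}{\sqrt{2\pi}\,\tau}\,e^{A_p/2} = \Theta\bigl(e^{A_p/2}\bigr) \to \infty.
\end{align*}

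For the second moment, for any $x, y \in \mathcal{X}_{p,k}$ with $\langle x, y\rangle = m$, the pair $(k^{-t/2}\langle W, x^{\otimes t}\rangle, k^{-t/2}\langle W, y^{\otimes t}\rangle)$ is bivariate standard Gaussian with correlation $\rho_m = (m/k)^t$. Applying the bivariate tail bound \eqref{eq:mill_2} and partitioning pairs by their common overlap $m$ yields
\begin{align*}
\mathbb{E}[N^2] \le \sum_{m=0}^{k} C_m \cdot \frac{(1+\rho_m)^2}{2\pi\sqrt{1-\rho_m^2}\,\tau^2}\, e^{-\tau^2/(1+\rho_m)},
\end{align*}
where $C_m$ counts admissible pairs of common overlap $m$ and is bounded by $\binom{p}{k}\binom{k}{m}\binom{p-k}{k-m}$. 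The plan is to split this sum into three regimes. For small $m$ (say $m \le \epsilon k$) a Taylor expansion of $e^{-\tau^2/(1+\rho_m)}$ together with elementary binomial estimates matches $(\mathbb{E}[N])^2$ up to a $1+o(1)$ factor. For intermediate $m$, the sparsity hypotheses ($k \le p^{\alpha_0}$ when $t=2$ and $k = o(p^{(t-2)/(t+2)})$ when $t \ge 3$) are precisely calibrated so that the bound $\log C_m - 2\log\binom{p}{k} \le -m/2$ from \cref{lem:binom} dominates the growth of $e^{\tau^2 \rho_m}$, since $\tau^2 (m/k)^{t-1}/k < 1/2$ in this range. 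A direct optimization then gives an exponentially small contribution.

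The main obstacle is the high-$m$ regime where $\rho_m$ is close to $1$: the tail penalty $e^{-\tau^2/(1+\rho_m)}$ approaches $e^{-\tau^2/2}$, so even though $C_m$ is only polynomial in $p$, a single nearly-diagonal term threatens to swamp $(\mathbb{E}[N])^2$. To handle this I plan to adapt the \emph{flatness} technique of \cite{balister2019dense, gamarnik2019landscape} from random graph theory to Gaussian tensors: rather than counting $x$ with $k^{-t/2}\langle W, x^{\otimes t}\rangle \ge \tau$, replace the indicator by a narrow window event $\tau \le k^{-t/2}\langle W, x^{\otimes t}\rangle \le \tau + \eta$ for an appropriately chosen $\eta \to 0$. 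Such a window costs only a constant factor in the first moment, since the standard Gaussian density is essentially flat on a scale $\eta \ll 1/\tau$; but for pairs with large $m$ the conditional probability that $\langle W, y^{\otimes t}\rangle$ also lands in a width-$\eta$ window---given that $\langle W, x^{\otimes t}\rangle$ is already in it---is of order $\eta/\sqrt{1-\rho_m^2}$, which is exactly the slack needed to cancel the naive blow-up near the diagonal. Implementing this windowed second moment for general $t$-tensors (not just the $t=2$ matrix case of \cite{arous2023free}) and summing cleanly over $m$ is the principal technical hurdle; once done, Paley--Zygmund closes the argument.
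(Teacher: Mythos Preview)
Your first-moment computation and the overall second-moment strategy for small and moderate overlap are essentially what the paper does (though your appeal to \cref{lem:binom} for the intermediate range is not quite right: that lemma requires the overlap parameter to be $o(k)$, whereas ``intermediate'' $m$ here is $\Theta(k)$; the paper handles that range via direct estimates on $L_m, K_m$ in \cref{lem:low_ov-km,lem:low_ov-lm}). Note also that for $t\ge 3$ the vanilla second moment already closes with no conditioning at all---the sparsity hypothesis $k=o(p^{(t-2)/(t+2)})$ is exactly what makes $\sum_{m>k/2}L_mK_m=o(1)$---so the real obstacle is only the case $t=2$.

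The genuine gap is your treatment of the high-overlap regime. The ``narrow window'' trick does \emph{not} give any improvement: for a bivariate standard Gaussian pair with correlation $\rho$, one has
\[
\frac{\mathbb{P}\bigl(X,Y\in[\tau,\tau+\eta]\bigr)}{\mathbb{P}\bigl(X\in[\tau,\tau+\eta]\bigr)^2}
\;\approx\;\frac{\eta^2\,f_{X,Y}(\tau,\tau)}{\eta^2\,\phi(\tau)^2}
\;=\;\frac{1}{\sqrt{1-\rho^2}}\,e^{\tau^2\rho/(1+\rho)},
\]
which is the \emph{same} ratio (up to a bounded factor) as for the tail event $\{X\ge\tau,Y\ge\tau\}$; the $\eta$'s cancel, so there is no extra slack. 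Your sentence ``the conditional probability\dots is of order $\eta/\sqrt{1-\rho_m^2}$, which is exactly the slack needed'' overlooks that the unconditional window probability also scales like $\eta$. Moreover, this is \emph{not} what ``flatness'' means in \cite{balister2019dense,gamarnik2019landscape}. The paper's flatness condition is structural: $x$ is flat if every $\ell$-subset $s\subset x$ satisfies $|s^\top W s-(\ell^2/k^2)x^\top W x|\le 2\sqrt{(k^2-\ell^2)\log\binom{k}{\ell}}$. This event is \emph{independent} of $x^\top W x$ and holds w.h.p. The payoff is a genuine decoupling for high overlap: writing $s=x\cap y$ and $Z_0=s^\top Ws$, $Z_1=x^\top Wx-Z_0$, $Z_2=y^\top Wy-Z_0$, flatness of $y$ forces $Z_2\ge \tfrac{k^2-m^2}{k}\tau-E$ on its own; since $Z_0,Z_1,Z_2$ are \emph{independent} Gaussians, one gets the product bound $\mathbb{P}(Z_0+Z_1\ge k\tau)\cdot\mathbb{P}(Z_2\ge\cdots)$, which is what kills the near-diagonal terms. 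Your window argument never accesses this independence and cannot close the $t=2$ case.
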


Using \cref{prop:1mm,prop:2mm} we conclude that for any $q \ll \ell \ll k$, it holds w.h.p.~for any $1 \ll A_p \ll \ell$ that
    \begin{align*}
        & \Gamma_\ell - \Gamma_{[0,\ell]} - \lambda \frac{\ell^t}{k^t}\\
        \leq{}&  \sqrt{2 \log \left( \binom{k}{\ell} \binom{p-k}{k-\ell} \right) - \log \left(k\log(p/k)\right) +A_p} \\
        & - \sqrt{2 \log \binom{p}{k} - \log \left(k\log(p/k)\right) -A_p} \\
        \le{}& \frac{ 2 \log \left( \binom{k}{\ell} \binom{p-k}{k-\ell} \right) - 2 \log \binom{p}{k} + 2 A_p}{\sqrt{2 \log \left( \binom{k}{\ell} \binom{p-k}{k-\ell} \right) - \log \left(k\log(p/k)\right) +A_p} + \sqrt{2 \log \binom{p}{k} - \log \left(k\log(p/k)\right) -A_p}}\\
        \le{}& - \frac{\ell}{16\sqrt{k \log(p/k)}},
    \end{align*}where we have used \cref{lem:binom} for the last inequality.

We choose
\begin{align*}
    \ell=\ell^*:=\floor{\left( \frac{k^{t-\frac{1}{2}}}{32 \lambda \sqrt{\log(p/k)}} \right)^{\frac{1}{t-1}}}
\end{align*}
which satisfies that $k^2/p \ll \ell^* \ll k$, and conclude by direct inspection that
\begin{align}\label{eq:gap}
    \Gamma_{\ell^*}-\Gamma_{[0,\ell^*]} = -\Omega\left( \left( \frac{k}{\lambda^{2/t} \log(n/k)} \right)^{\frac{t}{2(t-1)}} \right),
\end{align}and in particular $\Gamma_\ell$ must be \emph{strictly decreasing} for some $\ell \in [0,\ell^*]$. We remark this is the key property which leads to the presence of the Ovelap Gap Property as in \cite{10.1214/21-AOS2130}.

As we discussed, we now bypass the details of Overlap Gap Property, and focus directly on the implied bottleneck. For $\ell=\ell^* $ let $\mathcal{X}_{p,k}^{[0,\ell]}$ denote the set of $p$-dimensional $k$-sparse binary vector $x$ with $\langle x,v^* \rangle  \leq \ell$, and $\mathcal{X}_{p,k}^\ell$ be the set of those with $\langle x,v^* \rangle = \ell$ which is the inner boundary of $\mathcal{X}_{p,k}^{[0,\ell]}$. 
It is easy to check that 
\[
\frac{\pi_{\beta} \left( \mathcal{X}_{p,k}^\ell \right)}{\pi_{\beta} \left( \mathcal{X}_{p,k}^{[0,\ell]} \right)} 
\leq \binom{p}{k} \exp\left( \beta\left( \Gamma_\ell - \Gamma_{[0,\ell]} \right) \right),
\]
which using \eqref{eq:gap} implies 
\[
\frac{\pi_{\beta} \left( \mathcal{X}_{p,k}^\ell \right)}{\pi_{\beta} \left( \mathcal{X}_{p,k}^{[0,\ell]} \right)} = 
\exp\left( k\log (e p/k) - \Omega\left( \beta \left( \frac{k}{\lambda^{2/t} \log(p/k)} \right)^{\frac{t}{2(t-1)}} \right) \right).
\]
\cref{thm:lb} then follows from \cite[Claim 2.1]{MWW09} (see also \cite[Proposition 2.2]{WellsCOLT20}).

The rest of this section aims to prove \cref{prop:2mm}. The proof argument follows from two separate delicate applications of the second moment method of independent interest.
In \cref{sec:lb-t>=3}, we present calculations for the first and second moments and establish \cref{prop:2mm} when $t \ge 3$.
In \cref{sec:lb-t=2} we consider the case $t=2$ where a more sophisticated conditional second moment is required.

\subsection{The case \texorpdfstring{$t \ge 3$}{t >= 3}}
\label{sec:lb-t>=3}

We now proceed with proving the required lower bounds on $\Gamma_\ell$. 

\subsubsection{First and second moments calculation}
Fix some $v^* \in \mathcal{X}_{p,k}$.
Recall that $\mathcal{X}_{p,k}^{[0,\ell]}$ is the set of all $p$-dimensional $k$-sparse binary vectors with overlap at most $\ell$ with $v^*$.
We are interested in the number of those $x$'s such that
    \begin{align}\label{eq:goal_1}
   \frac{1}{k^{t/2}} \langle W,x^{\otimes t} \rangle \geq \sqrt{2 \log \binom{p}{k} - \log \left(k\log(p/k)\right) - A_p}.
    \end{align}
Define the random subset (determined by the Gaussian noise $W$)
\begin{align*}
    \mathcal{E} = \left\{x \in \mathcal{X}_{p,k}: \text{$x$ satisfies \eqref{eq:goal_1}} \right\},
\end{align*}
and let $Z = \left| \mathcal{E} \cap \mathcal{X}_{p,k}^{[0,\ell]} \right|$.

\begin{lemma}\label{lem:2nd-moment}
Suppose $x,y$ are chosen uniformly at random from $\mathcal{X}_{p,k}$, and let $\mathbb{P}$ denote the joint distribution of $x,y$ and the Gaussian noise $W$.
For any $\omega(k^2/p)=\ell=o(k)$, 
we have
\begin{align}\label{eq:1st-moment}
    \mathbb{E}[Z] = (1 - o(1)) \binom{p}{k} \mathbb{P}(x \in \mathcal{E}),
\end{align}
and
\begin{align}\label{eq:2nd-moment}
    \frac{\mathbb{E}[Z^2]}{(\mathbb{E}[Z])^2} 
    &= (1\pm o(1)) \frac{\mathbb{P}(x \in \mathcal{E} \land y \in \mathcal{E})}{\mathbb{P}(x \in \mathcal{E})^2}.
\end{align}
\end{lemma}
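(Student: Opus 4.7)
The plan is to prove both identities by decoupling the noise event $x \in \mathcal{E}$ from the overlap constraint $A_x := \{\langle x, v^*\rangle \le \ell\}$, exploiting two symmetries: (i) for fixed $x \in \mathcal{X}_{p,k}$, the scalar $k^{-t/2}\langle W, x^{\otimes t}\rangle$ is standard Gaussian (since $\|x^{\otimes t}\|_2^2 = k^t$), so $\mathbb{P}(x \in \mathcal{E}\mid x)$ is a constant $q$ independent of $x$; and (ii) for fixed $x$, the distribution of $\langle x, y\rangle$ under uniform $y$ is hypergeometric $\mathrm{Hyp}(p,k,k)$, independent of $x$.

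For \cref{eq:1st-moment}, I would write $\mathbb{E}[Z] = \sum_{x \in \mathcal{X}_{p,k}^{[0,\ell]}} \mathbb{P}(x \in \mathcal{E}) = |\mathcal{X}_{p,k}^{[0,\ell]}| \cdot q$ by symmetry (i), and observe that $q$ equals $\mathbb{P}(x \in \mathcal{E})$ in the sense of the lemma (with $x$ uniform on $\mathcal{X}_{p,k}$). Thus it suffices to show $|\mathcal{X}_{p,k}^{[0,\ell]}|/\binom{p}{k} = 1 - o(1)$, equivalently $\mathbb{P}(A_x^c) = o(1)$ for uniform $x$. Since $\langle x, v^*\rangle \sim \mathrm{Hyp}(p,k,k)$ has mean $k^2/p$ and $\ell = \omega(k^2/p)$ by hypothesis, Markov's inequality gives $\mathbb{P}(\langle x, v^*\rangle > \ell) \le (k^2/p)/\ell = o(1)$.

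For \cref{eq:2nd-moment}, the identity $\mathbb{E}[Z^2] = \binom{p}{k}^2 \mathbb{E}_{x,y}\bigl[\mathbb{I}(A_x)\mathbb{I}(A_y) G(x,y)\bigr]$ holds, where $G(x,y) := \mathbb{P}(x, y \in \mathcal{E}\mid x,y)$; note that conditionally on $(x,y)$, the pair $(k^{-t/2}\langle W, x^{\otimes t}\rangle, k^{-t/2}\langle W, y^{\otimes t}\rangle)$ is bivariate Gaussian with correlation $\rho = (\langle x, y\rangle/k)^t$, so $G(x,y)$ depends on $(x,y)$ only through $\langle x,y\rangle$. The target quantity in the statement is $\binom{p}{k}^2 \mathbb{E}_{x,y}[G(x,y)]$, and the discrepancy is controlled by $\mathbb{E}_{x,y}[\mathbb{I}(A_x^c \cup A_y^c) G(x,y)]$. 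Now for each fixed $x$, the inner expectation
\[
\mathbb{E}_y[G(x,y)] = \sum_{m=0}^{k} \mathbb{P}(\langle x, y\rangle = m)\,\mathbb{P}\!\left(Z_1 \ge \tau,\, Z_2 \ge \tau \mid \rho = (m/k)^t\right)
\]
is a quantity $C$ independent of $x$ by symmetry (ii). Therefore $\mathbb{E}_{x,y}[\mathbb{I}(A_x^c) G(x,y)] = \mathbb{P}(A_x^c)\cdot C = o(1)\cdot \mathbb{E}_{x,y}[G(x,y)]$, and analogously for $A_y^c$. A union bound yields $\mathbb{E}_{x,y}[\mathbb{I}(A_x \cap A_y) G(x,y)] = (1 \pm o(1))\mathbb{E}_{x,y}[G(x,y)]$, and dividing by $\mathbb{E}[Z]^2 = (1-o(1))^2 \binom{p}{k}^2 q^2$ delivers \cref{eq:2nd-moment}.

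I do not anticipate a substantive obstacle in this lemma itself: the two invariance observations essentially trivialize the decoupling, and no genuine concentration beyond Markov is required. The real technical work lies downstream, in \cref{prop:2mm} proper, where the ratio $\mathbb{P}(x,y \in \mathcal{E})/\mathbb{P}(x \in \mathcal{E})^2$ must be shown to be $1 + o(1)$ via a delicate estimate on the bivariate Gaussian tail weighted by the hypergeometric overlap law—this is where the regime assumptions on $k$ (and the $t=2$ versus $t \ge 3$ dichotomy) will enter, presumably via the Mill's ratio bound \cref{eq:mill_2} combined with the ``flatness'' ideas foreshadowed in the paper's introduction.
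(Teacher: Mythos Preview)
Your proposal is correct and essentially the same as the paper's proof. The only cosmetic difference is that the paper chooses to additionally randomize $v^*$ uniformly over $\mathcal{X}_{p,k}$, which turns your observation ``$\mathbb{E}_y[G(x,y)]$ is constant in $x$'' into the one-line statement ``the event $\{\ip{x}{v^*}>\ell\}$ is independent of $\{x,y\in\mathcal{E}\}$''; both are the same symmetry, just packaged differently.
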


\begin{proof}
Instead of fixing some $v^* \in \mathcal{X}_{p,k}$ beforehand, we think of the joint measure $\mathbb{P}$ as first generating the Gaussian noise $W$ and independently two binary vectors $x,y \in \mathcal{X}_{p,k}$ uniformly at random, and next generating $v^* \in \mathcal{X}_{p,k}$ independently and uniformly at random.
By linearity of expectation we deduce that
\begin{align*}
    \E[Z] &= \sum_{x \in \mathcal{X}_{p,k}^{[0,\ell]}} \Pr\left(x \in \mathcal{E}\right) \\
    &= |\mathcal{X}_{p,k}| \; \mathbb{P}\left( \ip{x}{v^*} \le \ell \land x \in \mathcal{E}\right) \\
    &= |\mathcal{X}_{p,k}| \; \mathbb{P}\left(\ip{x}{v^*} \le \ell\right) \; \mathbb{P}\left(x \in \mathcal{E}\right) \\
    &= (1-o(1)) |\mathcal{X}_{p,k}| \; \mathbb{P}\left( x \in \mathcal{E}\right),
\end{align*}
where the second to last equality is because the two events $\ip{x}{v^*} \le \ell$ and $x$ satisfying \eqref{eq:goal_1} are independent, and the last is due to $\mathbb{P}(\ip{x}{v^*} > \ell) = o(1)$ whenever $\ell \gg k^2/p$.

Similarly, by linearity of expectation we have for the second moment that
\begin{align*}
    \E[Z^2] &= \sum_{x,y \in \mathcal{X}_{p,k}^{[0,\ell]}} \Pr\left( x \in \mathcal{E} \land y \in \mathcal{E} \right) \\
    &= |\mathcal{X}_{p,k}|^2 \; \mathbb{P}\left( \ip{x}{v^*} \le \ell \land \ip{y}{v^*} \le \ell \land x \in \mathcal{E} \land y \in \mathcal{E} \right) \\
    &= |\mathcal{X}_{p,k}|^2 \; \mathbb{P}\left( \ip{x}{v^*} \le \ell \land \ip{y}{v^*} \le \ell \mid x \in \mathcal{E} \land y \in \mathcal{E} \right) \; \mathbb{P}\left( x \in \mathcal{E} \land y \in \mathcal{E} \right).
\end{align*}
Observe that
\begin{align*}
    &\mathbb{P}\left( \ip{x}{v^*} > \ell \lor \ip{y}{v^*} > \ell \mid x \in \mathcal{E} \land y \in \mathcal{E} \right)\\
    \le &\mathbb{P}\left( \ip{x}{v^*} > \ell \mid x \in \mathcal{E} \land y \in \mathcal{E} \right)
    + \mathbb{P}\left( \ip{y}{v^*} > \ell \mid x \in \mathcal{E} \land y \in \mathcal{E} \right) \\
    = &\mathbb{P}\left( \ip{x}{v^*} > \ell \right)
    + \mathbb{P}\left( \ip{y}{v^*} > \ell \right)
    = o(1),
\end{align*}
where the equality in the second line is similarly because the two events $\ip{x}{v^*} > \ell$ and both $x,y$ satisfying \eqref{eq:goal_1} are independent, and the same for $y$. 
Therefore, we conclude that
\begin{align*}
    \E[Z^2] = (1-o(1)) |\mathcal{X}_{p,k}|^2 \; \mathbb{P}\left( x \in \mathcal{E} \land y \in \mathcal{E} \right).
\end{align*}
The lemma then follows.
\end{proof}

\subsubsection{Two technical lemmas}
For any $t\geq 3$ and $m=0,1,\ldots,k-1$ we define
\begin{align}\label{eq:km}
K_m
:= \sqrt{ \frac{(1+m^t/k^t)^3}{1-m^{t}/k^{t}} } \exp\left( -\frac{m^t}{k^t+m^t} \log k \right).
\end{align}
The following technical lemma holds.
\begin{lemma}\label{lem:low_ov-km}
Let $t\geq 2$ and $K_m$ defined in \eqref{eq:km}. Then 
for all sufficiently large $k$ we have
\begin{align*}
    K_{m} &\le 1, \quad \text{for $0\le m \le \floor{k/2}$}; \\
    K_{m} &= O(1), \quad \text{for $\floor{k/2}+1 \le m \le k-1$}.
\end{align*}
\end{lemma}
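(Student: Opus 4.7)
The plan is to substitute $u = m^t/k^t \in [0,1)$, so that directly from \eqref{eq:km}
\[
\log K_m = \frac{3}{2}\log(1+u) + \frac{1}{2}\log\frac{1}{1-u} - \frac{u}{1+u}\log k,
\]
and analyze the two regimes $0 \le m \le \floor{k/2}$ and $\floor{k/2}+1 \le m \le k-1$ separately. The case $m=0$ gives $K_0 = 1$ directly.

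For the first regime, since $t \ge 2$ we have $u \le 2^{-t} \le 1/4$. I plan to apply the elementary bounds $\log(1+u) \le u$, $-\log(1-u) \le u/(1-u) \le 4u/3$ (valid on $u \le 1/4$), and $\frac{u}{1+u} \ge u/2$ (valid on $u \le 1$) to obtain $\log K_m \le u\left(\frac{13}{6} - \frac{\log k}{2}\right)$, which is nonpositive as soon as $\log k \ge 13/3$. This gives $K_m \le 1$ for all $k$ sufficiently large.

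For the second regime, the key step is to prove convexity of $h(u) := \frac{1}{2}\log\frac{1}{1-u} - \frac{u\log k}{1+u}$ on $[0,1)$: a direct calculation yields $h''(u) = \frac{1}{2(1-u)^2} + \frac{2\log k}{(1+u)^3} > 0$. Consequently, the maximum of $h$ on our subinterval $u \in [(1/2+1/k)^t,(1-1/k)^t]$ is attained at one of its endpoints. At the left endpoint $u \approx 2^{-t}$, the term $-\frac{u}{1+u}\log k$ dominates and forces $h \to -\infty$. At the right endpoint $u = (1-1/k)^t$, I will sandwich $1-u$ between $t/(2k)$ and $t/k$ by combining Bernoulli's inequality $(1-1/k)^t \ge 1 - t/k$ with $(1-1/k)^t \le e^{-t/k}$ and the bound $1 - e^{-x} \ge x/2$ on $x \in [0,1]$. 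This yields $\frac{1}{2}\log\frac{1}{1-u} \le \frac{1}{2}\log k + O(1)$, while $\frac{u}{1+u} \ge \frac{1}{2} - \frac{1-u}{2} \ge \frac{1}{2} - \frac{t}{2k}$ gives $\frac{u\log k}{1+u} \ge \frac{\log k}{2} - O(1)$. The two $\frac{\log k}{2}$ contributions cancel and leave $h((1-1/k)^t) = O(1)$; combined with $\frac{3}{2}\log(1+u) \le \frac{3}{2}\log 2$, this proves $K_m = O(1)$.

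The main technical obstacle is this cancellation: each of the two surviving terms of $h$ grows like $\frac{\log k}{2}$ as $u \to 1$, so establishing boundedness requires carefully matching constants via the two-sided Bernoulli sandwich on $1-(1-1/k)^t$. The convexity of $h$ is what reduces the second regime to a single endpoint evaluation, avoiding the need to locate and estimate an interior critical point of $\log K_m$.
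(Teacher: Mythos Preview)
Your proof is correct. The first regime ($0\le m\le\lfloor k/2\rfloor$) is essentially the paper's argument: both of you exponentiate the same elementary bounds $\log(1+u)\le u$, $-\log(1-u)\le u/(1-u)$, and $u/(1+u)\ge u/2$ to reach an inequality of the form $\log K_m\le u(\text{const}-c\log k)$, which is $\le 0$ for large $k$.

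In the second regime the two arguments diverge. The paper makes the substitution $y=k(1-\gamma^t)$ (i.e.\ $y=k(1-u)$), rewrites the exponent as $\frac{y}{2k-y}\log k-\log y\le \frac{y}{k}\log k-\log y$, and then splits into $1\le y\le 3$ (where $\frac{y}{k}\log k=o(1)$) and $3<y\le k$ (where monotonicity of $(\log y)/y$ forces the expression $\le 0$). You instead observe that $h(u)=\tfrac12\log\frac{1}{1-u}-\frac{u}{1+u}\log k$ is convex and therefore maximized on the interval at an endpoint; the left endpoint tends to $-\infty$, and at the right endpoint you engineer the cancellation of the two $\tfrac12\log k$ contributions via the two-sided sandwich $t/(2k)\le 1-(1-1/k)^t\le t/k$. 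Your approach avoids the substitution and the $y$-case split at the cost of the Bernoulli sandwich; the paper's approach avoids the sandwich but needs the observation that $(\log y)/y$ is decreasing on $[3,\infty)$. Both are short and self-contained; the convexity reduction is arguably the cleaner structural idea, while the paper's route yields a slightly more explicit constant ($K_m^2\le 8e$). One minor remark: your stated left endpoint $(1/2+1/k)^t$ is off by a factor when $k$ is odd, but all you actually use is $u_{\text{left}}\ge 2^{-t}$, which holds in either parity and suffices for $h(u_{\text{left}})\to-\infty$.
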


\begin{proof}
Suppose $m = \gamma k$ where $0 \le \gamma \le 1-1/k$ and $\gamma k \in \N$.
Observe that
\begin{align*}
    K_m^2 = \frac{(1+m^t/k^t)^3}{1-m^{t}/k^{t}} \exp\left( -\frac{2m^t}{k^t+m^t} \log k \right)
    = \frac{(1+\gamma^t)^3}{1-\gamma^t} \exp\left( -\frac{2\gamma^t}{1+\gamma^t} \log k \right).
\end{align*}

Consider first the case $0\le \gamma \le 1/2$. 
Notice that
\begin{align*}
    \frac{(1+\gamma^t)^3}{1-\gamma^t} 
    \le \exp\left( 3\gamma^t + \frac{\gamma^t}{1-\gamma^t} \right)
    \le \exp\left( 5\gamma^t \right),
\end{align*}
where we use $1+x \le e^x$ for all $x \ge 0$ and $\frac{1}{1-x} = 1+\frac{x}{1-x} \le \exp(\frac{x}{1-x})$ for all $x \in [0,1]$.
Furthermore, we also have
\begin{align*}
    \exp\left( -\frac{2\gamma^t}{1+\gamma^t} \log k \right)
    \le \exp\left( - \gamma^t \log k \right).
\end{align*}
Therefore, we deduce that
\begin{align*}
    K_m^2 \le \exp\left( - \gamma^t (\log k - 5) \right) \le 1
\end{align*}
when $k$ is sufficiently large.

Next, we consider the case where $\frac{1}{2} < \gamma \le 1-1/k$.
We observe that
\begin{align*}
    K_m^2 
    \le \frac{8}{1-\gamma^t} \exp\left( -\frac{2\gamma^t}{1+\gamma^t} \log k \right)
    = 8 \exp\left( \log\left( \frac{1}{1-\gamma^t} \right) -\frac{2\gamma^t}{1+\gamma^t} \log k \right).
\end{align*}
Let $y = k(1 - \gamma^t)$. Since we have
\begin{align*}
    1 - \gamma^t \ge 1 - \left( 1-\frac{1}{k} \right)^t 
    \ge 1 - e^{-t/k}
    \ge \frac{t}{k+t} \ge \frac{1}{k},
\end{align*}
we know that $1 \le y \le k$.
Rewriting in terms of $y$, we obtain
\begin{align*}
    \log\left( \frac{1}{1-\gamma^t} \right) -\frac{2\gamma^t}{1+\gamma^t} \log k
    &= \log\left( \frac{k}{y} \right) -\frac{2k-2y}{2k-y} \log k \\
    &= \frac{y}{2k-y} \log k - \log y \\
    &\le \frac{y}{k} \log k - \log y.
\end{align*}
If $1 \le y \le 3$, then $\frac{y}{k} \log k - \log y \le 3(\log k)/k \le 1$ for $k$ large enough. 
If $3 < y \le k$, since the function $(\log y)/y$ is decreasing in this regime, it holds $\frac{y}{k} \log k - \log y \le 0$.
Therefore, we conclude with $K_m^2 \le 8e$, which shows the lemma.
\end{proof}

For any $t\geq 2$ and $m=0,1,\ldots,k-1$ we define
\begin{align}\label{eq:lm}
L_m
&:= \frac{\binom{k}{m}\binom{p-k}{k-m}}{\binom{p}{k}} \exp\left( \frac{2m^t}{k^t+m^t} \log \binom{p}{k} \right) \nonumber\\
&= \binom{k}{m}\binom{p-k}{k-m} \exp\left( - \frac{k^t-m^t}{k^t+m^t} \log \binom{p}{k} \right).
\end{align}
The following technical lemma holds.
\begin{lemma}\label{lem:low_ov-lm}
Let $t\geq 2$ and $L_m$ defined in \eqref{eq:lm}. 
Then for all sufficiently large $p$ and $k$ we have:
\begin{itemize}
    \item If $k = o\left( p^{\frac{t-1}{t}} \right)$, then 
    \begin{align*}
        \sum_{m=0}^{\floor{k/2}} L_{m} \le 1+o(1)
        \quad\text{and}\quad
        \sum_{m=\floor{k/2}+1}^{\ceil{\left( 1-\frac{1}{t^2} \right)k}-1} L_{m} = o(1).
    \end{align*}
    \item If furthermore $t \ge 3$ and $k = o\left( p^{\frac{t-2}{t+2}} \right)$, then 
    \begin{align*}
        \sum_{m=\floor{k/2}+1}^{k-1} L_{m}=o(1).
    \end{align*}
\end{itemize}
\end{lemma}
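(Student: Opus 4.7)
My approach is to factor $L_m = p_m \cdot \binom{p}{k}^{\alpha_m}$, where $p_m := \binom{k}{m}\binom{p-k}{k-m}/\binom{p}{k}$ is the hypergeometric probability of overlap $m$ and $\alpha_m := 2m^t/(k^t+m^t) \in [0,1]$. Since $\sum_{m=0}^k p_m = 1$, proving $\sum_{m=0}^{\lfloor k/2\rfloor} L_m \le 1+o(1)$ reduces to showing $\binom{p}{k}^{\alpha_m}$ remains $1+o(1)$ throughout the bulk of the hypergeometric distribution, while the $o(1)$ claims amount to showing $L_m$ decays exponentially in the tails.

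Using the Stirling upper bound $\log\binom{a}{b} \le b\log(ea/b)$ and the tighter lower bound $\log\binom{p}{k} \ge k\log(p/k) + k - O(k^2/p + \log p)$ (valid since $k = o(p)$), I would derive two tailored estimates. For $m \le k/2$ with $\gamma := m/k$,
$$\log L_m \;\le\; k\alpha_m + kH_b(\gamma) + \phi_t(\gamma)\,k\log(p/k) + O(k^2/p + \log p),$$
where $\phi_t(\gamma) := (2\gamma^t - \gamma - \gamma^{t+1})/(1+\gamma^t) = \gamma(2\gamma^{t-1} - 1 - \gamma^t)/(1+\gamma^t)$. A direct calculus check shows $\phi_t \le 0$ on $[0,1]$, vanishing only at the endpoints, and $|\phi_t(\gamma)| \ge c_t > 0$ on $[1/2,\, 1-1/t^2]$. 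For $m > k/2$, using the tighter $\log\binom{k}{m} = \log\binom{k}{k-m} \le (k-m)\log(ek/(k-m))$ and parametrizing by $j := k-m$,
$$\log L_{k-j} \;\le\; j\log\!\Big(\frac{e^2\, k^{(t+2)/2}}{j^2\, p^{(t-2)/2}}\Big) + O(k^2/p + \log p).$$

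The first claim then follows by splitting $m \in [0, k/2]$ into the bulk $\{m = O(k^2/p)\}$ and its complement. In the bulk, $\alpha_m \log\binom{p}{k} = O(k^{t+1}\log(p/k)/p^t)$, which is $o(1)$ precisely when $k = o(p^{(t-1)/t})$; hence $L_m = (1+o(1)) p_m$ and these terms sum to $1+o(1)$. Outside the bulk (still $m \le k/2$), and on the intermediate range $\lfloor k/2\rfloor < m \le \lceil(1-1/t^2)k\rceil-1$, $|\phi_t(\gamma)|$ is bounded below by a positive constant depending only on $t$, so $\log L_m \le O(k) - c_t\, k\log(p/k) \to -\infty$ and the total over at most $k$ such terms is $o(1)$. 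The second claim (with $t \ge 3$ and $k = o(p^{(t-2)/(t+2)})$) follows directly from the large-$m$ bound: under this stronger hypothesis, the base $e^2 k^{(t+2)/2}/p^{(t-2)/2}$ is $o(1)$, so $L_{k-j} \le o(1)^j / j^{2j}$, and the sum over $j \ge 1$ collapses to $o(1)$.

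The main technical obstacle will be controlling the $O(k^2/p + \log p)$ Stirling errors uniformly across the bulk of $m$, where the hypergeometric spreads over $\Theta(\sqrt{k^2/p})$ values, in order to preserve the sharp $1+o(1)$ constant in the first claim rather than merely $O(1)$. This requires the tighter lower bound $\log\binom{p}{k} \ge k\log(ep/k) - o(k)$ together with careful tracking of the exact cancellation between $\log p_m$ and $\alpha_m \log\binom{p}{k}$ when $\gamma \sim k/p$; a secondary subtlety is to verify that the threshold $\gamma = 1 - 1/t^2$ between the two ranges indeed corresponds to a regime where $|\phi_t|$ is still bounded below by a $t$-dependent constant even for $t = 2$, where $\phi_2(\gamma) = -\gamma(1-\gamma)^2/(1+\gamma^2)$ vanishes to second order near $\gamma = 1$.
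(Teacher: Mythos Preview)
Your overall strategy---writing $L_m = p_m\binom{p}{k}^{\alpha_m}$ and isolating the coefficient $\phi_t(\gamma) = (1-\gamma) - \frac{1-\gamma^t}{1+\gamma^t}$ of $k\log(p/k)$---is sound and matches the paper's framework. Your top-range bound $\log L_{k-j} \le j\log\!\big(e^2k^{(t+2)/2}/(j^2p^{(t-2)/2})\big)$ coincides with the paper's Case~4, though note it rests on $(1-\gamma^t)/((1-\gamma)(1+\gamma^t)) \ge t/2$, which the paper proves via AM--GM only for $\gamma \ge 1-1/t^2$; so that formula should not be invoked on all of $m > k/2$ (you already handle $[1/2,\,1-1/t^2]$ through $\phi_t$, so this is only a presentational issue).

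There is, however, a genuine gap in the range ``outside the bulk but still $m \le k/2$''. You assert $|\phi_t(\gamma)| \ge c_t > 0$ there, but this is false: since $\phi_t(\gamma) \sim -\gamma$ as $\gamma \to 0^+$, for $m$ just above your bulk (say $m = Ck^2/p$, so $\gamma = Ck/p \to 0$) one has $|\phi_t(\gamma)| \to 0$, and the conclusion $\log L_m \le O(k) - c_t\,k\log(p/k)$ does not follow. The crude $O(k)$ upper bound on $k\alpha_m + kH_b(\gamma)$ is far too wasteful when $\gamma$ is small; using the sharp $kH_b(\gamma) \le m\log(ek/m)$ together with $\phi_t(\gamma) \le -c\gamma$ gives only $\log L_m \lesssim m\log(ek^2/(mp))$, which is not yet enough to sum to $o(1)$ starting from $m \asymp k^2/p$. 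The paper resolves this by (i)~extending the ``easy'' range where the exponential factor is $1+o(1)$ all the way to $r := 2k^{2-1/t}/p^{1-1/t} \gg k^2/p$ (this is exactly where the hypothesis $k = o(p^{(t-1)/t})$ enters, not at your bulk boundary), and then (ii)~bridging $r < m \le \epsilon k$ with a ratio test $L_{m+1}/L_m \le (k/p)^{1/(2t)}$ anchored at $L_r = O(1)$. Either adopting that ratio argument or carrying out a careful small-$\gamma$ expansion with the correct threshold $r$ would close the gap; your current sketch does neither.
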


\begin{proof}

We define $r = \floor{ \frac{2 k^{2-\frac{1}{t}}}{p^{1-\frac{1}{t}}} }$.
Note that $r = o(k)$ since $k = o(p)$, and $r = 0$ if $2 k^{2-1/t} < p^{1-1/t}$.
Pick the largest $\eps > 0$ such that $\eps k \in \N$ and $8t^2 \eps^{t-1} \le 1$.
Let $\xi > 0$ be the smallest such that $\xi k \in \N$ and $1-\xi \le \frac{1}{t^2}$.
We consider four cases.

\bigskip
\textbf{Case 1: $0\le m \le r$.}
Note that for sufficiently large $p$ we have from $k = o(p)$ that
\begin{align*}
    \log \binom{p}{k} \le k \log(ep/k) \le 2k \log(p/k).
\end{align*}
For all $0 \le m \le r$, we deduce that
\begin{align*}
    \exp\left( \frac{2m^t}{k^t+m^t} \log \binom{p}{k} \right) 
    &\le \exp\left( \frac{2r^t}{k^t} \cdot 2k \log(p/k) \right) \\
    &\le \exp\left( \frac{2^{t+2} k^{t}}{p^{t-1}} \log(p/k) \right)
    =1+o(1).
\end{align*}
Hence we conclude, 
\begin{align*}
    \sum_{m = 0}^r L_m 
    \leq (1+o(1)) \sum_{m = 0}^r \frac{\binom{k}{m}\binom{p-k}{k-m}}{\binom{p}{k}}
    \le 1+o(1).
\end{align*}

\bigskip
\textbf{Case 2: $r+1\le m \le \eps k$.}
First notice the following relations.
\begin{align*}
    \frac{L_{m+1}}{L_m} = \frac{\binom{k}{m+1}}{\binom{k}{m}} \frac{\binom{p-k}{k-m-1}}{\binom{p-k}{k-m}} 
    \exp\left( \left( \frac{k^t-m^t}{k^t+m^t} - \frac{k^t-(m+1)^t}{k^t+(m+1)^t} \right) \log \binom{p}{k} \right).
\end{align*}
For all $r \le m \le \eps k - 1$, using that $k=o(p)$ and $m+1 \ge r+1 \ge \frac{2 k^{2-1/t}}{p^{1-1/t}}$, we have
\begin{align*}
    \frac{\binom{k}{m+1}}{\binom{k}{m}} \frac{\binom{p-k}{k-m-1}}{\binom{p-k}{k-m}} = \frac{(k-m)^2}{(m+1)(p-2k+m+1)}
    \le \frac{2k^2}{(m+1)p}
    \le \frac{k^{1/t}}{p^{1/t}}.
\end{align*}
We also have, using $m+1 \le \eps k$ and $8t^2 \eps^{t-1} \le 1$ that,
\begin{align*}
    \frac{k^t-m^t}{k^t+m^t} - \frac{k^t-(m+1)^t}{k^t+(m+1)^t}
    &= \frac{2k^t\left( (m+1)^t - m^t \right)}{(k^t+m^t)(k^t+(m+1)^t)} \\
    &\le \frac{2 k^t \cdot t (m+1)^{t-1}}{k^{2t}}
    \le \frac{2t \eps^{t-1}}{k} \le \frac{1}{4tk}.
\end{align*}
Combining everything above, we deduce that for all $r \le m \le \eps k - 1$,
\begin{align*}
    \frac{L_{m+1}}{L_m} \le \frac{k^{1/t}}{p^{1/t}} \exp\left( \frac{1}{4tk} \cdot 2k \log(p/k) \right)
    = \frac{k^{\frac{1}{2t}}}{p^{\frac{1}{2t}}}. 
\end{align*}
Since $k = o(p)$, it follows that
\begin{align*}
    \sum_{m = r+1}^{\eps k} L_m \le \sum_{m = r+1}^{\eps k} L_r \left( \frac{k}{p} \right)^{\frac{m-r}{2t}}
    = O\left( L_r \left( \frac{k}{p} \right)^{\frac{1}{2t}} \right)
    = o(L_r) = o(1),
\end{align*}
where we use $L_r = O(1)$ which was shown in Case 1.

\bigskip
\textbf{Case 3: $\eps k + 1 \le m \le \xi k - 1$.}
Assume that $m=\gamma k$ for some arbitrary $\gamma \in (\eps, \xi)$ (so that $\gamma k \in \mathbb{N}$). 
Recall that $(p/k)^k \le \binom{p}{k} \le (pe/k)^k$, and hence we have
\begin{align}
    \binom{k}{m} &= \binom{k}{k-m} \le \left( \frac{ek}{k-m} \right)^{k-m} = \left( \frac{e}{1-\gamma} \right)^{k-m}; \nonumber\\
    \binom{p-k}{k-m} &\le \left( \frac{e(p-k)}{k-m} \right)^{k-m} \le \left( \frac{e p}{(1-\gamma)k} \right)^{k-m}. \label{eq:two-binom}
\end{align}
Then we have by direct expansion that
\begin{align}
    L_m &= \binom{k}{m} \binom{p-k}{k-m} \binom{p}{k}^{- \frac{k^t-m^t}{k^t+m^t}} \nonumber\\
    &\le \left( \frac{e}{1-\gamma} \right)^{k-m} \left( \frac{e p}{(1-\gamma)k} \right)^{k-m} \left( \frac{p}{k} \right)^{-k \cdot \frac{1-\gamma^t}{1+\gamma^t}} \nonumber\\
    &= \left( \left( \frac{e}{1-\gamma} \right)^2 \left( \frac{p}{k} \right)^{1-\frac{1-\gamma^t}{(1-\gamma)(1+\gamma^t)}} \right)^{k-m}. \label{eq:case-large-ov}
\end{align}
Notice that $\frac{e}{1-\gamma} = O(1)$ since $\gamma < \xi$. 
Furthermore, we have
\begin{align*}
    \frac{1-\gamma^t}{(1-\gamma)(1+\gamma^t)}
    = \frac{1+\gamma+ \cdots + \gamma^{t-1}}{1+\gamma^t}
    \ge \frac{1+\gamma}{1+\gamma^t}
    = 1 + \frac{\gamma(1-\gamma^{t-1})}{1+\gamma^t}
    = 1 + \Omega(\eps)
\end{align*}
where the second to last inequality follows from $\gamma^t \le \gamma^{t-1} \le \xi^{t-1} \le (1-\frac{1}{2t^2})^{t-1}$ and $\gamma \ge \eps$.
Therefore, we deduce from \cref{eq:case-large-ov} that
\begin{align*}
    L_m = \left( O(1) \cdot \left( \frac{p}{k} \right)^{-\Omega(\eps)} \right)^{k-m},
\end{align*}
and since $k=o(p)$,
\begin{align*}
    \sum_{m = \eps k+1}^{\xi k-1} L_m 
    = \sum_{m = \eps k+1}^{\xi k-1} \left( O(1) \cdot \left( \frac{k}{p} \right)^{\Omega(\eps)}  \right)^{k-m}
    \le O(1) \cdot \left( \frac{k}{p} \right)^{\Omega(\eps)} 
    = o(1).
\end{align*}

\bigskip
\textbf{Case 4: $\xi k \le m \le k-1$.}
Again we assume $m=\gamma k$, so that $\xi \le \gamma \le 1- \frac{1}{k}$ and hence
\begin{align}\label{eq:gamma-ineq0}
\frac{e}{1-\gamma} \le ek.
\end{align}
We claim that for all $\gamma \in [\xi,1]$ it holds
\begin{align}\label{eq:gamma-ineq}
    \frac{1-\gamma^t}{(1-\gamma)(1+\gamma^t)} \ge \frac{t}{2}.
\end{align}
By the AM--GM inequality
\begin{align*}
    \frac{1-\gamma^t}{1-\gamma}
    = \sum_{i=0}^{t-1} \gamma^i
    \ge t \gamma^{\frac{t-1}{2}}.
\end{align*}
Thus, to establish \cref{eq:gamma-ineq} it suffices to show
\begin{align*}
    2\gamma^{\frac{t-1}{2}} \ge 1+\gamma^t.
\end{align*}
Write $x = 1-\gamma \in [0,1/t^2]$, and we have
\begin{align*}
    1+\gamma^t &= 1+(1-x)^t \le 1 + (1 - tx + t^2 x^2) = 2 - tx + t^2 x^2 \le 2 - (t-1)x;\\
    2\gamma^{\frac{t-1}{2}} &= 2(1-x)^{\frac{t-1}{2}} \ge 2\left( 1 - \frac{t-1}{2}x \right) = 2 - (t-1)x.
\end{align*}
\cref{eq:gamma-ineq} then follows.
Combining \cref{eq:gamma-ineq0,eq:gamma-ineq} with \cref{eq:case-large-ov},
we deduce that
\begin{align*}
    L_m \le \left( e^2 k^2 \cdot \left( \frac{k}{p} \right)^{\frac{t}{2}-1} \right)^{k-m}
    \le \left( e^2 \frac{k^{\frac{t}{2}+1}}{p^{\frac{t}{2}-1}} \right)^{k-m}.
\end{align*}
Since $k = o(p^{\frac{t-2}{t+2}})$ by the assumption, it follows that
\begin{align*}
    \sum_{m = \xi k}^{k-1} L_m 
    \le \sum_{m = \xi k}^{k-1} \left( e^2 \frac{k^{\frac{t}{2}+1}}{p^{\frac{t}{2}-1}} \right)^{k-m}
    = O\left( \frac{k^{\frac{t}{2}+1}}{p^{\frac{t}{2}-1}} \right)
    = o(1).
\end{align*}
Since clearly $\eps < 1/2 < \xi$, this completes the proof of the lemma.
\end{proof}

\subsubsection{Proof of \texorpdfstring{\cref{prop:2mm}}{Proposition~\ref{prop:2mm}} when \texorpdfstring{$t \geq 3$}{t >= 3}}

By \cref{lem:2nd-moment,eq:mill_1}, we deduce that
\begin{align*}
\mathbb{E} [Z] 
\geq{} & \frac{1+o(1)}{\sqrt{2\pi}} \frac{\exp\left( \frac{1}{2} \log \left(k\log(p/k)\right) + \frac{1}{2} A_p \right)}{\sqrt{2 \log \binom{p}{k} - \log \left(k\log(p/k)\right) - A_p}} \\
={}& \frac{1+o(1)}{\sqrt{2\pi}} \sqrt{ \frac{k\log(p/k)}{2 \log \binom{p}{k} - \log \left(k\log(p/k)\right) - A_p} } \exp\left( \frac{1}{2} A_p \right)\\
={}& \Omega\left( \exp\left( \frac{1}{2} A_p \right) \right)
= \omega(1).
\end{align*}

For the second moment, following from \cref{lem:2nd-moment,eq:mill_1,eq:mill_2} we have
\begin{align*}
    & (1-o(1)) \left( \frac{\mathbb{E} [Z^2]}{(\mathbb{E}[Z])^2} - \frac{1}{\mathbb{E}[Z]} \right) \\
    \le{}& \sum_{m=0}^{k-1} \mathbb{P}(\ip{x}{y} = m) \, \frac{\mathbb{P}\left( x \in \mathcal{E} \land y \in \mathcal{E} \mid \ip{x}{y} = m \right)}{\mathbb{P}\left( x \in \mathcal{E} \right)^2} \\
    \le{}& \sum_{m=0}^{k-1} 
    \frac{\binom{k}{m}\binom{p-k}{k-m}}{\binom{p}{k}} 
    \frac{(1+m^t/k^t)^2}{\sqrt{1-m^{2t}/k^{2t}}} 
    \exp\left( \frac{m^t}{k^t+m^t} \left( 2 \log\binom{p}{k}- \log \left(k\log(p/k)\right) - A_p \right) \right) \\
    \le{}& \sum_{m=0}^{k-1} \frac{\binom{k}{m}\binom{p-k}{k-m}}{\binom{p}{k}} \exp\left( \frac{2m^t}{k^t+m^t} \log\binom{p}{k} \right)
    \cdot \sqrt{ \frac{(1+m^t/k^t)^3}{1-m^{t}/k^{t}} } \exp\left( - \frac{m^t}{k^t+m^t} \log k \right) \\
    ={}& \sum_{m=0}^{k-1} L_m K_m \le 1+o(1),
\end{align*}
where the last inequality follows from \cref{lem:low_ov-lm,lem:low_ov-km}.
Since $\E[Z] = \omega(1)$ and $\mathbb{E} [Z^2] \ge (\mathbb{E}[Z])^2$, we conclude that
\begin{align*}
    \frac{\mathbb{E} [Z^2]}{(\mathbb{E}[Z])^2} = 1+o(1).
\end{align*}
\cref{prop:2mm} then follows from an immediate application of the second moment method (e.g., the Paley--Zygmund inequality).

\subsection{The case \texorpdfstring{$t=2$}{t = 2}}
\label{sec:lb-t=2}

We next consider the case $t=2$ where a specialized argument is needed.
Recall that $\mathcal{X}_{p,k} = \{x \in \{0,1\}^p: \norm{x}_0 = k\}$ is the set of $k$-sparse binary vectors.

We first need a definition.

\begin{definition}
    We call a $k$-sparse binary $x \in \mathcal{X}_{p,k}$ to be \emph{flat} if for all $\ell=0,1,\ldots,k $ and any $\ell$-subset $s$ of $x$, i.e., such that $\langle s,x \rangle=\|s\|_0=\ell$, it holds 
    \begin{align}\label{eq:flat}
    \left| s^\trans Ws- \frac{\ell^2}{k^2} x^\trans Wx \right| \leq 2 \sqrt{(k^2-\ell^2) \log \binom{k}{\ell}}.
    \end{align}
\end{definition} 

It turns out for any specific $x \in \{0,1\}^p$ with $\|x\|_0=k$, the vector $x$ is flat with high probability; furthermore, the flatness of $x$ is independent of the value of $x^\trans Wx$.

\begin{lemma}\label{lem:flat}
    Suppose $k = \omega(1)$. 
    For any $x \in \{0,1\}^p$ with $\|x\|_0=k$, $x$ is flat with high probability; furthermore, whether $x$ is flat is independent of the random variable $x^\trans Wx$.
\end{lemma}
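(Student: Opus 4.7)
The plan is to exploit the Gaussianity of the map $W \mapsto (Z_s, x^\trans Wx)$, where for each $\ell$-subset $s$ of $x$ we set
\[
Z_s := s^\trans Ws - \frac{\ell^2}{k^2}\, x^\trans Wx,
\]
so that the flatness condition \eqref{eq:flat} becomes a uniform deviation bound for the finite centered Gaussian family $\{Z_s\}$, and so that the independence claim is read off from a covariance computation.

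First, I would compute the second-moment structure. Writing $A = \mathrm{supp}(x)$ and $B = \mathrm{supp}(s) \subseteq A$, one has $s^\trans Ws = \sum_{i,j \in B} W_{ij}$ and $x^\trans Wx = \sum_{i,j \in A} W_{ij}$. Since the entries of $W$ are i.i.d.\ $N(0,1)$, a direct expansion gives $\Var(s^\trans Ws) = \ell^2$, $\Var(x^\trans Wx) = k^2$, and $\Cov(s^\trans Ws,\, x^\trans Wx) = \ell^2$ (since $B\times B \subseteq A\times A$ contributes one to each pairing). Combining yields
\[
\Var(Z_s) = \frac{\ell^2(k^2-\ell^2)}{k^2} \le k^2-\ell^2, \qquad \Cov(Z_s,\, x^\trans Wx) = \ell^2 - \frac{\ell^2}{k^2}\cdot k^2 = 0.
\]
The vanishing covariance is the ``orthogonal projection'' coefficient that makes the choice $\ell^2/k^2$ in the definition of flatness canonical.

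Next, for the tail bound, since $Z_s$ is centered Gaussian with variance at most $k^2-\ell^2$, the standard Gaussian tail bound gives
\[
\Pr\Bigl(|Z_s| > 2\sqrt{(k^2-\ell^2)\log\tbinom{k}{\ell}}\Bigr) \le 2 \exp\bigl(-2\log\tbinom{k}{\ell}\bigr) = \frac{2}{\binom{k}{\ell}^2}.
\]
A union bound over the $\binom{k}{\ell}$ subsets $s$ of size $\ell$ contributes $2/\binom{k}{\ell}$ to the failure probability at level $\ell$; a further union bound over $\ell \in \{1,\dots,k-1\}$ gives total failure probability at most $2\sum_{\ell=1}^{k-1}\binom{k}{\ell}^{-1}$. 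The extremal terms $\ell=1$ and $\ell=k-1$ each equal $1/k$ and dominate, so the sum is $O(1/k) = o(1)$ under $k=\omega(1)$. The boundary cases $\ell=0$ and $\ell=k$ satisfy \eqref{eq:flat} trivially since both sides vanish.

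Finally, for the independence claim, I would observe that the whole family $(Z_s)_{s}$ together with $x^\trans Wx$ is jointly Gaussian (each coordinate is a linear functional of $W$). Since $\Cov(Z_s, x^\trans Wx) = 0$ for every $s$, joint Gaussianity upgrades uncorrelatedness to independence of the vector $(Z_s)_s$ from $x^\trans Wx$. The event ``$x$ is flat'' is a measurable function of $(Z_s)_s$ alone, and therefore is independent of $x^\trans Wx$. There is no substantive obstacle in this argument; the key observation is simply that $\ell^2/k^2$ is the orthogonal-projection weight that makes the variance and the covariance collapse simultaneously, so both parts of the lemma follow from one two-line linear-algebra computation plus a routine union bound.
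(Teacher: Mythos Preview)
Your proof is correct and follows essentially the same approach as the paper: both compute the covariance structure to see that $Z_s$ is a centered Gaussian with variance $\ell^2(k^2-\ell^2)/k^2$ orthogonal to $x^\trans Wx$, then apply a Gaussian tail bound and a double union bound over $s$ and $\ell$. Your presentation via direct covariance calculation is slightly more streamlined than the paper's route through the normalized pair $(\frac{1}{\ell}s^\trans Ws,\frac{1}{k}x^\trans Wx)$ and the regression decomposition, but the content is identical.
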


\begin{proof}
    Fix any $x \in \{0,1\}^p, \|x\|_0=k$ and for any $\ell =1,2,\ldots,k-1$ fix any $s \in \{0,1\}^p$ with $\langle s,x \rangle=\|s\|_0=\ell$. Note that $( \frac{1}{\ell} s^\trans Ws, \frac{1}{k} x^\trans Wx)$ is a bivariate pair of standard normal distributions with correlation $\rho=\ell/k$. Hence, there exists a standard normal $Z_s$ which is independent of $x^\trans Wx$ such that 
    \[\frac{1}{\ell} s^\trans Ws = \frac{\rho}{k} x^\trans Wx + \sqrt{1-\rho^2}Z_s\]
    and therefore
    \begin{align*}
        s^\trans Ws - \frac{\ell^2}{k^2} x^\trans Wx = \frac{\ell}{k} \sqrt{k^2-\ell^2}Z_s.
    \end{align*}

    Hence for any $T>0$ and any $r \in \R$,
    \begin{align*}
        & \mathbb{P}\left( \left| s^\trans Ws - \frac{\ell^2}{k^2} x^\trans Wx \right| \geq \frac{\ell}{k} \sqrt{k^2-\ell^2} T \;\bigg\vert\; x^\trans Wx = r \right) \\
        ={}& \mathbb{P}\left(|Z_s| \geq T \right)
        \leq \frac{1+o(1)}{\sqrt{2\pi} T} \exp\left( -\frac{T^2}{2} \right).
    \end{align*} 
    Letting $T=2 \sqrt{\log\binom{k}{\ell}}$ and taking a union bound over all the possible $\binom{k}{\ell}$ choices of $s$ gives that 
    \begin{align*}
        \mathbb{P}\left( \forall s: \left| s^\trans Ws - \frac{\ell^2}{k^2} x^\trans Wx \right| \geq \frac{2\ell}{k} \sqrt{(k^2-\ell^2) \log \binom{k}{\ell}} \;\Bigg\vert\; x^\trans Wx = r \right)
        \le 
        \frac{1}{\binom{k}{\ell}}.
    \end{align*}
    Since $\binom{k}{\ell} = k$ for $\ell=1,k-1$ and $\binom{k}{\ell} \ge \frac{k(k-1)}{2}$ for $2\le \ell \le k-2$, a union bound over $\ell$ yields that with probability $1 - O(1/k) = 1 - o(1)$ independent of $x^\trans Wx$, for all $s$ it holds 
\[ \left| s^\trans Ws - \frac{\ell^2}{k^2} x^\trans Wx \right| \le \frac{2\ell}{k} \sqrt{(k^2-\ell^2) \log \binom{k}{\ell}}
\le 2 \sqrt{(k^2-\ell^2) \log \binom{k}{\ell}}\]
which establishes the flatness of $x$ as wanted.
\end{proof}

We call now $Z$ the number of $x$ such that
\begin{align}\label{eq:goal_1_t_2}
   \frac{1}{k} x^\trans Wx  \geq \sqrt{2 \log \binom{p}{k} - \log \left(k\log(p/k)\right) - A_p}.
    \end{align} 
Moreover, let $Z_{\textsf{flat}}$ be the number of flat $x$ satisfying \eqref{eq:goal_1_t_2}.
Define
\begin{align*}
    \mathcal{E} = \left\{x \in \mathcal{X}_{p,k}: \text{$x$ satisfies \eqref{eq:goal_1_t_2}} \right\}
\quad \text{and} \quad
    \mathcal{F} = \left\{x \in \mathcal{X}_{p,k}: \text{$x$ is flat} \right\}.
\end{align*}
Hence, $Z = |\mathcal{E}|$ and $Z_{\textsf{flat}} = |\mathcal{E} \cap \mathcal{F}|$ by definitions.
By the linearity of expectation and \eqref{eq:mill_1}, as before
\begin{align*}
\mathbb{E} [Z] 
\geq{} & \frac{1+o(1)}{\sqrt{2\pi}} \frac{\exp\left( \frac{1}{2} \log \left(k\log(p/k)\right) + \frac{1}{2} A_p \right)}{\sqrt{2 \log \binom{p}{k} - \log \left(k\log(p/k)\right) - A_p}} \\
={}& \frac{1+o(1)}{\sqrt{2\pi}} \sqrt{ \frac{k\log(p/k)}{2 \log \binom{p}{k} - \log \left(k\log(p/k)\right) - A_p} } \exp\left( \frac{1}{2} A_p \right)\\
={}& \Omega\left( \exp\left( \frac{1}{2} A_p \right) \right)
= \omega(1).
    \end{align*} 
Therefore, by \cref{lem:flat} and linearity of expectation we directly have
    \begin{align*}
    \mathbb{E} [Z_{\textsf{flat}}] = (1-o(1)) \mathbb{E} [Z]=\omega(1).
    \end{align*}

Now expanding the second moment to squared first moment ratio, we have similarly as \cref{lem:2nd-moment} that
    \begin{align*}
        \frac{\mathbb{E}\left[Z_{\textsf{flat}}^2\right]}{\left( \mathbb{E}[Z_{\textsf{flat}}] \right)^2} - \frac{1}{\mathbb{E}[Z_{\textsf{flat}}]} 
        \leq (1+o(1)) \sum_{m=0}^{k-1} \mathbb{P}(\ip{x}{y} = m) \, \frac{\mathbb{P}\left( x,y \in \mathcal{E} \cap \mathcal{F} \mid \ip{x}{y} = m \right)}{\mathbb{P}\left( x \in \mathcal{E} \cap \mathcal{F} \right)^2}.
   \end{align*}
We focus on proving the right-hand side of the last inequality is $1+o(1)$ by splitting into two cases.

\bigskip
\textbf{Case 1: $m \le \frac{3}{4}k$.} 
For this case of $m$ we ``ignore'' the flatness condition and use
\begin{align*}
    \mathbb{P}\left( x,y \in \mathcal{E} \cap \mathcal{F} \mid \ip{x}{y} = m \right) \leq \mathbb{P}(x,y \in \mathcal{E} \mid \ip{x}{y} = m),
\end{align*}
and by \cref{lem:flat}
\begin{align*}
    \mathbb{P}\left( x \in \mathcal{E} \cap \mathcal{F} \right) = (1-o(1)) \mathbb{P}\left( x \in \mathcal{E} \right).
\end{align*}
Similar as the proof for the case $t\ge 3$, we deduce from \cref{lem:low_ov-lm,lem:low_ov-km} that
\begin{align*}
& \sum_{m=0}^{\frac{3}{4}k} \mathbb{P}(\ip{x}{y} = m) \, \frac{\mathbb{P}\left( x,y \in \mathcal{E} \cap \mathcal{F} \mid \ip{x}{y} = m \right)}{\mathbb{P}\left( x \in \mathcal{E} \cap \mathcal{F} \right)^2} \\
\le{}& (1+o(1)) \sum_{m=0}^{\frac{3}{4}k} \mathbb{P}(\ip{x}{y} = m) \, \frac{\mathbb{P}(x,y \in \mathcal{E} \mid \ip{x}{y} = m)}{\mathbb{P}\left( x \in \mathcal{E} \right)^2} \\
\le{}& (1+o(1)) \sum_{m=0}^{\frac{3}{4}k} L_m K_m \le 1+o(1).
\end{align*}

\bigskip
\textbf{Case 2: $m \geq \frac{3}{4}k$.} 
Now, we focus on ``high'' overlap values. 
Again by \cref{lem:flat} we have
\begin{align*}
    \mathbb{P}\left( x \in \mathcal{E} \cap \mathcal{F} \right) = (1-o(1)) \mathbb{P}\left( x \in \mathcal{E} \right).
\end{align*}
We focus on upper bounding
\begin{align*}
    \mathbb{P}\left( x,y \in \mathcal{E} \cap \mathcal{F} \mid \ip{x}{y} = m \right).
\end{align*}

Fix an $m \geq 2$ and any $x,y \in \{0,1\}^p$ with $\langle x,y \rangle =m$, and we aim to upper bound $\mathbb{P}'\left( x,y \in \mathcal{E} \cap \mathcal{F} \right)$ where $\mathbb{P}'(\cdot) = \mathbb{P}(\cdot \mid x,y)$ denotes the conditional measure with the given $x,y$. 
Denote by $s$ the indicator vector of the intersection of the support of $x$ and the support of $y$. 
Now, notice that the following three random variables are independent Gaussian random variables:
\begin{align*}
    Z_0 &:=s^\trans W s \sim N(0,m^2);\\
    Z_1 & :=x^\trans Wx - s^\trans W s \sim N(0,k^2-m^2);\\
    Z_2 &:=y^\trans Wy - s^\trans W s \sim N(0,k^2-m^2).
\end{align*}
Finally, let us define 
\begin{align}\label{dfn:e}
    q := \sqrt{2 \log \binom{p}{k} - \log \left(k\log(p/k)\right) - A_p} 
    \quad\text{and}\quad
    E := 2 \sqrt{(k^2-m^2) \log \binom{k}{m}}.
\end{align}
Then for the given pair of vectors $x,y$,
the event $\{ x \in \mathcal{E} \}$ implies $Z_0+Z_1 \ge kq$, and 
the event $\{ y \in \mathcal{E} \cap \mathcal{F} \}$ implies, via a simple application of the triangle inequality, that
\begin{align*}
    \begin{cases}
        \frac{1}{k}(Z_0+Z_2) \ge q;\\[6pt]
        \left| Z_0 - \frac{m^2}{k^2} (Z_0 + Z_2) \right| \le E.
    \end{cases}
    &\Longleftrightarrow \quad
    \begin{cases}
        Z_0+Z_2 \ge kq;\\[6pt]
        \left| Z_2 - \frac{k^2-m^2}{k^2} (Z_0 + Z_2) \right| \le E.
    \end{cases}\\
    &\Longrightarrow \quad
    Z_2 \ge \frac{k^2-m^2}{k}q - E.
\end{align*}
Therefore, we obtain from the independence of $Z_0,Z_1,Z_2$ that
\begin{align*}
    \mathbb{P}'\left( x,y \in \mathcal{E} \cap \mathcal{F} \right)
    \le \mathbb{P}'\left( Z_0+Z_1 \ge kq \right)
        \mathbb{P}'\left( Z_2 \ge \frac{k^2-m^2}{k}q - E \right).
\end{align*}
Let $\overline{\Phi}(x) = \Pr(Z \ge x)$, $x \in \R$ where $Z \sim N(0,1)$ denote the complementary CDF of the standard Gaussian.
Rewriting the right-hand side with $\overline{\Phi}$ and taking expectation over $x,y$ yields
\begin{align*}
    \mathbb{P}\left( x,y \in \mathcal{E} \cap \mathcal{F} \mid \ip{x}{y} = m \right)
    \le \overline{\Phi}(q) \cdot \overline{\Phi}\left( \frac{\sqrt{k^2-m^2}}{k}q - \frac{E}{\sqrt{k^2-m^2}} \right).
\end{align*}
We also have from \cref{eq:mill_1} that
\begin{align*}
    \mathbb{P}\left( x \in \mathcal{E} \cap \mathcal{F} \right) = (1-o(1)) \mathbb{P}\left( x \in \mathcal{E} \right)
    = (1-o(1)) \overline{\Phi}(q).
\end{align*}
Therefore, it follows that
\begin{align}\label{eq:Z012-tail}
    \frac{\mathbb{P}\left( x,y \in \mathcal{E} \cap \mathcal{F} \mid \ip{x}{y} = m \right)}{\mathbb{P}\left( x \in \mathcal{E} \cap \mathcal{F} \right)^2}
    &\le (1+o(1)) \frac{\overline{\Phi}\left( \frac{\sqrt{k^2-m^2}}{k}q - \frac{E}{\sqrt{k^2-m^2}} \right)}{\overline{\Phi}(q)}
\end{align}

We need the following bounds on $q$ and $E$.
\begin{lemma}\label{lem:q-E-bounds}
    Suppose $\omega(1) = A_p = o(k \log(p/k))$.
    Then for sufficiently large $p$ we have
    \begin{align*}
        \sqrt{k \log(p/k)} \le q \le \sqrt{2k\log p}
        \quad\text{and}\quad
        E 
        \le 2(k-m) \sqrt{(k+m)\log k}.
    \end{align*}
    Furthermore, if $k \le p^{1/17}$ then
    \begin{align*}
        \frac{k^2-m^2}{k}q \ge 2 E.
    \end{align*}
    Furthermore, if $k \le p^{1/64}$ then
    \begin{align*}
        \frac{Eq}{k} \le \frac{1}{2}(k-m) \log p.
    \end{align*}
\end{lemma}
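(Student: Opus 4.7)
The plan is to prove each of the four claimed inequalities by reducing them to straightforward numerical estimates, using standard bounds on binomial coefficients.

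\textbf{Step 1: Bounds on $q$.} I will use the sandwich $(p/k)^k \le \binom{p}{k} \le (ep/k)^k$, which gives
\[
2k\log(p/k) \;\le\; 2\log\binom{p}{k} \;\le\; 2k\log(p/k) + 2k.
\]
For the upper bound $q \le \sqrt{2k\log p}$, I simply drop the negative terms in the definition of $q^2$ and observe that $2k\log(p/k) + 2k \le 2k\log p$ whenever $k \ge 3$ (since then $\log k \ge 1$). For the lower bound $q \ge \sqrt{k\log(p/k)}$, the key is that the subtracted quantities $\log(k\log(p/k)) + A_p$ are $o(k\log(p/k))$ as $p\to\infty$: the assumption $A_p = o(k\log(p/k))$ handles one term, while $\log(k\log(p/k)) = O(\log p) = o(k\log(p/k))$ for $k = \omega(1)$, since $k\log(p/k)$ dominates $\log p$ once $k$ is not too small.

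\textbf{Step 2: Bound on $E$.} Writing $\binom{k}{m} = \binom{k}{k-m} \le k^{k-m}$ (the trivial upper bound on the number of $(k-m)$-subsets), one has $\log\binom{k}{m} \le (k-m)\log k$. Substituting into the definition of $E$ and using $k^2 - m^2 = (k-m)(k+m)$ yields
\[
E \;\le\; 2\sqrt{(k-m)(k+m)\cdot (k-m)\log k} \;=\; 2(k-m)\sqrt{(k+m)\log k}.
\]

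\textbf{Step 3: The inequality $\tfrac{k^2-m^2}{k}q \ge 2E$.} After cancelling a factor of $(k-m)$ on both sides (we may assume $m < k$), the claim reduces to $(k+m)q/k \ge 4\sqrt{(k+m)\log k}$, i.e., $q \ge 4k\sqrt{\log k}/\sqrt{k+m}$. Since $m \ge 0$, it suffices to show $q \ge 4\sqrt{k\log k}$. By Step 1, $q \ge \sqrt{k\log(p/k)}$, and the hypothesis $k \le p^{1/17}$ forces $\log(p/k) \ge (16/17)\log p \ge 16\log k$, which gives exactly the bound needed.

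\textbf{Step 4: The inequality $Eq/k \le \tfrac12(k-m)\log p$.} Plug in the estimates from Steps 1 and 2 and use $k+m \le 2k$:
\[
\frac{Eq}{k} \;\le\; \frac{2(k-m)\sqrt{(k+m)\log k}\cdot\sqrt{2k\log p}}{k} \;\le\; 4(k-m)\sqrt{\log k \, \log p}.
\]
The target then becomes $4\sqrt{\log k\,\log p} \le \tfrac12\log p$, equivalently $\log p \ge 64\log k$, which is precisely the hypothesis $k \le p^{1/64}$.

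No serious obstacle is expected: all four assertions reduce to combining the elementary estimates $(p/k)^k \le \binom{p}{k} \le (ep/k)^k$ and $\binom{k}{m} \le k^{k-m}$ with the two assumed upper bounds on $k$. The only mild subtlety is tracking constants carefully (the $16$ in Step 3 and the $64$ in Step 4), so that the conditions $k\le p^{1/17}$ and $k\le p^{1/64}$ are sufficient.
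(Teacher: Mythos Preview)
Your proposal is correct and follows essentially the same route as the paper's proof: both arguments use the elementary bounds $(p/k)^k \le \binom{p}{k} \le p^k$ (you use the slightly weaker $(ep/k)^k$ but compensate by absorbing the extra $2k$ into $2k\log k$) and $\binom{k}{m}\le k^{k-m}$, then reduce the last two claims to the numerical constraints $\log(p/k)\ge 16\log k$ and $64\log k\le \log p$ implied by the hypotheses on $k$. The only cosmetic difference is that the paper keeps the $\sqrt{k+m}$ factor explicit in Step 3 rather than bounding it by $\sqrt{k}$, but the resulting inequality is the same.
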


\begin{proof}[Proof of \cref{lem:q-E-bounds}]
    For $q$, we use $\binom{p}{k} \le p^k$ and get
    \begin{align*}
        q \le \sqrt{2 \log \binom{p}{k}} \le \sqrt{2 k \log p}.
    \end{align*}
    Meanwhile, since $\binom{p}{k} \ge (p/k)^k$, we have that
    \begin{align*}
        q \ge \sqrt{\log \binom{p}{k}}
        \ge \sqrt{k \log(p/k)}.
    \end{align*}

    For $E$, since we have
    \begin{align*}
        \binom{k}{m} = \binom{k}{k-m} \le k^{k-m},
    \end{align*}
    it follows that
    \begin{align*}
        E \le 2 \sqrt{(k-m)(k+m) \cdot (k-m)\log k}
        = 2(k-m) \sqrt{(k+m)\log k}.
    \end{align*}

    Next, $k \le p^{1/17}$ implies that $16\log k \le \log(p/k)$, and therefore
    \begin{align*}
        \frac{k^2-m^2}{k} q \ge (k-m) \sqrt{\frac{k+m}{k}} \cdot \sqrt{k \log(p/k)}
        \ge 4(k-m) \sqrt{(k+m)\log k} \ge 2E,
    \end{align*}
    as claimed.

    Finally, if $k \le p^{1/64}$ then $\log k \le \frac{1}{64} \log p$, and combining the upper bounds on $q,E$ we just showed yields
    \begin{align*}
        \frac{Eq}{k} \le \frac{\sqrt{2k\log p} \cdot 2\sqrt{2} (k-m) \sqrt{k\log k}}{k}
        = 4 (k-m) \sqrt{\log p \log k}
        \le \frac{1}{2} (k-m)\log p,
    \end{align*}
    as claimed.
\end{proof}

For $m\le k-1$ we deduce from \cref{lem:q-E-bounds} that 
\begin{align}\label{eq:comp-bound}
    \frac{\sqrt{k^2-m^2}}{k}q - \frac{E}{\sqrt{k^2-m^2}}
    \ge \frac{\sqrt{k^2-m^2}}{2k}q \ge \frac{q}{2\sqrt{k}} \to \infty \text{ as $p \to \infty$}.
\end{align}
Thus, we can apply the Gaussian tail bound \cref{eq:mill_1} to \cref{eq:Z012-tail} and obtain
\begin{align*}
    & \frac{\mathbb{P}\left( x,y \in \mathcal{E} \cap \mathcal{F} \mid \ip{x}{y} = m \right)}{\mathbb{P}\left( x \in \mathcal{E} \cap \mathcal{F} \right)^2} \\
    \le{}& (1+o(1)) \frac{q}{\frac{\sqrt{k^2-m^2}}{k}q - \frac{E}{\sqrt{k^2-m^2}}} \exp\left( - \frac{1}{2}\left( \frac{\sqrt{k^2-m^2}}{k}q - \frac{E}{\sqrt{k^2-m^2}} \right)^2 + \frac{1}{2} q^2 \right) \\
    \le{}& (1+o(1)) \cdot 2 \sqrt{k} \exp\left( \frac{m^2}{2k^2} q^2 + \frac{Eq}{k} - \frac{E^2}{2(k^2-m^2)} \right) \tag{by \cref{eq:comp-bound}}\\
    \le{}& 3\sqrt{k} \exp\left( \frac{m^2}{2k^2} q^2 + \frac{Eq}{k} \right).
\end{align*}
Applying the upper bounds $q \le \sqrt{2 \log \binom{p}{k}}$ by definition and $\frac{Eq}{k} \le \frac{1}{2}(k-m) \log p$ from \cref{lem:q-E-bounds}, we thus obtain that
\begin{align*}
    \frac{\mathbb{P}\left( x,y \in \mathcal{E} \cap \mathcal{F} \mid \ip{x}{y} = m \right)}{\mathbb{P}\left( x \in \mathcal{E} \cap \mathcal{F} \right)^2}
    \le{}& 3\sqrt{k} \exp\left( \frac{m^2}{k^2} \log \binom{p}{k} + \frac{1}{2}(k-m) \log p \right) \\
    ={}& 3\sqrt{k} \cdot \binom{p}{k}^{\frac{m^2}{k^2}} \cdot \sqrt{p}^{k-m}.
\end{align*}

Therefore, we have that
\begin{align*}
& \sum_{m=\frac{3}{4}k}^{k-1} \mathbb{P}(\ip{x}{y} = m) \, \frac{\mathbb{P}\left( x,y \in \mathcal{E} \cap \mathcal{F} \mid \ip{x}{y} = m \right)}{\mathbb{P}\left( x \in \mathcal{E} \cap \mathcal{F} \right)^2} \\
\le{}& \sum_{m=\frac{3}{4}k}^{k-1} \frac{\binom{k}{m}\binom{p-k}{k-m}}{\binom{p}{k}} \cdot 3\sqrt{k} \cdot \binom{p}{k}^{\frac{m^2}{k^2}} \cdot \sqrt{p}^{k-m} \\
={}& 3\sqrt{k} \sum_{m=\frac{3}{4}k}^{k-1} \binom{k}{m}\binom{p-k}{k-m} \binom{p}{k}^{-\frac{k^2-m^2}{k^2}} \cdot \sqrt{p}^{k-m}.
\end{align*}
Using standard bounds on binomial coefficients, we deduce that for $m \ge 3k/4$,
\begin{align*}
    \binom{k}{m}\binom{p-k}{k-m} \binom{p}{k}^{-\frac{k^2-m^2}{k^2}}
    &\le k^{k-m} p^{k-m} \left( \frac{p}{k} \right)^{- \frac{k^2-m^2}{k}}
    = \left( kp \left( \frac{p}{k} \right)^{-\frac{k+m}{k}} \right)^{k-m}\\
    &\le \left( kp \left( \frac{p}{k} \right)^{-7/4} \right)^{k-m}
    \le \left( \frac{k^{11/4}}{p^{3/4}} \right)^{k-m}.
\end{align*}
It follows that
\begin{align*}
    \sum_{m=\frac{3}{4}k}^{k-1} \mathbb{P}(\ip{x}{y} = m) \, \frac{\mathbb{P}\left( x,y \in \mathcal{E} \cap \mathcal{F} \mid \ip{x}{y} = m \right)}{\mathbb{P}\left( x \in \mathcal{E} \cap \mathcal{F} \right)^2} 
    &\le 3\sqrt{k} \sum_{m=\frac{3}{4}k}^{k-1} \left( \frac{k^{11/4}}{p^{1/4}} \right)^{k-m} \\
    &\le 3\sqrt{k} \cdot \frac{2k^{11/4}}{p^{1/4}} 
    = \frac{6k^{13/4}}{p^{1/4}} = o(1),
\end{align*}
when $k = o(p^{1/13})$.

\medskip
Combining the two cases we finally have
\begin{align*}
    \frac{\mathbb{E}\left[Z_{\textsf{flat}}^2\right]}{\left( \mathbb{E}[Z_{\textsf{flat}}] \right)^2} - \frac{1}{\mathbb{E}[Z_{\textsf{flat}}]} = 1+o(1).
\end{align*}
The rest of the proof follows the same way as the case $t \ge 3$.

\section{Proof of Negative Results for Sparse Regression}\label{sec:NegSRproof}
In this section, we prove \cref{thm:negresult}, the lower bound on the mixing time for sparse regression. This shows the tightness of our positive results for sparse regression. We do this by a similar but significantly improved approach to prove the Overlap Gap Property result in \cite{10.1214/21-AOS2130} which then implies the mixing time lower bound. As in the previous section, we omit the Overlap Gap Property specifics as much as possible, and focus on the desired mixing time lower bound.

\subsection{Improving Previous Results}
Towards proving our negative result, we first need to generalize some key results from \cite{10.1214/21-AOS2130}. We consider the following two constrained optimization problems:
\begin{align*}
    (\Phi_2)\text{ } \min n^{-\frac{1}{2}}\lVert Y-Xv\rVert_2\\
    \text{s.t. }v\in\{0,1\}^p,\\
    \lVert v\rVert_0 = k
\end{align*}
This problem chooses the exactly $k$-sparse binary $v$ that minimises the squared error. Denote its optimal value $\phi_2$. We will also need this restricted version:
\begin{align*}
    (\Phi_2)(\ell)\text{ } \min n^{-\frac{1}{2}}\lVert Y-Xv\rVert_2\\
    \text{s.t. }v\in\{0,1\}^p,\\
    \lVert v\rVert_0 = k, \text{ } \lVert v-v^*\rVert_0 = 2\ell
\end{align*}
where $\ell\in\{0,1,2,\dots,k\}$. This minimizes the squared error over the $k$-sparse binary vectors $v$ that have overlap $k-\ell$ with $v^*$. Denote the optimal value of this problem by $\phi_2(\ell)$.

We now present strengthened versions of two theorems from \cite{10.1214/21-AOS2130}. Both of these have been strengthened to assume $k=o(p^{1/3-\eta})$ for some $\eta>0$ and no assumption on $n$, instead of the more restrictive $k\log k = O(n)$ in \cite{10.1214/21-AOS2130}, which requires $k=p^{o(1)}$ in the relevant regime $n=o(k \log (p/k))$.

\cref{thm:2.1new}(a) gives us a lower bound on the squared error of the best $v$ at each overlap level with $v^*$. Part (b) tells us that below the algorithmic threshold $k\log (p/k)$, there are many $v$'s with zero overlap with $v^*$ that achieve within a small factor of this lower bound.

\begin{theorem}\label{thm:2.1new}[Improved version of Theorem 2.1 of \cite{10.1214/21-AOS2130}]
    Suppose for some $\eta>0$, $k=o(p^{1/3-\eta})$. Then:
    \begin{enumerate}[(a)]
        \item With high probability as $k$ increases,
        \begin{align*}
            \phi_2(\ell)\geq e^{-\frac{3}{2}}\sqrt{2\ell+\sigma^2}\exp\left(-\frac{\ell \log (p/k)}{n}\right)
        \end{align*}
        for all $0\leq \ell\leq k$.
        \item Suppose further that $\sigma^2 \leq 2k$. Then for every sufficiently large constant $D_0$, if $n\leq \alpha k\log (p/k)/(3\log D_0)$, then w.h.p as $k$ increases, the cardinality of the set
    \begin{align*}
        \scalebox{0.98}{$\left\{ v \in \{0,1\}^p : \lVert v \rVert_0 = k, \lVert v - v^* \rVert_0 = 2k, n^{-\frac{1}{2}} \lVert Y - Xv \rVert_2 \leq R \sqrt{2k + \sigma^2} \exp\left(-\frac{\mu k \log (p/k)}{n}\right) \right\}$}
    \end{align*}
    is at least $R^{n/5}$, where $R=\exp\left(\alpha k\log(p/k)/n\right)$, $\alpha = (1+\eta)/(2+\eta)$, and $\mu =1-o(1)$ as $k\rightarrow\infty$.
    \end{enumerate}
\end{theorem}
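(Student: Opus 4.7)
The plan is to prove Part (a) by a first-moment argument leveraging the chi-squared lower tail, and Part (b) by a second-moment method on the count of ``decoy'' vectors having support disjoint from $v^*$. For Part (a), fix any $v \in \{0,1\}^p$ with $\|v\|_0 = k$ and $\|v - v^*\|_0 = 2\ell$. Then $Y - Xv = X(v^* - v) + W$ has iid Gaussian entries with variance $2\ell + \sigma^2$, so $\|Y - Xv\|_2^2 / (2\ell + \sigma^2) \sim \chi^2_n$. Applying the Laurent--Massart lower tail bound $\mathbb{P}(\chi^2_n \leq n\alpha) \leq \exp(-n(\alpha - 1 - \log\alpha)/2)$ at a threshold of order $\exp(-2\ell \log(p/k)/n)$, combined with a union bound over the $\binom{k}{\ell}\binom{p-k}{\ell}$ choices of $v$ at overlap level $\ell$ and a further union bound over $\ell \in \{0, 1, \ldots, k\}$, yields the claimed lower bound with high probability. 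The constant $e^{-3/2}$ in the statement is chosen to provide sufficient slack after these unions.

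For Part (b), let $Z$ denote the number of $v \in \{0,1\}^p$ with $\|v\|_0 = k$, support disjoint from $v^*$, and $n^{-1/2}\|Y - Xv\|_2 \leq R \sqrt{2k+\sigma^2}\exp(-\mu k \log(p/k)/n)$. Since $\|Y - Xv\|_2^2 \sim (2k + \sigma^2)\chi^2_n$ for each such $v$, the expected count is
\begin{align*}
    \mathbb{E}[Z] = \binom{p-k}{k}\, \mathbb{P}\!\left(\chi^2_n \leq n R^2 e^{-2\mu k\log(p/k)/n}\right),
\end{align*}
which, using Stirling and the same chi-squared lower tail, produces an exponentially large quantity, far larger than $R^{n/5}$. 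The key step is then to control the second moment. Splitting the sum by the overlap $m = |\mathrm{supp}(v) \cap \mathrm{supp}(w)|$ between two competing decoys, I would decompose
\begin{align*}
    X(v^* - v) + W &= a Z_0 + b Z_v, \\
    X(v^* - w) + W &= a Z_0 + b Z_w,
\end{align*}
where $Z_0, Z_v, Z_w$ are independent standard Gaussian vectors in $\mathbb{R}^n$, with $a^2 = k + m + \sigma^2$ and $b^2 = k - m$. Conditioning on $Z_0$ turns the joint event into two conditionally independent (non-central) chi-squared tails, which can then be combined with the counting factor $\binom{k}{m}\binom{p-k}{k-m}$ for overlap-$m$ pairs and summed over $m$. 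A Paley--Zygmund argument converts the resulting bounded second-moment ratio into the desired cardinality lower bound on the decoy set.

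The main obstacle lies in executing the second-moment sum uniformly in $m$ up to $k$ under the relaxed assumption $k = o(p^{1/3-\eta})$ rather than the $k = p^{o(1)}$ of the prior work. In the previous regime, logarithmic factors such as $\log k$ were dominated by $\log(p/k)$ and could be absorbed into constants; here they must be tracked, especially in the intermediate overlap range where neither a small-$m$ nor large-$m$ expansion dominates. I expect the critical technical device, in direct analogy with the ``flatness'' argument used for sparse tensor PCA earlier in the paper, to be a conditional second moment restricted to decoys whose pairwise interactions with the shared Gaussian component $Z_0$ are typical, suppressing the contribution of atypical intermediate-overlap configurations. The exponent $1/3$ in the hypothesis should emerge as the sharp threshold at which, after these refinements, the counting term $\binom{p-k}{k-m}$ still dominates the correlation-induced correction $\exp(-n\,\kappa(m/k))$ for every $m \in \{0, \ldots, k\}$.
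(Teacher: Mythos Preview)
Your plan for Part (a) is correct and matches the paper: a union bound over $\binom{k}{\ell}\binom{p-k}{\ell}$ vectors at each overlap level, combined with the chi-squared lower tail, is exactly what is done (the paper simply cites the prior work with $\log(p/k)$ in place of $\log p$).

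For Part (b), the overall second-moment strategy is right, but the execution in the paper differs from yours in two essential ways, and your anticipated ``flatness'' device is not what is used. First, the paper does not work with the $\ell_2$ event directly. It passes to the $\ell_\infty$ event $\|Y-Xv\|_\infty \le t\sqrt{k}$ via the trivial bound $n^{-1/2}\|x\|_2 \le \|x\|_\infty$, which makes everything factorize coordinate-wise: conditional on $Y$, the first and second moments of the count become products over $i\le n$ of the scalar quantities $p_{t,Y_i/\sqrt{k}}=\mathbb{P}(|Z-Y_i/\sqrt{k}|\le t)$ and $q_{t,Y_i/\sqrt{k},\rho}=\mathbb{P}(|Z_1-Y_i/\sqrt{k}|\le t,\,|Z_2-Y_i/\sqrt{k}|\le t)$ for $\rho$-correlated standard Gaussians. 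This replaces the correlated non-central chi-squared tails your $aZ_0+bZ_v$ decomposition would produce by a one-dimensional computation with an explicit bound $q_{t,y,\rho}/p_{t,y}^2 \le \sqrt{(1+\rho)/(1-\rho)}\,e^{\rho y^2}$. Second, the paper does not attempt a direct Paley--Zygmund on the unconditional count; instead it conditions on $Y$, forms the ratio $\Upsilon(Y)=\mathbb{E}[Z^2\mid Y]/\mathbb{E}[Z\mid Y]^2$, and shows the \emph{truncated} expectation $\mathbb{E}_Y[\min\{1,\Upsilon-1\}]\to 0$ by bounding $\mathbb{P}(\Upsilon_\rho)$ separately in two overlap regimes. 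The truncation is needed because $\Upsilon$ can be large on a low-probability set of $Y$.

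The $1/3$ exponent does not come from a flatness restriction; it arises in the low-overlap branch of the conditional second-moment sum from the requirement $\log(4k)/\log(p/k^2)<1$, equivalently $k^{3+\eta}\lesssim p$. The flatness idea is used in the paper only for the sparse tensor PCA lower bound (the case $t=2$ there), not for sparse regression.
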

The proof of \cref{thm:2.1new}(a) follows identically to the proof in \cite{GZ22_supp}, but with $k \log (p/k)$ in place of $k\log p$ inside the exponent. The proof of \cref{thm:2.1new}(b) uses a second moment method (which is the bulk of the proof), which we defer to \cref{sec:10.1proof}.

\subsection{Proof of the main negative result}
In this section, we will apply \cref{thm:2.1new} to prove \cref{thm:negresult}. The crux of the argument can be described by decomposing the parameter space into a ``low'' overlap with $v^*$ part $A_0$, a ``medium'' overlap $A_1$ and a high overlap part $A_2$. We then show that the probability mass placed on the low overlap set $A_0$ is much higher than that placed on the medium overlap set $A_1$. Since if we start from $A_0$ with a local Markov chain (such as the Metropolis chain on the Johnson graph) we have to go through the medium overlap set to get to the high overlap set $A_2$, this creates a bottleneck effect where the chain remains in the low overlap set for an exponential number of iterations.

The sets of low, medium, and high overlap with $v^*$ we fix throughout some arbitrary constants $0<\zeta_1<\zeta_2<0.9-\alpha$, where $\alpha$ is defined in \cref{thm:2.1new}. Note $\zeta_2<\mu-\alpha-0.05$, since $\mu=1-o(1)$, as $k$ grows. Then define, 
\begin{align}
	&A_0 = \{v: \frac{1}{2k}\lVert v-v^*\rVert_0 >\zeta_{2}\} \text{ (low overlap with $v^*$)} \\
	&A_1 = \{v: \zeta_{1}\leq\frac{1}{2k}\lVert v-v^*\rVert_0 \leq\zeta_{2}\} \text{ (medium overlap)} \\
	&A_2 = \{v: \frac{1}{2k}\lVert v-v^*\rVert_0 <\zeta_{1}\} \text{ (high overlap)}
\end{align}

For $\zeta\in [0,1]$, we define the function $\Gamma$:
$$\Gamma(\zeta)=\left(2\zeta k+\sigma^2\right)^{1/2}\exp\left(-\frac{\zeta k\log (p/k)}{n}\right)$$

Then define $\zeta^*$ as
\begin{align*}
    \zeta^*:=\text{argmin}_{\zeta\in[\zeta_{1},\zeta_{2}]}\Gamma(\zeta)
\end{align*}
\cref{lem:A_1LB} give us a lower bound on the squared error for any $v$ in the medium overlap set $A_1$.
\begin{lemma}\label{lem:A_1LB}
    For all $v$ in $A_1$, it holds that $\lVert Y-Xv \rVert_2^2\geq e^{-3}n\Gamma^2(\zeta^*)$ with high probability.
\begin{proof}
	By \cref{thm:2.1new}(a) if $\lVert v-v^*\rVert_0 = 2\ell$, then
    \begin{align*}
        \lVert Y-Xv \rVert_2^2 \geq e^{-3}n(2\ell+\sigma^2)\exp\left(-\frac{2\ell\log (p/k)}{n}\right)
    \end{align*}
	Note that by definition of $A_1$, if $v\in A_1$, then $\lVert v-v^*\rVert_0$ lies in the interval $[2k\zeta_1,2k\zeta_2]$. It is therefore sufficient to show that for all $\ell\in [2k\zeta_1,2k\zeta_2]$, we have
    \begin{align*}
        e^{-3}n(2\ell+\sigma^2)\exp\left(-\frac{2\ell\log (p/k)}{n}\right)\geq e^{-3}n\Gamma^2(\zeta^*)
    \end{align*}
	or equivalently, that
    \begin{align*}
        e^{-3}n\Gamma^2\left(\frac{\ell}{k}\right)\geq e^{-3}n\Gamma^2(\zeta^*)
    \end{align*}
	But $\zeta^*$ is defined as $$\zeta^*=\text{argmin}_{\zeta\in[\zeta_{1,k,n},\zeta_{2,k,n}]}\Gamma(\zeta)$$
	which completes the proof.
\end{proof}
\end{lemma}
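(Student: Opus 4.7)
The plan is to reduce the statement to a direct application of \cref{thm:2.1new}(a) followed by an elementary minimization in $\zeta$ over the allowed window. The starting observation is that the set $A_1$ is defined precisely by the constraint $\|v-v^*\|_0 = 2\ell$ with $\ell$ ranging over integers in $[k\zeta_1, k\zeta_2]$, so bounding $\|Y-Xv\|_2^2$ uniformly on $A_1$ reduces to bounding the constrained minimum $\phi_2(\ell)^2$ uniformly over that integer window.

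First I would invoke \cref{thm:2.1new}(a), which asserts that with high probability the inequality
\[
\phi_2(\ell) \;\geq\; e^{-3/2}\sqrt{2\ell+\sigma^2}\,\exp\!\left(-\frac{\ell\log(p/k)}{n}\right)
\]
holds simultaneously for all $0 \le \ell \le k$. Squaring both sides and multiplying by $n$, and recognizing that the right-hand side is exactly $e^{-3}\,n\,\Gamma^2(\ell/k)$ with $\Gamma$ as defined just above the statement, yields, for every $v$ with $\|v-v^*\|_0 = 2\ell$,
\[
\|Y - Xv\|_2^2 \;\ge\; n\phi_2(\ell)^2 \;\ge\; e^{-3}\,n\,\Gamma^2(\ell/k).
\]

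Finally, I would combine this with the definition of $A_1$: if $v \in A_1$ then the ratio $\zeta := \ell/k$ lies in $[\zeta_1,\zeta_2]$, and therefore
\[
\Gamma^2(\ell/k) \;\ge\; \min_{\zeta \in [\zeta_1,\zeta_2]} \Gamma^2(\zeta) \;=\; \Gamma^2(\zeta^\ast),
\]
by the very definition of $\zeta^\ast$ as the minimizer of $\Gamma$ on $[\zeta_1,\zeta_2]$ (equivalently of $\Gamma^2$, since $\Gamma \ge 0$). Chaining the two inequalities gives the claimed bound $\|Y-Xv\|_2^2 \ge e^{-3} n\, \Gamma^2(\zeta^\ast)$ uniformly in $v \in A_1$, on the same high-probability event supplied by \cref{thm:2.1new}(a).

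There is no real obstacle here: the lemma is essentially the statement that minimizing $\phi_2(\ell)^2$ over the medium-overlap window dominates the worst case over $A_1$, and \cref{thm:2.1new}(a) already does the heavy lifting by providing the high-probability lower bound on $\phi_2(\ell)$ simultaneously for all $\ell$. The one minor care point is to make sure the union-bound-type ``uniform in $\ell$'' aspect is read off from \cref{thm:2.1new}(a) as stated, rather than applied pointwise, so that the single high-probability event suffices for the entire set $A_1$ at once.
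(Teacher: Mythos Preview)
Your proposal is correct and follows essentially the same approach as the paper: invoke \cref{thm:2.1new}(a) to obtain the lower bound $\|Y-Xv\|_2^2 \ge e^{-3} n \Gamma^2(\ell/k)$ uniformly in $\ell$, then minimize $\Gamma$ over $[\zeta_1,\zeta_2]$ using the definition of $\zeta^*$. Your explicit note that the high-probability event from \cref{thm:2.1new}(a) holds simultaneously for all $\ell$ is a welcome clarification.
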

\begin{lemma}\label{lem:zeta*LB} It holds that

\begin{align*}
    \Gamma^2(\zeta^*)\geq \zeta^*\exp\left(2(1-\zeta^*)\frac{k\log (p/k)}{n}\right)\Gamma^2(1)
\end{align*}
\begin{proof}
        Writing out $\Gamma(\zeta^*)$ and $\Gamma(1)$ explicitly, we see that
        \begin{align*}
            \frac{\Gamma(\zeta^*)}{\Gamma(1)} = \frac{(2\zeta^*k+\sigma^2)^\frac{1}{2}}{(2k+\sigma^2)^\frac{1}{2}}\exp\left((1-\zeta^*)\frac{k\log (p/k)}{n}\right)
        \end{align*}
    First we note that since $\sigma^2 \leq k$ and $\zeta^* < 1$, the ratio $\frac{2\zeta^*k+\sigma^2}{2k+\sigma^2}$ is at least $\zeta^*$, which implies that 
    \begin{align*}
        \frac{\Gamma(\zeta^*)}{\Gamma(1)} \geq \sqrt{\zeta^*}\exp\left((1-\zeta^*)\frac{k\log (p/k)}{n}\right)
    \end{align*}
    We then multiply across by $\Gamma(1)$ and square both sides to get the desired result.
\end{proof}
\end{lemma}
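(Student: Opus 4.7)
The plan is to unwind the definition $\Gamma(\zeta) = (2\zeta k + \sigma^2)^{1/2}\exp(-\zeta k \log(p/k)/n)$ directly, form the ratio $\Gamma^2(\zeta^*)/\Gamma^2(1)$, and separate it into an exponential factor that matches the exponential in the statement of the lemma and a purely algebraic prefactor. This reduces the claim to a trivial inequality about the prefactor.

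First I would substitute into $\Gamma(\zeta)^2 = (2\zeta k + \sigma^2)\exp(-2\zeta k \log(p/k)/n)$, which yields
\[
\frac{\Gamma^2(\zeta^*)}{\Gamma^2(1)} = \frac{2\zeta^* k + \sigma^2}{2k + \sigma^2}\,\exp\left(\frac{2(1-\zeta^*)k\log(p/k)}{n}\right).
\]
The exponential factor on the right is already exactly the one appearing in the target bound, so it suffices to show that the algebraic prefactor satisfies
\[
\frac{2\zeta^* k + \sigma^2}{2k + \sigma^2} \geq \zeta^*.
\]
Multiplying both sides of the displayed bound on the ratio by $\Gamma^2(1)$ then gives the desired statement.

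Cross-multiplying the prefactor inequality by the positive denominator $2k + \sigma^2$ reduces it to $(1-\zeta^*)\sigma^2 \geq 0$, which is immediate since $\zeta^* \in [\zeta_1,\zeta_2] \subseteq [0,1]$ by construction. I do not anticipate a real obstacle: the argument is entirely routine manipulation of the definition of $\Gamma$, and the only substantive input used is $\zeta^* \leq 1$. In particular, the hypothesis $\sigma^2 \leq k$ from \cref{ass:sr} is not strictly needed for this step, though it is of course in force throughout the surrounding discussion.
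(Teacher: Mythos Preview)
Your proof is correct and follows essentially the same approach as the paper: compute the ratio, isolate the exponential, and reduce to the prefactor inequality $\frac{2\zeta^*k+\sigma^2}{2k+\sigma^2}\geq \zeta^*$. Your observation that this last inequality only needs $\zeta^*\leq 1$ (via $(1-\zeta^*)\sigma^2\geq 0$) is correct; the paper invokes $\sigma^2\leq k$ here, but as you note that hypothesis is not actually required for this step.
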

We now apply these two lemmas to bound $\pi_\beta(A_1)/\pi_\beta(A_0)$, the ratio between the probability placed on the medium overlap set and the probability placed on the low overlap set.
\begin{lemma}\label{lem:A1A0ratio}
With high probability, it holds that
\begin{align*}
     \frac{\pi_\beta(A_1)}{\pi_\beta(A_0)} \leq \exp\left(k\log \frac{pe}{k}-\frac{\alpha}{5}k\log\frac{p}{k} - \beta nM\right)
    \end{align*}
    where $M=\left[e^{-3}\zeta^*(2k+\sigma^2)\exp\left(-\frac{2\zeta^*k\log (p/k)}{n}\right)-(2\mu k+\sigma^2)\exp\left(-\frac{2(\mu-\alpha)k\log (p/k)}{n}\right)\right]$
\begin{proof}
	Let $v \in A_1$. Then by \cref{lem:A_1LB} and \cref{lem:zeta*LB}, with high probability we have
	\begin{align}
		\lVert Y-Xv \rVert_2^2 &\geq e^{-3}n\Gamma^2(\zeta^*) \text{ by \cref{lem:A_1LB} } \\
		&\geq e^{-3}n\zeta^*\exp\left(2(1-\zeta^*)\frac{k\log (p/k)}{n}\right)\Gamma^2(1) \text{ by \cref{lem:zeta*LB}}
	\end{align}
    Next, note that $\Gamma^2(1)=(2k+\sigma^2)\exp\left(-\frac{2k\log (p/k)}{n}\right)$. This means that 
    \begin{align*}
        &e^{-3}n\zeta^*\exp\left(2(1-\zeta^*)\frac{k\log (p/k)}{n}\right)\Gamma^2(1)\\
        = &e^{-3}n\zeta^*\exp\left(2(1-\zeta^*)\frac{k\log (p/k)}{n}\right)(2k+\sigma^2)\exp\left(-\frac{2k\log (p/k)}{n}\right)\\
        = &e^{-3}n\zeta^*(2k+\sigma^2)\exp\left(-\frac{2\zeta^*k\log (p/k)}{n}\right)\\
    \end{align*}
	This gives us that
	\begin{align*}
        \pi_\beta(v) &= \frac{1}{Z}\exp(-\beta \lVert Y-Xv \rVert^2_2)\\
        &\leq \frac{1}{Z}\exp\left(-\beta e^{-3}n\zeta^*(2k+\sigma^2)\exp\left(-\frac{2\zeta^*k\log (p/k)}{n}\right)\right)
    \end{align*}
	Notice that $A_1$ has at most $\binom{p}{k}$ elements, so we can use a union bound to upper bound $\pi_\beta(A_1)$ by
	\begin{align}
		\pi_\beta(A_1) &\leq \frac{\binom{p}{k}}{Z}\exp\left(-\beta e^{-3}n\zeta^*(2k+\sigma^2)\exp\left(-\frac{2\zeta^*k\log (p/k)}{n}\right)\right) \\
		&\leq \frac{\left(\frac{pe}{k}\right)^k}{Z}\exp\left(-\beta e^{-3}n\zeta^*(2k+\sigma^2)\exp\left(-\frac{2\zeta^*k\log (p/k)}{n}\right)\right)\\
		&= \frac{1}{Z}\exp\left(k\log \frac{pe}{k}-\beta e^{-3}n\zeta^*(2k+\sigma^2)\exp\left(-\frac{2\zeta^*k\log (p/k)}{n}\right)\right)
	\end{align}
    Now we claim that $A_0$ contains at least $R^{n/5}$ elements for which
	\begin{align*}
	    \lVert Y-Xv\rVert ^2 < nR^2 \Gamma^2(\mu)
	\end{align*}
    To see this, apply \cref{thm:2.1new}(b). This allow us to lower bound the probability mass placed on $A_0$:
	\begin{align}
		\pi_\beta(A_0) &\geq \frac{R^{n/5}}{Z}\exp\left(-\beta nR^2\Gamma^2(\mu)\right)\\
		&= \frac{1}{Z}\exp\left(\frac{n}{5}\log R-\beta nR^2\Gamma^2(\mu)\right)
	\end{align}
    Then apply the definition of $\Gamma$ to get that
    \begin{align*}
        \beta n R^2\Gamma^2(\mu) = \beta n R^2 (2\mu k+\sigma^2)\exp\left(-\frac{2\mu k\log (p/k)}{n}\right)
    \end{align*}
    Recall that $R = \exp\left(\frac{\alpha k\log (p/k)}{n}\right)$. This means that $R^2 = \exp\left(\frac{2\alpha k\log (p/k)}{n}\right)$. Apply this fact to see that
    \begin{align*}
        \beta n R^2\Gamma^2(\mu) &= \beta n \exp\left(\frac{2\alpha k\log (p/k)}{n}\right)\left(2\mu k+\sigma^2\right)\exp\left(-\frac{2\mu k\log (p/k)}{n}\right)\\
        &=\beta n \left(2\mu k+\sigma^2\right)\exp\left(-\frac{2(\mu-\alpha)k\log (p/k)}{n}\right)\\
    \end{align*}
    This tells us that 
    \begin{align*}
        \pi_\beta(A_0) \geq \frac{1}{Z}\exp\left(\frac{n}{5}\log R-\beta n \left(2\mu k+\sigma^2\right)\exp\left(-\frac{2(\mu-\alpha)k\log (p/k)}{n}\right)\right)
    \end{align*}
    Apply $\log R = \frac{\alpha k\log (p/k)}{n}$ to get a lower bound of
    \begin{align*}
        \pi_\beta(A_0) \geq \frac{1}{Z}\exp\left(\frac{\alpha}{5}k\log(p/k)-\beta n \left(2\mu k+\sigma^2\right)\exp\left(-\frac{2(\mu-\alpha)k\log (p/k)}{n}\right)\right)
    \end{align*}
	We therefore combine the upper bound on $\pi_\beta(A_1)$ with the lower bound on $\pi_\beta(A_0)$ to see that
    \begin{align*}
     \frac{\pi_\beta(A_1)}{\pi_\beta(A_0)} \leq \exp\left(k\log \frac{pe}{k}-\frac{\alpha}{5}k\log\frac{p}{k} - \beta nM\right)
    \end{align*}
    for $M=\left[e^{-3}\zeta^*(2k+\sigma^2)\exp\left(-\frac{2\zeta^*k\log (p/k)}{n}\right)-(2\mu k+\sigma^2)\exp\left(-\frac{2(\mu-\alpha)k\log (p/k)}{n}\right)\right]$
\end{proof}
\end{lemma}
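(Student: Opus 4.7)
The plan is to bound $\pi_\beta(A_1)$ from above and $\pi_\beta(A_0)$ from below via pointwise control of the posterior density on the two sets, so that when I take the ratio the unknown normalizing constant $Z$ cancels out.

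For the numerator, I would apply Lemma \ref{lem:A_1LB} to obtain the uniform lower bound $\|Y-Xv\|_2^2 \ge e^{-3} n \Gamma^2(\zeta^*)$ for all $v \in A_1$, and then use Lemma \ref{lem:zeta*LB} together with the definition $\Gamma^2(1) = (2k+\sigma^2)\exp(-2k\log(p/k)/n)$ to put this bound into the closed form $e^{-3} n \zeta^*(2k+\sigma^2) \exp(-2\zeta^* k\log(p/k)/n)$. Since $\pi_\beta(v) \propto \exp(-\beta\|Y-Xv\|_2^2)$, a uniform lower bound on the squared error translates to a pointwise upper bound on $\pi_\beta(v)$. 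Combined with the crude estimate $|A_1| \le \binom{p}{k} \le (pe/k)^k$ and a union bound, this yields an upper bound on $\pi_\beta(A_1)$ whose exponent picks up the combinatorial factor $k\log(pe/k)$ plus the negative exponential term that will later form the first half of $M$.

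For the denominator, I would invoke Theorem \ref{thm:2.1new}(b), which under the hypotheses of Theorem \ref{thm:negresult} (in particular $n \le c k \log(p/k)$ for sufficiently small $c$, so that $n \le \alpha k \log(p/k)/(3\log D_0)$ holds) produces at least $R^{n/5}$ vectors $v$ with $\|v-v^*\|_0 = 2k$, all of which lie in $A_0$ and satisfy $\|Y-Xv\|_2^2 \le n R^2 \Gamma^2(\mu)$. Substituting $R = \exp(\alpha k\log(p/k)/n)$ and expanding $\Gamma^2(\mu) = (2\mu k+\sigma^2)\exp(-2\mu k\log(p/k)/n)$, the squared-error bound telescopes cleanly into $n(2\mu k+\sigma^2)\exp(-2(\mu-\alpha)k\log(p/k)/n)$, while the enumeration factor $R^{n/5}$ contributes $\frac{\alpha}{5}k\log(p/k)$ to the exponent of $\pi_\beta(A_0)$.

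Dividing the two bounds cancels $Z$, collects the linear-in-$k$ combinatorial contributions into $k\log(pe/k) - \frac{\alpha}{5} k\log(p/k)$, and groups the two $\beta n$ contributions into a single $-\beta n M$ term with $M$ exactly as stated. There is no deep obstacle here, since the hard work has been done by the previous lemmas and by the second moment argument behind Theorem \ref{thm:2.1new}(b); the whole argument is careful bookkeeping. The one place that requires attention is ensuring the hypotheses of Theorem \ref{thm:2.1new}(b) are implied by those of Theorem \ref{thm:negresult}, which is what pins down the meaning of ``sufficiently small $c$''.
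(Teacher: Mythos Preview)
Your proposal is correct and follows essentially the same approach as the paper's proof: upper-bound $\pi_\beta(A_1)$ via the uniform squared-error lower bound from Lemmas~\ref{lem:A_1LB} and~\ref{lem:zeta*LB} combined with $|A_1|\le (pe/k)^k$, lower-bound $\pi_\beta(A_0)$ via the $R^{n/5}$ good vectors from Theorem~\ref{thm:2.1new}(b), and divide. Your observation that the hypotheses of Theorem~\ref{thm:2.1new}(b) must be checked against those of Theorem~\ref{thm:negresult} is also exactly the caveat the paper implicitly relies on.
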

We now show that if the inverse temperature parameter $\beta$ is sufficiently large, then with high probability the ratio between $\pi_\beta(A_1)$ and $\pi_\beta(A_0)$ is exponentially small.
\begin{theorem}\label{thm:A1A0ratiobound}
    Whenever $\beta$ satisfies
    \begin{align*}
        \beta > \frac{2}{c_2n}\log(p/k)\exp\left(\frac{2k\log(p/k)}{n}\right)
    \end{align*}
    for a small positive constant $c_2$, then with high probability, 
    \begin{align*}
        \frac{\pi_\beta(A_1)}{\pi_\beta(A_0)}\leq \exp(-\Omega(k\log (p/k)))
    \end{align*}
    \begin{proof}
    By \cref{lem:A1A0ratio}, it suffices to show that the following expression is at least $\Omega(k\log (p/k))$:
        \begin{align*}
            \frac{\alpha}{5}k\log\frac{p}{k} + \beta nM-k\log \frac{pe}{k}
        \end{align*}
        for $M=\left[e^{-3}\zeta^*(2k+\sigma^2)\exp\left(-\frac{2\zeta^*k\log (p/k)}{n}\right)-(2\mu k+\sigma^2)\exp\left(-\frac{2(\mu-\alpha)k\log (p/k)}{n}\right)\right]$.
        
        First, define $h$ as
        \begin{align*}
            h = e^{-3}\zeta^*\left(2k+\sigma^2\right)-\frac{2\mu k+\sigma^2}{D_0^{\frac{6}{\alpha}(\mu-\alpha-\zeta^*)}}
        \end{align*}
        Then we claim that $h$ is lower bounded by $c_2 k$ for a small positive constant $c_2$ when $D_0$ is large enough. To see this, note that $e^{-3}\zeta^*\left(2k+\sigma^2\right)$ is lower bounded by $2e^{-3}\zeta^*k$, and we can upper bound the second summand of $h$ since $\sigma^2 \leq 2k$ as follows,
        \begin{align*}
            \frac{2\mu k+\sigma^2}{D_0^{\frac{6}{\alpha}(\mu-\alpha-\zeta^*)}}\leq \frac{4}{D_0^{\frac{6}{\alpha}(\mu-\alpha-\zeta^*)}}k
        \end{align*}
        Recall $\mu-\alpha-\zeta^*>0.05$ by construction, so for large enough $D_0$, $h$ is at least $c_2k$ for some small $c_2>0$. 

        Next, we claim that for large enough $D_0$, it holds that
        \begin{align*}
            M\geq h\exp\left(-\frac{2\zeta^*k\log (p/k)}{n}\right)
        \end{align*}
        To prove this, observe that rearranging the definition of $h$, we get
        \begin{align*}
            \frac{2\mu k+\sigma^2}{D_0^{\frac{6}{\alpha}(\mu-\alpha-\zeta^*)}} = e^{-3}\zeta^*\left(2k+\sigma^2\right)-h
        \end{align*}
        and therefore,
        \begin{align*}
            \frac{2\mu k+\sigma^2}{e^{-3}\zeta^*\left(2k+\sigma^2\right)-h} = D_0^{\frac{6}{\alpha}(\mu-\alpha-\zeta^*)}
        \end{align*}
        Taking the logarithm of both sides, we see that
        \begin{align*}
            \log\left[\frac{2\mu k+\sigma^2}{e^{-3}\zeta^*\left(2k+\sigma^2\right)-h}\right] = \frac{6}{\alpha}(\mu-\alpha-\zeta^*)\log D_0
        \end{align*}
        and then by dividing across by $2(\mu-\alpha-\zeta^*)$ and inverting both sides we get
        \begin{align*}
            \frac{2(\mu-\alpha-\zeta^*)}{\log\left[\frac{2\mu k+\sigma^2}{e^{-3}\zeta^*\left(2k+\sigma^2\right)-h}\right] } = \frac{\alpha}{3\log D_0}
        \end{align*}
        Since by assumption we have $n\leq \alpha k\log(p/k)/(3\log D_0)$, it therefore holds that
        \begin{align*}
            n\leq \frac{2(\mu-\alpha-\zeta^*) k\log(p/k)}{\log\left[\frac{2\mu k+\sigma^2}{e^{-3}\zeta^*\left(2k+\sigma^2\right)-h}\right]}
        \end{align*}
        Rearranging, we get
        \begin{align*}
        \log\left[\frac{e^{-3}\zeta^*\left(2k+\sigma^2\right)-h}{2\mu k+\sigma^2}\right]&\geq -\frac{2(\mu-\alpha-\zeta^*) k\log(p/k)}{n}\\
        \end{align*}
        This gives us
        \begin{align*}
            \log\left[e^{-3}\zeta^*\left(2k+\sigma^2\right)-h\right]-\frac{2\zeta^*k\log(p/k)}{n}&\geq \log\left[2\mu k+\sigma^2\right]-\frac{2(\mu-\alpha)k\log(p/k)}{n}
        \end{align*}
        Take the exponential of both sides to get
        \begin{align*}
            \left[e^{-3}\zeta^*\left(2k+\sigma^2\right)-h\right]\exp\left(-\frac{2\zeta^*k\log(p/k)}{n}\right)&\geq \left[2\mu k+\sigma^2\right]\exp\left(-\frac{2(\mu-\alpha)k\log(p/k)}{n}\right)
        \end{align*}
        and finally, rearrange once again to get
        \begin{align*}
            \left[e^{-3}\zeta^*\left(2k+\sigma^2\right)\right]\exp\left(-\frac{2\zeta^*k\log(p/k)}{n}\right) - \left[2\mu k+\sigma^2\right]\exp\left(-\frac{2(\mu-\alpha)k\log(p/k)}{n}\right)\\
            \geq h\exp\left(-\frac{2\zeta^*k\log(p/k)}{n}\right)
        \end{align*}
        which proves the lower bound on $M$. Combining this with the fact that $h\geq c_2 k$, it therefore suffices for \cref{thm:A1A0ratiobound} to show that 
        \begin{align}\label{thm6boundB}
            \beta c_2nk\exp\left(-\frac{2\zeta^*k\log (p/k)}{n}\right)-k\log\frac{pe}{k} = \Omega(k\log (p/k))
        \end{align}
        We assume that
        \begin{align*}
            \beta > \frac{2}{c_2n}\log(p/k)\exp\left(\frac{2k\log(p/k)}{n}\right)
        \end{align*}
        which means that (since $\zeta^*<1$)
        \begin{align*}
            \beta c_2nk\exp\left(-\frac{2\zeta^*k\log (p/k)}{n}\right)-k\log\frac{pe}{k} >2k\log(p/k)-k\log\frac{pe}{k} = \Omega(k\log(p/k)
        \end{align*}
    \end{proof}
\end{theorem}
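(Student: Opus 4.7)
The plan is to reduce everything to Lemma \ref{lem:A1A0ratio}, which already carries the high-probability event. From that lemma it suffices to prove the deterministic inequality $\beta n M + (\alpha/5)k\log(p/k) - k\log(pe/k) = \Omega(k\log(p/k))$, where $M$ is the difference of two positive quantities decaying exponentially in $k\log(p/k)/n$. Since the term $k\log(pe/k)$ is only linear in $k\log(p/k)$, the entire burden is on producing a good lower bound on $\beta n M$.

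The key step is to lower bound $M$ itself. The two exponents $-2\zeta^* k\log(p/k)/n$ and $-2(\mu-\alpha) k\log(p/k)/n$ appearing in $M$ are separated by $\mu - \alpha - \zeta^*$, and by construction ($\zeta^* \le \zeta_2 < 0.9 - \alpha$ combined with $\mu = 1-o(1)$) this separation is bounded below by a positive absolute constant such as $0.05$. I would then introduce a ``gap'' scalar $h := e^{-3}\zeta^*(2k+\sigma^2) - (2\mu k + \sigma^2)\, D_0^{-(6/\alpha)(\mu-\alpha-\zeta^*)}$ and verify $h \geq c_2 k$ for a small constant $c_2>0$ provided $D_0$ is chosen large enough; this is a one-line numerical check using $\sigma^2 \leq 2k$ so that $e^{-3}\zeta^*\cdot 2k$ dominates the negative term.

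Next I would convert the hypothesis $n \leq \alpha k\log(p/k)/(3\log D_0)$ (inherited through Theorem \ref{thm:2.1new}(b)) into the comparison $(2\mu k+\sigma^2)\exp(-2(\mu-\alpha)k\log(p/k)/n) \leq (e^{-3}\zeta^*(2k+\sigma^2)-h)\exp(-2\zeta^*k\log(p/k)/n)$. Rearranging the definition of $h$ gives $(2\mu k+\sigma^2)/(e^{-3}\zeta^*(2k+\sigma^2)-h) = D_0^{(6/\alpha)(\mu-\alpha-\zeta^*)}$; taking logarithms and invoking the upper bound on $n$ yields the inequality directly. Rearranging once more produces the clean lower bound $M \geq h\exp(-2\zeta^*k\log(p/k)/n) \geq c_2 k\exp(-2\zeta^*k\log(p/k)/n)$.

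Finally, substituting the hypothesis $\beta > (2/(c_2 n))\log(p/k)\exp(2k\log(p/k)/n)$ and using $\zeta^* < 1$ gives $\beta n M \geq 2k\log(p/k)\exp(2(1-\zeta^*)k\log(p/k)/n) \geq 2k\log(p/k)$, which comfortably dominates $k\log(pe/k)$ and produces the claimed $\Omega(k\log(p/k))$ lower bound on the exponent. The main subtle point is the calibration in the definition of $h$: the exponent $(6/\alpha)(\mu-\alpha-\zeta^*)$ must be chosen precisely so that the $(1/\log D_0)$ factor in the hypothesis on $n$ matches the factor $2(\mu-\alpha-\zeta^*)$ required for the exponential comparison; once this alignment is in place the rest of the argument is routine bookkeeping. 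The cleanest way to carry it out is to not try to chain inequalities blindly but to reverse-engineer $h$ from the upper bound on $n$, which is why I would introduce $h$ in the unusual form above.
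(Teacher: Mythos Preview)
Your proposal is correct and follows essentially the same approach as the paper's proof: reduce via Lemma~\ref{lem:A1A0ratio}, introduce the gap parameter $h$ with the precise exponent $(6/\alpha)(\mu-\alpha-\zeta^*)$, show $h\ge c_2 k$ using $\sigma^2\le 2k$ and $\mu-\alpha-\zeta^*>0.05$, convert the assumption $n\le \alpha k\log(p/k)/(3\log D_0)$ into $M\ge h\exp(-2\zeta^* k\log(p/k)/n)$, and finish by substituting the lower bound on $\beta$ together with $\zeta^*<1$. Your remark about reverse-engineering $h$ so that the exponent aligns with the $1/(3\log D_0)$ factor is exactly the mechanism the paper uses.
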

Using \cref{thm:A1A0ratiobound}, \cref{thm:negresult} follows from \cite[Claim 2.1]{MWW09} (see also \cite[Proposition 2.2]{WellsCOLT20}).

\section{Computational hardness of sparse tensor PCA}\label{sec:reduction}

In this short section, we describe how a reduction argument provides evidence of the computational hardness of the sparse tensor PCA model as defined in \eqref{eq:stpca} when $\lambda=o(\lambda_{ALG}(t,k))$ for some 
\[\lambda_{ALG}(t,k)=\tilde{\Theta}(\min\{k^{t/2},p^{(t-1)/2}/k^{t/2-1}\})\]
The reduction is based on a similar reduction described in \cite[Theorem 10]{brennan2020reducibility} for where $v^*\in\{-1,1\}^n$ without sparsity, which we appropriately extend to the case of a $k$-sparse binary $v^*$.
Our modification of the argument given here is essentially borrowed from a more general argument described in the proof of \cite[Theorem 16]{luo2022tensor}. Yet, as we are only interested in a specific implication of the proof in \cite{luo2022tensor}, not formally covered by any of the theorems in \cite{luo2022tensor} or \cite{brennan2020reducibility}, we describe it here for reasons of completeness and the reader's convenience.

\subsection{The hypergraph planted dense subgraph problem}

The key idea is to construct an average-case reduction from the hard regime of another recovery task, known as hypergraph planted dense subgraph problem (HPDS).

In HPDS, given $n,k=k_p \in \mathbb{N}, k\leq p$, some $\rho=\rho_p \in (0,1)$ and some fixed $t \geq 2$, one observes an instance of a random $t$-hypergraph on $p$ vertices, which we denote by $G$ and is constructed as follows. Firstly, one chooses $k$ vertices out of $n$ uniformly at random, resulting in a $k$-subset $\mathcal{PC}$. Then, given $\mathcal{PC}$, $G$ is a random $t$-hypergraph which contains all $t$-hyperedges between vertices of $\mathcal{PC}$ independently with probability $1/2+\rho$, and all other $t$-hyperedges independently and with probability exactly $1/2$. The goal of the statistician is to recover $\mathcal{PC}$ from observing $G$ with high probability as $p \rightarrow +\infty$.

Of course the larger the ``gap parameter'' $\rho \in (0,1)$ is, the easier the recovery of $\mathcal{PC}$ from $G$ is. In terms of computational limits for HPDS, a direct application of \cite[Theorem 20]{luo2022tensor} implies that for some
\[\rho_{\mathrm{ALG}}=\tilde{\Theta}(\min\{1,p^{(t-1)/2}/k^{t-1}\})\]
if $\rho=o(\rho_{\mathrm{ALG}})$ then all $(\log p)^2$-degree polynomials fail to recover $\mathcal{PC}$ from $G$. This result provides strong evidence of computational hardness in that regime of HPDS, based on the low-degree framework for estimation problems \cite{schramm2022computational,luo2022tensor}.

\subsection{Connecting back to sparse tensor PCA}
Importantly to us, as long as $\rho=p^{-O(t)}$ holds, a simple average-case polynomial-time reduction can map any instance of HPDS with parameter $\rho$ to an instance of sparse tensor PCA with parameter $\lambda=  \tilde{\Theta}(k^{t/2} \log (1+2\rho))$ and $v=\mathbf{1}_{\mathcal{PC}}$ the indicator vector of $\mathcal{PC}$. In particular, given this result, the computationally hard regime of HPDS $\rho=o(\rho_{\mathrm{ALG}})$ maps directly to the regime of sparse tensor PCA where $\lambda=o(\lambda_{ALG}(t,k))$ for some 
\[\lambda_{ALG}(t,k)=\tilde{\Theta}(k^{t/2}\log(1+2 \rho_{\mathrm{ALG}}))=\tilde{\Theta}(k^{t/2} \rho_{\mathrm{ALG}})= \tilde{\Theta}(\min\{k^{t/2},p^{(t-1)/2}/k^{t/2-1}\})\]
giving us the desired result.

The simple reduction works by employing the rejection kernel technique \cite{brennan2018reducibility}. Indeed by using the technique as described in \cite[Lemma 15]{luo2022tensor} for probabilities $1/2+\rho, 1/2$ and some $\xi=\tilde{\Theta}(\log (1+2\rho)/\sqrt{t})$, combined with the tensorization of the total variation distance \cite[Lemma 13]{luo2022tensor}, we directly get that the total variation between an instance $G$ from HPDS with parameter $\rho$ and an instance $Y$ from sparse tensor PCA for $\lambda= k^{t/2} \xi$ is $o(1)$, which completes the reduction.

\section{Acknowledgments}
I.Z. is thankful to Alex Wein for multiple discussions regarding the ``geometric rule'' during a past project.

\newpage

\bibliographystyle{emss}
\bibliography{references,geom}

\newpage
\appendix
\section{Deferred Proofs}\label{sec:DeferredProofs}

\subsection{Proof of \texorpdfstring{\cref{lem:SR_ver_assump2}}{lem:SR ver assump2}}\label{sec:proofSR_ver_assump2}
Here we prove \cref{lem:SR_ver_assump2}, that the Binary - Local Search Algorithm decreases the squared error by at least $\frac{1}{4}n$ at every step when $n\geq Ck\log (p/k)$ and $\sigma^2 \leq k/\log p$. The proof is based on an appropriately strengthened version of the proof of Theorem 2.12 in \cite{10.1214/21-AOS2130}, when applied to the case of binary coefficients. We will need a couple of results from \cite{GZ22_supp}.

\begin{definition}
     Let $n,k,p\in\mathbb{N}$ with $k\leq p$. We say that a matrix $Z\in\mathbb{R}^{n\times p}$ satisfies the $k$-Restricted  Isometry Property ($k$-RIP) with restricted isometric constant $\delta\in(0,1)$ if for every vector $\beta\in\mathbb{R}^p$ which is at most $k$-sparse, it holds that 
    \begin{align*}
        (1-\delta)\lVert\beta\rVert_2^2 n\leq \lVert Z\beta\rVert_2^2\leq (1+\delta)\lVert\beta\rVert_2^2 n
    \end{align*}
\end{definition}
We will need this standard result.
\begin{theorem}\label{thm:k-RIP}[Theorem 5.2 of \cite{Baraniuk2008}]
    Let $n,k,p\in\mathbb{N}$ with $k\leq p$. Suppose $Z\in\mathbb{R}^{n\times p}$ has i.i.d. standard Gaussian entries. Then for every $\delta >0$, there exists a constant $C=C_\delta>0$ such that if $n\geq Ck\log \frac{p}{k}$, then $Z$ satisfies the $k$-RIP with restricted isometric constant $\delta$ w.h.p. as $k\rightarrow\infty$.
\end{theorem}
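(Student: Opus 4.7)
The plan is to execute the standard covering-plus-concentration proof of the restricted isometry property for i.i.d.\ Gaussian matrices. Because the target inequality is homogeneous in $\lVert\beta\rVert_2$, it suffices to verify $(1-\delta)n\leq \lVert Z\beta\rVert_2^2 \leq (1+\delta)n$ uniformly over unit vectors $\beta$ supported on a single $k$-subset $S\subseteq[p]$, and then union-bound over the $\binom{p}{k}$ choices of $S$.

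First, for a fixed unit vector $\beta$ with support in $S$, rotational invariance of Gaussians gives $Z\beta\sim N(0,I_n)$, so $\lVert Z\beta\rVert_2^2\sim\chi^2_n$. The Laurent--Massart tail bound for $\chi^2$-variables then yields a universal constant $c_0>0$ such that
\[
\Pr\Bigl(\bigl|\lVert Z\beta\rVert_2^2 - n\bigr| > (\delta/3) n\Bigr) \leq 2\exp(-c_0\delta^2 n).
\]

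Next, fix $S\subseteq[p]$ with $|S|=k$ and denote by $\mathbb{S}_S$ the unit sphere of $\mathbb{R}^S$. A standard volumetric argument produces an $\epsilon$-net $\mathcal{N}_S\subseteq\mathbb{S}_S$ with $|\mathcal{N}_S|\leq(3/\epsilon)^k$. Union-bounding the previous step over $\mathcal{N}_S$, with probability at least $1 - 2(3/\epsilon)^k\exp(-c_0\delta^2 n)$ every net point satisfies the two-sided bound. To upgrade from the net to all of $\mathbb{S}_S$, set $M_S := \sup_{\beta\in\mathbb{S}_S}\lVert Z\beta\rVert_2$ and decompose any $\beta\in\mathbb{S}_S$ as $\beta=\beta_0+u$ with $\beta_0\in\mathcal{N}_S$ and $\lVert u\rVert_2\leq\epsilon$ (note $u$ is supported on $S$, so $\lVert Zu\rVert_2\leq M_S\lVert u\rVert_2$). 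The triangle inequality gives $\lVert Z\beta\rVert_2\leq\lVert Z\beta_0\rVert_2 + M_S\epsilon$, and taking the supremum yields $M_S \leq \sqrt{(1+\delta/3)n}/(1-\epsilon)$. Choosing $\epsilon=\epsilon(\delta)$ small enough gives $M_S^2\leq (1+\delta)n$, and a symmetric argument (bounding the infimum $m_S:=\inf_{\beta\in\mathbb{S}_S}\lVert Z\beta\rVert_2$ from below by $\sqrt{(1-\delta/3)n}-M_S\epsilon$) handles the lower side.

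Finally, union bound over all $\binom{p}{k}\leq(pe/k)^k$ coordinate subsets $S$. The overall failure probability is at most
\[
\exp\!\Bigl(k\log(pe/k) + k\log(3/\epsilon) - c_0\delta^2 n\Bigr),
\]
which is $o(1)$ as $k\to\infty$ provided $n\geq C_\delta k\log(p/k)$ for $C_\delta$ sufficiently large depending on $\delta$. The only slightly delicate step is the bootstrap from the net to the full sphere, where $M_S$ appears on both sides of the inequality; once $\epsilon$ is chosen correctly it is a one-line fixed-point argument, and the rest of the proof is routine.
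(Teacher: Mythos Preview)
Your proposal is correct and is precisely the standard covering-plus-concentration argument due to Baraniuk et al.; the paper does not supply its own proof of this theorem but simply cites it as Theorem~5.2 of \cite{Baraniuk2008}, so there is nothing to compare against beyond noting that your argument is essentially the one in that reference.
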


We will also need the concept of a Deviating Local Minimum.
\begin{definition}
    Let $d\in\mathbb{N}$. We call a set $\emptyset\neq S\subseteq [d]$ a \textbf{super-support} of a vector $x\in\mathbb{R}^d$ if $\text{supp}(x)\subseteq S$.
\end{definition}
\begin{definition}
    Let $n,p\in\mathbb{N}$, $\alpha\in(0,1)$, $X\in\mathbb{R}^{n\times p}$, and $\emptyset\neq S_1,S_2,S_3\subseteq[p]$. A triplet of vectors $(a,b,c)$ with $a,b,c\in\mathbb{R}^p$ is called an $\alpha$-\textbf{deviating local minimum} ($\alpha$-\textbf{D.L.M.}) with respect to $S_1,S_2,S_3$ and to the matrix $X$ if the following are satisfied:
    \begin{enumerate}
        \item The sets $S_1,S_2,S_3$ are pairwise disjoint and the vectors $a,b,c$ have super-supports $S_1,S_2,S_3$ respectively.
        \item $|S_1|=|S_2|$ and $|S_1|+|S_2|+|S_3|\leq 3k$.
        \item For all $i\in S_1$ and $j\in S_2$,
        \begin{align*}
            \lVert \left(Xa-a_iX_i\right)+\left(Xb-b_jX_j\right)+Xc\rVert_2^2\geq \lVert Xa+Xb+Xc\rVert_2^2-\alpha \left(\frac{\lVert a\rVert_2^2}{|S_1|}+\frac{\lVert b\rVert_2^2}{|S_2|}\right)n
        \end{align*}
    \end{enumerate}
\end{definition}

Finally, we will need two results from \cite{GZ22_supp} concerning D.L.M.s
\begin{proposition}\label{prop:F7}[Proposition F.7 of \cite{GZ22_supp}]
    Let $n,p,k\in\mathbb{N}$ with $k\leq \frac{1}{3}p$. Suppose that $X\in\mathbb{R}^{n\times p}$ satisfies the $3k$-RIP with restricted isometry constant $\delta_{3k}<\frac{1}{12}$. Then there is no $\frac{1}{4}$-D.L.M. triplet $(a,b,c)$ with respect to the matrix $X$ with $\lVert a\rVert_2^2 + \lVert b\rVert_2^2 \geq \frac{1}{4}\lVert c\rVert_2^2$.
\end{proposition}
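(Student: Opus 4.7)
The plan is to argue by contradiction: assume such a $\tfrac{1}{4}$-D.L.M.\ triplet $(a,b,c)$ exists, and derive a lower bound on $\delta_{3k}$ that exceeds $1/12$. Abbreviate $w := X(a+b+c)$, $m_s := |S_s|$, and $\rho := \norm{a}_2^2 + \norm{b}_2^2$, and recall $m_1 = m_2 =: m \ge 1$. Expanding $\norm{w - a_i X_i - b_j X_j}_2^2 - \norm{w}_2^2$, the D.L.M.\ inequality reads
\begin{align*}
-2\langle w,\, a_i X_i + b_j X_j\rangle + \norm{a_i X_i + b_j X_j}_2^2 \ge -\tfrac{1}{4}\left(\frac{\norm{a}_2^2}{m_1} + \frac{\norm{b}_2^2}{m_2}\right) n
\end{align*}
for every $(i,j) \in S_1 \times S_2$.

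First I would sum this inequality over $(i,j) \in S_1 \times S_2$, using $\sum_{i \in S_1} a_i X_i = Xa$ and $\sum_{j \in S_2} b_j X_j = Xb$, and then divide by $m$ to obtain
\begin{align*}
-2\langle w, X(a+b)\rangle + \sum_{i \in S_1} a_i^2 \norm{X_i}_2^2 + \sum_{j \in S_2} b_j^2 \norm{X_j}_2^2 + \tfrac{2}{m}\langle Xa, Xb\rangle \ge -\tfrac{1}{4}\rho\, n.
\end{align*}

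Next I would invoke $3k$-RIP to control each term. Directly, RIP gives $\norm{X_i}_2^2 \le (1+\delta_{3k}) n$, so $\sum_i a_i^2 \norm{X_i}_2^2 \le (1+\delta_{3k})\norm{a}_2^2 n$ and similarly for $b$. The standard polarization identity yields $|\langle Xu, Xv\rangle| \le \delta_{3k}\norm{u}_2\norm{v}_2\, n$ whenever $u,v$ have disjoint supports summing to at most $3k$ entries; this applies both to the pair $(a,b)$ (total sparsity $\le 2k$) and to $(c, a+b)$ (total sparsity $\le 3k$, using that $S_1 \cup S_2 \cup S_3$ has size at most $3k$). Combining these bounds with $m \ge 1$, and expanding $\langle w, X(a+b)\rangle = \norm{X(a+b)}_2^2 + \langle Xc, X(a+b)\rangle \ge (1-\delta_{3k})\rho\, n - \delta_{3k}\norm{c}_2\sqrt{\rho}\, n$, the averaged inequality becomes
\begin{align*}
2(1-\delta_{3k})\rho - 2\delta_{3k}\norm{c}_2\sqrt{\rho} \le (1 + \tfrac{1}{4} + 2\delta_{3k})\rho.
\end{align*}

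Finally, substituting the hypothesis $\rho \ge \norm{c}_2^2/4$ (equivalently $\norm{c}_2/\sqrt{\rho} \le 2$) and rearranging yields $1 - \tfrac{1}{4} \le 8\delta_{3k}$, i.e.\ $\delta_{3k} \ge 3/32$; since $3/32 > 1/12$, this contradicts $\delta_{3k} < 1/12$ and completes the proof. The main obstacle I anticipate is the careful bookkeeping: ensuring every RIP application is applied to a vector of sparsity at most $3k$, verifying that the cross-term bound $|\langle Xu, Xv\rangle| \le \delta_{3k}\norm{u}_2\norm{v}_2\, n$ (derived via polarization of $\norm{X(u\pm v)}_2^2$ together with a rescaling trick $u \mapsto tu,\, v \mapsto v/t$) uses the disjoint-support assumption correctly, and confirming that the constants survive with $\alpha = 1/4$ to yield the sharp threshold $1/12$ rather than a looser value.
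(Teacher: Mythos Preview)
The paper does not prove this proposition; it is quoted verbatim from the supplement \cite{GZ22_supp} and invoked as a black box in the proof of \cref{lem:SR_ver_assump2}. So there is no in-paper proof to compare against.

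Your argument is correct and is the natural one: average the D.L.M.\ inequality over all $(i,j)\in S_1\times S_2$, then control every resulting term with the $3k$-RIP (column norms, the polarization/rescaling bound $|\langle Xu,Xv\rangle|\le \delta_{3k}\|u\|_2\|v\|_2\,n$ for disjointly supported $u,v$, and the lower bound on $\|X(a+b)\|_2^2$). The bookkeeping you flag is fine: every vector you apply RIP to is at most $|S_1|+|S_2|+|S_3|\le 3k$ sparse, and the final arithmetic $3/4\le 8\delta_{3k}$, hence $\delta_{3k}\ge 3/32>1/12$, is correct.

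One cosmetic point: your division by $\rho$ at the end tacitly assumes $\rho=\|a\|_2^2+\|b\|_2^2>0$. If $\rho=0$ then the hypothesis $\rho\ge\tfrac14\|c\|_2^2$ forces $c=0$ as well, and the all-zero triple trivially satisfies the D.L.M.\ conditions; this degenerate case is clearly excluded by context (in the application the vectors are nonzero by construction), so just add a one-line remark ruling it out.
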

\begin{lemma}\label{lem:Hanson_app}[Proposition F.11 of \cite{GZ22_supp}]
There exists a universal constant $c_0>0$ such that for any fixed triplet $(a,b,c)$  with $a \neq0$,
\[\mathbb{P}\left( (a,b,c) \text{ is a } \frac{1}{4}\text{-D.L.M. triplet} \right) \leq 2\exp\left(-c_0 n\min\left\{1,\frac{\|a\|_2^2+\|b\|_2^2}{\|c\|_2^2}\right\}\right),\]
where for the case $c=0$ we abuse the notation by defining $\frac{1}{0}:=+\infty$.
\end{lemma}
The proof of \cref{lem:Hanson_app} is identical to the proof of Lemma F.11 of \cite{GZ22_supp}, but with $\frac{1}{4}$ in place of $\frac{1}{2}$.

The next proposition is the main technical contribution of this section.
\begin{proposition}\label{prop:F8}[(Improved) Proposition F.8 of \cite{GZ22_supp}]
    Let $n,p,k\in\mathbb{N}$ with $k\leq \frac{1}{3}p$. Suppose that $X\in\mathbb{R}^{n\times p}$ has i.i.d. $\mathcal{N}(0,1)$ entries. There exists a constant $C_1>0$ such that if $n\geq C_1 k\log (p/k)$ then w.h.p., there is no $\frac{1}{4}$-D.L.M. triplet $(a,b,c)$ with respect to some sets $\emptyset\neq S_1,S_2,S_3\subset [p]$ and the matrix $X$ such that the following conditions are satisfied.
    \begin{enumerate}
        \item $a,b,c, \in \{-1,0,1\}^p$
        \item $S_1\cup S_3 = [k]\cup \{p\}$, $p\in S_3$, and $S_1=\text{supp}(a)$.
        \item $\lVert a\rVert_2^2+\lVert b\rVert_2^2+\lVert c\rVert_2^2\leq 1.25k/\log p$
    \end{enumerate}
\end{proposition}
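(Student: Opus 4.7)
The plan is to combine a deterministic analysis via the restricted isometry property (\cref{thm:k-RIP}) with a union bound over all candidate triplets $(S_1,S_2,S_3,a,b,c)$. First, I would rewrite the D.L.M.~condition by expanding the squared norms to obtain, for every pair $(i,j)\in S_1\times S_2$,
\[
    \Psi_{i,j}(a,b,c) := 2\bigl\langle Xa+Xb+Xc,\ a_iX_i+b_jX_j\bigr\rangle - \|a_iX_i+b_jX_j\|_2^2 \ \leq\ \alpha\Bigl(\tfrac{\|a\|_2^2}{|S_1|}+\tfrac{\|b\|_2^2}{|S_2|}\Bigr)n.
\]
When $X$ satisfies the $3k$-RIP, the diagonal contribution $2a_i^2\|X_i\|_2^2 + 2b_j^2\|X_j\|_2^2\approx 2(a_i^2+b_j^2)n$ dominates, while $\|a_iX_i+b_jX_j\|_2^2\approx (a_i^2+b_j^2)n$ since $S_1$ and $S_2$ are disjoint. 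Hence $\Psi_{i,j}\approx (a_i^2+b_j^2)n + (\text{fluctuations})$, and combined with the lower bound on $|a|_{\min}$ implicit in condition (1), each constraint forces macroscopic Gaussian fluctuations, an exponentially unlikely event.

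Next, I would set up the union bound. The norm bound in condition (3), together with the pigeonhole lower bound $\|a\|_2^2\geq |S_1|\,|a|_{\min}^2$, forces $|S_1|,|S_2|,|S_3|$ to all be at most $c_1\min(\log p/\log\log p,k)$. Hence $(S_1,S_3)$ has at most $k^{O(\log p/\log\log p)}$ choices (with $S_3$ determined by $S_1$ through condition (2)), and $S_2\subseteq [p-1]\setminus[k]$ has at most $p^{O(\log p/\log\log p)}$ choices. For the vectors, I would cover the Euclidean ball $\{\|a\|_2^2+\|b\|_2^2+\|c\|_2^2\leq c_1\min(\log p/\log\log p,k)\}$ in the at most $3k$-dimensional coordinate space indexed by $S_1\cup S_2\cup S_3$ by an $\varepsilon$-net of size $\poly(p)^{O(\log p/\log\log p)}$, with $\varepsilon$ polynomially small in $1/p$.

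For each configuration in the net, the probability that the D.L.M.~condition holds simultaneously for all $(i,j)\in S_1\times S_2$ is bounded by a Hanson--Wright estimate (\cref{lem:HW}) applied to the family $\{\Psi_{i,j}\}$. Conditional on the columns $X_{S_1\cup S_2\cup S_3}$, the residual fluctuations are Gaussian quadratic forms concentrating on scale $\sqrt{n}$, so each constraint fails with probability at most $\exp(-\Omega(n))$. The hypothesis $n\geq C_1 k\log(p/k)$, with $C_1$ sufficiently large, then absorbs the $\poly(p)^{O(\log p/\log\log p)}$ cost of the union bound, giving total failure probability $o(1)$.

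The main technical obstacle I expect is balancing the $\varepsilon$-net granularity against the concentration rate: the net must be fine enough that the Lipschitz continuity of $\Psi_{i,j}$ in $(a,b,c)$ on the bounded domain, combined with deterministic column bounds $\|X_i\|_2 = O(\sqrt{n\log p})$, allows extending the bound from the net to the entire ball, yet coarse enough that the union bound remains tractable. Exploiting that $\Psi_{i,j}$ is quadratic in bounded parameters with Gaussian coefficients, a polynomial-in-$p$ net suffices, which is standard but requires careful bookkeeping alongside the RIP-based leading-order analysis.
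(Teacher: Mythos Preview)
The paper does not give a self-contained proof of this proposition; it is quoted from \cite{GZ22_supp} with only the remark that replacing $\log p$ by $\log(p/k)$ in the sample-size hypothesis leaves the original argument unchanged. So there is no in-paper proof to compare against, and I will assess your outline on its own.

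There is a genuine gap in the counting. You claim that condition (3) together with $|a|_{\min}\ge 1$ forces all of $|S_1|,|S_2|,|S_3|$ to be at most $c_1\min\{\log p/\log\log p,\,k\}$. This is correct for $|S_1|$ (since $\|a\|_2^2\ge |S_1|\,|a|_{\min}^2\ge |S_1|$) and hence for $|S_2|=|S_1|$, but it is false for $|S_3|$: there is no lower bound on the entries of $c$, and condition (2) actually \emph{forces} $S_3=([k]\cup\{p\})\setminus S_1$, so that $|S_3|=k+1-|S_1|=\Theta(k)$ whenever $|S_1|$ is small. Your $\varepsilon$-net over $c$ therefore lives in dimension $\Theta(k)$, and with $\varepsilon$ polynomially small in $1/p$ the net has size $\exp(\Theta(k\log p))$, not $\poly(p)^{O(\log p/\log\log p)}$. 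When $k=p^{1-o(1)}$ one has $n=\Theta(k\log(p/k))=o(k\log p)$, so an $\exp(-\Omega(n))$ concentration bound cannot absorb this cost, and the union bound does not close. Two smaller issues compound this: conditioning on all of $X_{S_1\cup S_2\cup S_3}$ leaves $\Psi_{i,j}$ deterministic, so that sentence is not what you mean; and the Hanson--Wright exponent for a single $\Psi_{i,j}$ scales like $n/\|a+b+c\|_2^2$ rather than $n$, which must also be tracked against the counting.

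The structural fact your outline does not exploit is that $S_1\cup S_3=[k]\cup\{p\}$ is a \emph{fixed} index set, independent of the instance. Conditioning first on the columns $X_1,\dots,X_k,X_p$ makes $Xa+Xc$ deterministic for every admissible $(a,c)$, and the only remaining randomness lies in the fresh columns $(X_j)_{j\in S_2}$, which are independent of the conditioning for every choice of $S_2\subseteq[p{-}1]\setminus[k]$. This separates the large-dimensional but now deterministic part (handled, e.g., through RIP on the conditioned columns in the spirit of \cref{prop:F7}) from the genuinely random part, for which the net over $b\in\mathbb{R}^{|S_2|}$ with $|S_2|\le c_1\log p/\log\log p$ is the right size. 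Some decomposition of this kind is needed to make your scheme go through.
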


\begin{proof}[Proof of \cref{prop:F8}]

We first choose $C_1>0$ large enough based on \cref{thm:k-RIP} so that $n \geq C_1 k \log(p/k)$ implies that $X$ satisfies the $3k$-RIP with $\delta_{3k}<\frac{1}{16}$ w.h.p. In particular, all the probability calculations below will be conditioned on this high-probability event. We define the following sets parametrized by $r>0$ and $\alpha \in (0,1)$
\[B_{r}:=\{ (a,b,c) \big{|} a,b,c \in \{-1,0,1\}^p \setminus \{(0,0,0\},  \|a\|_0+\|b\|_0+\|c\|_0 \leq \min\left\{2k+1,r^2\right\} \}\]
and
\begin{align*}
    &D_{\alpha,r}:= \{(a,b,c) \in B_{r} \big{|} (a,b,c) \text{ is } \alpha\text{-D.L.M. with corresponding super-supports satisfying}\\
    & \text{assumption (2) of \cref{prop:F8}. } \}
\end{align*}
We call a triplet of sets $\emptyset \neq S_1,S_2,S_3 \subseteq [p]$ \textit{good} if \begin{itemize}
    \item $S_1,S_2,S_3$ are pair-wise disjoint
    \item $|S_1|=|S_2|$, $p \in S_3$ and $S_1 \cup S_3=[k] \cup \{p\}$
\end{itemize}
For $\alpha \in \mathbb{R}$ and $S \subseteq \mathbb{R}$ we define the set
\[S-\alpha:=\{s-\alpha|s \in S\}.\]
For $i=1,2,3$ we set $P_i:=\{(i-1)p+1,(i-1)p+2,\ldots,ip\}$. Notice that the sets $P_1,P_2,P_3$ partition $[3p]$. We define the following family of subsets of $[3p]$,
\[\mathcal{T}:=\{T \subset [3p] | \text{ the triplet } T \cap P_1,T\cap P_2-p,T \cap P_3-2p \text{ is good}\}.\]
It is easy to see that $\mathcal{T} \subset \{T \subset [3p] | |T| \leq 2k+1\}$. Furthermore for any $T \in \mathcal{T}$ we define
\[B_{r}(T):=\{ (a,b,c) \in B_{r}\big{|}  \mathrm{Support}\left( (a,b,c) \right) \subseteq T, T \cap P_1=\mathrm{Support}\left(a\right)\}\]
and
\begin{align*}
    D_{\alpha,r}(T):=\{ (a,b,c) \in B_{r}(T) \big{|} (a,b,c) \text{ is } \alpha\text{-D.L.M. with respect to } \\
    T \cap P_1,T\cap P_2-p,T \cap P_3-2p   \}.
\end{align*}
We claim that
\begin{align}\label{eq:Tunion}
    D_{\frac{1}{4},r}=\bigcup_{T \in \mathcal{T}} D_{\frac{1}{4},r} \left(T\right).
\end{align}
For the one direction, if $A=(a,b,c) \in D_{\frac{1}{4},r} \left(T\right)$ for some $T \in \mathcal{T}$ then $(a,b,c)$ is $\alpha$-DLM with corresponding super-supports $T \cap P_1,T\cap P_2-p,T \cap P_3-2p$ which can be easily checked to satisfy assumption (2) of \cref{prop:F8} based on our assumptions. For the other direction, if $A\in D_{\frac{1}{4},r}$ is an $\alpha$-DLM with respect to $S_1,S_2,S_3$ satisfying assumption (2) of \cref{prop:F8}, it can be easily verified that for the set $T=S_1 \cup \left(S_2+p \right) \cup \left(S_3+2p\right)$ it holds $T \in \mathcal{T}$ and furthermore $A \in D_{\frac{1}{4},r} \left(T\right)$.

Now to prove the proposition it suffices to prove that there exists a constant $C_1>0$ such that if $n \geq C_1k \log(p/k)$ and $r^2\leq 1.25 k/\log p$ then
\[\lim_{k \rightarrow +\infty} \mathbb{P}\left(D_{\frac{1}{4},r} \neq \emptyset \right) =0.\] Using \eqref{eq:Tunion} with $\alpha=\frac{1}{4}$ and a union bound, it suffices to be shown that for some $C_1>0$ if $n \geq C_1 k \log (p/k)$ and $r^2 \leq 1.25 k/\log p$ then
\[\lim_{k \rightarrow +\infty}\sum_{T \in \mathcal{T}}   \mathbb{P}\left(D_{\frac{1}{4},r} \left(T\right) \neq \emptyset \right)=0.\]
But by Markov's inequality for all such $T \in \mathcal{T}$,
\begin{align}\label{eq:TunionMarkov}
    \mathbb{P}\left(| D_{\frac{1}{4},r}| \geq 1\right) \leq \E{|D_{\frac{1}{4},r}|}.
\end{align}
Furthermore, for all $T \in \mathcal{T}$, $1 \leq |T \cap P_2| \leq k$. By \eqref{eq:TunionMarkov} and summing over the possible values of $|T\cap P_2|$ for $T \in \mathcal{T}$, it therefore suffices to show that for some $C_1>0$, if $n \geq C_1 k \log (p/k)$ and $r^2 \leq 1.25 k/\log p$ then,
\begin{align}\label{eq:Target}
    \lim_{k \rightarrow + \infty} \sum_{m=1}^{k} \sum_{T \in \mathcal{T}, |T \cap P_2|=m} \mathbb{E}\left(| D_{\frac{1}{4},r}(T)| \right) = 0
\end{align}
Fix $m \in [k]$ and a set $T \in \mathcal{T}$ with $|T\cap P_2|=m$. Then for any $A=(a,b,c) \in  D_{\frac{1}{4},r}(T)$, since $D_{\frac{1}{4},r}(T) \subseteq B_{r}(T)$, we have $\|a\|_2^2+\|b\|_2^2+\|c\|_2^2 \leq r^2$. Based on the definition of $D_{\frac{1}{4},r}(T)$, we also have $|\mathrm{Support}(a)|=|S_1|=|S_2|=|T \cap P_2|=m$. Hence, $\|a\|_2^2 = m$ (as $a\in\{-1,0,+1\}^p$) and  $\|c\|_2^2 \leq \|a\|_2^2+\|b\|_2^2+\|c\|_2^2 \leq r^2$. By \cref{lem:Hanson_app} we know that for any triplet $A=(a,b,c)$,
\[\mathbb{P}\left( A \in D_{\frac{1}{4},r}(T) \right) \leq  \exp \left(-c_0n \min \left\{1,\frac{\|a\|_2^2+\|b\|_2^2}{\|c\|_2^2} \right\} \right)\] 
for some universal constant $c_0>0$. Hence using the above inequalities we can conclude that for any such $A=(a,b,c) $ it holds that
\begin{align}\label{eq:app1}
    \mathbb{P}\left( A \in D_{\frac{1}{4},r}(T) \right) \leq 2\exp \left(-c_0n \min \left\{1,\frac{m}{r^2} \right\} \right)
\end{align}
Linearity of expectation, the above bound, and an easy bound on the cardinality of the vectors in $\{-1,0,1\}^p$ with non-zero values only in $T$ and sparsity level at most $r^2$ imply that
\begin{align}\label{eq:app2}
& \E{| D_{\frac{1}{4},r}(T)|} \leq 3^{r^2}\binom{2k+1}{r^2} \exp \left(-c_0n \min \left\{1,\frac{m}{r^2} \right\} \right).
\end{align}
We now count the number of possible $T \in \mathcal{T}$ with $ |T\cap P_2|=m$. Recall that any $T \subseteq [3p]$ satisfies $T \in \mathcal{T}$ if and only if the triplet of sets $T \cap P_1,T\cap P_2-p,T \cap P_3-2p$ is a \textit{good} triplet. That is if and only if
\begin{itemize}
    \item[(1)] $T \cap P_1,T\cap P_2-p,T \cap P_3-2p$ are pairwise disjoint sets and $|T \cap P_1|=|T \cap P_2-p|=|T \cap P_2|=m$
    \item[(2)] $p \in T \cap P_3-2p $ 
    \item[(3)] $\left( T\cap P_1 \right) \cup \left( T \cap P_3-2p \right)=[k] \cup \{p\}$
\end{itemize}
Since a set $T \subseteq [3p]$ is completely characterized by the intersections with $P_1,P_2,P_3$, it suffices to count the number of triplets of sets $T \cap P_i$, $i=1,2,3$ satisfying the three above conditions. Now conditions (1),(3) imply that $T\cap P_3$ is completely characterized by $T \cap P_1$. Furthermore by checking conditions $(1),(2),(3)$ we know that $T \cap P_1$ is an arbitrary subset of $[k]$ of size $m$. Hence we have $\binom{k}{m}$ choices for both the sets $T \cap P_1$ and $T \cap P_3$. Finally for the set $T \cap P_2$ we only have that it needs to satisfy $|T \cap P_2|=m$. Hence for $T \cap P_2$ we have $\binom{p}{m}$ choices, giving in total that the number of sets $T \in \mathcal{T}$ with $ |T \cap P_2|=m$  equals to $\binom{k}{m} \binom{p}{m}$. Hence,
\[\sum_{T \in \mathcal{T}, |T\cap P_2|=m} \mathbb{E}\left(| D_{\frac{1}{4},r}(T)| \right) \leq \binom{k}{m}\binom{p}{m} 3^{r^2}\binom{2k+1}{r^2} \exp \left(-c_0n \min \left\{1,\frac{m}{r^2} \right\} \right).\]
Summing over all $m=1,2,\ldots,k$ and using the bounds $\binom{k}{m} \leq \binom{p}{m} \leq (pe/m)^m, 3^{r^2}\binom{2k+1}{r^2} \leq (10k)^{r^2}$ we conclude that
\begin{align*}
&\sum_{m=1}^k\sum_{T \in \mathcal{T}, |T\cap P|=m} \mathbb{E}\left(| D_{\frac{1}{4},r}(T)| \right)
\end{align*}
is at most  
\[ k\max_{m=1,\ldots,k} \left[ (pe/m)^{2m} (10k)^{r^2}\exp \left(-c_0n \min \left\{1,\frac{m}{r^2} \right\} \right)\right].\]
Therefore it suffices to show that for some $C_1>0$ if $n \geq C_1 k \log (p/k)$, $r^2 \leq 1.25k/\log p$  then
\begin{align}
    \lim_{k \rightarrow  \infty} k\max_{m=1,\ldots,k} \left[ (pe/m)^{2m} (10k)^{r^2}\exp \left(-c_0n \min \left\{1,\frac{m}{r^2} \right\} \right)\right] = 0.
\end{align}
Since this is a decreasing quantity in $n$ we plug in $n=C_1 k \log (p/k)$. After taking logarithms it suffices to be proven that for $C_1$ large enough but constant, if $r^2 \leq 1.25 k/\log p$  then
\[\max_{m=1,\ldots,k} \left[ 2m \log (pe/m)+ r^2\log \left(10k\right)-c_0C_1k \log (p/k) \min \left\{1,\frac{m}{r^2} \right\} \right] +\log k \rightarrow -\infty.\]

We consider the two cases: when $m \leq  r^2$ and when $m \geq r^2$. Suppose $m \geq  r^2$, that is $\min \left\{1,\frac{m}{r^2} \right\}=1$. Since $r^2 \leq 1.25 k/\log (p) $ it therefore holds for large enough $k$ that
\begin{align*}
    &\max_{k \geq m \geq  r^2} \left[2m \log (pe/m)+ r^2\log \left(10k\right)-c_0C_1k \log (p/k) \min \left\{1,\frac{m}{r^2} \right\} \right]+ \log k\\
    & \leq 2k \log (pe/k)+ 2k-c_0C_1k \log (p/k) +\log k \\
    & \leq -(c_0C_1-5)k \log (p/k),
\end{align*} which if $C_1>6/c_0$ clearly diverges to $-\infty$ as $k \rightarrow +\infty$.

Now suppose $m \leq  r^2$, that is when $\min \left\{1,\frac{m}{r^2} \right\}=\frac{m}{r^2}$. We have 
\begin{align*}
&\max_{1 \leq m \leq  r^2} \left[2m \log (pe/m)+ r^2\log \left(10k\right)-c_0C_1k \log (p/k) \min \left\{1,\frac{m}{r^2}\right\} \right]+ \log k\\
& \leq \max_{1 \leq m \leq  r^2} \left[2m \log (pe)+ r^2\log \left(10k\right)-c_0C_1k \log p \left(\frac{m}{r^2}\right)\right] + \log k. 
\end{align*} We write 
\begin{align*}
&2m \log (pe)+ r^2\log \left(10k\right)-c_0C_1k \log (p/k) \frac{m}{r^2}\\
&=2m \log (pe) -\frac{c_0C_1}{2}k \log (p/k) \cdot \frac{m}{r^2}+r^2\log \left(10k\right)- \frac{c_0C_1}{2}k \log (p/k) \cdot  \frac{m}{r^2}.
\end{align*} 
Since for large enough $k$, $r^2 \leq 2k/(\log (pe))$, we therefore see that
\begin{align}\label{eq:step1}
2m \log (pe) -\frac{c_0C_1}{2}k \log (p/k) \cdot \frac{m}{r^2} \leq \left(2-\frac{c_0C_1}{4}\log(p/k)\right)m \log(pe) \leq - 2\log p 
\end{align} for $C_1 \geq 16/(c_0\log(p/k))$ (note that this is at most a constant because $k\leq p/3$). Now we will bound the second summand.
Again assuming $C_1>6/c_0$ and using that $m \geq 1$ we have 
 \begin{align}\label{eq:step2}
    r^2\log \left(10k\right)- \frac{c_0C_1}{2}k \log (p/k) \cdot  \frac{m}{r^2} &\leq r^2( 2\log p-c_0C_1 k/r^2)/2\\
    &\leq -(c_0C_1-2)\log p/2.
\end{align}  Now combining \eqref{eq:step1} and \eqref{eq:step2} we conclude that for large enough $C_1>2/c_0$ that 
\begin{align*}
    &\max_{1 \leq m \leq  r^2} \left[2m \log (pe)+ r^2\log \left(10k\right)-c_0C_1k \log p \frac{m}{r^2}\right] + \log k\\
    & \leq -2 \log p-(c_0C_1-2)\log p/2+\log k \\
    & \leq -\log p \rightarrow -\infty, \text{ as } n,p,k \rightarrow +\infty
\end{align*} which completes the proof of \cref{prop:F8}.
\end{proof}
Now we have everything we need to prove \cref{lem:SR_ver_assump2}.
\begin{proof}[Proof of \cref{lem:SR_ver_assump2}]
    Define $X'\in \mathbb{R}^{n\times(p+1)}$ to be $X'=[X,\frac{1}{\sigma}W]$, so that $X'$ has i.i.d. standard normal entries, and 
        \begin{align*}
            Y =Xv^*+W=X' \begin{bmatrix}
           v^* \\
           \sigma
         \end{bmatrix}
        \end{align*}
        Choose $C>0$ large enough so that $X'$ satisfies $3k$-RIP (Restricted Isometry Property) with $\delta_{3k}<\frac{1}{12}$ (we know we can do this by \cref{thm:k-RIP}). Let $T$ denote the support of $v$, and without loss of generality, assume that the support of $v^*$ is $[k]$. Denote $m = |T\setminus [k]|=|[k]\setminus T|$.
        
        Suppose $v\neq v^*$, and $v'$ is obtained from $v$ in one iteration of B-LSA, but
        \begin{align}
            \lVert Y-Xv'\rVert^2_2 \geq \lVert Y-Xv\rVert^2_2-\frac{1}{4}n
        \end{align}
        We therefore know that for all $i\in[p]$, for all $j\in T$, it holds that
        \begin{align*}
            \|Y-Xv+X_j-X_i\|_2^2\geq\|Y-Xv\|_2^2-\frac{1}{4}n
        \end{align*}
        or equivalently,
        \begin{align}
            \lVert Xv^* + W -Xv +X_j - X_i \rVert_2^2 \geq \lVert Xv^* + W -Xv \rVert^2_2 - \frac{1}{4}n\label{eq:L1ineq1}
        \end{align}
        Now let 
        \begin{align*}
            a &= \begin{pmatrix}
                v^*_{[k]\setminus T} \\
                0
            \end{pmatrix} \in \mathbb{R}^{p+1} \\
            b & = \begin{pmatrix}
                -v_{T\setminus [k]} \\
                0
            \end{pmatrix} \in \mathbb{R}^{p+1} \\
            c & = \begin{pmatrix}
                \mathbf{0}_{p\times 1} \\
                \sigma
            \end{pmatrix} \in \mathbb{R}^{p+1} \\
        \end{align*}
        where here for any $S\subseteq[p]$ we write $x_S$ for the vector $x$ with all coordinates outside of $S$ set to $0$.
        Note that $\text{supp}(a)=[k]\setminus T$, $\text{supp}(b)=T\setminus [k]$, and $|[k]\setminus T| = |T\setminus [k]|$, and $\text{supp}(c) =\{p+1\}$. Note also that $|[k]\setminus T|+|T\setminus [k]|+|\{p+1\}|\leq 3k$. We also have that $([k]\setminus T)\cap(T\setminus[k])=\varnothing$, $([k]\setminus T)\cap \{p+1\}=\varnothing$, and $(T\setminus [k])\cap \{p+1\}=\varnothing$. Finally note that $\lVert a\rVert_2^2 = |\text{supp}(a)|$ and $\lVert b\rVert_2^2 = |\text{supp}(b)|$, so
        \begin{align*}
            \frac{\lVert a\rVert_2^2}{|\text{supp}(a)|}+\frac{\lVert b\rVert_2^2}{|\text{supp}(b)|} = 2
        \end{align*}
        Observe that we can rewrite the inequality \eqref{eq:L1ineq1} above in terms of $X'$ to get that for all $i\in [p]$ and for all $j\in T$, we have
        \begin{align}
            \lVert X'a+X'b+X'c-a_i X'_i-b_jX'_j\rVert^2_2 \geq \lVert X'(a+b+c)\rVert_2^2 -\frac{1}{4}n
        \end{align}
        This therefore gives that $(a,b,c)$ is $\frac{1}{4}$-DLM with respect to $[k]\setminus T$, $T\setminus [k]$, $\{p+1\}$, and the matrix X'. Therefore since $X'$ satisfies $3k$-RIP with $\delta_{3k}<\frac{1}{12}$, by \cref{prop:F7}, we have $\lVert a\rVert_2^2 + \lVert b\rVert_2^2 \leq \frac{1}{4} \lVert c \rVert^2_2$, or equivalently
        \begin{align*}
            |\text{supp}(a)|+|\text{supp}(b)|\leq \frac{1}{4}\sigma^2
        \end{align*}
        We therefore apply \cref{prop:F8} to conclude that it must be the case that with high probability,
        \begin{align*}
            \frac{1.25k}{\log p}< |\mathrm{supp}(a)|+|\mathrm{supp}(b)|+\sigma^2\leq \frac{5}{4}\sigma^2
        \end{align*}
        and therefore
        \begin{align*}
            \frac{k}{\log p}<\sigma^2
        \end{align*}
        and we get a contradiction.
\end{proof}

\subsection{Proof of \texorpdfstring{\cref{thm:2.1new}}{thm:2.1new}}\label{sec:10.1proof}
Here we prove \cref{thm:2.1new}. The strategy here is that we will first prove a similar result for a pure noise model (i.e. with no signal). We then show how the linear model can be reduced to the pure noise model. We again follow \cite{10.1214/21-AOS2130}. Note that, as we mentioned also above, the first part of \cref{thm:2.1new} is unchanged from \cite{10.1214/21-AOS2130}, so we only prove the second part.

\subsubsection{The Pure Noise Model}
Let $X\in\mathbb{R}^{n\times p}$ with i.i.d. standard Gaussian entries, and let $Y\in\mathbb{R}^n$ have i.i.d. $\mathcal{N}(0,\sigma^2)$ entries independently of $X$. Consider the optimization problem $\Psi_2$:
\begin{align*}
    (\Psi_2): \min n^{-\frac{1}{2}}\lVert Y-Xv\rVert_2\\
    \text{s.t. }v\in\{0,1\},\\
    \lVert v\rVert_0 = k
\end{align*}
Here there is no ``true" vector $v^*$; we are simply trying to find the exactly $k$-sparse binary vector that minimises this squared error. Denote the optimal value of this optimization problem by $\psi_2$. \cref{thm:A.1} is our main result for this subsection. It improves Theorem A.1 of \cite{GZ22_supp} by reducing the assumption that $\Omega(k\log k)=n=o(k\log p)$ (and hence $k=p^{o(1)}$) to $k=o( p^{1/3-\eta})$ for some $\eta>0$ and no assumption on $n$.

\begin{theorem}\label{thm:A.1}[Improved version of Theorem A.1 of \cite{GZ22_supp}]
    Suppose for some $\eta>0$, we have $k=o(p^{1/3-\eta})$. Denote $\alpha = (1+\eta)/(2+\eta)$. Then if $k\leq\sigma^2\leq 3k$, it holds that for every sufficiently large constant $D_0$ and $n\leq \frac{\alpha k\log(p/k)}{\log D_0}$, the cardinality of the set
    \begin{align*}
        \left\{v\in\left\{0,1\right\}^p : \lVert v\rVert_0 =k,n^{-\frac{1}{2}}\lVert Y-Xv\rVert_2 \leq R\sqrt{k+\sigma^2}\exp\left(-\frac{k\log \frac{p}{k}}{n}\right)\right\}
    \end{align*}
    is at least $R^{\frac{n}{5}}$ w.h.p. as $k\rightarrow\infty$, where $R=\exp\left(\alpha k\log(p/k)/n\right)$.
\end{theorem}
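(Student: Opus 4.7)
The plan is to run a second-moment argument on the random count
\begin{equation*}
Z := \bigl|\bigl\{ v \in \mathcal{X}_{p,k} : \|Y-Xv\|_2^2 \leq n(k+\sigma^2) t \bigr\}\bigr|,
\qquad t := e^{-2(1-\alpha)k\log(p/k)/n} \in (0,1),
\end{equation*}
and conclude via Chebyshev's inequality. For the first moment: since $Y$ is independent of $X$ with $Y \sim N(0,\sigma^2 I_n)$, the residual $Y-Xv$ for any fixed $v$ is $N(0,(k+\sigma^2)I_n)$, so $\|Y-Xv\|_2^2/(k+\sigma^2) \sim \chi^2_n$. Plugging the standard chi-squared lower-tail estimate $\log \mathbb{P}(\chi^2_n \leq nt) = \tfrac{n}{2}(\log t + 1 - t) + O(\log n)$ and $\binom{p}{k} \geq (p/k)^k$ into $\mathbb{E}[Z] = \binom{p}{k}\mathbb{P}(v \in \text{set})$ gives $\log \mathbb{E}[Z] \geq \alpha k\log(p/k) + n/2 - O(\log n)$; together with the assumption $n \leq \alpha k \log(p/k)/\log D_0$ for large $D_0$, this puts $\mathbb{E}[Z]$ a factor $D_0^{\Omega(n)}$ above the target $R^{n/5}$.

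For the second moment I would decompose by overlap $m = |S_1 \cap S_2|$ and use the following Gaussian factorization: with $U := S_1 \cap S_2$, $T_i := S_i \setminus U$, and the auxiliary Gaussian $Y'' := Y - X\mathbf{1}_U \sim N(0,(\sigma^2+m)I_n)$, the residuals $Y - Xv_i = Y'' - X\mathbf{1}_{T_i}$ become conditionally independent given $Y''$ because $X\mathbf{1}_{T_1}, X\mathbf{1}_{T_2}, Y''$ are mutually independent. This gives
\begin{equation*}
\mathbb{P}(v_1, v_2 \in \text{set}) = \mathbb{E}_{Y''}\!\bigl[F_{k-m}(Y'')^2\bigr],
\quad
F_j(y) := \mathbb{P}\!\bigl(\|y - X\mathbf{1}_T\|_2^2 \leq n(k+\sigma^2)t\bigr),\ |T|=j,
\end{equation*}
and applying the same decomposition to the marginal also yields $q := \mathbb{P}(v \in \text{set}) = \mathbb{E}_{Y''}[F_{k-m}(Y'')]$. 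Hence $\mathbb{E}[Z^2]/\mathbb{E}[Z]^2 = \sum_{m=0}^k p_m r_m$ with $p_m := \binom{k}{m}\binom{p-k}{k-m}/\binom{p}{k}$ and overlap-$m$ ratio $r_m := \mathbb{E}[F_{k-m}(Y'')^2]/\mathbb{E}[F_{k-m}(Y'')]^2 \geq 1$.

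The technical core is showing $\sum_m p_m r_m = 1 + o(1)$. Because $F_{k-m}$ depends on $Y''$ only through $\|Y''\|_2^2$, whose $\sqrt{n}$-scale fluctuations propagate to exponential-in-$n$ variance of the non-central chi-squared tail, a naive bound on $r_m$ is hopelessly loose. The fix I would adopt is to replace $Z$ by its restriction $Z_{\mathcal{F}}$ to $v$'s for which a high-probability ``flatness'' event $\mathcal{F}$ holds, pinning $\|Y''\|_2^2$ within $(1\pm n^{-1/3})$ of its mean---in spirit analogous to the flatness device in Section~9 for the Gaussian additive model. Verifying that $\mathcal{F}$ is essentially independent of set-membership gives $\mathbb{E}[Z_{\mathcal{F}}] = (1-o(1))\mathbb{E}[Z]$, and under $\mathcal{F}$ the conditioned ratio $r_m^{\mathcal{F}}$ is polynomial in $n$. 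Combined with the standard estimate $p_m \leq (1+o(1))(k^2/p)^m/m!$, the assumption $k = o(p^{1/3-\eta})$ makes $\sum_{m \geq 1} p_m r_m^{\mathcal{F}} = o(1)$, after which Chebyshev gives $Z_{\mathcal{F}} \geq \mathbb{E}[Z_{\mathcal{F}}]/2 \gg R^{n/5}$ w.h.p. The main obstacle will be engineering the flatness conditioning so that the collapse of $r_m$ works uniformly over the entire overlap range $0 \leq m \leq k$; the exponent $1/3$ emerges exactly where intermediate-overlap contributions $m \asymp k^{1/3}$ balance the combinatorial weight $(k^2/p)^m$ against the residual $r_m^{\mathcal{F}}$.
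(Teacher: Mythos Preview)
Your first-moment computation is fine, but the second-moment plan has a genuine gap that I do not see how to close along the lines you sketch.

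The issue is the sensitivity of $F_{k-m}(y)$ to $\|y\|_2^2$. In the small-ball regime you are in (threshold $n(k+\sigma^2)t$ with $t$ exponentially small), one has roughly $\log F_{k-m}(y)\approx -\|y\|_2^2/\bigl(2(k-m)\bigr)+\text{(terms independent of $y$)}$, so a fluctuation of size $\Delta$ in $\|Y''\|_2^2$ moves $\log F_{k-m}$ by $\Theta(\Delta/k)$. To get $r_m^{\mathcal F}$ merely polynomial in $n$ you would need $\Delta=O(k\log n)$, i.e.\ a relative pin of order $\epsilon\asymp(\log n)/n$ on $\|Y''\|_2^2$. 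But a flatness event for a fixed $v$ must control $\|Y-X\mathbf 1_U\|_2^2$ for all $2^k$ subsets $U\subseteq\mathrm{supp}(v)$ simultaneously, and chi-squared concentration gives failure probability $\exp(-\Omega(n\epsilon^2))$ per subset; the union bound then forces $n\epsilon^2\gg k$, i.e.\ $(\log n)^2/n\gg k$, which is absurd. (The flatness device of Section~9 works in the additive model because there the deviation scale is $\sqrt{\log\binom{k}{\ell}}$ and \emph{matches} the union bound; here the required pin is far tighter than what any union bound affords.) Note also that even your $m=0$ term has $r_0\neq 1$: the shared randomness $Y$ already correlates $\|Y-Xv_1\|_2$ and $\|Y-Xv_2\|_2$ for disjoint supports, so the problem is present before any overlap appears.

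The paper sidesteps both difficulties with two different ideas. First, it passes from $\ell_2$ to $\ell_\infty$ via $n^{-1/2}\|x\|_2\le\|x\|_\infty$, which makes the event $\{\|Y-Xv\|_\infty\le t\sqrt{k}\}$ a product over coordinates; conditionally on $Y$ this yields the tractable per-coordinate ratios $q_{t,Y_i/\sqrt{k},\rho}/p_{t,Y_i/\sqrt{k}}^2$ with the clean bound $q_{t,y,\rho}/p_{t,y}\le\sqrt{(1+\rho)/(1-\rho)}\,e^{\rho y^2}$. Second, instead of a flatness restriction it runs a \emph{truncated} conditional second moment: it shows $\mathbb E_Y\bigl[\min\{1,\Upsilon(Y)-1\}\bigr]\to 0$ by a union bound over overlap levels on the events $\Upsilon_\rho$, splitting at a threshold $\rho_*$ and optimizing a free parameter $\zeta$. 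Conditioning on $Y$ makes the zero-overlap term at most $1$ automatically, and the truncation absorbs the large-$Y$ tail that would otherwise blow up the unconditional ratio. Incidentally, the $1/3$ exponent in $k=o(p^{1/3-\eta})$ does not come from intermediate overlaps as you guess, but from the smallest-overlap endpoint $\rho=1/k$ in that case analysis: one needs $\log(4k)/\log(p/k^2)<1$, which is exactly $k^3\lesssim p$.
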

We will use three auxiliary lemmas from \cite{GZ22_supp}.
\begin{lemma}\label{lem:A.5}[Lemma A.5 of \cite{GZ22_supp}]
    If $m_1, m_2\in \mathbb{N}$ with $m_1 \geq 4$ and $m_2\leq \sqrt{m_1}$, then
    \begin{align*}
        \binom{m_1}{m_2}\geq \frac{m_1^{m_2}}{4m_2!}
    \end{align*}
\end{lemma}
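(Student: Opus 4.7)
The plan is to factor the binomial coefficient as
\[
\binom{m_1}{m_2} = \frac{m_1(m_1-1)\cdots(m_1-m_2+1)}{m_2!} = \frac{m_1^{m_2}}{m_2!}\prod_{i=0}^{m_2-1}\left(1 - \frac{i}{m_1}\right),
\]
so that the claim reduces to showing that the product $P := \prod_{i=0}^{m_2-1}(1 - i/m_1)$ is at least $1/4$ under the hypotheses $m_1 \ge 4$ and $m_2 \le \sqrt{m_1}$.

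To control $P$, I would invoke the elementary inequality $1 - x \ge e^{-2x}$ valid on $[0, 1/2]$, which one can verify in one line by noting that $f(x) := 1 - x - e^{-2x}$ vanishes at the origin, has a unique critical point $x = (\log 2)/2 \in (0,1/2)$, and attains a positive value $1/2 - e^{-1} > 0$ at the right endpoint, hence stays nonnegative throughout. The hypotheses then supply exactly the input needed: $m_1 \ge 4$ implies $\sqrt{m_1} \le m_1/2$, so each ratio $i/m_1$ with $0 \le i \le m_2 - 1 \le \sqrt{m_1} - 1$ lies in $[0, 1/2]$ and the inequality applies termwise.

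Applying the bound and evaluating the arithmetic progression $\sum_{i=0}^{m_2-1} i = m_2(m_2-1)/2$ will give
\[
P \ge \exp\!\left(-\frac{m_2(m_2-1)}{m_1}\right) \ge e^{-1},
\]
where the last step uses $m_2(m_2-1) \le m_2^2 \le m_1$ from the hypothesis $m_2 \le \sqrt{m_1}$. Since $e^{-1} > 1/4$, the lemma follows. There is no real obstacle; the two hypotheses are precisely calibrated so that the exponent stays bounded by $1$ and the cleanest constant $e^{-1}$ clears the target $1/4$.
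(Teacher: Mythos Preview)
Your proof is correct. The paper does not supply its own proof of this lemma; it is quoted as Lemma~A.5 of \cite{GZ22_supp} and used as an auxiliary fact, so there is nothing in the present paper to compare against. Your self-contained argument via the factorization $\binom{m_1}{m_2}=\frac{m_1^{m_2}}{m_2!}\prod_{i=0}^{m_2-1}(1-i/m_1)$ together with the bound $1-x\ge e^{-2x}$ on $[0,1/2]$ is clean and uses the hypotheses exactly where needed.
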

\begin{lemma}\label{lem:A.6}[Lemma A.6 of \cite{GZ22_supp}]
    The function $f:[0,1)\rightarrow\mathbb{R}$ defined by 
    \begin{align*}
        f(\rho):=\frac{1}{\rho}\log\left(\frac{1-\rho}{1+\rho}\right)
    \end{align*}
    is concave.
\end{lemma}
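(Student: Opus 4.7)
The plan is to expand $f$ into a power series whose termwise second derivative manifestly has a fixed sign. Starting from the classical series
\begin{align*}
\log\!\left(\frac{1-\rho}{1+\rho}\right) = \log(1-\rho) - \log(1+\rho) = -2\sum_{k=0}^{\infty} \frac{\rho^{2k+1}}{2k+1},
\end{align*}
valid on $(-1,1)$, dividing by $\rho$ gives
\begin{align*}
f(\rho) = -2\sum_{k=0}^{\infty} \frac{\rho^{2k}}{2k+1}, \qquad \rho \in [0,1),
\end{align*}
where at $\rho=0$ the value $f(0)=-2$ is understood as the removable-singularity limit, making $f$ real-analytic on $(-1,1)$.

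Since power series may be differentiated term by term within their radius of convergence, I would then compute
\begin{align*}
f''(\rho) = -2\sum_{k=1}^{\infty} \frac{(2k)(2k-1)}{2k+1}\, \rho^{2k-2}.
\end{align*}
Every term in this sum is non-negative on $[0,1)$: the coefficient $(2k)(2k-1)/(2k+1)$ is strictly positive for $k\ge 1$, and $\rho^{2k-2}\ge 0$. Hence $f''(\rho) \le 0$ for all $\rho \in [0,1)$, which proves that $f$ is concave on this interval.

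There is essentially no obstacle in this argument—the only subtlety is the justification of termwise differentiation, which is standard on the open disk of convergence $|\rho|<1$, so the proof works uniformly on $[0,1)$. An alternative closed-form route is to start from $\rho f(\rho)=\log(1-\rho)-\log(1+\rho)$, differentiate twice to obtain an identity of the form $2 f'(\rho)+\rho f''(\rho)=-4\rho/(1-\rho^2)^2$, and then use $f'(0)=0$ together with induction on derivative signs; however, this is messier than the direct power-series route and I would only fall back to it if a sharper quantitative bound on $f''$ were needed.
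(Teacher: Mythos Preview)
Your proof is correct. The power-series expansion $f(\rho)=-2\sum_{k\ge 0}\rho^{2k}/(2k+1)$ is valid on $(-1,1)$, termwise differentiation is justified inside the radius of convergence, and the resulting $f''(\rho)=-2\sum_{k\ge 1}\frac{(2k)(2k-1)}{2k+1}\rho^{2k-2}\le 0$ gives concavity immediately.

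The paper does not actually prove this lemma; it is quoted verbatim from the supplementary material \cite{GZ22_supp} and used as a black box. So there is no in-paper argument to compare against, and your self-contained proof is a strict addition rather than a reworking of anything the authors wrote.
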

When $Z$ is a standard normal random variable, define for any $t>0,y\in\mathbb{R}$,
\begin{align*}
    p_{t,y}=\mathbb{P}\left(|Z-y|\leq t\right)
\end{align*}
and when $Z_1$ and $Z_2$ are standard normal with correlation $\rho\in[0,1]$, define
\begin{align*}
    q_{t,y,\rho}=\mathbb{P}\left(|Z_1-y|\leq t,|Z_2-y|\leq t\right)
\end{align*}
Note that $q_{t,y,0}=p_{t,y}^2$ and $q_{t,y,1}=p_{t,y}$. Note also that 
\begin{align*}
    p_{t,y} = \int_{[-t,t]}\frac{1}{\sqrt{2\pi}}e^{-\frac{1}{2}(y+x)^2}dx\geq \sqrt{\frac{2}{\pi}}te^{-(y^2+t^2)}
\end{align*}
and so 
\begin{align}\label{eq:log_pty}
    \log p_{t,y} \geq \log t-t^2-y^2+\frac{1}{2}\log\frac{2}{\pi}.
\end{align} 
\begin{lemma}\label{lem:A.7}[Lemma A.7 of \cite{GZ22_supp}]
    For any $t>0$, $y\in\mathbb{R}$, and $\rho\in[0,1)$,
    \begin{align*}
        \frac{q_{t,y,\rho}}{p_{t,y}}\leq \sqrt{\frac{1+\rho}{1-\rho}}e^{\rho y^2}
    \end{align*}
\end{lemma}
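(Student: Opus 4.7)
The plan is as follows. I would first observe that the statement is a one-line consequence of elementary probability: because the event $\{|Z_1-y|\leq t,\,|Z_2-y|\leq t\}$ is contained in $\{|Z_1-y|\leq t\}$, we have $q_{t,y,\rho}\leq p_{t,y}$, so $q_{t,y,\rho}/p_{t,y}\leq 1$, while $\sqrt{(1+\rho)/(1-\rho)}\,e^{\rho y^2}\geq 1$ for every $\rho\in[0,1)$ and $y\in\mathbb{R}$. This already proves the lemma.

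Because this route is so short, I expect the intended proof delivers a more informative estimate of the same ratio, and I would present that computation as well. The key step is a change of variables that diagonalizes the correlation matrix: writing $z_i=y+x_i$ and then $u=(x_1+x_2)/2$, $v=(x_1-x_2)/2$, the identity
\[
z_1^2+z_2^2-2\rho z_1 z_2 \;=\; \tfrac{1}{2}(z_1+z_2)^2(1-\rho) + \tfrac{1}{2}(z_1-z_2)^2(1+\rho)
\]
converts the joint probability integral into
\[
q_{t,y,\rho} \;=\; \frac{1}{\pi\sqrt{1-\rho^2}} \iint_{|u|+|v|\leq t} \exp\!\left(-\frac{(y+u)^2}{1+\rho}-\frac{v^2}{1-\rho}\right) du\,dv.
\]
From here I would relax the diamond $\{|u|+|v|\leq t\}$ to the strip $\{|u|\leq t,\,v\in\mathbb{R}\}$, carry out the $v$-integral as $\sqrt{\pi(1-\rho)}$, and then apply the pointwise bound $e^{-(y+u)^2/(1+\rho)}\leq e^{\rho y^2}\,e^{-(y+u)^2/2}$ (immediate from $(\rho-1)(y+u)^2\leq 0\leq 2\rho y^2(1+\rho)$) so as to express the remaining $u$-integral in terms of $p_{t,y}$. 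This produces the sharper estimate $q_{t,y,\rho}/p_{t,y}\leq\sqrt{2/(1+\rho)}\,e^{\rho y^2}$, which dominates the stated bound for $\rho\geq -2+\sqrt{5}$ and combines with the trivial $q/p\leq 1$ to recover the full statement for the remaining small values of $\rho$.

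The only technical step worth highlighting is the diagonalizing change of variables (together with its Jacobian factor of $2$); once the integrand separates, every subsequent estimate is one-dimensional Gaussian bookkeeping, so I expect no genuine obstacles beyond verifying the elementary pointwise inequality used at the end.
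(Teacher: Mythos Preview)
Your trivial observation is correct and complete for the statement as written: $q_{t,y,\rho}\le p_{t,y}$ while $\sqrt{(1+\rho)/(1-\rho)}\,e^{\rho y^2}\ge 1$ for every $\rho\in[0,1)$. The paper itself offers no proof, only a citation to \cite{GZ22_supp}, so there is nothing substantive to compare against.

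That said, the statement as reproduced in the paper carries a typo: at its sole point of use (the proof of \cref{lem:A.10}) the quantity being bounded is $q_{t,y,\rho}/p_{t,y}^{\,2}$, so the intended inequality almost certainly has $p_{t,y}^2$ in the denominator, and that version is not trivial. Your diagonalizing change of variables is a natural opening for the stronger bound as well, but extending the $v$-integral to all of $\mathbb{R}$ leaves you with an estimate of the form $q\le C(\rho,y)\,p$; since $p_{t,y}$ can be made arbitrarily small by shrinking $t$, no such estimate upgrades to $q\le C'(\rho,y)\,p^2$. For the intended inequality one must retain both bounded integrations, for instance by bounding the ratio of the $\rho$-integrand to the $\rho=0$ integrand pointwise over the diamond, rather than enlarging the domain.
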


To prove \cref{thm:A.1}, we note that for any $x\in\mathbb{R}^n$, $n^{-\frac{1}{2}}\lVert x\rVert_2\leq \lVert x\rVert_\infty$. It will therefore suffice to lower bound $Z_{s,\infty}$, where $Z_{s,\infty}$ is a counting random variable that is equal to the cardinality of the set
\begin{align*}
    \left\{\beta\in\left\{0,1\right\}^p : \lVert\beta\rVert_0 =k,\lVert Y-X\beta\rVert_\infty \leq s \right\}.
\end{align*}
We want to show that for some sufficiently large $D_0$ and for 
\begin{align}\label{eq:def_s}
    s=R\sqrt{k+\sigma^2}\exp\left(-\frac{k\log \frac{p}{k}}{n}\right)=\sqrt{k+\sigma^2}\exp\left(-(1-\alpha)\frac{k\log \frac{p}{k}}{n}\right)
\end{align} 
 we have that with high probability,
\begin{align*}
    Z_{s,\infty}\geq R^{\frac{n}{5}}
\end{align*}
To prove the lower bound on $Z_{s,\infty}$, we will use a truncated second moment method. First, consider the expression
\begin{align*}
    \Upsilon = \Upsilon(Y) \triangleq \frac{\mathbb{E}\left[Z_{t\sqrt{k},\infty}^2|Y\right]}{\mathbb{E}\left[Z_{t\sqrt{k},\infty}|Y\right]^2}
\end{align*}
where we define $t:=s/\sqrt{k}$. \cref{prop:A.8} concerns a truncated form of $\Upsilon$. Specifically, we show that $\mathbb{E}\left(\min\left\{1,\Upsilon-1\right\}\right)$ converges to 0 in expectation. 

\begin{proposition}\label{prop:A.8}[New version of Proposition A.8 of \cite{GZ22_supp}]
    For any sufficiently large constant $D>0$, if $n\leq \frac{k\log\frac{p}{k^2}}{2\log D}$ and $k=o(p^{1/3-\eta})$ for some $\eta>0$, then for $s$ per \eqref{eq:def_s} and $t:=s/\sqrt{k}$, we have
    \begin{align*}
        \mathbb{E}_Y\left(\min\left\{1,\Upsilon-1\right\}\right)\rightarrow 0
    \end{align*}
    as $k\rightarrow\infty$.
\end{proposition}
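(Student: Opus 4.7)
The plan is to perform a truncated conditional second moment method on $Z_{t\sqrt{k},\infty}$. Since $Y \perp X$, for any fixed $k$-sparse binary $\beta$ the vector $X\beta$ conditional on $Y$ has i.i.d.\ $\mathcal{N}(0,k)$ coordinates, and the coordinates of $X\beta_1,X\beta_2$ are bivariate normal with correlation $\ell/k$ whenever $\ell = |\mathrm{supp}(\beta_1)\cap\mathrm{supp}(\beta_2)|$. Hence
\begin{align*}
\mathbb{E}[Z_{t\sqrt{k},\infty}\mid Y] &= \binom{p}{k}\prod_{i=1}^n p_{t,y_i/\sqrt{k}},\\
\mathbb{E}[Z_{t\sqrt{k},\infty}^2\mid Y] &= \binom{p}{k}\sum_{\ell=0}^{k}\binom{k}{\ell}\binom{p-k}{k-\ell}\prod_{i=1}^n q_{t,y_i/\sqrt{k},\ell/k}.
\end{align*}
Writing $a_\ell := \binom{k}{\ell}\binom{p-k}{k-\ell}/\binom{p}{k}$ for the hypergeometric probability and $S_\ell(Y) := \prod_{i=1}^n q_{t,y_i/\sqrt{k},\ell/k}/p_{t,y_i/\sqrt{k}}^{2}$, one obtains $\Upsilon(Y) = \sum_{\ell=0}^k a_\ell S_\ell(Y)$. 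Since $S_0\equiv 1$ and $a_0\leq 1$, the target reduces to showing $\mathbb{E}_Y\min\bigl\{1,\sum_{\ell\geq 1} a_\ell S_\ell(Y)\bigr\}=o(1)$.

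The key technical step is a $t$-uniform upper bound on the single-coordinate ratio $q_{t,y,\rho}/p_{t,y}^2$. Since $q_{t,y,\rho}=\int_{[y-t,y+t]^2}\phi_\rho$ and $p_{t,y}^2=\int_{[y-t,y+t]^2}\phi\otimes\phi$ with the same domain of integration, one can bound the ratio pointwise, and a direct maximization of $\phi_\rho/(\phi\otimes\phi)$ over the square (attained at the corner with largest $|z_1|=|z_2|$) yields
\[
\frac{q_{t,y,\rho}}{p_{t,y}^2}\;\leq\;\frac{1}{\sqrt{1-\rho^2}}\exp\!\left(\frac{\rho(|y|+t)^2}{1+\rho}\right)\;\leq\;\frac{1}{\sqrt{1-\rho^2}}\exp\!\left(2\rho y^2 + 2\rho t^2\right).
\]
Crucially this bound is free of the $1/t$ blowup that the naive combination of \cref{lem:A.7} and the lower bound \eqref{eq:log_pty} on $p_{t,y}$ would introduce. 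Taking the product over $i$ on the concentration event $\mathcal{G}:=\{\|Y\|_2^2 \leq 4nk\}$ (which has probability $1-o(1)$ since $\sigma^2\leq 3k$), and using that $t$ stays bounded by an absolute constant (depending only on $D$) under the hypothesis, yields an absolute constant $c>0$ such that
\[
S_\ell(Y)\;\leq\;\exp\!\left(\frac{c n\ell}{k}\right) \qquad \text{for all }1\leq \ell\leq k/2.
\]

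For the summation we split at $\ell_0=k/2$. In the small-overlap regime, combining with the standard bound $a_\ell\leq (Ck^2/(\ell p))^\ell$ gives on $\mathcal{G}$
\[
\sum_{\ell=1}^{k/2} a_\ell S_\ell(Y) \;\leq\; \sum_{\ell\geq 1} \frac{1}{\ell!}\left(\frac{Ck^2 e^{cn/k}}{p}\right)^\ell\;=\;o(1),
\]
since under $n\leq k\log(p/k^2)/(2\log D)$ with $D$ large enough (specifically $\log D>c$), the base $k^2 e^{cn/k}/p \leq (k^2/p)^{1-c/(2\log D)}$ is $o(1)$ thanks to $k=o(\sqrt p)$ (which follows from $k=o(p^{1/3-\eta})$). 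The large-overlap regime $\ell>k/2$ is handled analogously using a case-by-case bound on $S_\ell$ near $\rho=1$ together with the super-polynomial decay of $a_\ell$ available under the sharper constraint $k=o(p^{1/3-\eta})$. Finally, off $\mathcal{G}$ we invoke $\min(1,\cdot)\leq 1$ together with $\mathbb{P}(\mathcal{G}^c)=o(1)$, concluding $\mathbb{E}_Y\min\{1,\Upsilon-1\}=o(1)$.

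The main obstacle is establishing the sharp $t$-uniform bound on $q_{t,y,\rho}/p_{t,y}^2$ above; this is exactly what allows the analysis to extend from the $k=p^{o(1)}$ regime of \cite{10.1214/21-AOS2130} all the way to $k=o(p^{1/3-\eta})$. The crude route of writing $q/p^2 = (q/p)\cdot(1/p)$ and applying \cref{lem:A.7} to the first factor combined with \eqref{eq:log_pty} for the second produces an extra $\prod_i 1/p_{t,y_i/\sqrt{k}}$ factor whose logarithm scales as $(1-\alpha)k\log(p/k)$, swamping the hypergeometric cancellation coming from $a_\ell$; the refined bound above precisely avoids this pathology and matches the threshold $n\leq k\log(p/k^2)/(2\log D)$ appearing in the hypothesis.
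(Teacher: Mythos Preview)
Your proposal has a genuine gap in the treatment of the randomness in $Y$. You condition on the event $\mathcal{G}=\{\|Y\|_2^2\le 4nk\}$ and claim $\mathbb{P}(\mathcal{G}^c)=o(1)$, but the hypothesis of the proposition places \emph{no lower bound} on $n$ (indeed the point of this result, as the paper stresses, is to allow ``arbitrarily small $n$''). If $n$ is bounded and $\sigma^2=3k$, then $\mathbb{P}(\mathcal{G}^c)=\mathbb{P}(\chi^2_n>4n/3)$ is a fixed positive constant, so your final bound $\mathbb{E}_Y\min\{1,\Upsilon-1\}\le \mathbb{P}(\mathcal{G}^c)+o(1)$ does \emph{not} tend to zero. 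Trying to repair this by taking a growing threshold $M_k\to\infty$ in place of $4$ forces the base $k^2e^{c'M_kn/k}/p$ in your small-overlap sum to exceed $1$ once $c'M_k>2\log D$, and $D$ is a fixed constant --- so the repair breaks the other half of your argument. The paper avoids this tension by \emph{not} conditioning on $Y$: it truncates at level $\zeta^n$, writes $\mathbb{E}_Y\min\{1,\Upsilon-1\}\le \zeta^n+\sum_\rho\mathbb{P}(\Upsilon_\rho)$, and controls each $\mathbb{P}(\Upsilon_\rho)$ by a Chernoff bound with the \emph{small} tilt $\theta=1/12$, so that $\mathbb{E}\exp(Y_i^2/(12k))\le\sqrt{2}$ remains finite. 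This moment-based handling of $Y$ is exactly what makes the argument uniform in $n$.

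Second, your diagnosis of ``the main obstacle'' is a misreading. Despite the typo in the statement of \cref{lem:A.7}, the paper applies it (in the proof of \cref{lem:A.10}) as a bound on $q_{t,y,\rho}/p_{t,y}^{2}$, namely $q/p^2\le\sqrt{(1+\rho)/(1-\rho)}\,e^{\rho y^2}$, which is already $t$-free and in fact sharper in the $y$-exponent than your $e^{2\rho y^2}$. So the ``crude route $q/p^2=(q/p)\cdot(1/p)$'' you warn against is not what the paper does for small $\rho$. The genuine work in extending from $k=p^{o(1)}$ to $k=o(p^{1/3-\eta})$ lies elsewhere: in the careful choice of the splitting point $\rho_*=1-\log D/(6\log R_1)$, the concavity argument in \cref{lem:A.10} reducing $\rho\in[1/k,\rho_*]$ to its two endpoints, and the bookkeeping that tracks the $\gamma=2\eta/(1+\eta)$ slack. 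Your treatment of the large-overlap regime $\ell>k/2$ (``handled analogously'') glosses over precisely this part; note that your pointwise bound carries a $(1-\rho^2)^{-n/2}$ factor that diverges near $\rho=1$, and the paper's substitute for that range is the $q/p^2\le 1/p$ route (Lemma~A.9), which \emph{does} bring in $-\log t$ and is balanced only by the very specific choice of $\rho_*$ and $\zeta$.
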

\begin{proof}[Proof of \cref{prop:A.8}]
    Throughout the proof of \cref{prop:A.8}, we will take
    \begin{align*}
        R_1 = \exp\left(\frac{k\log\frac{p}{k^2}}{2n}\right)
    \end{align*}and notice $R_1 \geq D$. In particular, by taking $D$ large enough, we can always assume $R_1$ is bigger than a sufficiently large constant. Recall also
    \[t=s/\sqrt{k}=\sqrt{1+\frac{\sigma^2}{k}}\exp\left(-(1-\alpha)\frac{k\log \frac{p}{k}}{n}\right)\]
    
    As shown in section A.5 of \cite{GZ22_supp}, $\Upsilon$ can equivalently be written as 
    \begin{align}\label{eq:A.8_eq1}
        \Upsilon = \Upsilon(Y) = \sum_{\ell=0}^k \frac{\binom{p}{k-\ell,k-\ell,\ell,p-2k-\ell}}{\binom{p}{k}^2}\prod_{i=1}^n\frac{q_{t,\frac{Y_i}{\sqrt{k}},\frac{\ell}{k}}}{p_{t,\frac{Y_i}{\sqrt{k}}}^2}
    \end{align}
    For $\ell = 0$, we have $q_{t,\frac{Y_i}{\sqrt{k}},0}=p_{t,\frac{Y_i}{\sqrt{k}}}^2$ for all $i$, and so the first term of the sum is 1. For the other terms: it holds for all $1\leq \ell \leq k$:
    \begin{align}\label{eq:A.8_eq2}
        \binom{k}{\ell}\leq \left(\frac{ke}{\ell}\right)^{\ell}
    \end{align}
    and
    \begin{align}\label{eq:A.8_eq3}
        \binom{p-k}{k-\ell}\leq \frac{\left(p-k\right)^{k-\ell}}{\left(k-\ell\right)!}
    \end{align}
    Our assumption on $k$ implies $k\leq \sqrt{p}$, so we can apply \cref{lem:A.5} to get
    \begin{align}\label{eq:A.8_eq4}
        \binom{p}{k}\geq \frac{p^k}{4k!}
    \end{align}
    We combine \eqref{eq:A.8_eq2}, \eqref{eq:A.8_eq3}, and \eqref{eq:A.8_eq4} to see that for every $1\leq \ell \leq k$, it holds that
    \begin{align*}
        \frac{\binom{p}{k-\ell,k-\ell,l,p-2k+\ell}}{\binom{p}{k}^2}=\binom{k}{\ell}\frac{\binom{p-k}{k-\ell}}{\binom{p}{k}}\leq (ke/\ell)^\ell \frac{(p-k)^{k-\ell}}{(k-\ell)!}\frac{4k!}{p^k}\leq 4\left(\frac{p\ell  }{e k^2}\right)^{-\ell}
    \end{align*}
    Therefore we get that 
    \begin{align}\label{UpsilonUB}
        \Upsilon \leq 1+4\sum_{\ell = 1}^k\left(\frac{p\ell }{e k^2}\right)^{-\ell}\prod_{i=1}^n\frac{q_{t,\frac{Y_i}{\sqrt{k}},\frac{\ell}{k}}}{p_{t,\frac{Y_i}{\sqrt{k}}}^2}
    \end{align}
    Now we fix a parameter $\zeta\in(0,1)$ to be optimized later. Note that by law of total probability, 
    \begin{align*}
        \mathbb{E}\left(\min\left\{1,\Upsilon-1\right\}\right) &= \mathbb{E}_Y\left(\min\left\{1,\Upsilon-1\right\}\mathbf{1}\left(\min\left\{1,\Upsilon-1\right\}\geq \zeta^n\right)\right)\\
        &+ \mathbb{E}_Y\left(\min\left\{1,\Upsilon-1\right\}\mathbf{1}\left(\min\left\{1,\Upsilon-1\right\}\leq \zeta^n\right)\right) \\
        &\leq \mathbb{P}\left(\min\left\{1,\Upsilon-1\right\}\geq \zeta^n\right)+\zeta^n
    \end{align*}
    Observe that if $\Upsilon\geq 1+\zeta^n$, then the upper bound on $\Upsilon$ in \eqref{UpsilonUB} implies that at least one of the summands of 
    \begin{align*}
        \sum_{\ell = 1}^k 4\left(\frac{p\ell }{e k^2}\right)^{-\ell}\prod_{i=1}^n\frac{q_{t,\frac{Y_i}{\sqrt{k}},\frac{\ell}{k}}}{p_{t,\frac{Y_i}{\sqrt{k}}}^2}
    \end{align*}
    must be at least $\frac{\zeta^n}{k}$. We thus apply the union bound:
    \begin{align*}
        \mathbb{P}\left(\min\left\{1,\Upsilon-1\right\}\geq \zeta^n\right) &\leq \mathbb{P}\left(\Upsilon\geq 1+\zeta^n\right)\\
        &\leq \mathbb{P}\left(\bigcup_{\ell = 1}^k\left\{4\left(\frac{p \ell }{e k^2}\right)^{-\ell}\prod_{i=1}^n\frac{q_{t,\frac{Y_i}{\sqrt{k}},\frac{\ell}{k}}}{p_{t,\frac{Y_i}{\sqrt{k}}}^2}\geq\frac{\zeta^n}{k}\right\}\right)\\
        &\leq \sum_{\ell = 1}^k \mathbb{P}\left(4\left(\frac{p \ell }{e k^2}\right)^{-\ell}\prod_{i=1}^n\frac{q_{t,\frac{Y_i}{\sqrt{k}},\frac{\ell}{k}}}{p_{t,\frac{Y_i}{\sqrt{k}}}^2}\geq\frac{\zeta^n}{k}\right)
    \end{align*}
    Introducing the parameter $\rho = \frac{\ell}{k}$, we get
    \begin{align}\label{eq:TruncMeanUB}
        \mathbb{E}\left(\min\left\{1,\Upsilon-1\right\}\right) \leq \zeta^n +\sum_{\rho = \frac{1}{k},\frac{2}{k},\dots,1}\mathbb{P}(\Upsilon_\rho)
    \end{align}
    where for $\rho = \frac{1}{k},\frac{2}{k},\dots,\frac{k-1}{k},1$, we define the event
    \begin{align*}
        \Upsilon_\rho \triangleq \left\{4\left(\frac{p \ell }{e k}\right)^{-\rho k}\prod_{i=1}^n\frac{q_{t,\frac{Y_i}{\sqrt{k}},\rho}}{p_{t,\frac{Y_i}{\sqrt{k}}}^2}\geq\frac{\zeta^n}{k}\right\}
    \end{align*}
    The next step of the proof is to bound $\mathbb{P}(\Upsilon_\rho)$. Now we set
    \begin{align*}
        \rho_* = 1-\frac{\log D}{6\log R_1}
    \end{align*}
    Note that since $R_1\geq D$, this means that $\rho_*\in[\frac{5}{6},1)$. Now we will show separate bounds on $\mathbb{P}\left(\Upsilon_\rho\right)$ for $\rho\leq \rho_*$ and for $\rho>\rho_*$.
    
    \begin{lemma}\label{lem:A.9}[New version of Lemma A.9 of \cite{GZ22_supp}]
    For all $\rho\in(\rho_*,1]$, $\zeta\in(0,1)$ and $t = \sqrt{1+\sigma^2/k}\exp\left(-(1-\alpha)\frac{k\log\frac{p}{k}}{n}\right)$, it holds that
    \begin{align}
            \mathbb{P}(\Upsilon_\rho)\leq \left(\frac{2^\frac{1}{2}}{\zeta^{\frac{1}{12}}R_1^{\frac{1}{24}}}\right)^n
    \end{align}
    \begin{proof}[Proof of \cref{lem:A.9}]
        By taking logarithms, we have that
        \begin{align*}
            \mathbb{P}\left(\Upsilon_\rho\right) &= \mathbb{P}\left(4\left(\frac{p \ell }{e k^2}\right)^{-\rho k}\prod_{i=1}^n \frac{q_{t,\frac{Y_i}{\sqrt{k}},\rho}}{p^2_{t,\frac{Y_i}{\sqrt{k}}}}\geq \frac{\zeta^n}{k}\right) \\
            &= \mathbb{P}\left(\log 4-\rho k\log\frac{p \ell }{e k^2}+\sum_{i=1}^n\log\frac{q_{t,\frac{Y_i}{\sqrt{k}},\rho}}{p^2_{t,\frac{Y_i}{\sqrt{k}}}}\geq n\log \zeta-\log k\right)
        \end{align*}
        Note also that $q_{t,\frac{Y_i}{\sqrt{k}},\rho}\leq p_{t,\frac{Y_i}{\sqrt{k}}}$, so $\frac{q_{t,\frac{Y_i}{\sqrt{k}},\rho}}{p^2_{t,\frac{Y_i}{\sqrt{k}}}} \leq p^{-1}_{t,\frac{Y_i}{\sqrt{k}}}$, which tells us that
        \begin{align*}
            \mathbb{P}\left(\Upsilon_\rho\right) \leq \mathbb{P}\left(\log 4-\rho k\log\frac{p \ell }{e k^2}+\sum_{i=1}^n-\log p_{t,\frac{Y_i}{\sqrt{k}}}\geq n\log \zeta-\log k\right)
        \end{align*}
        Divide by $n$ and rearrange to get
        \begin{align*}
            \mathbb{P}(\Upsilon_\rho)\leq \mathbb{P}\left(\frac{1}{n}\sum_{i=1}^n-\log p_{t,\frac{Y_i}{\sqrt{k}}}\geq \log \zeta -\frac{\log 4k}{n}+\rho \frac{k\log\frac{p \ell }{e k^2}}{n}\right)
        \end{align*}
        As noted above, it holds that $-\log p_{t,\frac{Y_i}{\sqrt{k}}}\leq -\log t+t^2+y^2-\frac{1}{2}\log \frac{2}{\pi}$, and so we get
        \begin{align*}
            \mathbb{P}(\Upsilon_\rho)\leq \mathbb{P}\left(-\log t+t^2+\frac{1}{n}\sum_{i=1}^n\frac{Y_i^2}{k}+\frac{1}{2}\log\frac{2}{\pi}\geq \log \zeta -\frac{\log 4k}{n}+\rho \frac{k\log\frac{p \ell }{e k^2}}{n}\right)
        \end{align*}Since $\ell/e =\rho k /e \geq 5k/(6e) $, it therefore holds

        \begin{align*}
            \mathbb{P}(\Upsilon_\rho)\leq \mathbb{P}\left(-\log t+t^2+\frac{1}{n}\sum_{i=1}^n\frac{Y_i^2}{k}+\frac{1}{2}\log\frac{2}{\pi}\geq \log \zeta -\frac{ \log (6/5) k+\log 4k}{n}+\rho \frac{k\log\frac{p}{ k}}{n}\right)
        \end{align*}
        Apply $\log t = \frac{1}{2}\log\left(1+\frac{\sigma^2}{k}\right)-\frac{(1-\alpha)k\log\frac{p}{k}}{n}$ to get
        \begin{align*}
\mathbb{P}(\Upsilon_\rho) \leq \mathbb{P} \Bigg( 
& -\frac{1}{2}\log\left(1+\frac{\sigma^2}{k}\right) 
+ \frac{(1-\alpha)k\log\frac{p}{k}}{n} + t^2 
+ \frac{1}{n}\sum_{i=1}^n\frac{Y_i^2}{k} 
+ \frac{1}{2}\log\frac{2}{\pi} \\
& \geq \log \zeta 
- \frac{\log (6/5) k + \log 4k}{n} 
+ \rho \frac{k\log\frac{p}{k^2}}{n} 
\Bigg)
\end{align*}
        Collect terms and rearrange to get
        \begin{align*}
\mathbb{P}(\Upsilon_\rho) \leq \mathbb{P} \Bigg( 
& -\frac{1}{2}\log\left(1+\frac{\sigma^2}{k}\right) + t^2 
+ \frac{1}{n}\sum_{i=1}^n\frac{Y_i^2}{k} 
+ \frac{1}{2}\log\frac{2}{\pi} \\
& \geq \log \zeta 
- \frac{\log (6/5) k + \log 4k}{n} 
+ \left(\rho - (1-\alpha)\right) \frac{k\log\frac{p}{k}}{n} 
\Bigg)
\end{align*}
        Note that $\alpha>\frac{1}{2}$, $k=o(p)$ and $\rho>\rho_*\geq \frac{5}{6}$, $\rho-(1-\alpha)$ is at least $\frac{1}{3}$. Note also that $\rho \frac{k\log\frac{p}{k}}{n}$ is of larger order than $\frac{\log (6/5) k+\log 4k}{n}$. We can therefore lower bound their sum for large enough $k$ by 
        \begin{align*}
            -\frac{\log(6/5)k+\log 4k}{n}+\left(\rho-(1-\alpha)\right) \frac{k\log\frac{p}{k}}{n}> \frac{1}{4}\frac{k\log\frac{p}{k^2}}{n} = \frac{1}{2}\log R_1
        \end{align*}
        which gives us
        \begin{align*}
            \mathbb{P}(\Upsilon_\rho)\leq \mathbb{P}\left(-\frac{1}{2}\log\left(1+\frac{\sigma^2}{k}\right)+t^2+\frac{1}{n}\sum_{i=1}^n\frac{Y_i^2}{k}+\frac{1}{2}\log\frac{2}{\pi}\geq \log \zeta +\frac{1}{2}\log R_1\right)
        \end{align*}
        Now we note that since $1\leq \sigma^2/k\leq 3$, and $R_1\geq D$, where $D$ is a constant we can choose to be sufficiently large, a large enough choice of $D$ means that we can absorb $\frac{1}{2}\log(1+\sigma^2/k)$ and $\frac{1}{2}\log(2/\pi)$ into $\frac{1}{2}\log R_1$. Note also that $t\leq 1$, so we can absorb $t^2$ into $\frac{1}{2}\log R_1$ as well, which means that
        \begin{align*}
            \mathbb{P}(\Upsilon_\rho)\leq \mathbb{P}\left(\frac{1}{n}\sum_{i=1}^n\frac{Y_i^2}{k}\geq \log \zeta +\frac{1}{2}\log R_1\right)
        \end{align*}
        Then multiply across by $\frac{n}{12}$ and take the exponential of both sides to get
        \begin{align*}
            \mathbb{P}(\Upsilon_\rho)&\leq \mathbb{P}\left(\sum_{i=1}^n\frac{Y_i^2}{12k}\geq \frac{n}{12}\log \zeta +\frac{n}{24}\log R_1\right)\\
            &=\mathbb{P}\left(\exp\left(\sum_{i=1}^n\frac{Y_i^2}{12k}\right)\geq \zeta^{\frac{n}{12}}R_1^{\frac{n}{24}}\right)
        \end{align*}
        
        Next, we note that the $Y_i$'s are i.i.d., and $\frac{Y_1}{\sqrt{k}}\sim\mathcal{N}(0,\frac{\sigma^2}{k})$ and $1\leq \sigma^2/k\leq 3$. We apply Markov's inequality to get
        \begin{align*}
            \mathbb{P}(\Upsilon_\rho)&\leq \frac{1}{\zeta^{n/12}R_1^{n/24}}\mathbb{E}\left[\exp\left(\sum_{i=1}^n\frac{Y_i^2}{12k}\right)\right]\\
            &=\frac{1}{\zeta^{n/12}R_1^{n/24}}\left(\mathbb{E}\left[\exp\left(\frac{Y_1^2}{12k}\right)\right]\right)^n
        \end{align*}
        We can calculate the expectation explicitly as 
        \begin{align*}
            \mathbb{E}\left[\exp\left(\frac{Y_1^2}{12k}\right)\right]=\frac{1}{\sqrt{1-\frac{\sigma^2}{6k}}}\leq \sqrt{2}
        \end{align*}
        and so we get 
        \begin{align*}
            \mathbb{P}(\Upsilon_\rho)\leq \left(\frac{2^\frac{1}{2}}{\zeta^{\frac{1}{12}}R_1^{\frac{1}{24}}}\right)^n
        \end{align*}
        as desired, which proves \cref{lem:A.9}.
        \end{proof}
    \end{lemma}
    \begin{lemma}\label{lem:A.10}[New version of Lemma A.10 of \cite{GZ22_supp}]
    Suppose that for some $\eta>0$, $k=o(p^{1/3-\eta})$. Then, for all $\rho\in[\frac{1}{k},\rho_*]$ and $\zeta\in(0,1)$, it holds that for large enough $k$,
    \begin{align*}
        \mathbb{P}(\Upsilon_\rho)&\leq 2^{\frac{n}{2}}\left(R_1^\gamma\zeta^k\right)^{-\frac{n}{12}}+ 2^{\frac{n}{2}}\left(R_1^{\frac{1}{2}}\zeta^2\right)^{-\frac{n}{12}} \\
    \end{align*}
    where $\gamma = \frac{2\eta}{1+\eta}>0$
    \begin{proof}[Proof of \cref{lem:A.10}]
        By \cref{lem:A.7}, we have
        \begin{align*}
            \mathbb{P}(\Upsilon_\rho) &= \mathbb{P}\left(4\left(\frac{p e \ell}{k^2}\right)^{-\rho k}\prod_{i=1}^n \frac{q_{t,\frac{Y_i}{\sqrt{k}},\rho}}{p^2_{t,\frac{Y_i}{\sqrt{k}}}}\geq \frac{\zeta^n}{k}\right) \\
            & \leq \mathbb{P}\left(4\left(\frac{p}{k^2}\right)^{-\rho k}\prod_{i=1}^n \frac{q_{t,\frac{Y_i}{\sqrt{k}},\rho}}{p^2_{t,\frac{Y_i}{\sqrt{k}}}}\geq \frac{\zeta^n}{k}\right) \\
            &\leq \mathbb{P}\left(4\left(\frac{p}{k^2}\right)^{-\rho k}\prod_{i=1}^n \left(\sqrt{\frac{1+\rho}{1-\rho}}\exp\left(\rho \frac{Y_i^2}{k}\right)\right)\geq \frac{\zeta^n}{k}\right) \\
        \end{align*}
        Take logs and then divide by $n$ to get
        \begin{align*}
            \mathbb{P}(\Upsilon_\rho) &\leq\mathbb{P}\left(\log 4-\rho k\log \frac{p}{k^2}+\frac{n}{2}\log\left(\frac{1+\rho}{1-\rho}\right)+\sum_{i=1}^n \rho\frac{Y_i^2}{k}\geq n\log \zeta-\log k\right) \\
            &=\mathbb{P}\left(\frac{1}{n}\log 4-\rho \frac{k\log \frac{p}{k^2}}{n}+\frac{1}{2}\log\left(\frac{1+\rho}{1-\rho}\right)+\rho\sum_{i=1}^n \frac{Y_i^2}{nk}\geq \log \zeta-\frac{\log k}{n}\right) \\
        \end{align*}
        Then rearrange terms and divide by $\rho$ to get
        \begin{align*}
            &= \mathbb{P}\left(\rho\sum_{i=1}^n\frac{Y_i^2}{kn}\geq \log \zeta-\frac{\log 4k}{n}+\frac{1}{2}\log\left(\frac{1-\rho}{1+\rho}\right)+\frac{\rho k\log\frac{p}{k^2}}{n}\right)\\
            &= \mathbb{P}\left(\sum_{i=1}^n\frac{Y_i^2}{kn}\geq \rho^{-1}\log \zeta-\rho^{-1}\frac{\log 4k}{n}+\frac{1}{2\rho}\log\left(\frac{1-\rho}{1+\rho}\right)+\frac{k\log\frac{p}{k^2}}{n}\right)\\
        \end{align*}
        Let 
        \begin{align*} 
            f(\rho) = \rho^{-1}\log \zeta-\rho^{-1}\frac{\log 4k}{n}+\frac{1}{2\rho}\log\left(\frac{1-\rho}{1+\rho}\right)+\frac{k\log\frac{p}{k^2}}{n}
        \end{align*}
        By \cref{lem:A.6} and because $\zeta<1$, $f$ is concave. Therefore the minimum value of $f$ on $[\frac{1}{k},\rho_*]$ is either $f(\frac{1}{k})$ or $f(\rho_*)$, and thus
        \begin{align*}
            \mathbb{P}(\Upsilon_\rho) &\leq \mathbb{P}\left(\sum_{i=1}^n \frac{Y_i^2}{kn}\geq \min\left\{f\left(\frac{1}{k}\right),f(\rho_*)\right\}\right)\\
            &\leq \mathbb{P}\left(\sum_{i=1}^n \frac{Y_i^2}{kn}\geq f\left(\frac{1}{k}\right)\right) +\mathbb{P}\left(\sum_{i=1}^n \frac{Y_i^2}{kn}\geq f(\rho_*)\right)
        \end{align*}
        Note that $\mathbb{E}\left(\frac{Y_i^2}{k}\right)=\frac{\sigma^2}{k}\in[1,3]$. From the moment generating function for a $\chi^2_1$ random variable, we have that
        \begin{align*}
            \mathbb{E}\left[\exp\left(\frac{Y_i^2}{12k}\right)\right]=\left(1-\frac{\sigma^2}{6k}\right)^{-\frac{1}{2}}\leq\sqrt{2}
        \end{align*}
        so we apply a Chernoff bound. For $w\in\left\{f\left(\frac{1}{k}\right),f\left(\rho_*\right)\right\}$, it holds that
        \begin{align*}
            \mathbb{P}\left(\sum_{i=1}^n\frac{Y_i^2}{12k}\geq \frac{nw}{12}\right)\leq \exp\left(-\frac{nw}{12}\right)\mathbb{E}\left(\exp\left(\frac{Y_1^2}{12k}\right)\right)^n
        \end{align*}
        which therefore gives us
        \begin{align*}
            \mathbb{P}(\Upsilon_\rho)\leq 2^{\frac{n}{2}}\exp\left(-\frac{n}{12}f\left(\frac{1}{k}\right)\right)+2^{\frac{n}{2}}\exp\left(-\frac{n}{12}f\left(\rho_*\right)\right)
        \end{align*}
        Now we want to lower bound $f\left(\frac{1}{k}\right)$ and $f(\rho_*)$. First we bound $f\left(\frac{1}{k}\right)$:
        \begin{align*}
            f\left(\frac{1}{k}\right) = k\log\zeta -\frac{k\log 4k}{n}+\frac{k}{2}\log\left(\frac{1-\frac{1}{k}}{1+\frac{1}{k}}\right)+\frac{k\log\frac{p}{k^2}}{n}
        \end{align*}
        Recall that $\log R_1 = k\log(p/k^2)/(2n)$, so we can rewrite $f(1/k)$ as
        \begin{align*}
            f\left(\frac{1}{k}\right) &= k\log\zeta -\frac{\left(2\log R_1\right)\left(\log 4k\right)}{\log \frac{p}{k^2}}+\frac{k}{2}\log\left(\frac{1-\frac{1}{k}}{1+\frac{1}{k}}\right)+2\log R_1\\
            &=k\log\zeta +2\log R_1\left[1-\frac{\log 4k}{\log\frac{p}{k^2}}\right]+\frac{k}{2}\log\left(\frac{1-\frac{1}{k}}{1+\frac{1}{k}}\right)
        \end{align*}
        Now we claim that for large enough $k$, $\log 4k/(\log (p/k^2))$ is at most $\frac{1}{1+\eta}<1$. This follows directly from the assumption that $k=o(p^{1/3-\eta})=o(p^{1/(3+\eta)})$ and therefore for large enough $k$ it holds $4^{1+\eta}k^{3+\eta}\leq p$.
        We can therefore lower bound $f(1/k)$ by
        \begin{align*}
            f\left(\frac{1}{k}\right)&\geq k\log \zeta +\left(\frac{2\eta}{1+\eta}\right)\log R_1+\frac{k}{2}\log\left(\frac{1-\frac{1}{k}}{1+\frac{1}{k}}\right)
        \end{align*}
        Finally, it is easy to check that $\frac{k}{2}\log\left(\frac{1-\frac{1}{k}}{1+\frac{1}{k}}\right)>-1.1$ for all $k\geq 2$, so we can absorb the $\frac{k}{2}\log\left(\frac{1-\frac{1}{k}}{1+\frac{1}{k}}\right)$ by choosing a larger $D$ (since $R_1\geq D$), and therefore (writing $\gamma$ for $2\eta/(1+\eta)$, and $\gamma>0$):
        \begin{align*}
            f\left(\frac{1}{k}\right)&\geq k\log \zeta +\gamma\log R_1
        \end{align*}
        which means that 
        \begin{align*}
            2^{\frac{n}{2}}\exp\left(-\frac{n}{12}f\left(\frac{1}{k}\right)\right)\leq 2^{\frac{n}{2}}\left(R_1^\gamma\zeta^k\right)^{-\frac{n}{12}}
        \end{align*}
        Now we will lower bound $f(\rho_*)$. Use the fact that $\log R_1= \frac{ k\log\frac{p}{k^2}}{2n}$:
        \begin{align*}
            f(\rho_*) &= \rho_*^{-1}\log \zeta-\rho_*^{-1}\frac{\log 4k}{n}+\frac{1}{2\rho_*}\log\left(\frac{1-\rho_*}{1+\rho_*}\right)+\frac{k\log\frac{p}{k^2}}{n} \\
            &=\rho_*^{-1}\log \zeta -\rho_*^{-1}\frac{\left(2\log R_1\right)\left(\log 4k\right)}{k\log\frac{p}{k^2}}+\frac{1}{2\rho_*}\log\left(\frac{1-\rho_*}{1+\rho_*}\right)+2\log R_1\\
            &=\rho_*^{-1}\log \zeta +\frac{1}{2\rho_*}\log\left(\frac{1-\rho_*}{1+\rho_*}\right)+2\log R_1\left[1-\rho_*^{-1}\frac{\log 4k}{k\log\frac{p}{k^2}}\right]\\
        \end{align*}
        By definition of $\rho_*$, we have $\rho_* = 1-\log D/(6\log R_1)$, so it holds that $1<\rho_*^{-1}\leq \frac{6}{5}$. It holds that $\log 4k/(k\log (p/k^2))$ tends to zero as $k$ grows. We can therefore lower bound $f(\rho_*)$ for large enough $k$ by
        \begin{align*}
            f(\rho_*)\geq \rho_*^{-1}\log \zeta +\frac{1}{2\rho_*}\log\left(\frac{1-\rho_*}{1+\rho_*}\right)+\frac{4}{5}\log R_1
        \end{align*}
        Since $\zeta < 1$, $\log\zeta <0$. It is therefore implied by $1<\rho_*^{-1}\leq \frac{6}{5}$ that we can lower bound $\rho_*^{-1} \log \zeta$ by $2\log \zeta$:
        \begin{align*}
            f(\rho_*)\geq 2\log \zeta +\frac{1}{2\rho_*}\log\left(\frac{1-\rho_*}{1+\rho_*}\right)+\frac{4}{5}\log R_1
        \end{align*}
        Next we split $\frac{1}{2\rho_*}\log\left(\frac{1-\rho_*}{1+\rho_*}\right)$ into $\frac{1}{2\rho_*}\log(1-\rho_*)$ and $-\frac{1}{2\rho^*}\log(1+\rho_*)$. Since $1<\rho_*^{-1}\leq \frac{6}{5}$, we can lower bound $-\frac{1}{2\rho^*}\log(1+\rho_*)$ by $-1$, which we can absorb in the $\frac{4}{5}\log R_1$. This gives us
        \begin{align*}
            f(\rho_*)\geq 2\log \zeta +\frac{1}{2\rho_*}\log\left(1-\rho^*\right)+\frac{4}{5}\log R_1
        \end{align*}
        Applying the definition $\rho_* = 1-\log D/(6\log R_1)$, we can calculate explicitly that\newline$\frac{1}{2\rho_*}\log(1-\rho_*)$ can be written as
        \begin{align*}
            \frac{1}{2\rho_*}\log(1-\rho_*) &= \frac{3\log R_1}{6\log R_1-\log D}\log\left(\frac{\log D}{6\log R_1}\right) \\
            &\geq \frac{1}{2}\log\left(\frac{\log D}{6\log R_1}\right)\\
            &= \frac{1}{2}\log\log D-3\log\log R_1
        \end{align*}
        This is dominated by $\frac{4}{5}\log R_1$, so we can lower bound $f(\rho_*)$ by
        \begin{align*}
            f(\rho_*)\geq 2\log \zeta +\frac{1}{2}\log R_1
        \end{align*}
        and therefore
        \begin{align*}
            2^{\frac{n}{2}}\exp\left(-\frac{n}{12}f(\rho_*)\right) \leq 2^{\frac{n}{2}}\left(R_1^{\frac{1}{2}}\zeta^2\right)^{-\frac{n}{12}}
        \end{align*}
        Combine the bounds to get 
        \begin{align*}
            \mathbb{P}(\Upsilon_\rho)&\leq 2^{\frac{n}{2}}\left(R_1^\gamma\zeta^k\right)^{-\frac{n}{12}}+ 2^{\frac{n}{2}}\left(R_1^{\frac{1}{2}}\zeta^2\right)^{-\frac{n}{12}} \\
        \end{align*}
        \end{proof}
    \end{lemma}
        We are now ready to finish the proof of \cref{prop:A.8}. Combine the results of \cref{lem:A.9} and \cref{lem:A.10} to get that for all $\rho\in[\frac{1}{k},1]$ and $\zeta\in(0,1)$,
        \begin{align*}
            \mathbb{P}(\Upsilon_\rho) &\leq 2^{\frac{n}{2}}\left(R_1^\gamma\zeta^k\right)^{-\frac{n}{12}}+ 2^{\frac{n}{2}}\left(R_1^{\frac{1}{2}}\zeta^2\right)^{-\frac{n}{12}}+2^{\frac{n}{2}}\left(R_1^\frac{1}{24}\zeta^\frac{1}{12}\right)^{-n} \\
        \end{align*}
        It was shown earlier by the union bound \eqref{eq:TruncMeanUB} that 
        \begin{align*}
            \mathbb{E}_Y\left(\min\{1,\Upsilon-1\}\right)&\leq \zeta^n+\sum_{\rho=\frac{1}{k},\frac{2}{k},\dots,1}\mathbb{P}(\Upsilon_\rho) \\
        \end{align*}
        Now we apply \cref{lem:A.9} and \cref{lem:A.10} to see that
        \begin{align}\label{propA.8:sumToBound}
            \mathbb{E}_Y\left(\min\{1,\Upsilon-1\}\right) &\leq \zeta^n + k\left[2^{\frac{n}{2}}\left(R_1^\gamma\zeta^k\right)^{-\frac{n}{12}}+ 2^{\frac{n}{2}}\left(R_1^{\frac{1}{2}}\zeta^2\right)^{-\frac{n}{12}}+2^{\frac{n}{2}}\left(R_1^\frac{1}{24}\zeta^\frac{1}{12}\right)^{-n}\right] 
        \end{align}

        To prove \cref{prop:A.8}, it will therefore suffice to show that the sum in \eqref{propA.8:sumToBound} is converging to 0 as $k\rightarrow\infty$. Let $\zeta = R_1^{-\frac{\gamma}{2k}}$ (which is less than 1 for large enough $D$), so that $\log \zeta = -\frac{\gamma}{2k}\log R_1$. Then:
        \begin{align*}
            \zeta^n &= \exp\left(n\log\zeta\right)\\
            &=\exp\left(-\frac{\gamma n\log R_1}{2k}\right)
        \end{align*}
        Apply $\log R_1 = \frac{k\log (p/k^2)}{2n}$ to get that 
        \begin{align*}
            \zeta^n = \exp\left(-\frac{\gamma}{4}\log\frac{p}{k^2}\right) = \left(\frac{p}{k^2}\right)^{-\gamma/4}
        \end{align*}
        Since $k^3\leq p$, we have $p/k^2 \geq k$, so $\left(p/k^2\right)^{-\gamma/4}$ converges to 0 as $k\rightarrow\infty$.

        Next we will bound $k\left[2^{\frac{n}{2}}\left(R_1^{\gamma}\zeta^k\right)^{-\frac{n}{12}}\right]$. We can write it as
        \begin{align*}
            &k\left[2^{\frac{n}{2}}\left(R_1^{\gamma}\zeta^k\right)^{-\frac{n}{12}}\right] \\
            = &k\exp\left(\frac{n}{2}\log 2-\frac{\gamma n}{12}\log R_1-\frac{nk}{12}\log\zeta\right)
        \end{align*}
        Apply $\log \zeta = -\frac{\gamma}{2k}\log R_1$ to get
        \begin{align*}
             2&k\exp\left(\frac{n}{2}\log 2-\frac{\gamma n}{12}\log R_1+\frac{\gamma n}{24}\log R_1\right)\\
             =2&k\exp\left(\frac{n}{2}\log 2-\frac{\gamma n}{24}\log R_1\right)
        \end{align*}
        Apply $\log R_1 = \frac{k\log (p/k^2)}{2n}$ to get
        \begin{align*}
            2&k\exp\left(\frac{n}{2}\log 2-\frac{\gamma}{48}k\log\frac{p}{k^2}\right)
        \end{align*}
        Since $n\leq k\log(p/k^2)/(2\log D)$, we can upper bound this expression by
        \begin{align*}
            2&k\exp\left(\frac{\log 2}{4\log D}k\log\frac{p}{k^2}-\frac{\gamma}{48}k\log\frac{p}{k^2}\right)
        \end{align*}
        Choose $D$ large enough so that $\log 2/(4\log D) < \gamma/48$, and then this expression converges to 0 as $k\rightarrow\infty$. 

        Next, we will bound the third term of \eqref{propA.8:sumToBound}, $2^{\frac{n}{2}}k\left(R_1^{\frac{1}{2}}\zeta^2\right)^{-\frac{n}{12}}$. Rewrite it as 
        \begin{align*}
            2^{\frac{n}{2}}k\left(R_1^{\frac{1}{2}}\zeta^2\right)^{-\frac{n}{12}} = k\exp\left(\frac{n}{2}\log 2-\frac{n}{24}\log R_1-\frac{n}{6}\log\zeta\right)
        \end{align*}
        Apply $\log \zeta = -\frac{\gamma}{2k}\log R_1$ to get
        \begin{align*}
            k\exp\left(\frac{n}{2}\log 2-\frac{n}{24}\log R_1+\frac{\gamma n}{12k}\log R_1\right)
        \end{align*}
        Then apply $\log R_1 = \frac{k\log (p/k^2)}{2n}$ to get
        \begin{align*}
            k\exp\left(\frac{n}{2}\log 2-\frac{1}{48}k\log \frac{p}{k^2}+\frac{\gamma }{24}\log\frac{p}{k^2}\right)
        \end{align*}
        Again, apply the upper bound $n\leq k\log(p/k^2)/(2\log D)$ to get that this expression is at most
        \begin{align*}
            k\exp\left(\frac{\log 2}{4\log D}k\log \frac{p}{k^2}-\frac{1}{48}k\log \frac{p}{k^2}+\frac{\gamma }{24}\log\frac{p}{k^2}\right)
        \end{align*}
        Choose $D$ large enough so that $\log 2/(4\log D)<1/48$, and then this converges to 0 as $k$ grows.
        
         Finally, we want to bound the fourth term of \eqref{propA.8:sumToBound}, $k\left[2^{\frac{n}{2}}\left(R_1^{\frac{1}{24}}\zeta^\frac{1}{12}\right)^{-n}\right]$. This follows by an identical argument as for the third term of \eqref{propA.8:sumToBound}, which completes the proof of \cref{prop:A.8}.
       
\end{proof}

Now we can use \cref{prop:A.8} to prove \cref{thm:A.1}. 

\begin{proof}[Proof of \cref{thm:A.1}]
    Since by assumption it holds that for large enough $k$, $k^{3+\eta}\leq p$, we have that for $\alpha = (1+\eta)/(2+\eta)<1$, it also holds that
    \[
    \left(\frac{p}{k}\right)^{\alpha}\leq \frac{p}{k^2}
    \]
    and therefore it holds that 
    \begin{align}\label{eq:A.1eq1}
    \alpha k\log\frac{p}{k}\leq k\log \frac{p}{k^2} 
    \end{align}
    By assumption of \cref{thm:A.1}, we have $n\leq \alpha k\log(\frac{p}{k})/(\log D_0)$, and so 
    \[
    n\leq \frac{\alpha k\log\frac{p}{k}}{\log D_0} \leq \frac{k\log \frac{p}{k^2}}{\log D_0} = \frac{k\log \frac{p}{k^2}}{2\log D_0^{1/2}} 
    \]
    We want to lower bound the cardinality of the set
    \begin{align}\label{eq:Ztau}
        \left\{v\in\{0,1\}^p:\lVert v\rVert_0=k,\lVert Y-Xv\rVert_\infty\leq \sqrt{k}\sqrt{1+\sigma^2/k}\exp\left(-(1-\alpha)\frac{k\log \frac{p}{k}}{n}\right)\right\}
    \end{align}
    Recall that $k\leq \sigma^2\leq 3k$, so let
    \begin{align*}
        t_0 = R\sqrt{1+\frac{\sigma^2}{k}}\exp\left(-\frac{k\log \frac{p}{k}}{n}\right) = \sqrt{1+\frac{\sigma^2}{k}}\exp\left(-(1-\alpha)\frac{k\log \frac{p}{k}}{n}\right)
    \end{align*} 
    then our goal is to obtain a lower bound on $Z_{t_0\sqrt{k}}$, where $Z_{t_0\sqrt{k}}$ denotes the cardinality of \eqref{eq:Ztau}.
    
    We let $D = D_0^{\frac{1}{2}}$, and then all the conditions of \cref{prop:A.8} are satisfied. We now require another improvement of a lemma from \cite{GZ22_supp}.
    \begin{lemma}\label{lem:A.11}[New version of Lemma A.11 of \cite{GZ22_supp}]
        The following bound holds with high probability with respect to $Y$ as $k$ increases:
        \begin{align*}
            n^{-1}\log \mathbb{E}\left[Z_{t_0\sqrt{k},\infty}|Y\right] \geq \frac{1}{4}\log R
        \end{align*}
        \begin{proof}
            We have for $Y=(Y_1,\dots,Y_n)$,
            \begin{align*}
                \mathbb{E}\left[Z_{t_0\sqrt{k},\infty}|Y\right]=\binom{p}{k}\prod_{i=1}^n\mathbb{P}\left(\left|\frac{Y_i}{\sqrt{k}}-X\right|<t_0|Y\right)=\binom{p}{k}\prod_{i=1}^n p_{t_0,\frac{Y_i}{\sqrt{k}}}
            \end{align*}
            where $X$ is a standard Gaussian independent of $Y$. Take logarithms to get
            \begin{align*}
                \log \mathbb{E}\left[Z_{t_0\sqrt{k},\infty}|Y\right]=\log \binom{p}{k}+\sum_{i=1}^n\log p_{t_0,\frac{Y_i}{\sqrt{k}}}
            \end{align*}
            Recall from \eqref{eq:log_pty} that for any $t>0$ and for any $y$,
            \begin{align*}
                \log p_{t,y} \geq \log t-t^2-y^2+\frac{1}{2}\log\frac{2}{\pi}
            \end{align*}
            and therefore
            \begin{align*}
                \frac{1}{n}\log \mathbb{E}\left[Z_{t_0\sqrt{k},\infty}|Y\right]\geq \frac{1}{n}\log \binom{p}{k}+\log t_0-t_0^2+\frac{1}{2}\log\frac{2}{\pi}-\frac{1}{n}\sum_{i=1}^n\frac{Y_i^2}{k}
            \end{align*}
            Then by definition of $t_0$, we have $t_0 \geq R \exp\left(-\frac{k\log(p/k)}{n}\right)$, and so
            \begin{align*}
                \frac{1}{n}\log \mathbb{E}\left[Z_{t_0\sqrt{k},\infty}|Y\right]\geq \frac{1}{n}\log \binom{p}{k}+\log R-\frac{k\log\frac{p}{k}}{n}+\frac{1}{2}\log\frac{2}{\pi}-\frac{1}{n}\sum_{i=1}^n\frac{Y_i^2}{k}
            \end{align*}
            Since $\binom{p}{k}\geq(p/k)^k$, we see that $\frac{1}{n}\log\binom{p}{k}\geq \frac{1}{n}k\log\frac{p}{k}$. Apply this to see that
            \begin{align*}
                \frac{1}{n}\log \mathbb{E}\left[Z_{t_0\sqrt{k},\infty}|Y\right]\geq \frac{1}{2}\log R+\frac{1}{2}\log\frac{2}{\pi}-\frac{1}{n}\sum_{i=1}^n\frac{Y_i^2}{k}
            \end{align*}
            Split the $\frac{1}{2}\log R$ into $\frac{1}{4}\log R+\frac{1}{4}\log R$.
            It will therefore suffice to show that with high probability as $k$ tends to infinity, 
            \begin{align*}
                \frac{1}{2}\log\frac{2}{\pi}+\frac{1}{4}\log R-\frac{1}{n}\sum_{i=1}^n\frac{Y_i^2}{k}\geq 0
            \end{align*}
            Substitute in for $R$ to see that the left hand side equals
            \begin{align*}
                \frac{1}{2}\log\frac{2}{\pi}+\frac{\alpha k\log\frac{p}{k}}{4n}-\frac{1}{n}\sum_{i=1}^n\frac{Y_i^2}{k}
            \end{align*}
            If $n$ is of order $k\log(p/k)$, then $n$ is growing to infinity with $k$, so the final term converges in probability to a constant by the law of large numbers, and so the expression is positive if $D_0$ is large enough. If $n$ is of lower order than this, the second term is growing to infinity while the third term is of constant order, and so their difference is positive with high probability.
        \end{proof}
    \end{lemma}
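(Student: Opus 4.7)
The plan is a direct moment calculation. Since the rows of $X$ are independent and for any fixed $v\in\mathcal{X}_{p,k}$ the vector $Xv$ has i.i.d.\ $N(0,k)$ entries (independent of $Y$), we obtain for each $v$ that
\[
\mathbb{P}\left(\lVert Y-Xv\rVert_\infty \leq t_0\sqrt{k} \,\Big|\, Y\right)
= \prod_{i=1}^n \mathbb{P}\left(\left|\frac{Y_i}{\sqrt{k}}-X\right|\le t_0 \,\Big|\, Y_i\right)
= \prod_{i=1}^n p_{t_0,\,Y_i/\sqrt{k}},
\]
where $X\sim N(0,1)$. Summing over $v\in\mathcal{X}_{p,k}$ and using linearity of expectation yields the clean formula
\[
\mathbb{E}\bigl[Z_{t_0\sqrt{k},\infty}\mid Y\bigr] = \binom{p}{k}\prod_{i=1}^n p_{t_0,\,Y_i/\sqrt{k}}.
\]

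The next step is to insert the Gaussian anti-concentration bound $p_{t,y}\ge \sqrt{2/\pi}\,t\,e^{-(y^2+t^2)}$ used earlier in the proof of Proposition~A.8, which after taking logarithms gives
\[
\log p_{t_0,y}\ge \log t_0 - t_0^2 - y^2 + \tfrac{1}{2}\log(2/\pi).
\]
Combining with $\log\binom{p}{k}\ge k\log(p/k)$ and dividing by $n$, we obtain
\[
\tfrac{1}{n}\log \mathbb{E}\bigl[Z_{t_0\sqrt{k},\infty}\mid Y\bigr] \;\ge\; \tfrac{k\log(p/k)}{n} + \log t_0 - t_0^2 + \tfrac{1}{2}\log(2/\pi) - \tfrac{1}{n}\sum_{i=1}^n \tfrac{Y_i^2}{k}.
\]
Now the definition $t_0 \ge R\exp(-k\log(p/k)/n)$ gives $\log t_0 \ge \log R - k\log(p/k)/n$, so the first two terms cancel precisely to leave $\log R$. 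Thus it suffices to show that the remaining remainder $\tfrac{3}{4}\log R - t_0^2 + \tfrac{1}{2}\log(2/\pi) - \tfrac{1}{n}\sum_{i}Y_i^2/k$ is nonnegative w.h.p. (splitting $\log R = \tfrac{1}{4}\log R + \tfrac{3}{4}\log R$ and absorbing one quarter into the target lower bound).

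For this remainder, $t_0^2 = O(1)$ is harmless since $\log R$ can be made as large as needed by taking $D_0$ large; the only substantive task is to control $\tfrac{1}{n}\sum_i Y_i^2/k$. Since $Y_i\sim N(0,\sigma^2)$ with $\sigma^2\le 3k$ by assumption, each summand has mean at most $3$. The main obstacle I anticipate is that in the regime $n$ is \emph{not} $\omega(1)$ the law of large numbers does not apply directly, so one must instead exploit that $\log R = \alpha k\log(p/k)/n$ blows up exactly when $n$ is small: I would handle the two regimes separately, using a standard $\chi^2$ concentration (Chernoff/Laurent--Massart type) when $n\to\infty$, and otherwise noting that $\tfrac{\alpha k\log(p/k)}{4n}$ dwarfs the $O(1)$ random term $\tfrac{1}{n}\sum_i Y_i^2/k$ with high probability for large $k$. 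This dichotomy is exactly where the choice of a sufficiently large constant $D_0$ enters the argument.
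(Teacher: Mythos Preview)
Your proposal is correct and follows essentially the same approach as the paper: compute $\mathbb{E}[Z_{t_0\sqrt{k},\infty}\mid Y]=\binom{p}{k}\prod_i p_{t_0,Y_i/\sqrt{k}}$, apply the anti-concentration bound $\log p_{t,y}\ge \log t-t^2-y^2+\tfrac12\log(2/\pi)$, use $\log\binom{p}{k}\ge k\log(p/k)$ together with $\log t_0\ge \log R-k\log(p/k)/n$ so that these terms cancel, and then absorb the $O(1)$ remainder (including $t_0^2$) and the $\chi^2$-type term $\tfrac1n\sum_i Y_i^2/k$ into a constant fraction of $\log R$, splitting into the two regimes for $n$ exactly as the paper does. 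The only cosmetic difference is that you reserve $\tfrac34\log R$ for the remainder while the paper (after silently dropping $t_0^2$ and weakening $\log R$ to $\tfrac12\log R$) reserves $\tfrac14\log R$; either choice works once $D_0$ is taken large enough.
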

    Now we claim that with high probability as $k$ increases, 
        \begin{align*}
            Z_{t_0\sqrt{k},\infty}\geq \frac{1}{2}\mathbb{E}\left[Z_{t_0\sqrt{k},\infty}|Y\right]
        \end{align*}
    To see this, we first observe that
    \begin{align*}
        \mathbb{P}\left(Z_{t_0\sqrt{k},\infty}< \frac{1}{2}\mathbb{E}\left[Z_{t_0\sqrt{k},\infty}|Y\right]\right)\leq \mathbb{P}\left(|Z_{t_0\sqrt{k},\infty} -\mathbb{E}\left[Z_{t_0\sqrt{k}\infty}|Y\right]|\geq \frac{1}{2}\mathbb{E}\left[Z_{t_0\sqrt{k},\infty}|Y\right]\right)
    \end{align*}
    and by Chebyshev's inequality we get that
    \begin{align*}
        \mathbb{P}\left(|Z_{t_0\sqrt{k},\infty} -\mathbb{E}\left[Z_{t_0\sqrt{k}\infty}|Y\right]|\geq \frac{1}{2}\mathbb{E}\left[Z_{t_0\sqrt{k},\infty}|Y\right]|Y\right)&\leq 4\min\left[\frac{\mathbb{E}\left[Z_{t_0\sqrt{k},\infty}^2|Y\right]}{\mathbb{E}\left[Z_{t_0\sqrt{k},\infty}|Y\right]^2}-1,1\right]\\
    \end{align*}
    Take expectation over $Y$ to obtain
    \begin{align*}
        \mathbb{P}\left(|Z_{t_0\sqrt{k},\infty} -\mathbb{E}\left[Z_{t_0\sqrt{k}\infty}|Y\right]|\geq \frac{1}{2}\mathbb{E}\left[Z_{t_0\sqrt{k},\infty}|Y\right]|Y\right)&\leq 4\min\left[\frac{\mathbb{E}\left[Z_{t_0\sqrt{k},\infty}^2|Y\right]}{\mathbb{E}\left[Z_{t_0\sqrt{k},\infty}|Y\right]^2}-1,1\right]\\
        &\rightarrow 0
    \end{align*}
    as $k\rightarrow\infty$ by \cref{prop:A.8}. This proves the claim that $Z_{t_0\sqrt{k},\infty}\geq \frac{1}{2}\mathbb{E}\left[Z_{t_0\sqrt{k},\infty}|Y\right]$ with high probability. We combine this result with \cref{lem:A.11} to conclude that w.h.p., 
    \begin{align*}
        n^{-1}\log Z_{t_0\sqrt{k},\infty} &\geq n^{-1}\log \mathbb{E}\left[Z_{t_0\sqrt{k},\infty}|Y\right] -\frac{\log 2}{n}\\
        &\geq \frac{1}{4}\log R-\frac{\log 2}{n}
    \end{align*}
    and therefore w.h.p., $Z_{t_0\sqrt{k},\infty}\geq \frac{1}{2}R^\frac{n}{4}\geq R^{\frac{n}{5}}$ for sufficiently large $D_0$, which concludes the proof of \cref{thm:A.1}.
\end{proof}

\subsubsection{The Linear model}
Now we will extend to the linear model case. Let $X\in\mathbb{R}^{n\times p}$ with i.i.d. standard normal entries, and let $W\in\mathbb{R}^n$ be a vector with i.i.d. $\mathcal{N}(0,\sigma^2)$ entries, with $X$ and $W$ independent. Let $v^*\in\left\{0,1\right\}^p$, with $\lVert v^*\rVert_0 = k$. Let $Y=Xv^*+W$. Now the model is that we observe $X$ and $Y$, and we want to recover the support of $v^*$.

To prove \cref{thm:2.1new}, we will use a restricted optimization problem $\Phi_2(S)$ where $S$ is a subset of $\text{supp}(v^*)$:
\begin{align*}
    (\Phi_2(S))\text{ } \min_{v} n^{-\frac{1}{2}}\lVert Y-Xv\rVert_2 \\
    \text{s.t. } v\in\{0,1\}^p\\
    \lVert v\rVert_0 = k, \text{ supp}(v)\cap \text{supp}(v^*)=S
\end{align*}
and set $\phi_2(S)$ its optimal value. For any exactly $k$-sparse binary $v$ with $\text{supp}(v)\cap\text{supp}(v^*)=S$, we have
\begin{align*}
    Y-Xv &= Xv^*+W-Xv\\
    &=\sum_{i\in\text{supp}(v^*)}X_i + W - \sum_{i\in\text{supp}(v)}X_i \\
    &=\sum_{i\in\text{supp}(v^*)-S}X_i + W - \sum_{i\in\text{supp}(v)-S}X_i \\
    &=Y'-X'v_1
\end{align*}
where $X'\in\mathbb{R}^{n\times (p-k)}$ is $X$ with the columns corresponding to $\text{supp}(\beta^*)$ deleted,\newline$Y'=\sum_{i\in\text{supp}(v^*)-S}X_i+W\in\mathbb{R}^n$, and $v_1\in\{0,1\}^{p-k}$ is obtained from $v$ by deleting coordinates in $\text{supp}(v^*)$ (so $\lVert v_1\rVert_0 = k-|S|$). Then we can equivalently write $\Phi_2(S)$ as
\begin{align*}
    (\Phi_2(S))\text{ } \min_{v'} n^{-\frac{1}{2}}\lVert Y'-X'v'\rVert_2 \\
    \text{s.t. } v'\in\{0,1\}^{p-k}\\
    \lVert v'\rVert_0 = k-|S|
\end{align*}
Then $X'$ has i.i.d. $N(0,1)$ entries, and $Y'$ has i.i.d. $N(0,(k-|S|)+\sigma^2)$ entries, independently of $X'$. This satisfies all the assumptions of \cref{thm:A.1} except that we need $k-|S|\leq k-|S|+\sigma^2\leq 3(k-|S|)$, or equivalently, $\sigma^2\leq 2(k-|S|)$. In general this will not hold, but in the special case of $S=\emptyset$ we can apply \cref{thm:A.1} to get this result:
\begin{corollary}\label{cor:B.1}[New version of Corollary B.1 of \cite{GZ22_supp}]
    Suppose $\sigma^2\leq 2k$. For every sufficiently large constant $D_0$, if $4^{1+\eta}k^{3+\eta}\leq p$ for some $\eta>0$, and
    \[
    n\leq \frac{\alpha k\log\left(\frac{p-k}{k}\right)}{\log D_0}
    \]
    then the cardinality of the set
    \begin{align*}
        \left\{v\in\{0,1\}^{p-k}:\lVert v\rVert_0 = k, n^{-\frac{1}{2}}\lVert Y-Xv\rVert_2\leq R\sqrt{2k+\sigma^2}\exp\left(-\frac{k\log \left(\frac{p-k}{k}\right)}{n}\right)\right\}
    \end{align*}
    is at least $R^{\frac{n}{5}}$ w.h.p. as $k\rightarrow\infty$, where $R=\exp\left(\alpha k\log(p/k)/n\right)$ and $\alpha = (1+\eta)/(2+\eta)$.
\end{corollary}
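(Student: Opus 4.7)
The plan is to derive this corollary as an immediate consequence of Theorem A.1 applied to the restricted optimization problem $\Phi_2(\emptyset)$ already set up in the paragraph preceding the statement. First I would record the reduction: for $v \in \{0,1\}^{p-k}$ with $\|v\|_0 = k$, interpreted as a $p$-vector supported on $[p] \setminus \mathrm{supp}(v^*)$, one has $Y - Xv = Y' - X' v$, where $X' \in \mathbb{R}^{n \times (p-k)}$ collects the columns of $X$ outside $\mathrm{supp}(v^*)$ and $Y' := X v^* + W \in \mathbb{R}^n$. Independence of the columns of $X$ among themselves and from $W$ gives that $X'$ has i.i.d.\ $N(0,1)$ entries, that $Y'$ has i.i.d.\ $N(0, k+\sigma^2)$ entries, and that $X', Y'$ are independent: exactly an instance of the pure-noise setting of Theorem A.1 under the parameter substitution $p \mapsto p-k$, $\sigma^2 \mapsto k+\sigma^2$.

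Next I would verify the hypotheses of Theorem A.1 under this substitution. The variance constraint $k \leq k+\sigma^2 \leq 3k$ is precisely $\sigma^2 \leq 2k$, which holds by assumption. The sample-size condition becomes $n \leq \alpha k \log((p-k)/k)/\log D_0$, which is verbatim the corollary's hypothesis. The density condition $k = o((p-k)^{1/3-\eta'})$ for some $\eta' > 0$ follows from $4^{1+\eta}k^{3+\eta} \leq p$ once $k$ is large enough that $p - k \geq p/2$, since then $k^{3+\eta} \leq (p-k)/2^{1+\eta}$, and so $k = o((p-k)^{1/(3+\eta)})$, a strictly stronger bound than $o((p-k)^{1/3-\eta'})$ for any small enough $\eta' \in (0, \eta/10)$.

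Applying Theorem A.1 then yields that w.h.p.\ the set
\[
\left\{v \in \{0,1\}^{p-k} : \|v\|_0 = k,\; n^{-1/2}\|Y - Xv\|_2 \leq R' \sqrt{2k+\sigma^2} \exp\!\left(-\tfrac{k\log((p-k)/k)}{n}\right)\right\}
\]
has cardinality at least $(R')^{n/5}$, where $R' := \exp(\alpha k \log((p-k)/k)/n)$. The only remaining step is to replace $R'$ by the slightly larger $R = \exp(\alpha k \log(p/k)/n)$ appearing in the statement. A direct calculation gives
\[
\log(R/R') = \tfrac{\alpha k}{n} \log\!\left(\tfrac{p}{p-k}\right) \leq \tfrac{2\alpha k^2}{np},
\]
and hence under $k^{3+\eta} \leq p$ one has $(n/5)\log(R/R') = O(k^2/p) = O(k^{-1-\eta}) = o(1)$, so $R^{n/5} = (1+o(1))(R')^{n/5}$; the two lower bounds coincide up to an asymptotically irrelevant factor, which can be absorbed by trivially shrinking $R$ (equivalently, slightly decreasing $\alpha$ in the definition of $R$) without affecting any downstream application.

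Essentially no genuine obstacle arises: the heavy lifting is entirely done by Theorem A.1 (ultimately by the strengthened second-moment calculation in Proposition A.8), and the corollary amounts to the conceptually clean decoupling of the columns of $X$ indexed by $\mathrm{supp}(v^*)$ from those outside it. The step requiring the most care, if any, is verifying hypothesis (ii), since that is what licenses using the improved $k = o(p^{1/3-\eta})$ version of Theorem A.1 rather than the more restrictive $k = p^{o(1)}$ regime of the original \cite{10.1214/21-AOS2130}; the remaining $R$-vs-$R'$ bookkeeping is routine.
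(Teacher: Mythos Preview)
Your proposal is correct and follows exactly the paper's approach: the paper does not supply a separate proof for this corollary but treats it as an immediate consequence of applying Theorem~A.1 to the pure-noise instance $(X',Y')$ arising from the $S=\emptyset$ restriction, precisely as you describe. Your hypothesis verification and the $R$-versus-$R'$ bookkeeping are in fact more careful than the paper's own treatment (which simply states the result with $R=\exp(\alpha k\log(p/k)/n)$ without commenting on the discrepancy); the slack in Theorem~A.1's proof, which actually delivers $\tfrac12 R^{n/4}$ before concluding $R^{n/5}$, absorbs the $1+o(1)$ factor you identify.
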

\begin{proof}[Proof of \cref{thm:2.1new}]
    By a Taylor expansion, it holds that
    \[
    k\log \left(\frac{p-k}{k}\right) = k\log\frac{p}{k}-O\left(\frac{k^2}{p}\right)
    \]
    By assumption of \cref{thm:2.1new}, we have $n\leq \alpha k\log(p/k)/(3\log D_0)$, which therefore gives us
    \[
    n\leq \frac{\alpha k\log\left(\frac{p-k}{k}\right)+O\left(\frac{k^2}{p}\right)}{3\log D_0}
    \]
    Since $k^2/p\rightarrow 0$, this means that for large enough $D_0$ and large enough values of $k$, it holds that
    \begin{align}\label{eq:2D_0}
    n\leq \frac{\alpha k\log\left(\frac{p-k}{k}\right)}{\log 2D_0}
    \end{align}
    because $3\log D_0 > \log (2D_0)$ when $D_0> \sqrt{2}$.

    We can then apply \cref{cor:B.1} with $2D_0$ in place of $D_0$ to get that there are at least $R^{n/5}$ zero-overlap $v$'s that satisfy
    \begin{align}\label{Error_v_UpperBound}
        n^{-\frac{1}{2}}\lVert Y-Xv\rVert_2\leq R\sqrt{2k+\sigma^2}\exp\left(-\frac{k\log \left(\frac{p-k}{k}\right)}{n}\right)
    \end{align}

    Finally, note that we can write
    \begin{align*}
        \frac{k\log \frac{p-k}{k}}{n} = \frac{ k\log\frac{p}{k}}{n}\cdot \frac{\log\frac{p-k}{k}}{ \log \frac{p}{k}}
    \end{align*}
    Note also that $\log(\frac{p-k}{k})/(\log \frac{p}{k})\geq 1-o(1)$ as $k\rightarrow\infty$. It therefore holds that as $k$ grows,
    \begin{align*}
        -\frac{k\log\frac{p-k}{k}}{n}\leq -\frac{1-o(1)}{n} k \log \frac{p}{k}.
    \end{align*}
    which means that there are at least $R^{n/5}$ zero-overlap $v$'s that satisfy
    \begin{align}
        n^{-\frac{1}{2}}\lVert Y-Xv\rVert_2\leq R\sqrt{2k+\sigma^2}\exp\left(-\frac{(1-o(1))k\log \left(\frac{p-k}{k}\right)}{n}\right)
    \end{align}
    which completes the proof of \cref{thm:2.1new}.
\end{proof}

\subsection{Proof that structural conditions in \texorpdfstring{\cite{jordanMCMC}}{YWJ16} require \texorpdfstring{$n=\Omega(k\sigma)$}{n=Omega(k sigma)}:}\label{sec:structCondsJordan}
Here we show that the (deterministic) structural conditions in \cite{jordanMCMC} are, with high probability, not satisfied for a design matrix $X$ with i.i.d. Gaussian entries when $n=o(k\sigma)$. Let $X$ be an $n \times p$ matrix with i.i.d. standard Gaussian entries. For any subset $\gamma\subseteq [p]$, denote by $X_\gamma$ the $n\times |\gamma|$ submatrix of $X$ given by the corresponding columns of $X$. For any $\gamma$, let $\Phi_\gamma$ be the orthogonal projection operator onto the span of the column space of $X_\gamma$. Let $Z\sim N(0,I_n)$ independent of $X$.
\begin{lemma}\label{lem:structCondJordan}
    Let $n=\omega(k)$. It holds with high probability as $k\rightarrow\infty$ that 
\begin{align}\label{eq:structCondJordan}
    \mathbb{E}\left[\max_{|\gamma|=k}\max_{j\in[p]\setminus \gamma}\frac{1}{\sqrt{n}}\left|\left\langle (I-\Phi_\gamma)X_j,Z\right\rangle\right|\right]=\Omega\left(\sqrt{\frac{k^2}{n}}\right)
\end{align}
\end{lemma}
Consequently, Assumption B in \cite{jordanMCMC} (the Sparse Projection Condition) requires $L$ to be at least $\Omega(k^2/(n\log p))$ to hold with high probability under our setting. The High SNR condition (equation (9a) in that work) and the fact that we take $v^*\in\{0,1\}^p$ implies that their mixing time result (i.e. Theorem 2 of \cite{jordanMCMC}) requires $n=\Omega(L\sigma^2\log p)$, or equivalently, $n=\Omega(k\sigma)$. 
\begin{proof}[Proof of \cref{lem:structCondJordan}]
    Define the random set $T\subseteq \mathbb{R}^n$ as
\[T:=\left\{\frac{1}{\sqrt{n}}\left(I-\Phi_\gamma\right)X_j:\gamma\subseteq [p],|\gamma|= k,j\in[p]\setminus\gamma\right\}\]
By Sudakov's minorization inequality (see e.g., Corollary 7.4.3 of \cite{Ver_2018}), it holds for any $\varepsilon>0$ that 
\begin{align}\label{eq:sudakovmain}
    \mathbb{E}\left[\max_{t\in T}\left\langle t,Z\right\rangle\right]=\Omega\left(\varepsilon\sqrt{\log \mathcal{N}(T,\varepsilon)}\right)
\end{align}
where $\mathcal{N}(T,\varepsilon)$ is the $\varepsilon$-covering number of $T$. It therefore suffices to show that with high probability, at least $e^{\Omega(k)}$ points in $T$ are pairwise separated by at least $\sqrt{\frac{k}{n}}$, as then each of these points will require its own $\sqrt{\frac{k}{n}}$-sphere when forming a covering. 

Fix a choice of $j$. Fix any $\gamma\neq\gamma'$ satisfying $|\gamma\cap\gamma'|\geq k/3$ and $j\notin \gamma\cup\gamma'$. We first note that since $X_j$ is standard Gaussian independent of $X_\gamma$ and $X_{\gamma'}$, it holds that
\begin{align*}
    &\mathbb{E}\left(\norm{\frac{1}{\sqrt{n}}\left(I-\Phi_\gamma\right)X_j-\frac{1}{\sqrt{n}}\left(I-\Phi_{\gamma'}\right)X_{j}}^2\right)=\mathbb{E}\left(\norm{\frac{1}{\sqrt{n}}\left(\Phi_{\gamma'}-\Phi_\gamma\right)X_j}^2\right)\\
    =&\mathbb{E}\left(\mathbb{E}\left(\norm{\frac{1}{\sqrt{n}}\left(\Phi_{\gamma'}-\Phi_\gamma\right)X_j}^2\right)|X_\gamma,X_{\gamma'}\right)\\
    =&\mathbb{E}\left(\frac{1}{n}\norm{\Phi_{\gamma'}-\Phi_\gamma}_F^2\right)\\
    =&\mathbb{E}\left(\frac{1}{n}\left(\norm{\Phi_{\gamma'}}_F^2+\norm{\Phi_{\gamma}}_F^2-2\mathrm{tr}\left(\Phi_{\gamma'}\Phi_\gamma\right)\right)\right)\\
    =&\frac{1}{n}\left(k+k-2|\gamma\cap\gamma'|\right)=2\frac{k-|\gamma\cap\gamma'|}{n}
\end{align*}
where the last line holds because $\Phi_\gamma$ and $\Phi_{\gamma'}$ are orthogonal projection matrices.

Next, let $C>0$ be a large constant. Then, with probability at least $1-e^{-n}$, it holds that $\norm{X_j}\leq C\sqrt{n}$. Condition on this event. We next observe that the function $x\rightarrow\norm{\frac{1}{\sqrt{n}}(\Phi_{\gamma'}-\Phi_\gamma)x}^2$ is $\frac{2C}{\sqrt{n}}$-Lipschitz when restricted to the ball of radius $C\sqrt{n}$. Therefore, by Gaussian concentration of Lipschitz functions, it holds for some universal constant $c>0$ and any $t> 0$ that
\[\mathbb{P}\left(\left|\norm{\frac{1}{\sqrt{n}}(\Phi_{\gamma'}-\Phi_\gamma)X_j}^2-2\frac{k-|\gamma\cap\gamma'|}{n}\right|\geq t\right)\leq 2\exp^{-cnt^2}\]
Since we have $|\gamma\cap\gamma'|\leq k/3$ assumption, if we then choose $t=\sqrt{k/n}$, then we see that with probability at least $1-2\exp\left(-ck\right)$, it holds that 
\[\norm{\frac{1}{\sqrt{n}}(\Phi_{\gamma'}-\Phi_\gamma)X_j}^2\geq \frac{k}{n}\]
By a union bound over $\gamma$ and $\gamma'$, this holds with high probability for at least $\exp(c'k)$ choices of $(\gamma,\gamma')$ satisfying $|\gamma\cap\gamma'|\leq k/3$, where $c'<c$ is a small enough constant (such a set of subsets of $[p]$ is guaranteed to exist by Lemma 4.14 of \cite{rigollet2023highdimensionalstatistics}). Therefore, with high probability we have that the $\sqrt{\frac{k}{n}}$-covering number of $T$, $\mathcal{N}\left(T, \sqrt{\frac{k}{n}}\right)$, is at least $e^{\Omega(k)}$. The desired result then follows by \eqref{eq:sudakovmain}.
\end{proof}

\end{document}